\let\hide\iffalse
\let\unhide\fi
\newtheorem{theorem}{Theorem}
\newtheorem{corollary}[theorem]{Corollary}
\newtheorem{definition}[theorem]{Definition}
\newtheorem{lemma}[theorem]{Lemma}
\newtheorem{proposition}[theorem]{Proposition}
\newtheorem{remark}[theorem]{Remark}
\let\e=\varepsilon
\let\p=\partial
\let\O=\Omega
\let\o=\omega
\let\g=\gamma
\let\b=\beta
\newcommand{\R}{\mathbb{R}}
\renewcommand{\S}{\mathbb{S}}
\newcommand{\be}{\begin{equation}}
\newcommand{\bm}{\begin{multline}}
\newcommand{\ee}{\end{equation}}
\newcommand{\dd}{\mathrm{d}}
\newcommand{\xb}{x_{\mathbf{b}}}
\newcommand{\tb}{t_{\mathbf{b}}}
\newcommand{\X}{\mathbf{x}}
\newcommand{\V}{\mathbf{v}}
\newcommand{\T}{\Theta}
\newcommand{\Ga}{\Gamma_{\text{gain}}}
\newcommand{\bv}{\bar{v}}
\newcommand{\bx}{\bar{x}}
\newcommand{\tv}{\tilde{v}}
\newcommand{\tx}{\tilde{x}}
\newcommand{\Bes}{\begin{eqnarray*}}
\newcommand{\Ees}{\end{eqnarray*}}
\newcommand{\Be}{\begin{equation}}
\newcommand{\Ee}{\end{equation}}
 \numberwithin{equation}{section}
 \numberwithin{theorem}{section}
\def\p{\partial}
\def\O{\Omega}
\def\R{\mathbb{R}}
\def\B{\begin{equation}}
\def\E{\end{equation}}
\def\BN{\begin{eqnarray*}}
\def\EN{\end{eqnarray*}}
\begin{document}
	\date{\today}
	
	\title%{Reflection of Singularities in the Boltzmann theory}
 {H{\"o}lder Regularity of the Boltzmann equation Past an Obstacle}

	\author{Chanwoo Kim \and Donghyun Lee}

	\address{Department of Mathematics, University of Wisconsin, Madison, WI 53706 USA; Department of Mathematical Sciences, Seoul National University, Seoul, 08826, Korea
	\\ckim.pde@gmail.com, chanwoo.kim@wisc.edu}  
	\address{Department of Mathematics, POSTECH, Pohang-si, Gyeongsangbuk-do, 37673 Republic of Korea
	\\donglee@postech.ac.kr}

	\begin{abstract} Regularity and singularity of the solutions according to the shape of domains is a challenging research theme in the Boltzmann theory (\cite{Kim11,GKTT1}). In this paper, we prove an H\"older regularity in $C^{0,\frac{1}{2}-}_{x,v}$ for the Boltzmann equation of the hard-sphere molecule, which undergoes the elastic reflection in the intermolecular collision and the contact with the boundary of a convex obstacle. In particular, this H\"older regularity result is a stark contrast to the case of other physical boundary conditions (such as the diffuse reflection boundary condition and in-flow boundary condition), for which the solutions of the Boltzmann equation develop discontinuity in a codimension 1 subset (\cite{Kim11}), and therefore the best possible regularity is BV, which has been proved in \cite{GKTT2}.
		
	\hide
	Formation and propagation of singularities according to the shape of domains is a challenging research theme in the Boltzmann theory (\cite{VM_p,Kim11}). In this paper, we investigate the formation of singularity and its quantitative properties of propagation and reflection in the Boltzmann theory of the hard-sphere molecule, which undergoes the elastic reflection in the intermolecular collision and the contact with the boundary of a convex obstacle. Our novel and comprehensive study demonstrate that 
	%\begin{itemize}
	%\item[(1)] solutions develop the non-smooth singularity stably on so-called grazing trajectories emanated from tangent space of the boundary,
	%\item[(2)] this singularity propagates along the particle trajectory, 
	%	\item[(3)] this type of singularities are reflected at the boundary,
	%	\item[(4)] the solutions are locally Lipschitz continuous away from the grazing trajectories,
	%	\item[(5)] the solutions are H\"older regular $C^{0,1/2}_{x,v}$ uniformly everywhere.
	%\end{itemize}
	(1) solutions develop the non-smooth singularity stably on so-called grazing trajectories emanated from tangent space of the boundary; (2) this singularity propagates along the particle trajectory; (3) this type of singularities are reflected at the boundary; (4) the solutions are locally Lipschitz continuous away from the grazing trajectories; (5) the solutions are H\"older regular $C^{0,1/2}_{x,v}$ uniformly everywhere. 
	In particular, the result of the regularity and reflection of singularities is a stark contrast to the case of other physical boundary conditions (such as the stochastic boundary condition), for which the solutions of the Boltzmann equation develop discontinuity in general and such singularity completely fades away when the trajectory bounces back from the boundary again (\cite{Kim11}). %This work provides rather complete resolution to the challenging research theme for the hard sphere molecule, after the first author's work \cite{Kim11} opened a research direction in theoretical side on the theme a decade ago.
	\unhide
	\end{abstract}
		
	\maketitle

 \tableofcontents

	\section{Introduction}
	%The Boltzmann equation describes the dynamics of a system of a large number of hard-sphere particles undergo binary elastic collision.
The Boltzmann equation is one of the fundamental mathematical equation for rarefied collisional gases. It describes the motion of binary collisional gas as a partial differential equation of distribution function $F(t,x,v)$. % for $(t,x,v) \in [0,\infty)\times \O \times \R^{3}$. 
	Taking no external forces into account, the distribution function $F(t,x,v)$ solves 
		\begin{equation} \label{Boltzmann}
		\p_{t}F + v\cdot\nabla_{x} F = Q(F,F), \ \   F(0,x,v)= F_{0}(x,v),
		\end{equation}
	where nonlinear quadratic term $Q(F,F)$ means collision operator which has form of (we abbreviate arguments $t,x$ for convenience)
	\Be \label{Q}
		Q(F_{1},F_{2}) = \int_{\R^{3}}\int_{\S^{2}} B(v-u,\o) [F_{1}(u^{\prime}) F_{2}(v^{\prime}) - F_{1}(u) F_{2}(v)] d\o du,
	\Ee
	with the post collision velocities $u^{\prime}$ and $v^{\prime}$ which can be written as $u^{\prime} = u - ((v-u)\cdot \o)\o$ and $v^{\prime} = v + ((v-u)\cdot \o)\o$, respectively, for $\o\in\S^{2}$. For collision kernel $B(v-u ,\o)= |v-u|^\kappa q_0 \big(\big| \frac{v-u}{|v-u|} \cdot \o\big| \big)$, we work on a hard sphere model $B(v-u,\o) = |(v-u) \cdot \o |$, or $\kappa=1$ with $q_0(|\cdot|)=|\cdot |$.  \\
	
\hide
	Due to its own importance and analogy to statistical physics, there have been a lots of studies for the Boltzmann equation. In the seminal work \cite{DiLion}, they proved global existence of renormalized solution with large data. And in \cite{DV}, Desvillettes and Villani made a big progress about the large data problem and its convergence to equilibrium. Under assumptions of high order apriori bound and Gaussian lower bound they proved the solution converges to equilibrium. Moreover, C. Imbert and L. Silvestre \cite{CyrilLouis} recently derived $C^{\infty}$ apriori estmate of the Boltzmann equation using the theory of integro-differential equations. All these important results, however, are about $\R^{3}$ or $\mathbb{T}^{3}$ problem spatially. Taking general boundary condition problems are notoriously hard and expected to behave totally differently to $\mathbb{T}^{3}$ (or $\R^{3}$) problems. \\
\unhide

	Due to its own importance and applications in the statistical physics, there have been extensive researches on the Boltzmann equation (\cite{CIP,DiLion}). Guo established the energy method for the Boltzmann equation to construct global \textit{smooth} solutions and their asymptotical stability near Maxwellians (e.g. \cite{Guo_VMB}). In \cite{DV}, Desvillettes and Villani proved the asymptotic stability of the global Maxwellian under the assumptions of high order apriori bound and Gaussian lower bound of the \textit{smooth} solutions. More recently, Imbert and Silvestre \cite{CyrilLouis} derived conditional $C^{\infty}$-\textit{smoothness} of the non-cutoff Boltzmann equation, using the theory of integro-differential equations. Forementioned works, especially \cite{DiLion, Guo_VMB, CyrilLouis}, deal with idealized periodic domains or whole space, in which the solutions can remain \textit{smooth} if initially so.  \\

\hide
{\color{blue}
However, a physical boundary present in many applications, which changes global properties (such as \textit{smoothness}) of solutions in general. [DELETE “In [32], the authors …. propagation of singularities according to the shape [of domains]”] In [21], Guo et al…. for the first time. [INSERT: In [CK1], Chen and Kim proved some higher regularity up to $C^{1,1-}$ away from the boundary for the steady problem of diffuse boundary.]… Indeed in [22], they proved this optimal BV regularity for the diffuse and inflow boundary.  \\
} %blue
\unhide

However, a physical boundary present in many applications, which changes global properties (such as \textit{smoothness}) of solutions in general. %In \cite{VM_p}, the authors point out one of widely open main themes in the kinetic theory as the ``specific boundary-related problems [which] raise beautiful challenges: propagation of singularities according to the shape [of domains]". 
In \cite{GKTT1}, Guo et al proved that in convex domains, the first order derivatives are continuous away from a grazing set, where a particle hits the boundary with tangential velocity, for the first time. In \cite{CK1}, Chen and Kim proved some higher regularity up to $C^{1,1-}$ away from the boundary for the steady problem of diffuse the boundary. When the domain is non-convex, Kim proved the formation and propagation of discontinuity along the characteristics emanating from the grazing set of non-convex boundary of the diffuse/inflow/bounce-back reflection in \cite{Kim11}. As such characteristics form a codimension 1 subset in the phase space, the optimal regularity is BV. Indeed in \cite{GKTT2}, they proved this optimal BV regularity for the diffuse and inflow boundary. We also refer very recent result \cite{C2} which shows that higher order regularity is verd to obtain in general even for free transport equation in the presence of physical boundary conditions. \\

The regularity question of the Boltmann equation with the specular reflection boundary condition (a particle hits the boundary and bounces back like a billiard) in the non-convex domains has been a challenging open problem. Even the global well-posedness of such problem has been an outstanding open problem since an announcement of \cite{SA} without full justification in 1977. Only very recently, the question is settled affirmatively in \cite{Guo10, KimLee}. In particular, in \cite{KimLee}, Kim-Lee constructed the first unique global-in-time solution and proved asymptotic stability of the Boltzmann equation near equilibria in smooth convex domains, which completely settled the classical long-standing (40 years) open question of the kinetic community in the affirmative. As the key of the proof, they establish a novel $L^2-L^\infty$ estimate using triple iterations of the Duhamel representation along the billiard trajectory in smooth convex domains. In \cite{KimLee2}, they further extended the result of \cite{KimLee} to the cylindrical non-convex domains, which is the first result for any non-convex domains without any symmetry. \\

In this paper, we are studying H\"older regularity of the Boltzmann equation in some non-convex domains with the specular reflection boundary condition in a full 3D setting. For the reader’s convenient, we state an informal statement of Theorem \ref{theo:Holder} :
\begin{theorem}[Informal statement]
	A local-in-time solution of the Boltzmann equation outside a general convex domain satisfying the specular reflection boundary condition is $C^{0,\frac{1}{2}-}_{x,v}$ if the initial datum is $C^{0,1-}_{x,v}$.
\end{theorem}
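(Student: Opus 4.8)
The plan is to exploit the Duhamel representation of $F$ along the specular (billiard) characteristics together with a precise description of those characteristics outside a convex body. A bounded local‑in‑time solution $F$ on $[0,T]$ is taken as given (e.g.\ from the $L^2$--$L^\infty$ theory underlying \cite{KimLee}). Since the hard‑sphere kernel grows linearly in $v$, I would first pass to the weighted unknown $f=\langle v\rangle^{\beta}F$ (or a Gaussian weight), so that $f_0\in C^{0,1-}_{x,v}$ and the weighted operator $\Gamma(f,f):=\langle v\rangle^{\beta}Q(\langle v\rangle^{-\beta}f,\langle v\rangle^{-\beta}f)=\Gamma_{\mathrm{gain}}(f,f)-f\,R(f)$ is a bounded bilinear map on the weighted Hölder classes in play (here $R(f)$ absorbs the weight). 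Writing the transport part as $\partial_t+v\cdot\nabla_x+R(f)$ and integrating along the backward characteristic $s\mapsto(X(s;t,x,v),V(s;t,x,v))$ of the specular flow gives
\[
f(t,x,v)=e^{-\int_0^t R(f)}\,f_0\big(X(0),V(0)\big)+\int_0^t e^{-\int_s^t R(f)}\,\Gamma_{\mathrm{gain}}(f,f)\big(s,X(s),V(s)\big)\,\mathrm{d}s ,
\]
so the proof reduces to (i) the $\tfrac12$‑Hölder regularity of $(x,v)\mapsto(X(s;\cdot),V(s;\cdot))$, uniformly in $s\in[0,t]$, and (ii) propagating this through the Duhamel integral.

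\textbf{Geometry of exterior billiards.} The structural observation that makes $\tfrac12$‑regularity both true and (barely) attainable is that \emph{outside a convex obstacle every billiard trajectory reflects off $\partial\Omega$ at most once}: writing $\nu$ for the outward normal of the obstacle at a reflection point $x_{\mathbf b}$, the outgoing velocity satisfies $v_{\mathbf b}\cdot\nu>0$ for a non‑grazing reflection, so the outgoing ray stays strictly in the open half‑space $\{(y-x_{\mathbf b})\cdot\nu>0\}$, which is disjoint from the convex obstacle; the backward characteristic inherits this. (This is exactly what spares us the infinitely‑many‑grazing‑bounces difficulty of the interior convex problem.) Hence a characteristic is encoded by the backward exit time $t_{\mathbf b}(x,v)$, the exit point $x_{\mathbf b}=x-t_{\mathbf b}v$, the reflected velocity $v_{\mathbf b}=v-2(v\cdot\nu(x_{\mathbf b}))\nu(x_{\mathbf b})$ and one further straight segment. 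Off the grazing set $\mathcal S_G=\{(x,v):v\cdot\nu(x_{\mathbf b})=0\}$, the implicit function theorem makes $t_{\mathbf b},x_{\mathbf b},v_{\mathbf b}$ smooth with derivatives $\lesssim|v\cdot\nu(x_{\mathbf b})|^{-1}$. Near $\mathcal S_G$ I would use that $\partial\Omega$ is locally the graph, over its tangent plane at the grazing point, of a function with a nondegenerate quadratic leading term: a nearly tangential segment displaced inward by $\eta$ meets this graph at distance $\sim|\eta|^{1/2}$ from the grazing point, so $t_{\mathbf b},x_{\mathbf b}$ have a $|\eta|^{1/2}$ (hence $\tfrac12$‑Hölder) modulus in the normal direction; then $\nu(x_{\mathbf b})$ and $v_{\mathbf b}$ inherit the same modulus, and since $v\cdot\nu(x_{\mathbf b})=O(|\eta|^{1/2})$ the reflection is weak, so the bounced branch stays $O(|\eta|^{1/2})$‑close to the straight branch, while a segment displaced outward simply misses the obstacle. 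Patching the Lipschitz bound off a $\delta$‑neighborhood of $\mathcal S_G$ (Lipschitz constant $\lesssim\delta^{-1}$, with $\delta$ of the order of the displacement) with this near‑grazing bound yields $(x,v)\mapsto(X(s),V(s))\in C^{0,1/2}$, uniformly for $s\in[0,t]$. Composed with $f_0\in C^{0,1-}_{x,v}$ this already gives the transport term $e^{-\int R(f)}f_0(X(0),V(0))\in C^{0,1/2-}_{x,v}$ -- the loss of exactly one half in the exponent being the exterior‑grazing effect, in sharp contrast with the codimension‑one discontinuities produced by diffuse or in‑flow conditions (\cite{Kim11,GKTT2}).

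\textbf{Collision term and the main obstacle.} The delicate point is step (ii): $\Gamma_{\mathrm{gain}}(f,f)$ is, in the spatial variable, no more regular than $f$ itself, so naively composing a $C^{0,1/2-}$ integrand with the $C^{0,1/2}$ flow would only give $C^{0,1/4-}$. The plan is to close the estimate by an iteration/fixed point in a weighted Hölder norm adapted to the kinetic distance $\alpha(t,x,v)$ to $\mathcal S_G$, using: first, that on $[0,t]$ the $|\eta|^{1/2}$‑spreading of characteristics is confined to the $s$‑subinterval preceding the (single) reflection, the characteristic being an affine, hence Lipschitz, function of $(x,v)$ elsewhere, which localizes the loss; and second, that the velocity integration in $\Gamma_{\mathrm{gain}}$ smooths across the singular set of $f$ in velocity -- for fixed $x$ this set has positive codimension, and for a.e.\ collision parameter the post‑collisional velocities avoid it -- so that $\Gamma_{\mathrm{gain}}(f,f)$, and after differentiating Duhamel also $\Gamma_{\mathrm{gain}}(\nabla f,f)$, is no worse than bounded against the same $\alpha^{-1/2}$‑weight already carried by the transport term, which lets a Grönwall argument in $t$ absorb the collision contribution. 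Throughout one must track the interaction of the velocity weight $\langle v\rangle^{\beta}$ -- needed for the hard‑sphere kernel $B(v-u,\omega)=|(v-u)\cdot\omega|$, via the standard change of variables for $\Gamma_{\mathrm{gain}}$ -- with the kinetic weight. Absorbing the residual logarithmic losses (and the gap between $C^{0,1}$ and $C^{0,1-}$ data) into $\varepsilon$ then yields $f\in C^{0,1/2-}_{x,v}$ on $[0,T]$, i.e.\ $F\in C^{0,1/2-}_{x,v}$; Lipschitz data would give the endpoint $C^{0,1/2}$. I expect the main obstacle to be precisely this collision estimate: proving that the collision operator, composed along the specular flow, neither destroys the $\tfrac12$‑Hölder bound nor amplifies the $\alpha^{-1/2}$ weight near $\mathcal S_G$ -- equivalently, that the grazing singularity is genuinely carried only by the free‑transport part -- together with the two‑weight bookkeeping. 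The geometric step is technically lengthy (uniform control of $t_{\mathbf b},x_{\mathbf b},v_{\mathbf b}$ and their moduli up to and across $\mathcal S_G$ for a general convex obstacle) but is in principle routine given the at‑most‑one‑bounce structure.
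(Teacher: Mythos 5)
Your broad outline matches the paper's: Duhamel along specular characteristics, the observation that $(x,v)\mapsto(X(s),V(s))$ is $C^{0,1/2}$ because of the $\sqrt{\eta}$ scaling at grazing, and the recognition that the real issue is closing the estimate through $\Gamma_{\mathrm{gain}}$. But your proposal stops exactly where the paper's work begins, and the mechanism you sketch for the collision step is one the paper explicitly rules out. You propose to carry a weight $\alpha^{-1/2}$ built from a kinetic distance to the grazing set and close by Gr\"onwall; this is in essence the velocity-lemma strategy of \cite{GKTT1}, which the paper states is unavailable in the exterior (hence non-convex) geometry. More concretely, your claim that ``velocity integration in $\Gamma_{\mathrm{gain}}$ smooths across the singular set'' because ``for a.e.\ collision parameter the post-collisional velocities avoid it'' is a measure-zero statement, not an estimate: the singularity in the trajectory Jacobian is $\sim|(v{+}\zeta)\cdot\nabla\xi(\xb)|^{-1}$, and to control $\int_\zeta |(v{+}\zeta)\cdot\nabla\xi(\xb(\cdot,v{+}\zeta))|^{-2\beta}\,d\zeta$ one has to show this is finite uniformly, which forces $\beta<\tfrac12$ and is exactly where the $C^{0,1/2-}$ threshold comes from --- your attribution of the loss to ``residual logarithmic losses'' misses this. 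The reference to $\Gamma_{\mathrm{gain}}(\nabla f,f)$ is also off: the solution is only H\"older, so the paper works with difference quotients and closes a seminorm, not a derivative, estimate.

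What is genuinely missing is the machinery that makes the $\zeta$-integration computable. When you integrate the trajectory difference quotient against the Carleman kernel, the intermediate point at which the grazing singularity is evaluated depends on $\zeta$ through the perturbation line between the two base points, and this dependence wrecks the integral. The paper's resolution is the shift: replace $(\bar x,\bar v)$ by $(\tilde x,\tilde v)$ chosen so that $\dot{\X}\perp v$ and $\dot{\V}\perp\V$, which (i) makes the Specular Singularity quantity satisfy a clean Gr\"onwall-type ODE whose solution gives the sharp bound $\int_{\tau_-}^{\tau_*}\mathfrak{S}^{-1}\lesssim(\tau_*-\tau_-)/|v\cdot\nabla\xi(\xb(\X(\tau_*)))|$, and (ii) pins the evaluation point of the singularity to $x$ or $\bar x$ \emph{independently of $\zeta$}, so the integrability lemma applies. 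Without something playing this role, ``the grazing singularity is carried only by free transport'' remains an expectation, as you yourself flag, not a proof. You also gloss over the case where one of the two trajectories hits the obstacle and the other misses; this requires the indicator-function splitting $\mathbf{1}_{t^1(\tilde x,v)<s\le t^1(x,v)}$ and a separate estimate (the paper's Lemma on $(t^1-s)\mathbf{1}$ in terms of $\int\mathfrak{S}^{-1}$), since near a tangency the two branches differ by a whole reflection.
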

The major difficulty of the problem is that the result of the velocity lemma of \cite{GKTT1}, which is a powerful tool of the regularity estimate in convex domains, is no longer true for non-convex domains. Therefore we have to develop a \textit{completely new} technique in the paper. Moreover this H\"older regularity result is a stark contrast to the result of other physical boundary conditions \cite{GKTT2, Kim11}, for which the formation and propagation of discontinuity happens in general.\\

Beyond the theoretical importance in general, the quantitative regularity estimate has several significant applications. Based on the regularity estimate results \cite{GKTT1}, later Cao-Kim-Lee studied Vlasov-Poisson-Boltzmann systems. When particles are charged (e.g. plasma), they interact through not only collisions but also the Lorentz force generated by charged particles. Master equations for
this situation are Vlasov-Boltzmann equation coupled with Maxwell system or simpler Poisson equation. Mathematical theory of boundary problems of such coupled system is not developed satisfactorily mainly due to the intrinsic singularity of solutions. In \cite{CKL1}, Cao-Kim-Lee construct the first unique global-in-time solution of Vlasov-Poisson-Boltzmann system with the diffuse reflection boundary condition in smooth convex domains and proved its convergence toward equilibria when the initial data is small enough. In \cite{CKL2} the authors construct a unique local-in-time solution of Vlasov-
Poisson-Boltzmann system with a generalized diffuse reflection boundary condition in smooth convex domains without the size restriction.

Lastly, we note that the $L^2-L^\infty$ method established in \cite{Guo10, KimLee} has been widely used to various problems, such as global wellposedness with large amplitude external potential (\cite{K2}), rotational setting \cite{KY}, and large amlitude problem with smooth convex domain \cite{DKL}. We also refer recent global well-posedness results with large amplitude initial data for various setting \cite{DHWY, DHWZ, DW, KLP}.   \\

First let us define a convex ball $\mathcal{O}$ and the domain $\O$.
	\begin{definition}\label{def:domain}
	Throughout this paper, we assume that the domain $\O=\R^3 \backslash \overline{\mathcal{O}}$ is an exterior domain of $\mathcal{O}$, which is a uniformly convex bounded open subset in $\R^{3}$: $\mathcal{O}= \{x \in \R^3: \xi(x)>0\}$ and there exists $\theta_{\O} >0$ such that 
	\Be\label{convex_xi}
\zeta \cdot(- \nabla^2 \xi(x))   \zeta	:=  \sum_{i,j=1}^3  (-\p_{ij} \xi(x)) \zeta_i\zeta_j \geq \theta_{\O} |\zeta|^2 \ \ \text{for all} \ \zeta \in \R^3.
	\Ee	 
	In other words, $\O = \{x \in \R^3: \xi(x)<0 \}$ with $\xi$ of \eqref{convex_xi}. 
%We also assume that $\nabla \xi (x) \neq 0$ for $\xi(x) =0$. 
We define unit normal vector of the exterior domain $\O$ as 
	\Be \notag %\label{normal}
	n(x) =  \frac{\nabla \xi (x)}{|\nabla \xi(x)|} .  \\
	\Ee
	\end{definition}
	%which is inward normal vector on $x\in \p\O$.
%Meanwhile, since the domain has a physical boundary, we should impose a boundary condition of $\p\O$. 
	\noindent Moreover, we impose the specular reflection boundary condition on the boundary $\p\O$,
		\begin{equation} \label{specular}
		F(t,x,v) = F(t,x,R_{x}v) \  \textit{ for } x\in\p\O, \  \textit{ where } \ R_x v = v- 2 (n(x)\cdot v)n(x).
		\end{equation}
%An operator $R_{x}$ is a reflection operator defined on $x\in\p\O$ where $n(x)$ is outward unit normal vector on $x \in \p\O$.   
In the classical Boltzmann theory, an equilibrium state of the problem \eqref{Boltzmann} and \eqref{specular} is well-known as a global Maxwellian
\Be \notag
\mu(v) = e^{-  \frac{|v|^2}{2} }.   \\
\Ee 

\section{Main Theorem and Scheme of Proof}

		We rewrite the Boltzmann equation (\ref{Boltzmann}) and the specular reflection boundary condition \eqref{specular} using $F(t,x,v) = \sqrt{\mu}f(t,x,v)$ to obtain  
			\Be\label{f_eqtn} 
	\p_{t}f + v\cdot \nabla_{x} f  
	 = \Gamma(f,f) := \Gamma_{\text{gain}}(f,f) - \nu(f) f ,  
	 \ \ 
	f|_{t=0} =f_0, \ \ f(t,x,v)|_{ \p\O}  = f(t,x,R_{x}v),
 \Ee
	%with $\nu(f)  = \iint_{\mathbb{R}^{3}\times\mathbb{S}^{2}} |(v-u)\cdot \omega| \sqrt{\mu(u)}f(t,x,u) d\omega du$ and 
	where
	\begin{align}
	\Gamma_{\text{gain}}(f_1,f_2) &:= \frac{1}{\sqrt{\mu}}Q_{\text{gain}}(\sqrt{\mu}f_1,\sqrt{\mu}f_2),\label{Gamma}\\
	\nu(f) &:= \iint_{\mathbb{R}^{3}\times\mathbb{S}^{2}} |(v-u)\cdot\omega| \sqrt{\mu(u)} f(t,x,u) d\omega du. \notag 
\end{align}

We define a backward exit time and position as 
 	\begin{equation} \notag % \label{backward_exit}
 	\begin{split}
 	\tb(x,v)   &:=  \sup \big\{  s \geq 0 :  x-\tau v  \in \O  \ \ \text{for all}  \ \tau \in ( 0, s )  \big\}, \quad t^{1}(x,v) := t-\tb(x,v),  \\
 	\xb(x,v)   &:=  x- \tb( x,v) v.  
 	%\vb(t,x,v)  &:=  \lim_{s\rightarrow\tb(t,x,v)}V ( t- s;t,x,v) ,  \end{split}
 	\end{split}
 	\end{equation}
Then the trajectory of (\ref{f_eqtn}) satisfies that for $(t,x,v)\in[0,\infty)\times\O \times\mathbb{R}^3,$ %we denote  
	%$[X(s),V(s)]=[X(s;t,x,v),V(s;t,x,v)]$ for the position and velocity at time $s$ which was placed at $x$ with the velocity $v$ at time $t$. Along this trajectory, we have
 	\begin{equation}\label{E_Ham} 
 	\frac{d}{ds} (X(s;t,x,v), V(s;t,x,v)  )   =    (V(s;t,x,v), 0),  \ \  \ 
	(X(s;t,x,v)  , V (s;t,x,v)  )|_{s=t}=(x,v). 
 	\end{equation} 
	While the explicit formula is given as 
\Be\label{form_XV}
\begin{split}
&(X(s;t,x,v), V(s;t,x,v) ) \\
&= \begin{cases} (x-(t-s) v, v)  
 \ \ \text{for} \ \ s \in (t-\tb(x,v),t]
   \\
 (\xb(x,v) - (t-\tb(x,v) - s) R_{\xb(x,v)} v, R_{\xb(x,v)} v
 )
  \ \ \text{for} \ \ s \in [0,t-\tb(x,v) ],
\end{cases}
\end{split}
\Ee	
where $R_{\xb(x,v)} = I - 2 n(\xb)\otimes n(\xb)$ is reflection operator. 
	From (\ref{f_eqtn}), we obtain the Duhamel's formula   
\Be \label{f_expan}
\begin{split} 
	f(t,x,v)
	%& =  \mathcal{V}(0;t, X, V) f(0, X(0), V(0))+ \int^t_0 \mathcal{V}(\Gamma_{\text{gain}})(s;t,x,v) \dd s \\
	&=  
	e^{- \int^t_ 0 \nu(f) (\tau, X(\tau;t,x,v), V(\tau;t,x,v)) d \tau}
	f(0,X(0;t,x,v), V(0;t,x,v))\\
	& \ \ + \int^t_0
	e^{- \int^t_ s \nu(f) (\tau, X(\tau;t,x,v), V(\tau;t,x,v)) d \tau}
	\Gamma_{\text{gain}}(f,f)(s,X(s;t,x,v), V(s;t,x,v)) ds.
\end{split}
\Ee 

Local in time existence of the mild solultion \eqref{f_expan} is given as the following.
 \begin{lemma}[Local existence] \label{lem loc}
	 Assume initial data $f_{0}$ satisfies $\|w_{0}(v)f_{0}\|_{\infty} := \|e^{\vartheta_{0}|v|^{2}}f_{0}\|_{\infty} < \infty$ for $0 < \vartheta_{0} < \frac{1}{4}$ and initial compatibility condition \eqref{specular}. Then there exists $T^{*} > 0$ and a local in time unique solution $f(t,x,v)$ of (\ref{f_expan}) for $0\leq t \leq T^{*}$. Moreover, the solution satisfies
	\begin{equation*}
	\sup_{0\leq s \leq T^{*}} \|w(v) f(s)\|_{\infty} :=
	\sup_{0\leq s \leq T^{*}} \|e^{\vartheta |v|^{2}}f(s)\|_{\infty} \lesssim \|e^{\vartheta_{0} |v|^{2}}f_{0}\|_{\infty},
	% \quad\text{or equivalently,} \sup_{0\leq s \leq T^{*}} \|\tilde{w}f(s)\|_{\infty} \lesssim \|wf_{0}\|_{\infty},
	\end{equation*}
	for some $0 < \vartheta < \vartheta_{0}$. 
\end{lemma}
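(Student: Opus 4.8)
The plan is to construct the solution by the usual Picard iteration along the specular characteristics \eqref{form_XV}, closing the scheme on a short interval in the weighted norm $\|w_\vartheta f(t)\|_\infty:=\|e^{\vartheta|v|^2}f(t)\|_\infty$. \textbf{First} I would record the geometric fact that makes the boundary harmless at this level of regularity: because $\mathcal O$ is convex, the backward flow $s\mapsto(X(s;t,x,v),V(s;t,x,v))$ is, for a.e.\ $(x,v)$ and all $s\le t$, globally defined and undergoes \emph{at most one} specular reflection. Indeed, once the backward ray $s\mapsto x-(t-s)v$ first meets $\partial\mathcal O$ transversally (off the measure-zero grazing set $v\cdot n(\xb)=0$), the reflected velocity satisfies $R_{\xb}v\cdot n(\xb)=-\,v\cdot n(\xb)$, so as $s$ decreases past $t-\tb$ the trajectory $\xb-(t-\tb-s)R_{\xb}v$ enters the open half-space $\{(y-\xb)\cdot n(\xb)<0\}$, which is disjoint from $\overline{\mathcal O}$ by the supporting-hyperplane property of the convex body $\mathcal O$; hence it never returns. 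Thus \eqref{form_XV} is exact and the Duhamel formula \eqref{f_expan} is unambiguous for a.e.\ datum. Moreover $R_{\xb}$ is a Euclidean isometry, so
\[
|V(s;t,x,v)|=|v|\qquad\text{for all }s,
\]
and hence the Gaussian weight $w_\vartheta$ is \emph{constant along characteristics}; this is precisely the structural input that replaces, in the present convex setting, the velocity lemma of \cite{GKTT1}.

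\textbf{Next} I would set up the iteration: put $f^0:=f_0$, and for $n\ge 0$ let $f^{n+1}$ solve the linear problem $\partial_t f^{n+1}+v\cdot\nabla_x f^{n+1}+\nu(f^n)f^{n+1}=\Gamma_{\text{gain}}(f^n,f^n)$ with datum $f_0$ and the specular condition, written via \eqref{f_expan} with coefficients frozen at $f^n$. For the hard-sphere kernel one has the weighted bounds, valid for $0<\vartheta<\tfrac14$,
\[
|\nu(f)(t,x,v)|\lesssim(1+|v|)\|w_\vartheta f(t)\|_\infty,\qquad
\big|w_\vartheta(v)\,\Gamma_{\text{gain}}(f,g)(t,x,v)\big|\lesssim(1+|v|)\|w_\vartheta f(t)\|_\infty\|w_\vartheta g(t)\|_\infty,
\]
the latter because conservation of kinetic energy in the collision, $|u'|^2+|v'|^2=|u|^2+|v|^2$, lets the two input Gaussians combine as $e^{-\vartheta|u'|^2-\vartheta|v'|^2}=e^{-\vartheta|u|^2-\vartheta|v|^2}$, leaving $\iint_{\R^3\times\S^2}|(v-u)\cdot\omega|\sqrt{\mu(u)}e^{-\vartheta|u|^2}\,\dd\omega\,\dd u=O(1+|v|)$. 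Plugging these into \eqref{f_expan}, using that the loss exponential $e^{-\int_s^t\nu(f^n)}$ is $\le 1$ once positivity $F^n=\sqrt\mu f^n\ge 0$ is propagated along the scheme, and pulling $w_\vartheta(v)=w_\vartheta(V(s))$ outside the integral by the isometry property, the residual factor $(1+|v|)$ gets absorbed into a slightly smaller Gaussian exponent — $(1+|v|)e^{(\vartheta-\vartheta_0)|v|^2}$ is bounded as soon as $\vartheta<\vartheta_0$ — which is the source of the loss $\vartheta<\vartheta_0$ in the statement. This yields an a priori bound of the form
\[
\sup_{0\le s\le T}\|w_\vartheta f^{n+1}(s)\|_\infty\ \le\ \|w_{\vartheta_0}f_0\|_\infty+C\,T\Big(\sup_{0\le s\le T}\|w_\vartheta f^{n}(s)\|_\infty\Big)^2,
\]
and, by a parallel computation on $f^{n+1}-f^n$, a companion contraction estimate with constant $\lesssim T\big(\sup_{[0,T]}\|w_\vartheta f^n\|_\infty+\sup_{[0,T]}\|w_\vartheta f^{n-1}\|_\infty\big)$. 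Choosing $T^*$ small, depending only on $\|w_{\vartheta_0}f_0\|_\infty$, makes the iteration a contraction on the ball of radius $2\|w_{\vartheta_0}f_0\|_\infty$, and the fixed point is the desired local mild solution of \eqref{f_expan}, satisfying the stated estimate; uniqueness follows from the same contraction estimate.

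\textbf{The main obstacle} — as opposed to the weighted-$L^\infty$ nonlinear estimates, which are by now routine — is the first step: verifying that the specular characteristic flow past the convex obstacle is globally well-defined for a.e.\ $(x,v)$ with only finitely many (here, at most one) reflections, so that \eqref{form_XV} and the representation \eqref{f_expan} are legitimate, and that $|V(s)|\equiv|v|$ so the exponential weight is transported intact along the flow. Both facts use convexity of $\mathcal O$ essentially; neither survives in a bounded non-convex cavity, which is exactly why the regularity theory there (\cite{Kim11}) is so different. Granting them, the remainder is a standard adaptation of the whole-space weighted-$L^\infty$ Picard argument.
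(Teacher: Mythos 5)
The paper gives no argument of its own for this lemma and simply cites \cite{GKTT1}; your proposal supplies an actual proof in the same spirit (Picard iteration in weighted $L^\infty$ along the specular characteristics), plus a clean observation specific to the exterior geometry — that, because $\mathcal{O}$ is convex, a backward ray reflects at most once, as the reflected ray lies in the open half-space $\{(y-\xb)\cdot n(\xb)<0\}$ disjoint from $\overline{\mathcal O}$ by the supporting hyperplane at $\xb$. That observation is correct and does simplify matters compared with the bounded convex domains of \cite{GKTT1}, where infinitely many bounces must be accounted for; it is not strictly necessary, however, since the two facts the scheme really needs — measurability of the flow and the isometry $|V(s;t,x,v)|=|v|$ — hold regardless of the number of reflections.

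There is, however, a genuine gap in the closing of the iteration. You claim the a priori bound
\[
\sup_{0\le s\le T}\|w_\vartheta f^{n+1}(s)\|_\infty \le \|w_{\vartheta_0}f_0\|_\infty + C\,T\Big(\sup_{0\le s\le T}\|w_\vartheta f^{n}(s)\|_\infty\Big)^2
\]
for a \emph{fixed} $\vartheta<\vartheta_0$, with the $(1+|v|)$ factor from $w_\vartheta(v)\,\Gamma_{\text{gain}}(f^n,f^n)\lesssim (1+|v|)\|w_\vartheta f^n\|_\infty^2$ "absorbed" into the smaller Gaussian. But the absorption $(1+|v|)\,w_\vartheta(v)\lesssim w_{\vartheta_0}(v)$ upgrades the weight of the \emph{output}, not the input: it yields $\|w_\vartheta f^{n+1}\|_\infty \lesssim \|w_{\vartheta_0}f_0\|_\infty + T\,\|w_{\vartheta_0}f^n\|_\infty^2$, where the right-hand side carries the \emph{stronger} norm $w_{\vartheta_0}$, and the map does not close on a ball of $w_\vartheta$. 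As written, $\sup_v\,(1+|v|)\,\|w_\vartheta f^n\|_\infty^2$ is infinite and the displayed bound does not follow. The standard repair — and presumably what \cite{GKTT1} actually does — is to use a \emph{time-dependent} weight such as $\widetilde w(t,v)=e^{(\vartheta_0-\lambda t)|v|^2}$: then along a characteristic one picks up the factor $\widetilde w(t,v)/\widetilde w(s,V(s)) = e^{-\lambda(t-s)|v|^2}$, so that $\int_0^t (1+|v|)\,e^{-\lambda(t-s)|v|^2}\,\dd s\lesssim \lambda^{-1}$ uniformly in $v$, and the iteration closes on a short interval $T^*\sim \vartheta_0/\lambda$ with final exponent $\vartheta = \vartheta_0-\lambda T^*>0$. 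Your sketch names the correct mechanism ("loss $\vartheta<\vartheta_0$") but the fixed-weight iteration you write down does not implement it; this step needs the time-dependent weight (or an equivalent device, e.g.\ comparison with a reference collision frequency $\sim(1+|v|)$) to be a proof.
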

\begin{proof}
We refer \cite{GKTT1}.	 
\end{proof}

Now, we define \textit{shifted} position and velocity which will be \textit{crucially} used when we perform H\"older regularity estimates. 
	\begin{definition}[Shift] \label{def_tilde}
	We define shifted position $\tilde{x} = \tx(x ,\bx, v)$ and velocity $\tilde{v} = \tv(v ,\bv, \zeta)$, respectively. \\
	(i) For fixed $x, \bx, v$, let us  assume
	\begin{equation} \label{assume_x}
		 \text{ $x-\bx \neq 0$ is neither parallel nor anti-parallel to $v\neq 0$,  i.e., $(x-\bx)\cdot v \neq \pm |x-\bx||v|$. } \\
	\end{equation}
	%When $x-\bx$ is neither parallel nor anti-parallel to $v$,  i.e., $(x-\bx)\cdot v \neq \pm |x-\bx||v|$,
	In this case, we define shifted $\tilde{x}$ as
	\begin{equation} \label{def_tildex} 
		\tilde{x} =  \tilde{x}(x, \bx, v) := \bar{x} + \big( (x-\bar{x})\cdot \hat{v} \big) \hat{v}.    \\
		%\tilde{v} &=& \tilde{v}(v, \bv, \zeta) := |v+\zeta| \widehat{(\bar{v} +\zeta)} .  \label{def_tildev}
	\end{equation}
	(ii) For fixed $v, \bv, \zeta$, let us  assume
	\begin{equation} \label{assume_v}
		\text{ $v+\zeta\neq 0$ is neither parallel nor anti-parallel to $\bv+\zeta\neq 0$,  i.e., $(v+\zeta)\cdot(\bv+\zeta)\neq \pm|v+\zeta||\bv+\zeta|$ }.
	\end{equation}
	%Let us fix $\zeta\in\R^{3}$. When $v+\zeta$ is neither parallel nor anti-parallel to $\bv+\zeta$,  i.e., $(v+\zeta)\cdot(\bv+\zeta)\neq \pm|v+\zeta||\bv+\zeta|$,
	In this case, we define shifted $\tilde{v}$ as
	\begin{equation} \label{def_tildev} 
	%\tilde{x} =  \tilde{x}(x, \bx; v) := \bar{x} + \big( (x-\bar{x})\cdot \hat{v} \big) \hat{v},    \\
	\tilde{v} + \zeta = \tilde{v}(v, \bv, \zeta) + \zeta := |v+\zeta| \widehat{(\bar{v} +\zeta)} . 
	\end{equation}
	Note that we have used the following notation,
	\[
		\hat{v} := \begin{cases}
		\frac{v}{|v|},\quad v\neq 0, \\ 0,\quad v=0.
		\end{cases}
	\]\\
	\end{definition}

	Next, we parametrize position and velocity using above shifted position and velocity. Before we introduce parametrizations, we first define cross section and argument.
	\begin{definition} \label{def_Sarg}
		(i) Let us assume \eqref{assume_x}. We define
		\begin{equation} \label{S_x}
			S_{(x, \bx, v)} := x + \text{span}\{ x- \tilde{x}, v \} = x + \text{span}\{ x- \bx, v \},
		\end{equation}
		where $\tilde{x} = \tilde{x}(x, \bx, v)$ is defined in \eqref{def_tildex}. \\
		(ii) Let us assume \eqref{assume_v}. We define
		\begin{equation} \label{S_v}
			S_{(x, v, \bv, \zeta)} := x + \text{span}\{ v+\zeta, \tilde{v}+\zeta \} = x + \text{span}\{ v+\zeta, \bv+\zeta \}, 
		\end{equation}
		where $\tilde{v} = \tilde{v}(v, \bv, \zeta)$ is defined in \eqref{def_tildev}. In the plane $S_{(x, v, \bv, \zeta)}$, we define $\arg : S_{(x, v, \bv, \zeta)}\backslash\{0\} \mapsto [0,2\pi)$ by 
		\Be \label{def_arg}
			\arg(w) = \cos^{-1}\big( \hat{w}\cdot \widehat{(\bv+\zeta)} \big),\quad \forall  w\in S_{(x, v, \bv, \zeta)}\backslash\{0\}, \\
		\Ee		
		so that $\arg(\bv+\zeta) = 0$. 
	\end{definition}
	 
	 Note that we are mainly interested in when cross section $\p\O\cap S_{(x, \bx , v)}$ and $\p\O\cap S_{(x, v, \bv, \zeta)}$ have meaningful geometry. So we assume
	 \Be \label{assume_x2}
	 	\text{ $\p\O\cap S_{(x, \bx , v)}$ is closed curve, i.e., $\p\O\cap S_{(x, \bx , v)} \neq \emptyset$ is neither an empty set nor a single point,}
	 \Ee
	 for $x$ perturbation case and 
	 \Be \label{assume_v2}
	 	\text{ $\p\O\cap S_{(x, v, \bv , \zeta)}$ is closed curve, i.e., $\p\O\cap S_{(x, v, \bv , \zeta)} \neq \emptyset$ is neither an empty set nor a single point,}
	 \Ee
	 for $v$ perturbation case. \\
	 
	\begin{definition}[Parametrizations] \label{def_para}
		Recall $\tilde{x}(x, \bar{x}, v)$ in Definition \ref{def_tilde} under assumption \eqref{assume_x}. Also we recall $\tilde{v}(v, \bar{v} ,\zeta)$ in Definition \ref{def_tilde} under assumption \eqref{assume_v}. \\
		(i) We define parametrizations,
		\Be \label{xv para}
		\begin{split}
			\X(\tau) & = 
			\X(\tau; x, \bar x, v)
			= (1-\tau)\tilde{x} (x, \bar x, v) + \tau x ,\quad \dot{\X}:= \dot{\X}(\tau)  = x- \tx, \\
			\V(\tau) &=
			\V(\tau; x, v , \bar v, \zeta) 
				=
			 |v+\zeta| R_{(v,\bv,\zeta)} 
			\begin{bmatrix}
				\cos\T(\tau) \\ \sin\T(\tau) \\ 0
			\end{bmatrix},
		\\
			%where\dot{\V}(\tau) = i\dot{\T}\V(\tau),\quad \ddot{\V}(\tau) = -\dot{\T}^{2}\V(\tau), \\
			\T(\tau) &:= \tau\theta, \quad \dot{\T}(\tau) = \dot{\T} := \theta,
			%&\quad\quad\quad  v(0) = \tilde{v} : = |v|e^{i\tilde{\theta}}, \quad v(1)=v := |v|e^{i\theta}.  \\
		\end{split}\Ee
	   %where $\T(\tau) := (1-\tau) \tilde{\theta} + \tau\theta$, 
	   where $\theta = 
	   \cos^{-1} (\widehat{v+\zeta} \cdot \widehat{\bv+\zeta})
	   \in[0,2\pi)$ is the angle between $v+\zeta$ and $\bv+\zeta$, and 
		\[
			R_{(v,\bv,\zeta)} := 
			\begin{bmatrix}
				 & & \\
				 \widehat{\bv+\zeta} & \widehat{v+\zeta} & \widehat{ (\bv+\zeta)\times (v+\zeta) } \\
				 & & \\
			\end{bmatrix}
			\begin{bmatrix}
				1 & \cos\theta & 0 \\
				0 & \sin\theta & 0 \\
				0 & 0 & 1
			\end{bmatrix}^{-1},
		\]
		so that
		\begin{equation*}
		\begin{split}
			&\X(0)=\tx,  \ \  \X(1)=x,   \\ 
			&\T(0) = \arg (\tilde{v}+\zeta) = \arg (\bv+\zeta) = 0 \ \ \text{by \eqref{def_arg}, and} \ \  
			\T(1) = \arg (v+\zeta) := \theta, \\
			&\V(0) = \tilde{v}+\zeta , \ \ \V(1)=v+\zeta .
		\end{split}
		\end{equation*}
		(ii) We further assume \eqref{assume_x2} and \eqref{assume_v2}, respectively. By convexity \eqref{convex_xi} of $\O$, we can define $\tau_{\pm}(x, \bx, v)$ and $\tau_{\pm}(x, v, \bv, \zeta)$ as 
		\begin{equation} \label{tau_pm}
		\begin{split}
			&v\cdot \nabla\xi(\xb(\X(\tau_{\pm}), v)) = 0,\quad \tau_{-} < \tau_{+},\quad \tau_{\pm} = \tau_{\pm}(x, \bx, v), \\
			&\V(\tau_{\pm})\cdot \nabla\xi(\xb(x, \V(\tau_{\pm}))) = 0,\quad \tau_{-} < \tau_{+},\quad \tau_{\pm} = \tau_{\pm}(x, v, \bv, \zeta), \\  
		\end{split}
		\end{equation}
		where $\xb(\X(\tau),v)$ and $\xb(x, \V(\tau))$ are well defined for $\tau_{-} \leq \tau \leq \tau_{+}$, respectively. %See Figures \ref{fig1} and \ref{fig2}. 
		Here we excuse our abuse of notations, different $\tau_\pm$'s are only distinguished by their arguments, for the sake of briefness in the later use of them.  \\
	\end{definition}

	\begin{remark}
	By Definition \ref{def_tilde} and \ref{def_para}, we can check the following crucial properties:
	\Be \label{perp}
	\begin{split}
		&\dot{\X}\cdot v =0, \quad\dot{\V}(\tau)\cdot \V(\tau) = 0,\quad |\dot\V(\tau)| = \theta|\V(\tau)|,  \\
		&\ddot{\V}(\tau) = -\theta^{2}\V(\tau),\quad \text{and}\quad \ddot{\V}(\tau)\cdot \V(\tau) = - \theta^{2}|\V(\tau)|^{2},
	\end{split}
	\Ee
	and
	\Be \notag %\label{dotv norm}
		|{\V}(\tau)| = |v+\zeta|,\quad |\dot{\V}(\tau)| = \theta|v+\zeta|,\quad \text{and}\quad |\ddot{\V}(\tau)| = \theta^{2}|v+\zeta|.  \\
	\Ee	
	\end{remark}

\begin{figure}
	\begin{center}
		\begin{tikzpicture}
		%circle
		\draw[rotate=30] (-0.7,0.4) ellipse (2cm and 1.3cm);
		%\draw(0,0) circle (1.5);
		\draw (-2.5,-1) node [below left] {$\p\O\cap S_{(x, \bx ,v)}$};
		\filldraw (-0.2,1.5) circle[radius=1.5pt];
		\draw (0,1.5) node [above left] {\tiny{$\xb(\X(\tau_{-}),v)$}};
		\draw[arrows=->] (-0.2,1.5)-- (-0.2,1);
		\draw (-0.7,1) node[below]{\tiny{$n_{\parallel}(\xb(\X(\tau_{-}), v))$}} ;
		\filldraw (-1.4,-1.5) circle[radius=1.5pt];
		\draw[arrows=->] (-1.4,-1.5)-- (-1.4,-1);
		\draw (-1.4,-1) node[above]{\tiny{$n_{\parallel}(\xb(\X(\tau_{+}), v))$}} ;
		\filldraw (0.99, 0.9) circle[radius=1.5pt];
		\draw[arrows=->] (0.99,0.9)-- (0.5,0.65);
		\draw (0.65,0.55) node[below]{\tiny{$n_{\parallel}(\xb(\X(\tau), v))$}} ;
		\draw [dashed](-2,2.2) -- (6,2.2) ;
		\draw [dashed](-0.2,1.5) -- (3,1.5) ;
		\draw [dashed](-1.4,-1.5) -- (3,-1.5) ;
		\draw [dashed](1.2,0.9) -- (3,0.9) ;
		\draw (3,-1.5) -- (3,2.2);
		%x points
		%x
		\draw[arrows=->] (3,0.4)-- node[below right]{$v$} (4,0.4);
		\filldraw (3,0.4) circle[radius=1.5pt];
		\draw (3,0.4) node [above left] {\tiny{$x=\X(1)$}};
		%x(\tau)
		\draw[arrows=->] (3,0.9)-- node[below right]{$v$} (4,0.9);
		\filldraw (3,0.9) circle[radius=1.5pt];
		\draw (3,0.9) node [above left] {\tiny{$\X(\tau)$}};
		%x(\tau-)
		\draw[arrows=->] (3,1.5)-- node[below right]{$v$} (4,1.5);
		\filldraw (3,1.5) circle[radius=1.5pt];
		\draw (3,1.5) node [above left] {\tiny{$\X(\tau_{-})$}};
		%x(\tau+)
		\draw[arrows=->] (3,-1.5)-- node[below right]{$v$} (4,-1.5);
		\filldraw (3,-1.5) circle[radius=1.5pt];
		\draw (3,-1.5) node [above left] {\tiny{$\X(\tau_{+})$}};
		%\tilde x
		\draw[arrows=->] (3,2.2)-- node[above right]{$v$} (4,2.2);
		\filldraw (3,2.2) circle[radius=1.5pt];
		\draw (3,2.2) node [above left] {\tiny{$\tilde{x}=\X(0)$}};
		%bar x
		\draw[arrows=->] (6,2.2)-- node[above right]{$v$} (7,2.2);
		\filldraw (6,2.2) circle[radius=1.5pt];
		\draw (6,2.2) node [above left] {$\bar{x}$};
		\end{tikzpicture}
	\end{center}
	\caption{$\tilde{x}$, $\X(\tau)$, and trajectories on projected plane $S_{(x, \bx ;v)} := x+ \text{span}\{ x-\tilde{x}, v\}$} \label{fig1}
\end{figure}

\begin{figure}
	\begin{center}
		\begin{tikzpicture}
		%circle
		\draw[rotate=30] (-0.6,0.35) ellipse (2cm and 1.3cm);
		%\draw(0,0) circle (1.5);
		\draw[dashed] (3,1.5) circle (1);
		\draw (-2.2,-1) node [below left] {$\p\O\cap S_{(x, v, \bv, \zeta)}$};
		\filldraw (0,1.5) circle[radius=1.5pt];
		\draw (1,1.5) node [above left] {\tiny{$\xb(x, \V(\tau_{-}))$}};
		\draw[arrows=->] (0,1.5)-- (0,1);
		\draw (-.5,1) node[below]{\tiny{$\nabla\xi(\xb(x, \V(\tau_{-})))$}} ;
		%\filldraw (0,-1.5) circle[radius=1.5pt];
		%\draw[arrows=->] (0,-1.5)-- (0,-1);
		%\draw (0,-1) node[above]{\tiny{$n_{\parallel}(\xb(x, v(\tau_{+})))$}} ;
		\filldraw (1.12,0.84) circle[radius=1.5pt];
		\draw[arrows=->] (1.12,0.84)-- (0.65,0.65);
		\draw (0.75,0.55) node[below]{\tiny{$\nabla\xi(\xb(x, \V(\tau)))$}} ;
		\draw [dashed] (0,1.5) -- (3,1.5) ;
		\draw [dashed] (1.2,0.9)-- (3,1.5);
		%near x
		\draw (5.3, 1) node{$\bar{v}+\zeta$};
		\draw[arrows=->] (3,1.5)--  (5.1,0.9);
		\draw[arrows=->] (3,1.5)-- (4,1.22);
		\draw (4,1.22) node[below]{\tiny{$\tilde{v}+\zeta = \V(0)$}} ;
		\draw (4,1.5) node[right]{\tiny{$\V(\tau_{-})$}} ;
		\draw[arrows=->] (3,1.5)-- (4,1.5);
		\draw[arrows=->] (3,1.5)-- (3.95,1.8);
		\draw (3.95,1.8) node[right]{\tiny{$\V(\tau)$}} ;
		\draw[arrows=->] (3,1.5)-- (3.8,2.1);
		\draw (3.8,2.1) node [above right] {\tiny{$v+\zeta = \V(1)$}};
		\filldraw (3,1.5) circle[radius=1.5pt];
		\draw (3,1.5) node [above left] {$x$};
		\end{tikzpicture}
	\end{center}
	\caption{$\tilde{v}$, $\V(\tau)$, and  trajectories on projected plane $S_{(x, v, \bv, \zeta)} := x + \text{span}\{ v+\zeta, \bar{v}+\zeta \}$} \label{fig2}
\end{figure}

\hide
\begin{figure}
	\begin{center}
		\begin{tikzpicture}
		%circle
		\draw(0,0) circle (1.5);
		\draw (-1,-1) node [below left] {$\p\O\cap S_{(x, \bx ;v)}$};
		\filldraw (0,1.5) circle[radius=1.5pt];
		\draw (0,1.5) node [above left] {\tiny{$\xb(\X(\tau_{-}),v)$}};
		\draw[arrows=->] (0,1.5)-- (0,1);
		\draw (0,1) node[below]{\tiny{$n_{\parallel}(\xb(\X(\tau_{-}), v))$}} ;
		\filldraw (0,-1.5) circle[radius=1.5pt];
		\draw[arrows=->] (0,-1.5)-- (0,-1);
		\draw (0,-1) node[above]{\tiny{$n_{\parallel}(\xb(\X(\tau_{+}), v))$}} ;
		\filldraw (1.2,0.9) circle[radius=1.5pt];
		\draw[arrows=->] (1.2,0.9)-- (0.75,0.51);
		\draw (0.75,0.55) node[below]{\tiny{$n_{\parallel}(\xb(\X(\tau), v))$}} ;
		\draw [dashed](0,2.2) -- (6,2.2) ;
		\draw [dashed](0,1.5) -- (3,1.5) ;
		\draw [dashed](0,-1.5) -- (3,-1.5) ;
		\draw [dashed](1.2,0.9) -- (3,0.9) ;
		\draw (3,-1.5) -- (3,2.2);
		%x points
		%x
		\draw[arrows=->] (3,0.4)-- node[below right]{$v$} (4,0.4);
		\filldraw (3,0.4) circle[radius=1.5pt];
		\draw (3,0.4) node [above left] {\tiny{$x=\X(1)$}};
		%x(\tau)
		\draw[arrows=->] (3,0.9)-- node[below right]{$v$} (4,0.9);
		\filldraw (3,0.9) circle[radius=1.5pt];
		\draw (3,0.9) node [above left] {\tiny{$\X(\tau)$}};
		%x(\tau-)
		\draw[arrows=->] (3,1.5)-- node[below right]{$v$} (4,1.5);
		\filldraw (3,1.5) circle[radius=1.5pt];
		\draw (3,1.5) node [above left] {\tiny{$\X(\tau_{-})$}};
		%x(\tau+)
		\draw[arrows=->] (3,-1.5)-- node[below right]{$v$} (4,-1.5);
		\filldraw (3,-1.5) circle[radius=1.5pt];
		\draw (3,-1.5) node [above left] {\tiny{$\X(\tau_{+})$}};
		%\tilde x
		\draw[arrows=->] (3,2.2)-- node[above right]{$v$} (4,2.2);
		\filldraw (3,2.2) circle[radius=1.5pt];
		\draw (3,2.2) node [above left] {\tiny{$\tilde{x}=\X(0)$}};
		%bar x
		\draw[arrows=->] (6,2.2)-- node[above right]{$v$} (7,2.2);
		\filldraw (6,2.2) circle[radius=1.5pt];
		\draw (6,2.2) node [above left] {$\bar{x}$};
		\end{tikzpicture}
	\end{center}
	\caption{$\tilde{x}$, $\X(\tau)$, and trajectories on projected plane $S_{(x, \bx ;v)} := x+ \text{span}\{ x-\tilde{x}, v\}$} 
\end{figure}

\begin{figure}
	\begin{center}
		\begin{tikzpicture}
		%circle
		\draw(0,0) circle (1.5);
		\draw[dashed] (3,1.5) circle (1);
		\draw (-1,-1) node [below left] {$\p\O\cap S_{(x, v, \bv, \zeta)}$};
		\filldraw (0,1.5) circle[radius=1.5pt];
		\draw (0,1.5) node [above left] {\tiny{$\xb(x, \V(\tau_{-}))$}};
		\draw[arrows=->] (0,1.5)-- (0,1);
		\draw (0,1) node[below]{\tiny{$\nabla\xi(\xb(x, \V(\tau_{-})))$}} ;
		%\filldraw (0,-1.5) circle[radius=1.5pt];
		%\draw[arrows=->] (0,-1.5)-- (0,-1);
		%\draw (0,-1) node[above]{\tiny{$n_{\parallel}(\xb(x, v(\tau_{+})))$}} ;
		\filldraw (1.2,0.9) circle[radius=1.5pt];
		\draw[arrows=->] (1.2,0.9)-- (0.75,0.51);
		\draw (0.75,0.55) node[below]{\tiny{$\nabla\xi(\xb(x, \V(\tau)))$}} ;
		\draw [dashed] (0,1.5) -- (3,1.5) ;
		\draw [dashed] (1.2,0.9)-- (3,1.5);
		%near x
		\draw (5.3, 1) node{$\bar{v}+\zeta$};
		\draw[arrows=->] (3,1.5)--  (5.1,0.9);
		\draw[arrows=->] (3,1.5)-- (4,1.22);
		\draw (4,1.22) node[below]{\tiny{$\tilde{v}+\zeta = \V(0)$}} ;
		\draw (4,1.5) node[right]{\tiny{$\V(\tau_{-})$}} ;
		\draw[arrows=->] (3,1.5)-- (4,1.5);
		\draw[arrows=->] (3,1.5)-- (3.95,1.8);
		\draw (3.95,1.8) node[right]{\tiny{$\V(\tau)$}} ;
		\draw[arrows=->] (3,1.5)-- (3.8,2.1);
		\draw (3.8,2.1) node [above right] {\tiny{$v+\zeta = \V(1)$}};
		\filldraw (3,1.5) circle[radius=1.5pt];
		\draw (3,1.5) node [above left] {$x$};
		\end{tikzpicture}
	\end{center}
	\caption{$\tilde{v}$, $\V(\tau)$, and  trajectories on projected plane $S_{(x, v, \bv, \zeta)} := x + \text{span}\{ v+\zeta, \bar{v}+\zeta \}$}  
\end{figure}
\unhide

Next definition precisly describes the kernels of integral operators which come from $\Gamma_{\text{gain}}(f,f)$. 
\begin{definition} \label{def_k}
	For $c > 0$, we define
	\Be \label{full k}
	\begin{split}
		k_{c}(v, v+\zeta) := \frac{1}{|u|}e^{ - c|\zeta|^{2} - c \frac{ | |v|^{2}-|v+\zeta|^{2} |^{2} }{|\zeta|^{2}} } ,
	\end{split}
	\Ee
	and 
	\Be\label{bar k}
	\mathbf{k}_{c} (v, \bar v, \zeta) := k_{c}(v, v+\zeta) + k_{c}(\bv, \bv+\zeta).   \\
	% \frac{1}{|v-u|}e^{ - \vartheta|v-u|^{2} - \vartheta\frac{ | |v|^{2}-|u|^{2} |^{2} }{|v-u|^{2}} } .
	\Ee
\end{definition}
Note that the following equalities are obvious. 
\Be \notag % \label{k_Rx}
		k_{c}(v, v+\zeta) = k_{c}(R_{x}v, R_{x}v + R_{x}\zeta),\quad 	\mathbf{k}_{c} (v, \bar v, \zeta) = \mathbf{k}_{c} (R_{x}v, R_{x}\bar v, R_{x}\zeta),\quad x\in\p\O. \\
\Ee

The following two definitions are the most important quantities of this paper. 

\begin{definition}[Specular Singularity] \label{def_S} Let $x, \tilde{x} \in \O$, $v, \tilde{v}, \zeta \in\R^{3}$. We assume \eqref{assume_x} and \eqref{assume_x2} for $\X(\tau)$ case, and assume \eqref{assume_v} and \eqref{assume_v2} for $\V(\tau)$ case, respectively. Recall the notations $\X(\tau)$ and $\V(\tau)$ in \eqref{xv para}. Suppose $\xb(\X(\tau), v)$ and $\xb(x, \V(\tau))$ are well-defined on $\p\O$, respectively. We define a reciprocal of the specular singularity 
\begin{align}
\mathfrak{S}_{sp}(\tau;x,\tilde{x}, v) &: =  
		\frac{- \nabla \xi(\xb( \X(\tau), v)) \cdot v  }{
		\big| \frac{\dot{\X}}{|\dot \X|} \cdot \nabla \xi (\xb( \X(\tau), v))\big|
		}%,  \ \ \text{for} \ \  \tau \in [ \tau_- (x, \bar{x}, v), \tau_+ (x, \bar{x}, v) ]
		, \label{def:S_x} \\
\mathfrak{S}_{vel}(\tau;x,  v,  \tilde {v}, \zeta) &: =  
		\frac{- \nabla \xi(\xb(x, \V(\tau))) \cdot \V(\tau)  }{\tb(x, \V(\tau))
		\big| \frac{\dot{\V}(\tau)}{|\dot{\V}(\tau)|}\cdot \nabla \xi (\xb(x , \V(\tau)))\big|
		}%,   \ \ \text{for} \ \  \tau \in [ \tau_- (x,  v, \bar{v}), \tau_+ (x,  v, \bar{v}) ]
		.\label{def:S_v}
\end{align}
\end{definition}

%\Be\label{int:S_introduction}
%\int \mathbf{k} \frac{1}{\mathfrak{S}_{sp}(\tau;x,\bar{x}, v)} \dd v
%\Ee

\begin{definition}[Seminorms] \label{def_H}
	Let $x, \bx \in \O$ and  $v, \bv, \zeta \in\R^{3}$. For $\varpi > 0$, we define
	\begin{equation} \notag
	\begin{split}
	\mathfrak{H}^{2\b}_{vel}(s) &:= \sup_{\substack{x\in {\O},  \\ 0<|v-\bv|\leq 1}} e^{ - \varpi \langle v  \rangle^{2} s } 
	\int_{\zeta} \mathbf{k}_{c}(v, \bv, \zeta) \frac{ | f(s, x, v+\zeta) - f(s, x, \bv+\zeta) | }{ | v - \bv |^{2\b} } d\zeta,  \\
	%&:= \sup_{x, |v-\bv|\leq 1} e^{ - \varpi \langle v  \rangle^{2} s } \langle v \rangle^{2\b} I_{v}(s, v, \bv; x),
	\end{split}
	\end{equation}
	%where 
	%\[
	%	I_{v}(s, v, \bv; x) := \int_{\zeta} \mathbf{k}_{c}(v, \bv, \zeta) \frac{ | f(s, x, v+\zeta) - f(s, x, \bv+\zeta) | }{ | v - \bv |^{2\b} } d\zeta .
	%\]
	\begin{equation} \notag
	\begin{split}
		\mathfrak{H}^{2\b}_{sp}(s) &:=  \sup_{\substack{v\in\R^{3},  \\ 0<|x-\bx|\leq 1}} e^{ - \varpi \langle v  \rangle^{2} s } 
		\int_{\zeta} k_{c}(v, v+\zeta) \frac{ | f(s, x, v+\zeta) - f(s, \bx, v+\zeta) | }{ | x - \bx |^{2\b} } d\zeta,  \\
		%&:=  \sup_{v, |x-\bx|\leq 1} e^{ - \varpi \langle v  \rangle^{2} s }   I_{x}(s, x, \bx; v)
	\end{split}
	\end{equation}
	where $\langle v \rangle := \sqrt{1 + |v|^2}$.  \\
	%where
	%\[
	%	 I_{x}(s, x, \bx; v) := \int_{\zeta} k_{c}(v, v+\zeta) \frac{ | f(s, x, v+\zeta) - f(s, \bx, v+\zeta) | }{ | x - \bx |^{2\b} } d\zeta .
	%\]
\end{definition}

\begin{theorem}[Main theorem] \label{theo:Holder}
	Suppose the domain is given as in Definition \ref{def:domain} and \eqref{convex_xi}. Assume $f_0$ satisfies compatibility condition \eqref{specular}, $\|e^{\vartheta_{0}|v|^{2}} f_0 \|_\infty< \infty$ for $0< \vartheta_{0} < \frac{1}{4}$, and 
	\[
		\sup_{ \substack{ v\in \R^{3} \\ 0 < |x - \bx|\leq 1   } }
		\langle v \rangle  \frac{|f_{0}( x, v ) - f_{0}(\bx, v)|}{|x - \bx|^{2\b}}  
		+ \sup_{ \substack{ x\in \overline{\O} \\ 0<|v - \bv|\leq 1   } }  \langle v \rangle^{2} \frac{|f_{0}( x, v ) - f_{0}( x, \bv)|}{|v - \bv|^{2\b}}  
		< \infty,\quad \b < \frac{1}{2}.
	\] 
	Then there exists $0< T \ll 1$ such that we have a unique solution $f(t,x,v)$ of \eqref{f_eqtn} for $0 \leq t \leq T$ with $\sup_{0 \leq t \leq T}\| e^{\vartheta|v|^2} f (t) \|_\infty\lesssim \| e^{\vartheta_{0}|v|^2} f_0 \|_\infty$ for some $0\leq \vartheta <\vartheta_{0}$. Moreover $f(t,x,v)$ is H\"older continuous in the following sense:
		\begin{equation} \label{est:Holder}
		\begin{split}
		&\sup_{0\leq t \leq T} 
		\sup_{ \substack{ (x,v)\in \overline{\O}\times \R^{3} \\ 0<|(x,v)-(\bx, \bv)|\leq 1   } }
		\Big| 
		\langle v \rangle^{-2\b} e^{-\varpi\langle v \rangle^{2}t}\frac{ | f(t,x,v) - f(t, \bx, \bv) | }{ |(x,v) - (\bx, \bv)|^{\b} }
		\Big|    \\
		%&\lesssim  \|w_{0}f_{0}\|_{\infty} \big[ \sup_{0\leq s \leq T}\mathfrak{H}_{sp}^{2\b}(s) + \sup_{0\leq s \leq T}\mathfrak{H}_{vel}^{2\b}(s) \big] +  \mathcal{P}_{2}(\|w_{0}f_{0}\|_{\infty})  \\
		&\lesssim_{\b} 
		\|e^{\vartheta_{0}|v|^{2}} f_{0}\|_{\infty}
		\Big[
		\sup_{ \substack{ v\in \R^{3} \\ 0 < |x - \bx|\leq 1   } }
		\langle v \rangle  \frac{|f_{0}( x, v ) - f_{0}(\bx, v)|}{|x - \bx|^{2\b}}  
		+ \sup_{ \substack{ x\in \overline{\O} \\ 0<|v - \bv|\leq 1   } }  \langle v \rangle^{2} \frac{|f_{0}( x, v ) - f_{0}( x, \bv)|}{|v - \bv|^{2\b}}  
		\Big]
		+ \mathcal{P}_{2}(\|e^{\vartheta_{0}|v|^{2}} f_{0}\|_{\infty}),
		\end{split}
		\end{equation}
		where $\mathcal{P}_{2}(s) = |s| + |s|^2$.   \\
\end{theorem}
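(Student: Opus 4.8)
The plan is to run a \(L^\infty\)-type iteration scheme along the billiard trajectory, but performed on \emph{difference quotients} of \(f\) rather than on \(f\) itself, and to close it by absorbing the singular geometric factors into the seminorms \(\mathfrak H^{2\b}_{sp}\) and \(\mathfrak H^{2\b}_{vel}\) of Definition \ref{def_H}. The starting point is the Duhamel representation \eqref{f_expan}. Fixing two close points \((x,v)\) and \((\bx,\bv)\), I would first reduce to the two ``pure'' perturbations — \(x\to\bx\) with \(v\) fixed, and \(v\to\bv\) with \(x\) fixed — since a generic perturbation splits into these by the triangle inequality (this is exactly why the main estimate \eqref{est:Holder} is stated as the sum of a spatial and a velocity Hölder modulus). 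For each pure perturbation I would subtract the two Duhamel expansions. The difference of the initial-data terms is controlled by the hypothesis on \(f_0\) together with the fact that the flow map at time \(0\), \((x,v)\mapsto(X(0),V(0))\), is Lipschitz in \(x\) and in \(v\) away from the grazing set, with a modulus that degenerates precisely by the reciprocal specular singularity \(\mathfrak S_{sp}\), \(\mathfrak S_{vel}\) of Definition \ref{def_S}. The difference of the \(\nu(f)\)-damping exponentials contributes a lower-order term bounded by \(\|w f\|_\infty\) times the same geometric factors, which is where the polynomial \(\mathcal P_2\) in \eqref{est:Holder} enters.

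The heart of the argument is the difference of the \(\Gamma_{\mathrm{gain}}\) terms along the trajectory. Here I would write \(\Gamma_{\mathrm{gain}}(f,f)(s,X(s),V(s))\) using the kernel representation, so that the \(v\)-integral against \(k_c(v,v+\zeta)\) appears, and then split
\[
\Gamma_{\mathrm{gain}}(f,f)(s,X(s),V(s)) - \Gamma_{\mathrm{gain}}(f,f)(s,\bar X(s),\bar V(s))
\]
into (a) a piece where only the \emph{arguments} of \(f\) inside the \(\zeta\)-integral differ — this is \emph{bounded by} \(|x-\bx|^{2\b}\mathfrak H^{2\b}_{sp}(s)\) or \(|v-\bv|^{2\b}\mathfrak H^{2\b}_{vel}(s)\) by the very definition of the seminorms — and (b) a piece where the kernel or the measure differs, controlled by \(\|w f\|_\infty\) and smoothness of \(k_c\). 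Integrating in \(s\in[0,t]\) and using \(t\le T\ll 1\) gives a small constant \(T^{1/2}\) (or a full power of \(T\)) in front of \(\sup_s \mathfrak H^{2\b}(s)\); the factor \(e^{-\varpi\langle v\rangle^2 t}\) built into the seminorms is there to absorb the velocity growth coming from \(\nu(f)\sim\langle v\rangle\) and from the loss of one power of \(\langle v\rangle\) per difference quotient (this is why the spatial modulus carries \(\langle v\rangle\) and the velocity modulus \(\langle v\rangle^2\)). One then obtains a closed inequality of the schematic form
\[
\sup_{0\le s\le T}\big(\mathfrak H^{2\b}_{sp}(s)+\mathfrak H^{2\b}_{vel}(s)\big)
\le C\,\|w f\|_\infty\big[\text{data}\big] + C\,T^{1/2}\sup_{0\le s\le T}\big(\mathfrak H^{2\b}_{sp}(s)+\mathfrak H^{2\b}_{vel}(s)\big)+\mathcal P_2(\|wf\|_\infty),
\]
which for \(T\) small absorbs the last sup on the left and yields the bound on the seminorms; feeding this back into the difference of the full Duhamel formulas \eqref{f_expan} (now evaluated at \((t,x,v)\) itself, not inside a \(\zeta\)-integral) upgrades it to the pointwise Hölder estimate \eqref{est:Holder}.

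The crucial and genuinely new ingredient is the trajectory estimate that the bounce map \(x\mapsto X(0;t,x,v)\) (resp.\ \(v\mapsto V(0;t,x,v)\)) is Hölder, with exponent exactly \(1/2\), across \emph{all} of phase space including points whose trajectory grazes the obstacle. On a fixed bounce number this map is piecewise smooth with derivative blowing up like \(1/(n(\xb)\cdot v)\), i.e.\ like the specular singularity; the paper's device is to project onto the two-dimensional planes \(S_{(x,\bx,v)}\) and \(S_{(x,v,\bv,\zeta)}\), parametrize the perturbation by the family \(\X(\tau)\), \(\V(\tau)\) of \eqref{xv para}, and use the \emph{uniform convexity} \eqref{convex_xi} of \(\mathcal O\) together with \(\tau_\pm\) from \eqref{tau_pm} to show that the arc-length of \(\p\O\) traversed — hence the displacement of \(\xb\) — is controlled by the \emph{square root} of the perturbation: near the grazing parameters \(\tau_\pm\) the boundary curve behaves parabolically, so a \(\delta\)-perturbation moving the trajectory tangentially produces a \(\sqrt\delta\) displacement of the exit point, and an extra bounce costs no more than this. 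This is precisely the phenomenon that fails in nonconvex domains and must replace the velocity lemma of \cite{GKTT1}. I expect this geometric Hölder-\(1/2\) bound for the billiard bounce map, uniform in the bounce count and valid up to the grazing set, to be the main obstacle; everything else — the kernel estimates for \(k_c\), the exponential-in-\(\langle v\rangle^2\) weights, the small-\(T\) absorption — is structurally standard once that bound is in hand.
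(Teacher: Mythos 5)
Your roadmap is faithful to the paper's overall architecture — Duhamel expansion, difference quotients, the auxiliary $\mathfrak H^{2\b}$ seminorms, a nonlocal-to-local iteration, and the $C^{0,1/2}$ bound on the bounce map (Lemma \ref{lem:Holder_tb}) to upgrade to the pointwise estimate. But you have misidentified where the restriction $\b<\frac12$ comes from, and in doing so you have declared "structurally standard" the part of the proof that is actually its technical heart. The trajectory map is H\"older with exponent exactly $\frac12$ (your parabolic-grazing picture is correct and is Lemma \ref{lem:Holder_tb}). The reason the final regularity is only $C^{0,\frac12-}$ is instead the \emph{integrability} of the singular geometric factor against the collision kernel: when you bound the $\Gamma_{\text{gain}}$ difference along the trajectory, the factor $\frac{|X(s)-\bar X(s)|^{2\b}}{|x-\bx|^{2\b}}$ appears \emph{inside} the $\zeta$-integral and blows up like $|( v+\zeta)\cdot\nabla\xi(\xb)|^{-2\b}$ on grazing trajectories. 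Lemma \ref{lem_int sing} shows that $\int_\zeta k_c\,|( v+\zeta)\cdot\nabla\xi(\xb)|^{-2\b}\,d\zeta<\infty$ precisely for $\b<\frac12$; at $\b=\frac12$ this integral diverges. Your claim that piece (a) is "bounded by $|x-\bx|^{2\b}\mathfrak H^{2\b}_{sp}(s)$ by the very definition of the seminorms" glosses over this: the point $X(s;t,x,v+\zeta)$ moves with $\zeta$, so you cannot simply pull a $\sup_{x\neq\bx}$ out of the $\zeta$-integral.

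The mechanism that makes this work is the one you cite in passing but do not use: the \emph{shift} $\tilde x(x,\bx,v)$ and $\tilde v(v,\bv,\zeta)$ of Definition \ref{def_tilde}. Because $\dot\X\perp v$ and $\dot\V(\tau)\perp\V(\tau)$ (equation \eqref{perp}), the ODEs \eqref{ODE:S_x}--\eqref{ODE:S_v} for the specular singularity close, and Proposition \ref{prop_avg S} converts the singular line integral $\int_0^1 \mathfrak S_{sp}^{-1}d\tau$ into a factor $|(v+\zeta)\cdot\nabla\xi(\xb(x,v+\zeta))|^{-1}$ evaluated at the \emph{endpoint} $x$ or $\tilde x$, which is independent of $\zeta$. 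Without the shift, the position at which the singularity is evaluated would drift with $\zeta$ and the $\zeta$-integration of Lemma \ref{lem_int sing} would be unavailable. You also need the averaging estimate (Proposition \ref{prop_avg S}) and the plane-geometry Lemma \ref{lem_unif n} to make this rigorous. Two smaller inaccuracies: in the exterior of a uniformly convex body a backward trajectory hits $\p\O$ at most once (concavity of $s\mapsto\xi(x-sv)$ forbids a second intersection), so "uniform in bounce count" is vacuous here; and the iteration is absorbed by taking $\varpi$ large in the weight $e^{-\varpi\langle v\rangle^2 t}$, giving a factor $1/\varpi$, not by a power of $T$ (see the end of Proposition \ref{prop_unif H}; $T$ is merely taken small so that $\varpi T\ll 1$ and Lemma \ref{lem_nega} applies).
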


Now we explain main ideas of the proof .
\subsection{A roadmap to the main theorem} 
To the sake of simplicity we drop out $\nu(f) f$ from (\ref{f_eqtn}) in this exhibition of key ideas:
\Be\label{f_eqtn:simple} 
\p_{t}f + v\cdot \nabla_{x} f  
= \Gamma_{\text{gain}}(f,f)  ,  
\ \ 
f|_{t=0} =f_0, \ \ f(t,x,v)|_{ \p\O}  = f(t,x,R_{x}v).
\Ee
From Hamiltonian \eqref{E_Ham} and \eqref{form_XV}, $f$ of \eqref{f_eqtn:simple} is given by 
\Be	\label{simple_expan}
\begin{split}
	f(t,x,v)  =  &  \ 
	f(0,X(0;t,x,v), V(0;t,x,v))
	+ \int^t_0
	\Gamma_{\text{gain}}(f,f)(s,X(s;t,x,v), V(s;t,x,v)) d s.
\end{split}
\Ee

\noindent {\it Step 1} ($C^{0,\frac{1}{2}}_{x,v}$ trajectory and nonlocal to local iteration) In order to estimate the H\"older semi-norm of $f$ we consider the difference quotients directly. Note that $V(s;t,x,v)$ has jump discontinuity at $s=t-\tb(x,v)$ which has to be handled carefully by  applying geometric splitting of $V(s)$ along the trajectory and the specular reflection BC. Then using a version of Carleman's representation and a priori $L^\infty$-bound of $f$, we can derive that for some $\theta$ and $\beta$,
\Be \label{lin_expan}
\begin{split}
	\frac{ | f(t,x,v+\zeta) - f(t, \bar{x}, \bar{v}+\zeta) | }{ |(x,v)-(\bar{x}, \bar{v})|^{\b} }   
	\lesssim&      \int_{0}^{t} 
	\frac{  |  X(s) - \bar{X}(s) |^{\theta} }{ |(x,v)-(\bar{x}, \bar{v})|^{\b} } 
	J_x[f(s)] (V(s);X(s),\bar{X}(s))
	%\underbrace{ 
	%\int_{|u|\lesssim 1} \frac{ |f(s,X(s),u+V(s)) - f(s,\bar{X}(s),u+V(s))| }{ |X(s)-\bar{X}(s)|^{\theta} } du 
	%}_{} 
	ds  \\
	&  +   \int_{0}^{t}
	\frac{  |  V(s) - \bar{V}(s) |^{\theta} }{ |(x,v)-(\bar{x}, \bar{v})|^{\b} }  
	J_v[f(s)] (X(s);V(s), \bar{V}(s))
	%\underbrace{ 
	%	\int_{|u|\lesssim 1} \frac{ |f(s,X(s),u+V(s)) - f(s,X(s),u+\bar{V}(s))| }{ |V(s)-\bar{V}(s)|^{\theta} } du 
	%}_{J_v[f(s)] (X(s);V(s), \bar{V}(s))}  ds
	+  good \  terms ,
\end{split}
\Ee
where \textbf{$\mathbf{J}$-seminorms} are defined as 
\Be\label{J_norm}
\begin{split}
	J_x[f(s)] (V(s);X(s),\bar{X}(s))&: = \int_{|u|\lesssim 1} \frac{ |f(s,X(s),u+V(s)) - f(s,\bar{X}(s),u+V(s))| }{ |X(s)-\bar{X}(s)|^{\theta} } du ,\\
	J_v[f(s)] (X(s);V(s), \bar{V}(s))&:=
	\int_{|u|\lesssim 1} \frac{ |f(s,X(s),u+V(s)) - f(s,X(s),u+\bar{V}(s))| }{ |V(s)-\bar{V}(s)|^{\theta} } du . 
\end{split}
\Ee
Here, $X(s)=X(s;t,x,v+\zeta)$, $V(s)=V(s;t,x,v+\zeta)$, $\bar{X}(s)=\bar{X}(s;t,\bar{x},\bar{v}+\zeta)$, and $\bar{V}(s)=\bar{V}(s;t,\bar{x},\bar{v}+\zeta)$. Our first key observation is that $(x,v) \mapsto X (s;t,x,v)$ is $C^{0,\frac{1}{2}}_{x,v}$ and $(x,v) \mapsto V (s;t,x,v) $ with $s \neq t-\tb (x,v)$ belongs to $C^{0,\frac{1}{2}}_{x,v}$! (For example, if we consider 2D circle $x^{2}+(y-1)^{2}=1$, then near grazing regime (let $x = (1,0)$, $\bar{x}=(1, \varepsilon)$, $v=(1,0)$), we have
$  %\label{xb_root}
| \xb(x,v) - \xb(\bar{x}, v) | = \sqrt{2\varepsilon - \varepsilon^{2}} \simeq \sqrt{\varepsilon}
$, for $\varepsilon \ll 1$.)
%since $x_{2} \simeq C_{1}x^{1}$ locally (near $\xb(x,v)=(0,0)$) for uniformly convex domain $\O$ for some constants $C_{1}$ and $C_{2}$ when $\varepsilon \ll 1$.
Thereby a natural choice of $\theta$ is  
\Be\notag %\label{choice_theta}
\theta=2\b. 
\Ee

A schematic bound (\ref{lin_expan}) exhibits the nonlocal effect of the Boltzmann equation. Our key strategy is a \textbf{nonlocal-to-local iteration}: namely we first estimate the $J$-seminorm and then use (\ref{lin_expan}) to bound the H\"older seminorm.  Indeed the $J$-seminorms \eqref{J_norm} can be bounded in a closed form through the Duhamel form of (\ref{simple_expan}) as

\begin{equation} \label{xiter}
\begin{split}
J_{x}[f(s)]( v; x, \bar{x})
&  \lesssim \   good \  terms    +   \int^t_0  
	\int_{|\zeta| \leq 1} \frac{|X(s) - \bar{X}(s)|^{2\beta}}{|x-\bar{x}|^{2\b}}  d\zeta 
ds \times
\sup_{s, v, x\neq \bar{x}}   J_{x}[f(s)] (v; x, \bar{x})   \\
&\quad +   \int^t_0 
	\int_{|\zeta|\leq 1} \frac{|V(s) - \bar{V}(s)|^{2\beta}}{|x-\bar{x}|^{2\b}}  d\zeta 
ds \times
 \sup_{s, x, v\neq \bar{v}}  J_{v} [f(s)](x; v, \bar{v}) ,
\end{split}
\end{equation}
and a similar expression for $J_{v}[f(s)] (x; v, \bar{v})$. Main ingredients of  \eqref{xiter} are difference quotients of $X(s)$ and $V(s)$. In fact the difference quotients would be very singular $\sim \frac{1}{|(v+\zeta) \cdot\nabla\xi(\xb)|}$ when the trajectories hit $\p\O$ near grazing region. However, measuring this singular behavior is very tricky, because of the curvature of the cross section $\p\O\cap \text{span}\{x - \bx, v\}$.  \\

\noindent {\it Step 2} (Shift method and ODE of Specular Singularity)  This step is the most technical step in this paper.  To get precise estimate of the difference quotients, we introduce two very important ideas : \textbf{Specular Singularity} and \textbf{Shift method}. \\

First, let us consider some simple linear parametrizations of position  $x(\tau)$ and velocity $v(\tau)$. If all the trajectories from $(x(\tau), v)$ with $0\leq \tau \leq 1$ hit $\p\O$, then difference quotient of trajectory will look like
\hide
\Be 
\mathfrak{S}_{sp}(\tau;x,\tilde{x}, v) : =  
\Big| \frac{ \nabla \xi(\xb( x(\tau), v)) \cdot v  }{
 \dot{x}(\tau) \cdot \nabla \xi (\xb( x(\tau), v)) 
} \Big|
%,  \ \ \text{for} \ \  \tau \in [ \tau_- (x, \bar{x}, v), \tau_+ (x, \bar{x}, v) ]
,\quad
\mathfrak{S}_{vel}(\tau;x,  v,  \tilde {v}, \zeta) : =  \Big|
\frac{ \nabla \xi(\xb(x, v(\tau))) \cdot v(\tau)  }{ 
  \dot{v}(\tau) \cdot \nabla \xi (\xb(x , v(\tau))) 
}  \Big|.
%,   \ \ \text{for} \ \  \tau \in [ \tau_- (x,  v, \bar{v}), \tau_+ (x,  v, \bar{v}) ]
\Ee
\unhide
%as in \eqref{def:S_x} and \eqref{def:S_v}. 
%With specular singularity, the difference quotients look like
\Be\label{DQ_x}
\frac{|X(s) - \bar{X}(s)|}{|x-\bar{x}|} 
\sim \int_{0}^{1} |\nabla X(s; t, x(\tau), v)| d\tau 
\sim \int_{0}^{1} \frac{1}{|v\cdot\nabla\xi(\xb(x(\tau), v))|} d\tau, 
\Ee
since $\nabla_{x}X(s) \sim \frac{1}{|v\cdot\nabla\xi(\xb(x(\tau), v))|} $. (We get simliar result for $v-\bar{v}$ case with $|v\cdot\nabla\xi(\xb(x, v(\tau)))|$.) Evidently, the success of estimating $J_x[f(s)]$ and $J_v[f(s)]$ relies on an efficient estimate of the integration with respect to $\zeta$ of the different quotients in (\ref{DQ_x}). Unfortunately, however, estimate \eqref{DQ_x} is not efficient : it does not consider the angle between $\dot{x}(\tau)$ (or $\dot{v}(\tau)$) and $\nabla\xi$ which is not uniform in general and hence causes trouble, i.e., it fails to reveal geometric property of $\O$ very well. \\

Meanwhile, to clarify the effect of convexity of $\O$ to the difference quotients, we introduce {\bf shifted} position and velocity. We shift postion and velocity to define $\tx$ and  $\tv$  as in \eqref{def_tildex} and \eqref{def_tildev}, respectively. Now, we use new parametrizations $\X(\tau)$ and $\V(\tau)$ in \eqref{xv para} with shifted $\tx$ and $\tv$ to redefine Specular Singularity (See \eqref{def:S_x} and \eqref{def:S_v} for their exact forms.)
\Be \label{spec_sing}
\mathfrak{S}_{sp}(\tau;x,\tilde{x}, v) : =  
\Big| \frac{ \nabla \xi(\xb( \X(\tau), v)) \cdot v  }{
	\dot{\X}(\tau) \cdot \nabla \xi (\xb( \X(\tau), v)) 
} \Big|
%,  \ \ \text{for} \ \  \tau \in [ \tau_- (x, \bar{x}, v), \tau_+ (x, \bar{x}, v) ]
,\quad
\mathfrak{S}_{vel}(\tau;x,  v,  \tilde {v}, \zeta) : =  \Big|
\frac{ \nabla \xi(\xb(x, \V(\tau))) \cdot \V(\tau)  }{ 
	\dot{\V}(\tau) \cdot \nabla \xi (\xb(x , \V(\tau))) 
}  \Big|.
%,   \ \ \text{for} \ \  \tau \in [ \tau_- (x,  v, \bar{v}), \tau_+ (x,  v, \bar{v}) ]
\Ee
These crucial quantities measure how singular region does the trajectory hits by its numerators. Moreover, the denominator takes into account the angle between $\dot{\X}(\tau)$ (also for $\dot{\V}(\tau)$) and $\nabla\xi$ and hence clarifies the effect of non-convexity in ODE method which will be explained very soon. Note that with the new parametrizations, we also have the following orthogonal geometric properties
\Be \label{perp short}
	\dot{\X} \perp v\quad\text{and}\quad \V(\tau)\perp\dot{\V}(\tau)
\Ee
which are used \textit{crucially} throughout this paper. See Figure \ref{fig1} and \ref{fig2}. Now, with \eqref{spec_sing}, \eqref{DQ_x} newly look like
\Be\label{DQ_x new}
\frac{|X(s) - \bar{X}(s)|}{|x-\bar{x}|} 
\sim \int_{0}^{1} |\nabla X(s; t, x(\tau), v)| d\tau 
\sim \int_{0}^{1} \frac{1}{\mathfrak{S}_{sp}(\tau; x, \tx, v)} d\tau.
\Ee
To perform estimate for \eqref{DQ_x new}, our next step is {\bf ODE method for $\mathfrak{S}_{sp, vel}$}. Using convexity \eqref{convex_xi} and \eqref{perp short} {\it crucially}, the Specular Singularity $\mathfrak{S}_{sp}$ solves ODE look like
\Be \notag %
\begin{split}
	\frac{d}{d\tau}\mathfrak{S}^{2}_{sp}(\tau) \gtrsim_{\O} \frac{1}{|\dot{\X}\cdot\nabla\xi(\xb(\X(\tau),v))|} \mathfrak{S}^{2}_{sp}(\tau).
\end{split}
\Ee
ODE for $\mathfrak{S}_{vel}$ is also similar. (See \eqref{ODE:S_x} and \eqref{ODE:S_v} in Lemma \ref{lemma:ODE} for exact ODEs). We note that the denominators of \eqref{spec_sing} are very important in its definition when we derive above type of ODEs. Its qualitative aspect was already explained. In the technical aspect, $\frac{d}{d\tau}\mathfrak{S}^{2}_{sp, vel}$ gain positive lower bound via convexity (see \eqref{lower:dS_force} and \eqref{lower:dS_force v}) only when they contain the denominators. %(With only $v\cdot\nabla\xi$ or $v(\tau)\cdot\nabla\xi$, such ODE estimates do not work!) 
Therefore, we obtain difference quotients estimates via integration of Specular Singularity, (in general we will use $v+\zeta$ instead of $v$ for notational convenience in velocity integration)
\Be \label{fraction scheme}
\frac{|X(s) - \bar{X}(s)|}{|x-\bar{x}|} 
\lesssim
%\int_{0}^{1} \frac{1}{\mathfrak{S}_{sp}(\tau)} d\tau \sim 
\frac{1}{|(v+\zeta)\cdot \nabla\xi(\xb(x,v+\zeta))|} + \frac{1}{|(v+\zeta)\cdot \nabla\xi(\xb(\tilde{x},v+\zeta))|}.
\Ee
(See \eqref{int:Sx} and \eqref{int:Sv} of Prposition \ref{prop_avg S} for exact estimates.) We also note that $\tilde{x}$ depends on $\zeta$ surely, but $(v+\zeta)\cdot \nabla\xi(\xb(\tilde{x},v)) = (v+\zeta)\cdot \nabla\xi(\xb(\bx,v))$ by definition of $\tx$ in \eqref{def_tildex}. Thus {\bf we can fix positions on $x$ or $\bx$}, independent to $\zeta$, for each terms in \eqref{fraction scheme}. This is a huge advantage in terms of \eqref{xiter}, because we should integrate \eqref{fraction scheme} with respect to $\zeta\in\R^{3}$ again. If the position moves depending on velocity $\zeta$, it looks nearly impossible to perform the integration!  \\

Now performing sharp integral estimate (Lemma \ref{lem_int sing})
\[
	\int_{\zeta} \frac{1}{|(v+\zeta)\cdot \nabla\xi(\xb(x,v+\zeta))|^{2\b}} d\zeta \lesssim 1,\quad \b<\frac{1}{2}, 
\]
we obtain uniform bound of $J_{x}[f]$ and $J_{v}[f]$ in \eqref{J_norm}. Exact definitions and estimate of the {\bf J-seminorms} are given in Definition \ref{def_H} and \eqref{est : H} in Proposition \ref{prop_unif H}, respectively.   \\

Lastly, from uniform estimates of J-seminorm and $C^{0,\frac{1}{2}}_{x,v}$ estimate of trajectory (Lemma \ref{lem:Holder_tb}), we obtain $C^{0,\frac{1}{2}-}_{x,v}$ regularity of $f$ via nonlocal-to-local estimate using \eqref{lin_expan}. \\

\section{Preliminaries}

%\subsection{{\color{red}Boltzmann operator}}
 In this section, we state lemmas for some estimates of $\Gamma_{\text{gain}}(f,f)$.  
	
	\begin{lemma}[Carleman estimate] \label{lem_Carl}
	Recall (\ref{Q}) and (\ref{Gamma}). We have the following expressions:
		\Be \label{Carl1}
		\Gamma_{\text{gain}}(g_{1}, g_{2})(v) = C \int_{\R^{3}} d u \int_{u\cdot z = 0} d z g_{1}(v+z) g_{2}(v+u) q_{0}^{*}\Big( \frac{|u|}{|u+z|} \Big) \frac{|u+z|^{\kappa-1}}{|u|} e^{-\frac{ |u+v+z|^{2} }{4} },
		\Ee 
		\Be \label{Carl2}
		\Gamma_{\text{gain}}(g_{1}, g_{2})(v) = C \int_{\R^{3}} d z \int_{u\cdot z = 0} d u g_{1}(v+z) g_{2}(v+u) q_{0}^{*}\Big( \frac{|u|}{|u+z|} \Big) \frac{|u+z|^{\kappa-1}}{|z|} e^{-\frac{ |u+v+z|^{2} }{4} },
		\Ee
		where $q_{0}^{*}(|\cos\theta|) := \frac{1}{|\cos\theta|}q_{0}(|\cos\theta|)$. For the hard sphere case $(\kappa=1)$, $q_{0}^{*}(|\cos\theta|) := \frac{1}{|\cos\theta|}|\cos\theta|=1$. 
		
		\hide
		Moreover 
		\Be\label{k_gain}
		\begin{split}
		\Gamma_{\text{gain}}(g, \sqrt{\mu})(v)&=
	\int_{\R^3}	\mathbf{k}_{\text{gain}}(v,u) g(u) \dd u,\\
		\mathbf{k}_{\text{gain}}(v,u)&=.
	\end{split}
		\Ee
		\unhide
		
	\end{lemma}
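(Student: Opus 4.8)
The statement to prove is the Carleman representation of $\Gamma_{\text{gain}}$, i.e.\ identities \eqref{Carl1}--\eqref{Carl2}. The plan is to start from the $\sigma$-representation of $Q_{\text{gain}}$ built into \eqref{Q} and \eqref{Gamma}, and perform the classical change of variables that replaces the $\omega\in\S^2$ integration by integrations over the two orthogonal ``planes'' $\{u\cdot z=0\}$. Concretely, writing $g_i = f_i$ and using $\Gamma_{\text{gain}}(g_1,g_2)(v) = \frac{1}{\sqrt{\mu(v)}}\int_{\R^3}\int_{\S^2} B(v-u',\omega)\sqrt{\mu(u')}\,g_1(u')\sqrt{\mu(v')}\,g_2(v')\,d\omega\,du$ with the pre-post relabeling, I would first switch to the variables $z := v'-v = ((v-u)\cdot\omega)\omega$ and $u^\star := u'-v$, which satisfy the orthogonality $z\cdot u^\star = 0$ (the geometric content of the elastic collision: $v' - v \perp u' - v$). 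The Jacobian of $(u,\omega)\mapsto (z, u^\star)$, together with $B(v-u,\omega) = |(v-u)\cdot\omega|$ for the hard sphere kernel, produces exactly the factor $q_0^*(|u|/|u+z|)\,|u+z|^{\kappa-1}/|u|$ in \eqref{Carl1} after one identifies $u = u^\star$ and $v-u = -(u+z)$ in the notation of the lemma; symmetrically, integrating $z$ over the plane first and $u$ over the transversal line yields the $1/|z|$ weight of \eqref{Carl2}.

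\textbf{Key steps in order.}
First I would record the elastic-collision algebra: for $\omega\in\S^2$, $u' = u - ((v-u)\cdot\omega)\omega$ and $v' = v+((v-u)\cdot\omega)\omega$, so that $v'-v = -(u'-v)^{\perp\text{-part}}$ and in particular $(v'-v)\cdot(u'-v) = 0$, and also $|u'-v|^2 + |v'-v|^2 = |u-v|^2$; moreover the Gaussian weight bookkeeping $\sqrt{\mu(u')\mu(v')}/\sqrt{\mu(v)} = \sqrt{\mu(u)}\cdot(\text{something})$ must be rewritten — using $|u'|^2 + |v'|^2 = |u|^2+|v|^2$ and the standard manipulation, one obtains the exponential $e^{-|u+v+z|^2/4}$ after the change of variables (here $u+v+z$ is, in the lemma's variables, $u' $, i.e.\ the surviving post-collisional velocity argument of the Gaussian). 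Second, I would fix $v$, set $z = v'-v$ and integrate: for each fixed direction the map $\omega \mapsto z$ covers a half-space worth of directions, and the substitution $(u,\omega)\to(z,u')$ has a Jacobian computed by the usual polar-type decomposition of $\S^2$ relative to the axis $\widehat{v-u}$; this gives \eqref{Carl1}. Third, the symmetric role of $z$ and $u$ (both lie in orthogonal complements of each other) gives \eqref{Carl2} by the same computation with the roles exchanged, which accounts for the $1/|u|$ versus $1/|z|$. Fourth, I would specialize $\kappa = 1$, $q_0(|\cos\theta|) = |\cos\theta|$, hence $q_0^*(|\cos\theta|) = 1$ and $|u+z|^{\kappa-1} = 1$, recovering the clean hard-sphere kernels used throughout the paper.

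\textbf{Main obstacle.} The delicate point is the Jacobian and measure-theoretic accounting in passing from $\int_{\R^3}\!du\int_{\S^2}\!d\omega$ to $\int_{\R^3}\!du\int_{u\cdot z=0}\!dz$ (and its transpose): one must show that the $(3{+}2)$-dimensional integral over $(u,\omega)$ collapses correctly onto the $(3{+}2)$-dimensional ``tent'' $\{(u,z): u\cdot z = 0\}$, that the factor $2$ from $\pm\omega$ giving the same collision is correctly absorbed into the constant $C$, and that the singular weights $1/|u|$, $1/|z|$ arise precisely (not up to an uncontrolled function) — this is where the hard-sphere simplification $q_0^*\equiv 1$ is genuinely used to keep the kernel explicit. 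I expect this step to be essentially a careful but standard computation; since the lemma is invoked later only through the explicit kernels $k_c$ of Definition \ref{def_k}, I would then close by noting that the Gaussian weight $e^{-|u+v+z|^2/4}$ combined with $\sqrt{\mu}$-conjugation reproduces, after integrating out the transversal variable, the bound $k_c(v,v+\zeta)$ of \eqref{full k}, so the representation is in the form needed downstream.
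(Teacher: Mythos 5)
Your plan is correct and is essentially the paper's own argument: after shifting $u\mapsto u+v$ and using $\sqrt{\mu(u')\mu(v')}/\sqrt{\mu(v)}=\sqrt{\mu(u)}$, the paper decomposes $u=u_{\parallel}+u_{\perp}$ relative to $\omega$ and computes exactly the Jacobians you flag as the main obstacle, namely $du\,d\omega=\tfrac{2}{|u_{\parallel}|^{2}}\,du_{\perp}\,du_{\parallel}$ for \eqref{Carl1} and $du\,d\omega=\tfrac{2}{|u_{\parallel}||u_{\perp}|}\,du_{\parallel}\,du_{\perp}$ for \eqref{Carl2}, with one factor $|u+z|/|u|$ absorbed into $q_{0}^{*}$. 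Only minor care is needed with the sign convention for $(u',v')$ (the orthogonality $(v'-v)\perp(u'-v)$ holds for the convention $u'=u+((v-u)\cdot\omega)\omega$, $v'=v-((v-u)\cdot\omega)\omega$ used in the proof), but this does not affect the substance.
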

 
	\begin{proof}
		Using $u^{\prime} = u + ((v-u)\cdot \omega) \omega$ and $v^{\prime} = v - ((v-u)\cdot \omega) \omega$, we write
		\[
		\Gamma_{\text{gain}}(g_{1}, g_{2}) = \int_{u\in\R^{3}}\int_{\omega\in\mathbb{S}^{2}} |v-u|^{\kappa}q_{0}\Big( \frac{|(v-u)\cdot\omega|}{|v-u|} \Big) \sqrt{\mu}(u) g_{1}(u + ((v-u)\cdot \omega) \omega) g_{2}(v - ((v-u)\cdot \omega) \omega) d \omega d u.
		\]
		We change $u\rightarrow u+v$ to get
		\[
		\Gamma_{\text{gain}}(g_{1}, g_{2}) = \int_{u\in\R^{3}}\int_{\omega\in\mathbb{S}^{2}} |u|^{\kappa}q_{0}\Big( \frac{|u\cdot\omega|}{|u|} \Big) \sqrt{\mu}(u+v) g_{1}(u+v - (u\cdot \omega) \omega) g_{2}(v + (u\cdot \omega) \omega) d \omega d u.
		\]
		Let us decompose and define $u_{\parallel} := (u\cdot\omega)\omega$ and $u_{\perp} := u - u_{\parallel}$. Then,
		\[
		\Gamma_{\text{gain}}(g_{1}, g_{2}) = \int_{u\in\R^{3}}\int_{\omega\in\mathbb{S}^{2}} |u_{\parallel}+u_{\perp}|^{\kappa}q_{0}\Big( \frac{|u_{\parallel}|}{|u_{\parallel}+u_{\perp}|} \Big) e^{\frac{1}{4}|u_{\parallel}+u_{\perp}+v|^{2}} g_{1}(u_{\perp}+v) g_{2}(v + u_{\parallel}) d \omega d u.
		\]
		Now, for (\ref{Carl1}), for fixed $\omega\in\mathbb{S}^{2}$, we have $d u = 2 d u_{\perp} d |u_{\parallel}|$, where $u_{\parallel} = |u_{\parallel}|\omega$. Therefore (also see \cite{Guo_Classic}),
		\[
		d u d \omega = \frac{2}{|u_{\parallel}|^{2}} d u_{\perp} |u_{\parallel}|^{2} d |u_{\parallel}| d \omega = \frac{2}{|u_{\parallel}|^{2}} d u_{\perp} d u_{\parallel}.
		\]
		Now we just rewrite $u_{\parallel}\rightarrow u$ and $u_{\perp}\rightarrow w$ to obtain
		\Be \notag
		\begin{split}
			\Gamma_{\text{gain}}(g_{1}, g_{2}) &= \int_{u\in\R^{3}}\int_{u\cdot w=0} |u+w|^{\kappa}q_{0}%\Big( \frac{|u_{\parallel}|}{|u_{\parallel}+u_{\perp}|} \Big) 
			\Big( \frac{|u 
			|}{|u +w|} \Big) 
			e^{\frac{1}{4}|u+w+v|^{2}} g_{1}(v+w) g_{2}(v + u) \frac{2}{|u|^{2}} d w d u  \\
			&= 2 \int_{u\in\R^{3}}\int_{u\cdot w=0} \frac{|u+w|^{\kappa-1}}{|u|} 
			\frac{|u+w|}{|u|} 
			q_{0}\Big( \frac{|u |}{|u +w|} \Big) e^{\frac{1}{4}|u+w+v|^{2}} g_{1}(v+w) g_{2}(v + u) d w d u .  \\
		\end{split}
		\Ee
		We define $q_{0}^{*}(\cos\theta) := \frac{1}{|\cos\theta|}q_{0}(\cos\theta)$ and this finishes the proof for (\ref{Carl1}).

		For (\ref{Carl2}), for fixed $\omega\in\mathbb{S}^{2}$, we choose $\bar{\omega} = \bar{\omega}(\o)\in \mathbb{S}^{2}$ so that $\bar{\o}\perp \o$ and a map $\o \mapsto \bar{\o}(\o)$ is measure preserving bijective map via rotation from $\mathbb{S}^{2}$ to $\mathbb{S}^{2}$. Then we have 
		\[
		dud{\o} = 2|u_{\perp}| d|u_{\parallel}|d|u_{\perp}|d\theta d\o,
		\]  
		where $\theta$ is angle of polar coordinate with $u_{\parallel}$ axis. Now consider the rotation $\o\rightarrow \bar{\o}$ which makes $\theta\rightarrow \bar{\theta}$, where $\bar{\theta}$ is angle of polar coordinate with $u_{\perp}$. Therefore (also see \cite{Guo_Classic}),  
		\[
		dud{\o} = 2|u_{\perp}| d|u_{\parallel}|d|u_{\perp}|d\bar{\theta} d\bar{\o} = 2 \frac{1}{|u_{\perp}|} \big( |u_{\perp}|^{2} d|u_{\perp}|d\bar{\o} \big) \frac{1}{|u_{\parallel}|}\big( |u_{\parallel}|d|u_{\parallel}| d\bar{\theta} \big) = \frac{2}{|u_{\parallel}||u_{\perp}|} du_{\parallel} du_{\perp},
		\]
		where $u_{\perp} \in\R^{3}$ and $u_{\parallel} \perp u_{\perp}$. Finally rewriting $u_\parallel \mapsto u$ and $u_\perp \mapsto w$, we get
		\Be \notag
		\begin{split}
			\Gamma_{\text{gain}}(g_{1}, g_{2}) %&= \int_{u\in\R^{3}}\int_{u\cdot w=0} |u+w|^{\kappa}q_{0}\Big( \frac{|u_{\parallel}|}{|u_{\parallel}+u_{\perp}|} \Big) e^{\frac{1}{4}|u+w+v|^{2}} g_{1}(v+w) g_{2}(v + u) \frac{2}{|u|^{2}} d w d u  \\
			&= 2 \int_{w\in\R^{3}}\int_{u\cdot w=0} |u+w|^{\kappa} q_{0}\Big( \frac{|u |}{|u +w|} \Big) e^{\frac{1}{4}|u+w+v|^{2}} g_{1}(v+w) g_{2}(v + u) \frac{1}{|w||u|} d u d w ,   
		\end{split} 
		\Ee
		and this gives (\ref{Carl2}) with the definition of $q_0^*$.
	\end{proof}

	\begin{lemma}[Nonlinear estimates] \label{lem_Gamma}
		%(Difference estimate for $\Gamma_{gain}$) 
		Let $w(v) = e^{\vartheta|v|^{2}}$ for $0<\vartheta < \frac{1}{4}$ and 
		% and  assume $\|wf_{1}\|_{\infty}$, $\|wf_{2}\|_{\infty}$, and $\|wf\|_{\infty}$ are bounded. 
		$x, \bx \in \O$, $v, \bv, \zeta \in\R^{3}$. For $|(x,v) - (\bx, \bv)| \leq 1$, we have the following estimates
		\Be \label{full k v}
		\begin{split}
			 \frac{| \Gamma_{\text{gain}}(f, f)(x,v) - \Gamma_{\text{gain}}(f, f)(x,\bar{v}) |}{|v-\bar{v}|^{2\b}}   
			%&\lesssim \|wf\|_{\infty} \int_{u} \frac{1}{|u|}e^{ - c|u|^{2} - c\frac{ | |v|^{2}-|v+u|^{2} |^{2} }{|u|^{2}} } \frac{ | f(v+u) - f(v+u-(v-\bar{v})) }{ |v-\bar{v}|^{2\b} }  du  \\
			%&\quad + \|wf\|_{\infty} \int_{u} \frac{1}{|u|}e^{ - c|u|^{2} - c\frac{ | |\bar{v}|^{2}-|\bar{v}+u|^{2} |^{2} }{|u|^{2}} } \frac{ | f(\bar{v}+u) - f(\bar{v}+u-(\bar{v}-v)) | }{|v-\bar{v}|^{2\b}}  du \\
			\lesssim & \ \|wf\|_{\infty} \int_{\R^3} %\Big[ k_{c}(v, v+u) + k_{c}(\bar{v}, \bar{v}+u) \Big]
			\mathbf{k}_{c}(v, \bar v, u)
			\frac{ | f(x, v+u) - f(x, \bar{v}+u) | }{ |v-\bar{v}|^{2\b} }  du \\
			&
			+ \|wf\|_{\infty}^{2}  \min \{	 \langle v\rangle^{-1}	, \langle \bar v\rangle^{-1}		   \},
			% \ \ \text{for} \ \ |v- \bar v|< 1,
			%+ \|wf\|_{\infty} \int_{u} k(\bar{v}, \bar{v}+u) \frac{ | f(x, \bar{v}+u) - f(x, v+u) | }{|v-\bar{v}|^{2\b}}  du \\
			%&\quad + \|wf\|_{\infty}^{2} \big( 	e^{- \frac{c}{16} |\bar{v}|^2 	} 	+ 	e^{- \frac{c}{16} |v|^2 	}  \big),  \\
		\end{split}
		\Ee
	\Be \label{full k x} 
			 \frac{| \Gamma_{\text{gain}}(f , f )(x,v) - \Gamma_{\text{gain}}(f , f )(\bar{x},v) |}{|x-\bar{x}|^{2\b}}   
			%&\lesssim \|wf_{2}\|_{\infty} \int_{u} \frac{1}{|u|}e^{ - c|u|^{2} - c\frac{ | |v|^{2}-|v+u|^{2} |^{2} }{|u|^{2}} }  \frac{ |f_{1}(x,u+v) - f_{1}(\bar{x},u+v)| }{ |x-\bar{x}|^{2\b} } du  \\
			%&\quad + \|wf_{1}\|_{\infty} \int_{u} \frac{1}{|u|}e^{ - c|u|^{2} - c\frac{ | |v|^{2}-|v+u|^{2} |^{2} }{|u|^{2}} }  \frac{ |f_{2}(x,u+v) - f_{2}(\bar{x},u+v)| }{ |x-\bar{x}|^{2\b} } du
			 \lesssim \|wf \|_{\infty} \int_{\R^3} 	k_{c}(v, v+u)\frac{ |f (x, v+u) - f (\bar{x}, v+u)| }{ |x-\bar{x}|^{2\b} } du, 
			 %\ \ \text{for} \ \ |x- \bar x|< 1.
			 % \\
		%	&
		%	+ \|wf \|_{\infty} \int_{u} k_{c}(v,v+u) \frac{ |f (x,u+v) - f (\bar{x},u+v)| }{ |x-\bar{x}|^{2\b} } du. 
		\Ee
		for some $c>0$, where $k_{c}$ and $\mathbf{k}_{c}$ are defined in \eqref{full k} and \eqref{bar k} of Definition \ref{def_k}.
	\end{lemma}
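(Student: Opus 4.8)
The starting point is the Carleman-type representations \eqref{Carl1} and \eqref{Carl2} of Lemma~\ref{lem_Carl}, which for the hard-sphere case ($\kappa=1$, $q_0^{*}\equiv1$) express $\Gamma_{\text{gain}}(f,f)(x,v)$ as $C\int_{\R^3}\frac{\dd u}{|u|}\int_{z\cdot u=0} f(x,v+z)f(x,v+u)\,e^{-\frac{|u+v+z|^2}{4}}\,\dd z$, and alternatively with the roles of $u$ and $z$ exchanged (so $z$ ranges over $\R^3$ with weight $1/|z|$ and $u$ over the plane $\{u\cdot z=0\}$). For each of the two difference quotients we subtract the corresponding representation at the shifted argument and split the difference of products bilinearly, $f_1f_2-\bar f_1\bar f_2=(f_1-\bar f_1)f_2+\bar f_1(f_2-\bar f_2)$, where $f_i,\bar f_i$ denote $f$ evaluated in the "$z$-slot" (resp. "$u$-slot") at the two base points. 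The key bookkeeping device is to \emph{route} each resulting piece to the representation in which the slot carrying the $f$-difference is shifted by the variable bearing the integrable weight $1/|u|$ (or $1/|z|$): the $(f_1-\bar f_1)f_2$ piece is handled with \eqref{Carl2} and the $\bar f_1(f_2-\bar f_2)$ piece with \eqref{Carl1}.

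In each such piece the complementary integration variable runs over a $2$-plane and is coupled to the rest only through the Gaussian; bounding the $f$-factor sitting on it by $\|wf\|_{\infty}e^{-\vartheta|\cdot|^2}$ and carrying out the classical Grad-type Gaussian integral over that plane produces a kernel dominated by $k_c(v,v+u)$ of Definition~\ref{def_k} for a suitable $c>0$ depending only on $\vartheta$, up to the factor $\|wf\|_{\infty}$. After renaming the remaining singular variable to $u$, dividing by $|x-\bx|^{2\b}$ resp. $|v-\bv|^{2\b}$, and using $k_c(v,v+u)\le\mathbf{k}_c(v,\bv,u)$, this yields the full right-hand side of \eqref{full k x} and the linear integral term of \eqref{full k v}. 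For \eqref{full k x} the spatial variable enters neither the Gaussian nor the velocity shifts, so no further error arises and no quadratic term is present.

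For \eqref{full k v} there is one extra contribution from the $v$-dependence of $e^{-|u+v+z|^2/4}$: grouping this factor with the two $f$-factors at a single base point and applying the mean value theorem gives $\big|e^{-\frac{|u+v+z|^2}{4}}-e^{-\frac{|u+\bv+z|^2}{4}}\big|\lesssim|v-\bv|\big(e^{-c|u+v+z|^2}+e^{-c|u+\bv+z|^2}\big)$, and since $|v-\bv|\le1$ with $2\b<1$ we have $|v-\bv|^{1-2\b}\le1$, so the division by $|v-\bv|^{2\b}$ costs nothing. Bounding the two surviving $f$-factors by $\|wf\|_{\infty}^2e^{-\vartheta|\cdot|^2}$ and integrating the product of Gaussians over $\R^3$ and the complementary plane, the elementary bound $\int_{\R^3}k_c(v,v+u)\,\dd u\lesssim\langle v\rangle^{-1}$ gives $\|wf\|_{\infty}^2\min\{\langle v\rangle^{-1},\langle\bv\rangle^{-1}\}$, the $\min$ because the Gaussian difference may be attached to the $f$-factors based at either $v$ or $\bv$. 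Collecting all pieces yields \eqref{full k v}.

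The only place genuine care is needed is the classical but delicate Grad-type estimate that $\frac{1}{|u|}\int_{z\cdot u=0}e^{-\vartheta|v+z|^2}e^{-\frac14|u+v+z|^2}\,\dd z$ is bounded by $\frac{1}{|u|}e^{-c|u|^2-c\frac{||v|^2-|v+u|^2|^2}{|u|^2}}$ with a uniform $c>0$; here the absorbed weight $e^{-\vartheta|\cdot|^2}$ of the $L^{\infty}$ bound must be tracked so that a strictly positive exponent survives, and the identity $2u\cdot v+|u|^2=|v+u|^2-|v|^2$ is used to recognize the anisotropic exponential defining $k_c$ in \eqref{full k}. Everything else is routine.
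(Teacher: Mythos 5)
Your proposal is correct and follows essentially the same route as the paper: apply \eqref{Carl1} to $\Gamma_{\text{gain}}(f,f)(x,v)$ and \eqref{Carl2} to $\Gamma_{\text{gain}}(f,f)(x,\bar v)$ (the ``routing''), add and subtract the cross term to split bilinearly so that the $f$-difference always sits in the slot carrying the singular $1/|u|$ (resp.\ $1/|z|$) weight, absorb the complementary $f$-factor into $\|wf\|_\infty e^{-\vartheta|\cdot|^2}$ and perform the Grad-type Gaussian integral over the orthogonal plane to produce $k_c$, and bound the residual term coming from the difference of Gaussians/kernel weights by the mean value theorem with the gain $|v-\bar v|^{1-2\beta}\le 1$ and symmetry in $v,\bar v$ to get $\min\{\langle v\rangle^{-1},\langle\bar v\rangle^{-1}\}$. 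This is the paper's argument almost verbatim, including the observation that \eqref{full k x} is the simpler case because the spatial shift does not touch the Gaussian.
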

	\begin{proof}
Applying (\ref{Carl1}) to $\Gamma_{\text{gain}}(f , f )(x,v)$, and %$\Gamma_{\text{gain}}(f , f )(x,\bar{v})$ (replacing $v$ with $\bar{v}$ for the later case), we get 
%\Be
%\begin{split}
%&\Gamma_{\text{gain}}(f , f )(x,v) - \Gamma_{\text{gain}}(f , f )(x,\bar{v}) \\
%&= C\int_{u}\int_{u\cdot z=0} \frac{1}{|u|} 
%\Big(
%\sqrt{\mu}(u+v+z)f (v+z) f (v+u)  
%			-    \sqrt{\mu}(u+\bar{v}+z)f (\bar{v}+u) f (\bar{v}+z)  
%			\Big)dz du.
%			\end{split}
%\Ee		
(\ref{Carl2}) to $\Gamma_{\text{gain}}(f , f )(x,\bar{v})$, we derive 
\hide where we replace $v$ with $\bar{v}$ we get that $\Gamma_{\text{gain}}(f , f )(x,v) - \Gamma_{\text{gain}}(f , f )(x,\bar{v}) = C\int_{u}\int_{u\cdot z=0} \frac{1}{|u|} \sqrt{\mu}(u+v+z)f (v+z) f (v+u) dz du 
			- C\int_{z}\int_{u\cdot z=0} \frac{1}{|z|}\sqrt{\mu}(u+\bar{v}+z)f (\bar{v}+u) f (\bar{v}+z) du dz$ equals \unhide
		% in the proof of Lemma \ref{lem_Carl} in the case of $\Gamma_{\text{gain}}(g_{1}, g_{2})(x,\bar{v})$. Then we get
		\hide
		{\color{blue}
			\Be 
			\begin{split}
				&\Gamma_{\text{gain}}(f_{1}, f_{2})(x,v) - \Gamma_{\text{gain}}(f_{1}, f_{2})(x,\bar{v})  \\
				&= C\int_{u}\int_{u\cdot z=0} \frac{1}{|u|} \sqrt{\mu}(u+v+z)f_{1}(v+z)\big( f_{2}(v+u) - f_{2}(\bar{v}+u) \big) dz du \\
				&\quad + C\int_{z}\int_{u\cdot z=0} \frac{|u|}{|z|^{2}}\sqrt{\mu}(u+\bar{v}+z)f_{2}(\bar{v}+u)\big( f_{1}(v+z) - f_{1}(\bar{v}+z) \big) du dz \\
				&\quad + C\int_{u}\int_{u\cdot z=0} \frac{1}{|u|} f_{1}(v+z)f_{2}(\bar{v}+u) \big( \sqrt{\mu}(u+v+z) - \sqrt{\mu}(u + \bar{v} +z) \big) dz du ,  \\
				%&\lesssim \|wf\|_{\infty} \int_{u} \frac{e^{-\theta|u|^{2}}}{|u|} \big( f_{1}(u+v)) - f_{1}(u+\bar{v}) \big) du
			\end{split}
			\Ee
		} %blue
		\unhide
		\Be \label{Gamma expan}
		\begin{split}
			%&\Gamma_{\text{gain}}(f_{1}, f_{2})(x,v) - \Gamma_{\text{gain}}(f_{1}, f_{2})(x,\bar{v})  \\
			%&= C\int_{u}\int_{u\cdot z=0} \frac{1}{|u|} \sqrt{\mu}(u+v+z)f_{1}(v+z) f_{2}(v+u) dz du 
			%- C\int_{z}\int_{u\cdot z=0} \frac{1}{|z|}\sqrt{\mu}(u+\bar{v}+z)f_{2}(\bar{v}+u) f_{1}(\bar{v}+z) du dz \\
			&\Gamma_{\text{gain}}(f , f )( v) - \Gamma_{\text{gain}}(f , f )( \bar{v}) = C\int_{\R^3}du \int_{u\cdot z=0}dz \frac{1}{|u|} \sqrt{\mu(u+v+z)}f (v+z) f (v+u)  \\
			&	  \ \ \ \	- C\int_{\R^3}dz \int_{u\cdot z=0}du   \frac{1}{|z|}\sqrt{\mu(u+\bar{v}+z)}f (\bar{v}+u) f (\bar{v}+z) 
			\\
			& = \ C\int_{ u \in \R^3}\int_{u\cdot z=0} \frac{1}{|u|} \sqrt{\mu(u+v+z)}f (v+z)\big( f (v+u) - f (v+u-(v-\bar{v})) \big) dz du \\
			&\quad + C\int_{z\in \R^3}\int_{u\cdot z=0} \frac{1}{|z|}\sqrt{\mu (u+\bar{v}+z)}f (\bar{v}+u)\big( f (v+z) - f (v+z-(v-\bar{v})) \big) du dz \\
			& \quad + C
			\Big\{\int_{ R^3}\int_{u\cdot z=0} \frac{ \sqrt{\mu (u+v+z)}}{|u|}  f (v+z)f (\bar{v}+u) dz du    \\
			&\quad\quad\quad\quad	-  \int_{ \R^3}\int_{u\cdot z=0} \frac{\sqrt{\mu (u + \bar{v} +z)}}{|z|}  f (v+z)f (\bar{v}+u)du  dz
			\Big\} .  
	\end{split}
		\Ee
		\hide
		where we have used (\ref{Carl1}) in the first term of RHS and (\ref{Carl2}) in the second term. For the first term on the RHS, we use Young's inequality to claim 
		\Be \begin{split}\label{integral_Carleman}
			\int_{ u  \cdot z=0}
			\frac{e^{- \frac{|  u +  v+ z|^2}{4}}}{w( v + z)|u |}
			d z 
			\lesssim    e^{- \frac{\theta}{4}
				|u|^2
			}
			\int_{ u  \cdot z=0}
			\frac{e^{- \frac{\theta}{4}
					|v+ z|^2} }{|u|}
			d z
			\lesssim  
			\frac{e^{- \frac{\theta}{4}
					|u|^2
			}}{|u|} \int_{ u  \cdot (z-v)=0}
			{e^{- \frac{\theta}{4}
					|  z|^2} } 
			d z
			\lesssim
			\frac{e^{- \frac{\theta}{4}
					|u|^2
			}}{|u|} ,
		\end{split}\Ee 
		for some small constant $\theta \ll 1$. For the second term, we repeat (\ref{integral_Carleman}) changing $dz$ into $du$ to obtain
		\[
		\int_{ u  \cdot z=0}
		\frac{e^{- \frac{|  u +  \bar{v} + z|^2}{4}}}{w( \bar{v} + u)|z |}
		d u 
		\lesssim   
		\frac{e^{- \frac{\theta}{4}
				|z|^2
		}}{|z|}.
		\]
		\[
		\int_{ u  \cdot z=0}
		\frac{e^{- \frac{|  u +  \bar{v}+ z|^2}{4}}}{w( \bar{v} + u)} |u|
		d u
		\lesssim    e^{- \frac{\theta}{4}
			|z|^2
		}
		\int_{ u  \cdot z=0}
		e^{- \frac{\theta}{4}
			|\bar{v}+ u|^2} 
		|u| d u
		\lesssim  
		e^{- \frac{\theta}{4}
			|z|^2
		} \int_{ z  \cdot (u-\bar{v})=0}
		{e^{- \frac{\theta}{4}
				| u|^2} } 
		|u-\bar{v}| d u
		\lesssim
		\langle \bar{v} \rangle 
		e^{- \frac{\theta}{4}
			|z|^2
		} ,
		\]
		and therefore,
		\Be \label{integral_Carleman z} 
		\begin{split} 
			&\int_{z}\int_{u\cdot z=0} \frac{|u|}{|z|^{2}}\sqrt{\mu}(u+\bar{v}+z)f_{2}(\bar{v}+u)\big( f_{1}(v+z) - f_{1}(\bar{v}+z) \big) du dz  \\
			&\lesssim \|wf_{2}\|_{\infty} \int_{z} \frac{e^{-\frac{\theta}{4}|z|^{2}}}{|z|^{2}} \langle \bar{v} \rangle | f_{1}(v+z) - f_{1}(\bar{v}+z) | dz  \\
			&\lesssim \|wf_{2}\|_{\infty} \Big\{ \int_{|z|\geq \langle \bar{v} \rangle} + \int_{|z|\leq \langle \bar{v} \rangle}    \Big\}  \\
			&\lesssim \|wf_{2}\|_{\infty} \int_{z} \frac{e^{-\frac{\theta}{4}|z|^{2}}}{|z|} | f_{1}(v+z) - f_{1}(\bar{v}+z) | dz \\
			&\quad + \|wf_{2}\|_{\infty} \|wf_{1}\|_{\infty}  \langle \bar{v} \rangle \int_{|z|\leq \langle \bar{v} \rangle}  \frac{e^{-\frac{\theta}{4}|z|^{2}}}{|z|^{2}} \big( w^{-1}(v+z) + w^{-1}(\bar{v}+z) \big) dz  \\
			&\lesssim \|wf_{2}\|_{\infty} \int_{z} \frac{e^{-\frac{\theta}{4}|z|^{2}}}{|z|} | f_{1}(v+z) - f_{1}(\bar{v}+z) | dz \\
			&\quad + \|wf_{2}\|_{\infty} \|wf_{1}\|_{\infty}  \langle \bar{v} \rangle \int_{|z|\leq \langle \bar{v} \rangle}  \frac{e^{-\frac{\theta}{8}|z|^{2}}}{|z|^{2}} \big( e^{-\frac{\theta}{8}|v|^{2}} + e^{-\frac{\theta}{8}|\bar{v}|^{2}} \big) dz  \\
			&\lesssim \|wf_{2}\|_{\infty} \int_{z} \frac{e^{-\frac{\theta}{4}|z|^{2}}}{|z|} | f_{1}(v+z) - f_{1}(\bar{v}+z) | dz + \|wf_{2}\|_{\infty} \|wf_{1}\|_{\infty} \big( 	e^{- \frac{\theta}{16} |\bar{v}|^2 	} 	+ 	e^{- \frac{\theta}{16} |v|^2 	}  \big)
		\end{split}\Ee 
		for some small constant $\theta \ll 1$ since $|v-\bar{v}|\leq 1$.  \\
		\unhide
		
		For the first term in RHS of (\ref{Gamma expan}), from the standard estimates (e.g. (3.11) and (3.52) in \cite{gl}), we derive, for some $c_{\vartheta^{\prime}}>0$, 
		\Be \notag % \label{Full k 1}
		\begin{split}
			%&\int_{u}\int_{u\cdot z=0} \frac{1}{|u|} \sqrt{\mu}(u+v+z)f_{1}(v+z)\big( f_{2}(v+u) - f_{2}(v+u-(v-\bar{v})) \big) dz du \\
	(\ref{Gamma expan})_1		&\lesssim \|wf \|_{\infty} \int_{u}\int_{u\cdot z=0} \frac{1}{|u|} e^{-\frac{|u+v+z|^{2}}{4}} e^{-\vartheta^{\prime}|v+z|^{2}} 
	%e^{\vartheta^{\prime}|v+u|^{2}}
	| f (v+u) - f(v+u-(v-\bar{v})) | dz du \\
			&\lesssim \|wf \|_{\infty} \int_{\R^3} \frac{1}{|u|}e^{ - c_{\vartheta^{\prime}}|u|^{2} - c_{\vartheta^{\prime}}\frac{ | |v|^{2}-|v+u|^{2} |^{2} }{|u|^{2}} } \big| f (v+u) - f (v+u-(v-\bar{v})) \big| du.
		\end{split}
		\Ee
	  Similarly, the second term in RHS of (\ref{Gamma expan}) is bounded by 
		\Be \notag %\label{Full k 2}
	(\ref{Gamma expan})_2 \lesssim	\|wf \|_{\infty} \int_{\R^3} \frac{1}{|u|}e^{ - c_{\vartheta^{\prime}}|u|^{2} - c_{\vartheta^{\prime}}\frac{ | |\bar{v}|^{2}-|\bar{v}+u|^{2} |^{2} }{|u|^{2}} } \big| f (v+u) - f (v+u-(v-\bar{v})) \big| du.
		\Ee

We only need to bound last line of (\ref{Gamma expan}) by the last term of \eqref{full k v}. Following the proof of Lemma \ref{lem_Carl}, we re-express the second integral of (\ref{Gamma expan}) into $u \in \R^3$ integration (and an integral over $z \in \{z\cdot u=0\}$). Then we bound the last line of (\ref{Gamma expan}) by  
		\Be  \label{integral_Carleman 2}
		 |v-\bar{v}|^{2\b} \| w f\|_\infty^2 \times  \underbrace{	 \int_{u}\int_{ u  \cdot z=0}
			\frac{ 1%e^{- \frac{|  u +  v+ z|^2}{8}} + e^{- \frac{|  u +  \bar{v}+ z|^2}{8}}
			 }{w( v + z)w( \bar{v} + u)|u |}
			\frac{| \sqrt{\mu (u+v+z) }- \sqrt{\mu (u + \bar{v} +z)} |}{|v-\bar{v}|^{2\b}}
			d z  du }_{\eqref{integral_Carleman 2}_*} . 
			\Ee
				Note that $|\sqrt{ \mu (u+\bar{v} + v- \bar{v}+z)} - \sqrt{\mu (u + \bar{v} +z)}|  
		=  \Big| \int_0^1  (v- \bar v) \cdot \nabla \sqrt{\mu (u+\bar{v} + (v- \bar{v})s+z)}  \dd s\Big| 
		 \lesssim    |v-\bar{v} | 
		 %\int_0^1e^{-\frac{|u+\bar{v} + (v- \bar{v})s+z|^2}{4+\delta}}  ds
		 .$
		Hence 
		\Be
		\begin{split}
		\eqref{integral_Carleman 2}_* & \lesssim 
		|v-\bar v|^{1-2\beta}\int_0^1 \dd s 
		\int_{\R^3}\dd u  |u|^{-1} e^{- \vartheta^\prime | \bar{v} + u|^2} \int_{ u\cdot z=0} \dd z e^{- \vartheta^\prime |v+z|^2} \lesssim \langle \bar v \rangle ^{-1}{|v-\bar v|^{1-2\beta}}. \notag
		\end{split}\Ee
Hence we conclude that the last line of (\ref{Gamma expan}) is bounded above by $\langle \bar v \rangle ^{-1} |v-\bar v|  \| w f\|_\infty^2$. Now changing $v$ and $\bar v$ and then following the same argument we also get the upper bound $\langle v \rangle ^{-1} |v-\bar v|  \| w f\|_\infty^2$.\hide
 
		Note that 
		\Be
		\begin{split}
		\frac{  e^{-\frac{|u+\bar{v} + (v- \bar{v})s+z|^2}{4+\delta}} %e^{- \frac{|  u +  v+ z|^2}{8}} + e^{- \frac{|  u +  \bar{v}+ z|^2}{8}}
			 }{w( v + z)w( \bar{v} + u) } & = e^{-\frac{1}{4+\delta}|u+\bar{v} + (v- \bar{v})s+z|^2
			 - \vartheta^\prime |v+z|^2 - \vartheta^\prime |\bar v+u|^2
			 }\\
			 &= e^{- \frac{\vartheta^\prime}{8}
			 \{
			 \}
			 }
		\end{split}
		\Ee

			{\color{red} [CK] I don't think the next estimates are correct.}
			\Be \begin{split}
			&\lesssim   |v-\bar{v}|^{2\b}
			\int_{u}\int_{ u  \cdot z=0} \frac{1}{|u|}
			\Big( 
			\frac{ e^{- \frac{\theta| v+ z|^2}{8}} 	  e^{- \frac{\theta}{8} |u|^2 	} }{w( \bar{v} + u)}   
			+
			\frac{ e^{- \frac{\theta|  u +  \bar{v} |^2}{8}} 	  e^{- \frac{\theta}{8} |z|^2 	} }{w( v + z)}   
			\Big)   dz du  \\
			&\lesssim   |v-\bar{v}|^{2\b}
			\int_{u}\int_{ u  \cdot z=0} \frac{1}{|u|}
			\Big( 
			e^{- \frac{\theta| v+ z|^2}{8}} 	  e^{- \frac{\theta}{16} |\bar{v}|^2 	} e^{- \frac{\theta}{16} |\bar{v}+u|^2 	} 
			+
			e^{- \frac{\theta|  u +  \bar{v} |^2}{8}}  e^{- \frac{\theta}{16} |v|^2 	} e^{- \frac{\theta}{16} |v+z|^2 	}  
			\Big)   dz du  \\
			&\lesssim   |v-\bar{v}|^{2\b}
			\big( 
			e^{- \frac{\theta}{16} |\bar{v}|^2 	} 
			+
			e^{- \frac{\theta}{16} |v|^2 	}
			\big).    
		\end{split}\Ee

		Now, combining (\ref{Full k 1}), (\ref{Full k 2}), and (\ref{integral_Carleman 2}), and dividing by $|(x,v) - (\bar{x}, \bar{v})|^{2\b}$, we obtain
		%{\color{blue} and changing dummy variable $z$ into $u$ for the second term in the RHS of (\ref{Gamma expan})}
		\Be \notag
		\begin{split}
			&\frac{| \Gamma_{\text{gain}}(f, f)(x,v) - \Gamma_{\text{gain}}(f, f)(x,\bar{v}) |}{|v-\bar{v}|^{2\b}}  \\
			&\lesssim \|wf\|_{\infty} \int_{u} \frac{1}{|u|}e^{ - c|u|^{2} - c\frac{ | |v|^{2}-|v+u|^{2} |^{2} }{|u|^{2}} } \frac{ | f(v+u) - f(v+u-(v-\bar{v})) }{ |v-\bar{v}|^{2\b} }  du  \\
			&\quad + \|wf\|_{\infty} \int_{u} \frac{1}{|u|}e^{ - c|u|^{2} - c\frac{ | |\bar{v}|^{2}-|\bar{v}+u|^{2} |^{2} }{|u|^{2}} } \frac{ | f(\bar{v}+u) - f(\bar{v}+u-(\bar{v}-v)) | }{|v-\bar{v}|^{2\b}}  du \\
			&\quad + \|wf\|_{\infty}^{2} \big( 	e^{- \frac{c}{16} |\bar{v}|^2 	} 	+ 	e^{- \frac{c}{16} |v|^2 	}  \big)  \\
		\end{split}
		\Ee
		
		\unhide The proof of (\ref{full k x}) is similar but simpler.%
		%
		%, estimate is standard and we skip the detail. We note that the last two terms on the RHS of (\ref{Gamma expan}) does not appear.
		\hide
		\Be
		\begin{split}
			&\Gamma_{\text{gain}}(f_{1}, f_{2})(x,v) - \Gamma_{\text{gain}}(f_{1}, f_{2})(\bar{x},v)  \\
			&= C\int_{u}\int_{u\cdot z=0} \frac{1}{|u|} \sqrt{\mu}(u+v+z)\big( f_{1}(x,v+z) f_{2}(x,v+u) - f_{1}(\bar{x},v+z) f_{2}(\bar{x},v+u) \big)  dz du \\
			&= C\int_{u}\int_{u\cdot z=0} \frac{1}{|u|} \sqrt{\mu}(u+v+z) f_{1}(x,v+z) \big( f_{2}(x,v+u) - f_{2}(\bar{x},v+u) \big)  dz du \\
			&\quad + C\int_{u}\int_{u\cdot z=0} \frac{1}{|u|} \sqrt{\mu}(u+v+z) f_{2}(\bar{x},v+z) \big( f_{1}(x,v+u) - f_{1}(\bar{x},v+u) \big)  dz du \\
			&\lesssim \|wf\|_{\infty} \int_{u} \frac{e^{-\theta|u|^{2}}}{|u|} \frac{ |f_{1}(x,u+v) - f_{1}(\bar{x},u+v)| }{ |x-\bar{x}|^{2\b} } du  
			+ \|wf\|_{\infty} \int_{u} \frac{e^{-\theta|u|^{2}}}{|u|} \frac{ |f_{2}(x,u+v) - f_{2}(\bar{x},u+v)| }{ |x-\bar{x}|^{2\b} } du 
		\end{split}
		\Ee
		\unhide
	\end{proof}

	Similar estimate for $\nu(f)$ is simpler.
	\begin{lemma} \label{lem_nu}
		%(Difference estimate for $\Gamma_{gain}$) 
		Let $w(v) = e^{\vartheta|v|^{2}}$ for $0<\vartheta < \frac{1}{4}$, and $x, \bx \in \O$, $v, \bv \in\R^{3}$. % and  assume $\|wf_{1}\|_{\infty}$, $\|wf_{2}\|_{\infty}$, and $\|wf\|_{\infty}$ are bounded. 
		For some positive $c > 0$,  we obtain%we have the following difference estimates.
		\Be \label{full nu v}
		\begin{split}
			\nu(f)(t, x, v) - \nu(f)(t, x, \bv)  
			&\leq C|v - \bv| \|f(t)\|_{\infty},
		\end{split}
		\Ee
		and
		\Be \label{full nu x} 
			\nu(f)(t, x, v) - \nu(f)(t, \bx, v)  
			\leq C\langle v \rangle \int_{\R^{3}} k_{c}(0,u) |f(t, x, u) - f(t, \bx, u)| du.   \\
		\Ee
	\end{lemma}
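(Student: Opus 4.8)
The plan is to prove the two bounds \eqref{full nu v} and \eqref{full nu x} by a direct computation that exploits only the Gaussian decay of $\sqrt{\mu}$; unlike the Boltzmann kernel estimates of Lemma \ref{lem_Gamma}, no Carleman change of variables and no geometry of $\O$ enters, which is why it is genuinely simpler than the nonlinear estimates. For \eqref{full nu v} I would start from the definition of $\nu(f)$, note that the weight $\sqrt{\mu(u)}$ and the factor $f(t,x,u)$ carry no dependence on the test velocity, and write
\[
\nu(f)(t,x,v) - \nu(f)(t,x,\bv) = \iint_{\R^3\times\mathbb{S}^2}\big(|(v-u)\cdot\o| - |(\bv-u)\cdot\o|\big)\sqrt{\mu(u)}\,f(t,x,u)\,\dd\o\,\dd u .
\]
The reverse triangle inequality gives $\big||(v-u)\cdot\o| - |(\bv-u)\cdot\o|\big| \le |(v-\bv)\cdot\o| \le |v-\bv|$, so pulling $\|f(t)\|_\infty$ out of the integral and using $\iint_{\R^3\times\mathbb{S}^2}\sqrt{\mu(u)}\,\dd\o\,\dd u =: C < \infty$ yields the claim (in fact with an absolute value on the left-hand side).

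For \eqref{full nu x} only $f$ depends on the spatial variable, so the left-hand side equals $\iint_{\R^3\times\mathbb{S}^2}|(v-u)\cdot\o|\sqrt{\mu(u)}\big(f(t,x,u)-f(t,\bx,u)\big)\,\dd\o\,\dd u$. I would pass to absolute values, carry out the elementary angular integral $\int_{\mathbb{S}^2}|(v-u)\cdot\o|\,\dd\o = 2\pi|v-u|$, and then absorb the leftover weight into the kernel. Recalling that $k_c(0,u) = \frac{1}{|u|}e^{-2c|u|^{2}}$ (the specialization $v=0$, $\zeta = u$ of \eqref{full k}) and using $|v-u|\le\langle v\rangle\langle u\rangle$, it suffices to fix $c$ small enough — $c\le 1/16$ works, since then $1/4-2c>0$ and any polynomial is dominated by $e^{(1/4-2c)|u|^{2}}$ — so that $|u|\langle u\rangle\,e^{-|u|^{2}/4}\le C e^{-2c|u|^{2}}$ for all $u\in\R^3$; this gives $|v-u|\sqrt{\mu(u)}\le C\langle v\rangle\, k_c(0,u)$, and integrating against $|f(t,x,u)-f(t,\bx,u)|$ produces \eqref{full nu x}.

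I do not expect any genuine obstacle here: once the value of $\int_{\mathbb{S}^2}|(v-u)\cdot\o|\,\dd\o$ is recorded, the argument is a single pointwise weight comparison. The only points that deserve (minor) care are the choice of $c$ absorbing the polynomial prefactor, and the observation that the resulting kernel $k_c(0,\cdot) = |\cdot|^{-1}e^{-2c|\cdot|^{2}}$ is locally integrable on $\R^3$ (the $|u|^{-1}$ singularity is harmless in three dimensions), so that the right-hand side of \eqref{full nu x} is finite whenever $f(t)\in L^\infty$.
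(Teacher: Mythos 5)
Your proof is correct and takes essentially the same route as the paper's: for \eqref{full nu v} the paper likewise subtracts the two integrands and invokes the reverse triangle inequality (it does so after first carrying out the $\omega$-integral to reduce to $|v-u|$, whereas you apply it at the level of $|(v-u)\cdot\omega|$, which makes no difference), and for \eqref{full nu x} the paper performs exactly the same pointwise comparison $|v-u|\sqrt{\mu(u)}\lesssim\langle v\rangle\,k_c(0,u)$ for a sufficiently small generic $c>0$. Your spelled-out verification of the angular integral, the inequality $|v-u|\le\langle v\rangle\langle u\rangle$, and the admissible range of $c$ are just the details the paper leaves implicit.
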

	\begin{proof}
		For \eqref{full nu v},
		\begin{equation} \notag %
		\begin{split}
		\nu(f)(t, x, v) - \nu(f)(t, x, \bv)  &= C\int_{\R^{3}_{u}} |v - u| \sqrt{\mu(u)} f(t, x, u) du - C\int_{\R^{3}_{u}} |\bv - u| \sqrt{\mu(u)} f(t, x, u) du \\
		&\leq C|v - \bv| \|f(t)\|_{\infty}.
		\end{split}
		\end{equation}
		For \eqref{full nu x},
		\begin{equation}\notag %
		\begin{split}
			 \nu(f)(t, x, v) - \nu(f)(t, \bx, v)  &\leq C\int_{\R^{3}_{u}} |v-u| \sqrt{\mu(u)} |f(t, x, u) - f(t, \bx, u)| du \\
			 &\leq C\langle v \rangle \int_{\R^{3}} k_{c}(0,u) |f(t, x, u) - f(t, \bx, u)| du. \\
		\end{split}
		\end{equation}
		for some generic small $c>0$. 
	\end{proof}
	
	\begin{lemma}[Uniform negativity] \label{lem_nega}
		 For $|\varpi s| < c$,
		\Be \label{ws nega}
		e^{-\varpi(1 + |v|^{2})s} e^{\varpi(1 + |v+\zeta|^{2})s} k_{c}(v, v+\zeta)  \leq k_{\frac{c}{2}}(v, v+\zeta).
		%e^{-\varpi(1 + |v|^{2})s} e^{\varpi(1 + |v+\zeta|^{2})s} e^{- c|\zeta|^{2} - c\frac{1}{|\zeta|^{2}} \big| |v|^{2}-|v+\zeta|^{2} \big|^{2} } \leq 1.
		\Ee
		When $|v-\bar{v}| \leq 1$ and $\varpi s < (\sqrt{20}-4)\frac{c}{2}$,
		\Be \label{ws nega bar}
		e^{ -\varpi(1+|\bar{v}|^{2})s + \varpi(1+|\bar{v}+\zeta|^{2})s } k_{c}(v,v+\zeta) \lesssim k_{\frac{c}{2}}(v,v+\zeta).
		\Ee
		Moreover, from \eqref{ws nega} and \eqref{ws nega bar},
		\Be \label{bf nega}
		\begin{split}
			e^{ -\varpi(1+|v|^{2})s + \varpi(1+|v+\zeta|^{2})s } \mathbf{k}_{c}(v, \bv, \zeta) &\lesssim \mathbf{k}_{\frac{c}{2}}(v, \bv, \zeta), \\
			e^{ -\varpi(1+|\bv|^{2})s + \varpi(1+|\bv+\zeta|^{2})s } \mathbf{k}_{c}(v, \bv, \zeta) &\lesssim \mathbf{k}_{\frac{c}{2}}(v, \bv, \zeta),
		\end{split}
		\Ee
		hold when $|v-\bar{v}| \leq 1$ and $\varpi s < (\sqrt{20}-4)\frac{c}{2}$.
	\end{lemma}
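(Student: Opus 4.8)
The plan is to reduce each of the three assertions to an elementary pointwise inequality between the exponents of the Gaussian factors, and then to close it with the Cauchy--Schwarz/Young inequality while carefully bookkeeping the constants. I would begin by recording the two algebraic identities that drive everything:
\[
-\varpi(1+|v|^{2})s + \varpi(1+|v+\zeta|^{2})s = \varpi s\big(|v+\zeta|^{2}-|v|^{2}\big),
\]
and, directly from the definition \eqref{full k},
\[
\frac{k_{\frac{c}{2}}(v,v+\zeta)}{k_{c}(v,v+\zeta)} = \exp\!\Big(\frac{c}{2}|\zeta|^{2} + \frac{c}{2}\,\frac{\big||v|^{2}-|v+\zeta|^{2}\big|^{2}}{|\zeta|^{2}}\Big),
\]
so that every claim becomes a comparison of the quantity $\varpi s(|\cdot+\zeta|^{2}-|\cdot|^{2})$ with $\tfrac{c}{2}|\zeta|^{2}+\tfrac{c}{2}A^{2}/|\zeta|^{2}$, where $A:=|v+\zeta|^{2}-|v|^{2}$.

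For \eqref{ws nega} the inequality is exactly $\varpi s\, A \le \tfrac{c}{2}|\zeta|^{2} + \tfrac{c}{2}A^{2}/|\zeta|^{2}$, and since AM--GM yields $\tfrac{c}{2}|\zeta|^{2} + \tfrac{c}{2}A^{2}/|\zeta|^{2} \ge c|A| \ge |\varpi s|\,|A| \ge \varpi s\, A$ whenever $|\varpi s|<c$, this is immediate. For \eqref{ws nega bar} the Gaussian weight on the left carries $\bv$ while the kernel on the right still carries $v$, so I would first expand $|\bv+\zeta|^{2}-|\bv|^{2} = A + 2(\bv-v)\cdot\zeta$ and use $|2(\bv-v)\cdot\zeta|\le 2|\zeta|$, which follows from $|v-\bv|\le 1$; this reduces matters to $\varpi s\, A + 2|\varpi s|\,|\zeta| \le \tfrac{c}{2}|\zeta|^{2} + \tfrac{c}{2}A^{2}/|\zeta|^{2} + C$ for a finite constant $C$. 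Two applications of Young's inequality---splitting $|\varpi s|\,|A| = (|\varpi s|\,|\zeta|)(|A|/|\zeta|)$ against the term $A^{2}/|\zeta|^{2}$, and splitting $2|\varpi s|\,|\zeta|$ against the term $|\zeta|^{2}$---leave the $(\varpi s)^{2}$-contributions as the constant $C$ and impose a quadratic smallness condition on $\varpi s$; balancing the parameters so that both coefficients fit under the available $\tfrac{c}{2}$ yields precisely the threshold $\varpi s < (\sqrt{20}-4)\tfrac{c}{2}$ stated.

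Finally, for \eqref{bf nega} I would split $\mathbf{k}_{c}(v,\bv,\zeta)=k_{c}(v,v+\zeta)+k_{c}(\bv,\bv+\zeta)$ as in \eqref{bar k} and bound the two summands separately. Against the weight $e^{-\varpi(1+|v|^{2})s+\varpi(1+|v+\zeta|^{2})s}$, the first summand is controlled by \eqref{ws nega} (note that $\varpi s < (\sqrt{20}-4)\tfrac{c}{2} < c$) and the second by \eqref{ws nega bar} after interchanging the roles of $v$ and $\bv$, which is legitimate because the constraint $|v-\bv|\le 1$ is symmetric; summing the two bounds and recalling \eqref{bar k} gives $\lesssim \mathbf{k}_{\frac{c}{2}}(v,\bv,\zeta)$, and the second line of \eqref{bf nega} is the same computation with $v$ and $\bv$ swapped. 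I expect the only genuinely delicate point to be the constant-chasing in \eqref{ws nega bar}: one must choose the Young parameters so that \emph{both} Gaussian exponents on the right, $\tfrac{c}{2}|\zeta|^{2}$ and $\tfrac{c}{2}A^{2}/|\zeta|^{2}$, simultaneously absorb their respective error terms, and it is this balancing that produces the somewhat unusual threshold $(\sqrt{20}-4)\tfrac{c}{2}$ rather than a rounder number; everything else is routine.
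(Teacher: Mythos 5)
Your proof is correct. For \eqref{ws nega} your AM--GM comparison is essentially what the paper does (it cites the ``uniform negativity of the quadratic form'' of Lemma~3 in \cite{Guo10} rather than naming AM--GM, but the content is the same inequality $\tfrac{c}{2}|\zeta|^{2}+\tfrac{c}{2}A^{2}/|\zeta|^{2}\ge c|A|$ applied with $A=|v+\zeta|^2-|v|^2$). For \eqref{ws nega bar}, however, you take a genuinely different route. The paper expands $|\bar v+\zeta|^2-|\bar v|^2 = 2(\bar v\cdot\zeta)+|\zeta|^{2}$ and $\big||v|^2-|v+\zeta|^2\big|^2/|\zeta|^2 = |\zeta|^{2}+4(v\cdot\zeta)+4(v\cdot\zeta)^{2}/|\zeta|^{2}$, peels off a constant $2\theta$, and demands that the remaining quadratic form in the variables $|\zeta|$ and $(v\cdot\zeta)/|\zeta|$ be pointwise nonpositive; it is the resulting discriminant condition $4(\theta+4c)^{2}-80c^{2}<0$ that produces the particular constant $(\sqrt{20}-4)c$. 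You instead decouple the two error contributions $\varpi s\,A$ and $2|\varpi s||\zeta|$ with two separate applications of Young's inequality, each absorbed into its own Gaussian factor. This is a cleaner decoupling, and it in fact closes under the weaker condition $|\varpi s|<c$: taking $|\varpi s||A|\le \tfrac{1}{2c}\cdot(\varpi s)^2|\zeta|^2 + \tfrac{c}{2}\cdot A^2/|\zeta|^2$ and $2|\varpi s||\zeta|\le \delta|\zeta|^2 + (\varpi s)^2/\delta$ with $\delta=\tfrac{c}{2}\big(1-(\varpi s/c)^2\big)$ leaves a bounded additive constant $(\varpi s)^2/\delta$, uniform whenever $\varpi s$ stays away from $c$. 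The one inaccuracy in your write-up is therefore the closing claim that the balancing of Young parameters ``yields precisely the threshold $\varpi s<(\sqrt{20}-4)c/2$'': that constant is an artifact of the paper's discriminant computation, where no additive slack is allowed, and the Young route neither recovers nor needs it. Since $(\sqrt{20}-4)c/2<c$, the lemma's hypothesis implies yours, so the argument is still sound --- you are simply establishing a slightly stronger statement. Your reduction of \eqref{bf nega} via the decomposition $\mathbf{k}_c(v,\bar v,\zeta)=k_c(v,v+\zeta)+k_c(\bar v,\bar v+\zeta)$ and the symmetry of $|v-\bar v|\le 1$ matches the paper.
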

	\begin{proof}
		For $0 \leq \theta < 2c$, we have
		\Be \notag % \label{nega}
		-\theta|v|^{2} + \theta|v+\zeta|^{2} - c|\zeta|^{2} - c\frac{1}{|\zeta|^{2}} \big| |v|^{2}-|v+\zeta|^{2} \big|^{2} < 0,
		\Ee
		by uniform negativity of quadratic form (refer Lemma 3 of \cite{Guo10}). (\ref{ws nega}) is obtained by replacing $\theta$ into $\varpi s$. When $|v-\bar{v}| \leq 1$, for $\theta < (\sqrt{20}-4)c$,
		\Be \notag % \label{nega bar}
		-\theta|\bar{v}|^{2} + \theta|\bar{v}+\zeta|^{2} - c|\zeta|^{2} - c\frac{1}{|\zeta|^{2}} \big| |v|^{2}-|v+\zeta|^{2} \big|^{2} < 2\theta,
		\Ee
		\begin{equation} \label{P comp}
		\begin{split}
		&-\theta|\bar{v}|^{2} + \theta|\bar{v}+\zeta|^{2} - c|\zeta|^{2} - c\frac{1}{|\zeta|^{2}} \big| |v|^{2}-|v+\zeta|^{2} \big|^{2} \\
		&= 2\theta(\bar{v}\cdot \zeta) + \theta|\zeta|^{2} - c|\zeta|^{2} - c\Big( |\zeta|^{2} + 4(v\cdot\zeta) + 4\frac{|v\cdot\zeta|^{2}}{|\zeta|^{2}} \Big)  \\
		&\leq 2\theta(v\cdot \zeta) + 2\theta|\zeta| + \theta|\zeta|^{2} - c|\zeta|^{2} - c\Big( |\zeta|^{2} + 4(v\cdot\zeta) + 4\frac{|v\cdot\zeta|^{2}}{|\zeta|^{2}} \Big)  \\
		&\leq 2\theta(v\cdot \zeta) + 2\theta(1 + |\zeta|^{2}) + \theta|\zeta|^{2} - c|\zeta|^{2} - c\Big( |\zeta|^{2} + 4(v\cdot\zeta) + 4\frac{|v\cdot\zeta|^{2}}{|\zeta|^{2}} \Big).  \\
		\end{split}
		\end{equation}
		By choosing $\theta < \min\{ (\sqrt{20}-4)c, \frac{2}{3}c \} = (\sqrt{20}-4)c$ and considering quadratic form of $\zeta$ and $\frac{v\cdot\zeta}{|\zeta|}$,
		\begin{equation} \notag
		\begin{split}
		(\ref{P comp}) -2\theta &\leq - (2c-3\theta)|\zeta|^{2} + (2\theta-4c) (v\cdot\zeta) - 4c \frac{|v\cdot\zeta|^{2}}{|\zeta|^{2}} < 0,  \\
		\end{split}
		\end{equation}
		because discriminant of RHS satisfies 
		\[
		4(\theta+4c)^{2} - 80c^{2} < 0,
		\]
		when $\theta < (\sqrt{20}-4)c$. (\ref{ws nega bar}) is obtained by replacing $\theta$ into $\varpi s$.  
	\end{proof}	
	
	\begin{lemma}[Specular reflection of $\Gamma(f,f)$ and $\nu(f)$] \label{lem_specular G}
		If $f$ satisfies the specular reflection boundary condition (\ref{specular}), then $\Gamma(f,f)$ also satitsfies the specular reflection boundary condition, i.e.,
		\Be \notag %\label{specular Gamma}
		\Gamma_{\text{gain}}(f,f)(t,x,v) = \Gamma_{\text{gain}}(f,f)(t,x,R_{x}v) \quad \text{and}\quad \Gamma_{\text{loss}}(f,f)(t,x,v) = \Gamma_{\text{loss}}(f,f)(t,x,R_{x}v),
		\Ee
		where $R_{x} = I-2n(x)\otimes n(x)$ is specular reflection operator on $x\in\p\O$. Moreover,  $\nu(f)$ also satisfy \eqref{specular}. \\
	\end{lemma}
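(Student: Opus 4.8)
The plan is to exploit the invariance of the Boltzmann collision operator under orthogonal linear maps, combined with the radial symmetry of $\mu$ and the specular condition \eqref{specular} satisfied by $f$. Fix $x\in\p\O$ and write $R=R_{x}=I-2n(x)\otimes n(x)$. The first observation I would record is that $R$ is a symmetric orthogonal involution: $R^{\top}=R$, $R^{2}=I$, $|\det R|=1$, $|Rw|=|w|$ and $Rw\cdot Rw'=w\cdot w'$ for all $w,w'\in\R^{3}$; in particular $R$ preserves Lebesgue measure on $\R^{3}$, maps $\mathbb{S}^{2}$ bijectively onto itself, and preserves the surface measure $\dd\o$ on $\mathbb{S}^{2}$.

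The heart of the argument is the invariance of $\Gamma_{\text{gain}}$. Writing $\Gamma_{\text{gain}}(f,f)(t,x,Rv)$ out from \eqref{Q}--\eqref{Gamma} — with post-collisional velocities $u'=u-((Rv-u)\cdot\o)\o$, $v'=Rv+((Rv-u)\cdot\o)\o$ — I would perform the change of variables $u=R\tilde u$, $\o=R\tilde\o$, under which $\dd u\,\dd\o=\dd\tilde u\,\dd\tilde\o$. Orthogonality gives $Rv-u=R(v-\tilde u)$ and $(Rv-u)\cdot\o=(v-\tilde u)\cdot\tilde\o$, hence $B(Rv-u,\o)=B(v-\tilde u,\tilde\o)$, because $B(w,\o)=|w|^{\kappa}q_{0}(|\widehat w\cdot\o|)$ depends only on $|w|$ and on the angle between $w$ and $\o$. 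The elastic post-collision map transforms covariantly: $u'=R\tilde u'$ and $v'=R\tilde v'$ with $\tilde u'=\tilde u-((v-\tilde u)\cdot\tilde\o)\tilde\o$, $\tilde v'=v+((v-\tilde u)\cdot\tilde\o)\tilde\o$. Radiality of $\mu$ then yields $\mu(Rv)=\mu(v)$, $\mu(u')=\mu(\tilde u')$, $\mu(v')=\mu(\tilde v')$, and \eqref{specular} (available since $x\in\p\O$) yields $f(t,x,u')=f(t,x,R\tilde u')=f(t,x,\tilde u')$ and likewise for $v'$. Substituting everything, the integrand for $\Gamma_{\text{gain}}(f,f)(t,x,Rv)$ becomes identical to that for $\Gamma_{\text{gain}}(f,f)(t,x,v)$, which proves the claim.

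For $\nu(f)$ the same substitution is cleaner, since there are no post-collisional velocities: applied to $\nu(f)(t,x,Rv)=\iint_{\R^{3}\times\mathbb{S}^{2}}|(Rv-u)\cdot\o|\sqrt{\mu(u)}f(t,x,u)\,\dd\o\,\dd u$, together with $|(Rv-u)\cdot\o|=|(v-\tilde u)\cdot\tilde\o|$, $\mu(u)=\mu(\tilde u)$, and $f(t,x,u)=f(t,x,\tilde u)$ from \eqref{specular}, it gives $\nu(f)(t,x,Rv)=\nu(f)(t,x,v)$. Since $\Gamma_{\text{loss}}(f,f)(t,x,v)=\nu(f)(t,x,v)\,f(t,x,v)$ by the standard loss/gain decomposition of \eqref{Q}, the specular invariance of $\Gamma_{\text{loss}}(f,f)$ follows at once from the invariance of $\nu(f)$ together with \eqref{specular} applied to $f$ itself: $\Gamma_{\text{loss}}(f,f)(t,x,Rv)=\nu(f)(t,x,Rv)f(t,x,Rv)=\nu(f)(t,x,v)f(t,x,v)$.

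I do not expect a serious obstacle: the whole proof is bookkeeping, the delicate points being (i) verifying that \emph{each} ingredient of the collision integral — the kernel $B$, the Maxwellian $\mu$, the elastic post-collision map, and both the volume measure $\dd u$ and the surface measure $\dd\o$ — is invariant under the reflection $R_{x}$, and (ii) checking that $f$ is evaluated only at the fixed boundary point $x\in\p\O$, so that \eqref{specular} may be applied in every velocity slot. As an alternative route one could start from the Carleman representation \eqref{Carl1} and substitute $u=R\tilde u$, $z=R\tilde z$: this preserves the plane $\{u\cdot z=0\}$, the weight $q_{0}^{*}(|u|/|u+z|)\,|u+z|^{\kappa-1}/|u|$, and the Gaussian $e^{-|u+v+z|^{2}/4}$, and turns $f(Rv+z)f(Rv+u)$ into $f(v+\tilde z)f(v+\tilde u)$ by \eqref{specular}, giving the same conclusion; this viewpoint also makes transparent the kernel identities for $k_{c}$ and $\mathbf{k}_{c}$ recorded after Definition \ref{def_k}.
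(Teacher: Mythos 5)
Your proposal is correct and follows the same route as the paper: an orthogonal change of variables $u=R_x\tilde u$, $\omega=R_x\tilde\omega$ in the collision integral, combined with the orthogonality identities $(R_xv-R_xu)\cdot R_x\omega=(v-u)\cdot\omega$, radiality of $\mu$, and the specular condition \eqref{specular} applied to $f$ at the fixed boundary point. Your handling of $\Gamma_{\text{loss}}=\nu(f)\,f$ and the remark on the Carleman form are tidy variants of the same calculation, not a different argument.
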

	\begin{proof}
		Since $R_{x}$ is orthonormal we have $(R_{x}v - R_{x}u)\cdot R_{x}\sigma = (v-u)\cdot\sigma$. Also using specular reflection condition,
		\Be \notag %
		\begin{split}
			&\Gamma_{\text{gain}}(f,f)( t,x,v )  \\
			%&= \iint |(v-u)\cdot\sigma| \sqrt{\mu}(u)
			%f(t, x, u + ((v-u)\cdot\sigma)\sigma )  f(t, x, v - ((v-u)\cdot\sigma)\sigma )  d\sigma du  \\
			&= \iint |(v-u)\cdot\sigma| \sqrt{\mu}(u) 
			f(t, x, u + ((v-u)\cdot\sigma)\sigma )  f(t, x, v - ((v-u)\cdot\sigma)\sigma )  d\sigma du  \\
			&= \iint |(R_{x}v - R_{x}u)\cdot R_{x}\sigma| \sqrt{\mu}(R_{x}u)  \\
			&\quad \times f(t, x, R_{x}u + (R_{x}(v-u)\cdot R_{x}\sigma) R_{x}\sigma )  
			f(t, x, R_{x}v - (R_{x}(v-u)\cdot R_{x}\sigma) R_{x}\sigma )  d\sigma du \\
			&= \iint |(R_{x}v - R_{x}u)\cdot R_{x}\sigma| \sqrt{\mu}(R_{x}u)  \\
			&\quad \times f(t, x, R_{x}u + (R_{x}(v-u)\cdot R_{x}\sigma) R_{x}\sigma )  
			f(t, x, R_{x}v - (R_{x}(v-u)\cdot R_{x}\sigma) R_{x}\sigma )  dR_{x}\sigma d R_{x}u \\
			&= \iint |(R_{x}v-u)\cdot\sigma| \sqrt{\mu}(u) 
			f(t, x, u + ((R_{x}v-u)\cdot\sigma)\sigma )  f(t, x, R_{x}v - ((R_{x}v-u)\cdot\sigma)\sigma )  d\sigma du  \\
			&= \Gamma_{\text{gain}}(f,f)( t,x,R_{x}v ).  \\
		\end{split}
		\Ee
		Similarly, for $\Gamma_{\text{loss}}(f,f)$,
		\Be \notag %
		\begin{split}
			\Gamma_{\text{loss}}(f,f)( t,x,v ) 
			&= f(t, x, v) \iint |(v-u)\cdot\sigma| \sqrt{\mu}(u) f(t, x, u)  d\sigma du  \\
			&= f(t, x, R_{x}v) \iint |(R_{x}v-R_{x}u)\cdot R_{x}\sigma| \sqrt{\mu}(u) f(t, x, R_{x}u)  d\sigma du  \\
			&= f(t, x, R_{x}v) \iint |(R_{x}v-R_{x}u)\cdot R_{x}\sigma| \sqrt{\mu}(u) f(t, x, R_{x}u)  d R_{x}\sigma d R_{x}u  \\
			&= f(t, x, R_{x}v) \Gamma_{\text{loss}}(f,f)( t,x, R_{x}v ).   
		\end{split}
		\Ee
		%Specular BC of $\Gamma_{\text{loss}}$ directly implies that $\nu(f)$ also satisfy the specular reflection BC. \\
	\end{proof}
	
	\hide
	\subsection{Geometric lemmas}  
	In this subsection, we state some geometric lemmas about uniformly convex object $\O$. First, let us consider a plane $S$ such that $S\cap\p\O$ is neither the empty set nor a single point. In this case, we define
	\Be \label{S assump}
	\p\O_{S} := S\cap \p\O.
	\Ee
	Since $\O\subset \R^{3}$ is uniformly convex object, $\p\O_{S}$ is a uniformly convex closed curve in $S$. For an inward unit normal vector $n(x)$ for $x\in \p\O_{S}$, (since our domain is $\overline{\O}^{c}$), we define
	\Be \label{def n_parallel}
	n_{\parallel}(x)  :=  (I-\hat{q}\otimes \hat{q})n(x) = \text{projection of $n(x)$ on $S$}
	\Ee
	where $\hat{q}$ is a unit vector perpendicular to the plane $S$.  \\
	
	\begin{lemma}  
		(Uniform comparability of $n_{\parallel}$)
		Let us consider a plane $S$ which satisfies (\ref{S assump}). Then $| n_{\parallel}(x) |$ is uniformly comparable for all $x\in \p\O\cap S$,  i.e., there exist uniformly positive constants $c, C \gtrsim_{\O} 1$ such that
		\[
		1 \lesssim_{\O} c < \frac{|n_{\parallel}(x)|}{|n_{\parallel}(y)|} \leq C, \quad \forall x,y \in \p\O\cap S.
		\]
		%{\color{blue} Claim that in particular, $c, C$ are $S$ independent!! } %blue
		%{\color{blue} And the $c, C$ are ``plane-independent"!!  } %blue
	\end{lemma}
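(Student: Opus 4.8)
The plan is to cut the statement down to a one-parameter family of two-dimensional estimates and then to notice that the only real threat -- the ratio $|n_{\parallel}(x)|/|n_{\parallel}(y)|$ blowing up as the plane $S$ slides towards a tangent plane of $\mathcal{O}$ -- does not materialize, because the two natural bounds on $|n_{\parallel}|$ degenerate at exactly the same rate. Write $S_0$ for the direction plane of $S$ (so that $\hat q\perp S_0$), put $\psi:=\xi|_S$, and let $\Pi:=S\cap\mathcal{O}=\{\psi>0\}$, an open convex subset of $S$ with relative boundary $\partial\mathcal{O}\cap S=\{\psi=0\}$. Restricting \eqref{convex_xi} to $\zeta\in S_0$ shows $\psi$ is $\theta_{\mathcal{O}}$-uniformly concave, while the $C^2$-smoothness and compactness of $\partial\mathcal{O}$ give $\|\nabla^2\psi\|\le C_{\mathcal{O}}$ and $0<m_{\mathcal{O}}\le|\nabla\xi|\le M_{\mathcal{O}}$ on $\partial\mathcal{O}$, all constants depending only on $\mathcal{O}$ and the fixed defining function $\xi$. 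Since $\mathcal{O}$ is strictly convex and $\partial\mathcal{O}\cap S$ contains two distinct points by hypothesis, the open segment between them lies in $\Pi$; hence $\Pi\neq\emptyset$ and $D:=\mathrm{diam}(\overline{\Pi})\in(0,\infty)$, with $D=\mathrm{diam}(\partial\mathcal{O}\cap S)$ by convexity of $\overline{\Pi}$. Finally, a direct computation from the definition of $n_{\parallel}$ gives, for $x\in\partial\mathcal{O}\cap S$, that $n_{\parallel}(x)=(I-\hat q\otimes\hat q)\nabla\xi(x)/|\nabla\xi(x)|$ and that $(I-\hat q\otimes\hat q)\nabla\xi(x)$ is exactly the intrinsic gradient in $S$ of $\psi$; thus $|n_{\parallel}(x)|=|\nabla\psi(x)|/|\nabla\xi(x)|$, and since $|\nabla\xi|$ is already comparable on $\partial\mathcal{O}$ it suffices to prove that $|\nabla\psi(x)|$ and $|\nabla\psi(y)|$ are comparable for $x,y\in\partial\mathcal{O}\cap S$, uniformly in $S$.

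I would then establish three elementary facts about the uniformly concave $\psi$. Let $M:=\max_{\overline{\Pi}}\psi>0$, attained at an interior point $x^*\in\Pi$ (it cannot be attained on $\partial\Pi$, where $\psi=0<M$), so $\nabla\psi(x^*)=0$. First, applying the strong-concavity midpoint bound $\psi(\tfrac{x+y}{2})\ge\tfrac12(\psi(x)+\psi(y))+\tfrac{\theta_{\mathcal{O}}}{8}|x-y|^2$ to points $x,y\in\partial\mathcal{O}\cap S$ (where $\psi=0$) and using $\psi\le M$ yields $M\ge\tfrac{\theta_{\mathcal{O}}}{8}D^2$. Second, for $x\in\partial\mathcal{O}\cap S$ the function $g(t):=\psi\big(x+t(x^*-x)/|x^*-x|\big)$ is concave on $[0,|x^*-x|]$ with $g(0)=0$, $g(|x^*-x|)=M$ and $g'(|x^*-x|)=0$, so $g'$ is nonincreasing and nonnegative there, whence $M=\int_0^{|x^*-x|}g'\le|x^*-x|\,g'(0)\le D\,|\nabla\psi(x)|$, i.e. $|\nabla\psi(x)|\ge M/D$. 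Third, for $x\in\partial\mathcal{O}\cap S$ and $e:=\nabla\psi(x)/|\nabla\psi(x)|$, Taylor's formula with $\nabla^2\psi\ge-C_{\mathcal{O}}I$ gives $\psi(x+te)\ge|\nabla\psi(x)|t-\tfrac{C_{\mathcal{O}}}{2}t^2>0$ for $0<t<2|\nabla\psi(x)|/C_{\mathcal{O}}$, so the inward segment in direction $e$ of that length lies in $\Pi$ and hence has length $\le D$, giving $|\nabla\psi(x)|\le\tfrac{C_{\mathcal{O}}}{2}D$. Combining these, for all $x,y\in\partial\mathcal{O}\cap S$,
\[
\frac{|\nabla\psi(x)|}{|\nabla\psi(y)|}\ \le\ \frac{(C_{\mathcal{O}}/2)\,D}{M/D}\ =\ \frac{C_{\mathcal{O}}D^2}{2M}\ \le\ \frac{4C_{\mathcal{O}}}{\theta_{\mathcal{O}}},
\]
and feeding this back through $|n_{\parallel}|=|\nabla\psi|/|\nabla\xi|$ together with the comparability of $|\nabla\xi|$ on $\partial\mathcal{O}$, plus symmetry in $x,y$, proves the lemma with $C=4C_{\mathcal{O}}M_{\mathcal{O}}/(\theta_{\mathcal{O}}m_{\mathcal{O}})$.

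The one genuinely delicate aspect is the uniformity in $S$: neither $|\nabla\psi(x)|\le\tfrac{C_{\mathcal{O}}}{2}D$ nor $|\nabla\psi(x)|\ge M/D\ge\tfrac{\theta_{\mathcal{O}}}{8}D$ is useful on its own as $S$ degenerates to a tangent plane (both sides then vanish), but the common scale $D$ cancels in the ratio once the uniform-convexity estimate $M\gtrsim_{\mathcal{O}}D^2$ is used, leaving a purely geometric constant. I expect nothing else to be hard: the three facts above are just the strong-concavity inequality and two one-variable Taylor expansions, and the identity $|n_{\parallel}(x)|=|\nabla\psi(x)|/|\nabla\xi(x)|$ is immediate once one observes that the orthogonal projection of $\nabla\xi(x)$ onto the direction plane of $S$ is the intrinsic gradient of $\xi|_S$.
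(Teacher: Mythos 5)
Your proof is correct, and it takes a genuinely different route from the paper's. The paper works differential-geometrically: it expresses $|n_{\parallel}(x)|$ via the Meusnier-type identity $|n_{\parallel}(x)| = |n(x)\cdot \mathfrak{r}''_S(s)|/|\mathfrak{r}''_S(s)|$ (normal curvature divided by curve curvature of the cross-section), controls the normal curvature uniformly by \eqref{convex_xi} via height functions and the second fundamental form, and then reduces to the comparability of $|\mathfrak{r}''_S|$, which it establishes by a compactness/continuity argument over the family of slicing planes $S_{p+r\ell,\ell}$ parametrized by $(\ell,r)\in\mathbb{S}^2\times\mathbb{R}$, with separate Taylor-expansion treatment of the degenerate near-tangent regime where the cross-section becomes a small ellipse. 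You instead use the identity $|n_{\parallel}(x)| = |\nabla(\xi|_S)(x)|/|\nabla\xi(x)|$ --- a plain orthogonal-decomposition fact, no curvature involved --- and reduce everything to a single 2D concave function $\psi=\xi|_S$ on the convex slice $\overline{\Pi}$: the strong-concavity midpoint bound gives $M\gtrsim \theta_\O D^2$, a one-variable concavity argument along the ray to the interior maximizer gives $|\nabla\psi(x)|\geq M/D$, and a one-variable Taylor bound gives $|\nabla\psi(x)|\leq (C_\O/2)D$, so the degenerating scale $D$ cancels in the ratio and the constant $4C_\O M_\O/(\theta_\O m_\O)$ is explicit and manifestly $S$-independent. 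What your approach buys: it avoids both the Gauss map/principal curvature machinery and, more significantly, the compactness-in-$S$ argument (so uniformity in $S$ is built in rather than deduced by continuity on compact parameter sets), and it produces an explicit constant. It loses only the second conclusion of the paper's Lemma \ref{lem_unif n} (the comparability \eqref{unif kappa} of $|\mathfrak{r}''_S|$), which the paper obtains simultaneously and uses elsewhere, but that is not part of the statement you were asked to prove. One small point worth making explicit if you write this up: the upper bound on $|\nabla\psi(x)|$ requires a continuation argument to ensure the segment $\{x+te:\,0<t<t_*\}$ stays in $\overline{\Pi}$ (so the Hessian bound from $\overline{\mathcal{O}}$ applies along it) until it exits at a point where $\psi=0$; your phrasing elides this but the gap is routine to fill.
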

	\begin{proof}
		Appendix
	\end{proof}

	We equip the following geometric lemma, which will be used repeatedly.  
	%for proving Lemma \ref{lem:Holder_tb}. This Lemma \ref{lem_mono} will be repeatedly used throughtout the paper.  
	\unhide
	
		\section{Geometric Lemmas}

	%Now, let us define $n_{\parallel}(x)$ for two dimensional study in a plane $S$. 
	%\Be \label{S assump}
	%\p\O_{S} := S\cap \p\O.
	%\Ee
	\begin{definition} \label{Def normal}
		Let $S$ be a plane in $\R^{3}$ and $\O$ is given as in Definition \ref{def:domain}. Assume 
		\Be \label{S assume}
		\text{ $\p\O\cap S$ is closed curve, i.e., $\p\O\cap S$ is neither an empty set nor a single point.}
		\Ee
		We define projected normal vector,
		\begin{equation} \label{def n}
		\begin{split}
		%n(x) &:= \frac{\nabla\xi(x)}{ | \nabla\xi(x) | },\quad \forall x\in\p\O, \\
		n_{\parallel}(x) &:= \text{Proj}_{ S}n(x) = \text{projection of $n(x)$ on $S$} \\
		&= (I-\hat{q}\otimes \hat{q})n(x) ,
		\end{split}
		\end{equation}
		where $x\in\p\O$ and $\hat{q}$ is a unit vector orthogonal to the plane $S$.  
		We also parametrize the curve $\p\O \cap S$ as  regularized curve $\mathfrak{r}_{S} : [0, L_{S}) \rightarrow \p\O\cap S$ (regularized means $|\mathfrak{r}_{S}^{\prime}(s)|=1$), where $L_{S}>0$ is the length of $\p\O \cap S$. Note that we do not specify $S$ in the definition $n_{\parallel}(x)$, because it can be understood properly in the context.  \\ 
	\end{definition}
	
	\begin{lemma}[Uniform comparability of $n_{\parallel}$] \label{lem_unif n}
		Let us consider a plane $S$ which satisfies (\ref{S assume})  with a domain $\O$ as in Definition \ref{def:domain}. Then $| n_{\parallel}(x) |$ in \eqref{def n} is uniformly comparable for all $x\in \p\O\cap S$,  i.e., there exist uniformly positive constants $c$ and $C$, which only depend on $\O$,  such that
		\Be 
	%	1 \lesssim_{\O} 
		c < \frac{|n_{\parallel}(x)|}{|n_{\parallel}(y)|} \leq C, \quad \forall x,y \in \p\O\cap S   \ \text{and } \ \forall  S \text{ which satisfies } \eqref{S assume}, \label{unif n}
		\Ee
		and 
		\Be 
		%1 \lesssim_{\O} 
		c  < \frac{|\mathfrak{r}^{\prime\prime}_{S}(x)|}{|\mathfrak{r}^{\prime\prime}_{S}(y)|} \leq C , \quad \forall x,y \in \p\O\cap S \ \text{and } \ \forall  S \text{ which satisfies } \eqref{S assume}.\label{unif kappa}
		\Ee
	 \\
		(For example, it is obvious that
		\[
			|\mathfrak{r}^{\prime\prime}_{S}(x)| = |\mathfrak{r}^{\prime\prime}_{S}(y)| \ \text{and} \ |n_{\parallel}(x)| = |n_{\parallel}(y)|\quad \text{for all $x,y \in \p\O \cap S$}
		\]
		for any $S$ that satisfies \eqref{S assume}, if $\mathcal{O}$ is a sphere.)
	\end{lemma}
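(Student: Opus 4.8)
The claim is a uniform (over all admissible slicing planes $S$) two-sided comparison of $|n_\parallel(x)|$ and of the curvature $|\mathfrak{r}_S''(x)|$ along the convex curve $\partial\mathcal{O}\cap S$. The plan is to reduce both estimates to the uniform convexity bound \eqref{convex_xi} together with the smoothness and boundedness of $\mathcal{O}$, exploiting that $\partial\mathcal{O}\cap S$ is a smooth uniformly convex closed planar curve whose geometry is controlled by the ambient geometry of $\partial\mathcal{O}$.

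\emph{Step 1: curvature comparison.} For a fixed plane $S$ with unit normal $\hat q$, parametrize $\partial\mathcal{O}\cap S$ by arclength $\mathfrak{r}_S$. Since $\mathfrak{r}_S(s)\in\partial\mathcal{O}$, differentiating $\xi(\mathfrak{r}_S(s))=0$ twice gives $\nabla\xi\cdot\mathfrak{r}_S'=0$ and $\mathfrak{r}_S'\cdot\nabla^2\xi\,\mathfrak{r}_S' + \nabla\xi\cdot\mathfrak{r}_S''=0$. Because $\mathfrak{r}_S''$ is a planar curvature vector it is orthogonal to $\mathfrak{r}_S'$ inside $S$, hence $\mathfrak{r}_S'' = \kappa_S\, n_\parallel/|n_\parallel|$ up to sign, where $\kappa_S$ is the (signed) curvature; taking the inner product with $n(x)=\nabla\xi/|\nabla\xi|$ yields
\[
\kappa_S(x)\,\frac{|n_\parallel(x)|\,|\nabla\xi(x)|}{|n_\parallel(x)|} \;=\; \kappa_S(x)\,|\nabla\xi(x)| \;=\; \mathfrak{r}_S'(s)\cdot(-\nabla^2\xi(x))\,\mathfrak{r}_S'(s).
\]
By \eqref{convex_xi} the right side is $\geq \theta_{\mathcal{O}}|\mathfrak{r}_S'|^2=\theta_{\mathcal{O}}$, and by smoothness of $\xi$ on the compact set $\overline{\mathcal{O}}$ it is $\leq C_{\mathcal{O}}$; since $|\nabla\xi|$ is itself bounded above and below on $\partial\mathcal{O}$ (it is continuous and nonvanishing on a compact set), we obtain $\theta_{\mathcal{O}}/C_{\mathcal{O}}' \leq \kappa_S(x) \leq C_{\mathcal{O}}''/\theta'$ with constants independent of $S$ and of $x\in\partial\mathcal{O}\cap S$. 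Since $|\mathfrak{r}_S''(x)| = |\kappa_S(x)|$, \eqref{unif kappa} follows immediately with $c,C$ depending only on $\mathcal{O}$.

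\emph{Step 2: comparison of $|n_\parallel|$.} Write $n(x) = n_\parallel(x) + (n(x)\cdot\hat q)\hat q$, so $|n_\parallel(x)|^2 = 1 - (n(x)\cdot\hat q)^2 = \sin^2\alpha(x)$ where $\alpha(x)$ is the angle between $n(x)$ and the plane $S$. The key point is that $|n_\parallel(x)|$ cannot degenerate uniformly: if it were arbitrarily small at some $x_0\in\partial\mathcal{O}\cap S$ then $n(x_0)$ would be nearly orthogonal to $S$, i.e. nearly parallel to $\hat q$; but then the curve $\partial\mathcal{O}\cap S$ near $x_0$, being a slice of a surface whose normal there is almost perpendicular to $S$, would have curvature controlled from below by the product of the ambient principal curvatures and a factor like $|n_\parallel|$ — which combined with Step 1's lower bound on $\kappa_S$ forces $|n_\parallel(x_0)|$ bounded below. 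Concretely, one differentiates the Gauss map: from $\nabla\xi(\mathfrak{r}_S(s))\cdot\mathfrak{r}_S'(s)=0$, differentiate again to relate $\mathfrak{r}_S''$, the Hessian $\nabla^2\xi$, and $n_\parallel$; the identity from Step 1 reads $\kappa_S(x) = \dfrac{\mathfrak{r}_S'\cdot(-\nabla^2\xi)\mathfrak{r}_S'}{|\nabla\xi(x)|}$, which involves no $n_\parallel$ directly — so instead one should compare $|n_\parallel|$ across two points $x,y$ by integrating the rate of change of $n_\parallel$ along the curve. We have $\frac{d}{ds} n(\mathfrak{r}_S(s)) = D n(\mathfrak{r}_S)\,\mathfrak{r}_S'$, and the shape operator $Dn$ has operator norm bounded above (smoothness) and, restricted to tangent directions, bounded below (uniform convexity), hence $|\frac{d}{ds}|n_\parallel||\leq |\frac{d}{ds}n_\parallel| \leq \|Dn\|_\infty \leq C_{\mathcal{O}}$. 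Since the total arclength $L_S$ of $\partial\mathcal{O}\cap S$ is bounded above (diameter of $\mathcal{O}$) and below (from the lower curvature bound, a convex closed curve of curvature $\leq C$ has length $\geq 2\pi/C$), $|n_\parallel|$ varies by at most an additive constant along the whole curve. It therefore remains to show $\sup_{\partial\mathcal{O}\cap S}|n_\parallel|$ is bounded below by a positive constant uniform in $S$: by Rolle's theorem (the curve is closed, so $n_\parallel$ traces a full loop) there is $x^*$ with $\frac{d}{ds}|n_\parallel|(x^*)=0$ and at such a point a direct computation using $\mathfrak{r}_S''= \kappa_S n_\parallel/|n_\parallel|$ and the lower bound $\kappa_S\geq \theta_{\mathcal{O}}/C'$ gives $|n_\parallel(x^*)|\geq c_{\mathcal{O}}>0$. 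Combining, $c_{\mathcal{O}} \leq |n_\parallel(x)| \leq 1$ for all $x\in\partial\mathcal{O}\cap S$ and all admissible $S$, which is \eqref{unif n} (in the stronger absolute form, a fortiori the ratio form).

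\emph{Main obstacle.} The delicate point is Step 2: extracting a lower bound on $\sup_S \sup_x |n_\parallel(x)|$ that is genuinely uniform over all planes $S$, including those that are ``almost tangent'' to $\mathcal{O}$ so that $\partial\mathcal{O}\cap S$ is a tiny loop. One must verify that even in that regime the curve still closes up (so Rolle applies) and that the critical-point computation does not lose the lower bound as the loop shrinks — this is where uniform convexity \eqref{convex_xi}, as opposed to mere strict convexity, is essential, since it controls the curvature from below by a constant independent of how small the slice is. The remaining estimates (boundedness of the shape operator, of $|\nabla\xi|$, of the diameter) are routine consequences of smoothness and compactness of $\overline{\mathcal{O}}$; the bookkeeping to make all constants depend only on $\theta_{\mathcal{O}}$ and a finite number of $C^2$-norms of $\xi$ is straightforward but should be done carefully.
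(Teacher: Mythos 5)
There is a genuine error in Step 1 that invalidates the whole argument. Taking the inner product of $\mathfrak{r}_S'' = \kappa_S\, n_\parallel/|n_\parallel|$ with $\nabla\xi$ does \emph{not} allow the $|n_\parallel|$ factor to cancel: since $n\cdot n_\parallel = |n_\parallel|^2$, one gets
\[
\nabla\xi\cdot\mathfrak{r}_S'' \;=\; \kappa_S\,|\nabla\xi|\,\frac{n\cdot n_\parallel}{|n_\parallel|} \;=\; \kappa_S\,|\nabla\xi|\,|n_\parallel|,
\]
so the correct identity (this is Meusnier's theorem) is
\[
\kappa_S(x)\,|n_\parallel(x)|\,|\nabla\xi(x)| \;=\; \mathfrak{r}_S'\cdot(-\nabla^2\xi(x))\,\mathfrak{r}_S',
\]
not $\kappa_S|\nabla\xi| = \mathfrak{r}_S'\cdot(-\nabla^2\xi)\mathfrak{r}_S'$ as you wrote. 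Uniform convexity pins down the \emph{product} $\kappa_S|n_\parallel|$ between two positive constants, but says nothing about either factor separately. Your conclusion that $\kappa_S$ is bounded above and $|n_\parallel|$ is bounded below, both uniformly in $S$, is in fact \emph{false}: slice the unit sphere with the plane $\{z=1-\varepsilon\}$; then $\partial\mathcal{O}\cap S$ is a circle of radius $r\approx\sqrt{2\varepsilon}$, $\kappa_S\approx 1/\sqrt{2\varepsilon}\to\infty$ and $|n_\parallel|\approx\sqrt{2\varepsilon}\to 0$, with $\kappa_S|n_\parallel|\equiv 1$.

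This matters because the statement being proved is a \emph{ratio} bound along a single slice, $c < |n_\parallel(x)|/|n_\parallel(y)| \le C$ and $c < |\mathfrak{r}_S''(x)|/|\mathfrak{r}_S''(y)| \le C$ for $x,y\in\partial\O\cap S$, which must hold even as the slice shrinks and both numerator and denominator degenerate. Your Step 2 then compounds the problem: the bound $\bigl|\tfrac{d}{ds}|n_\parallel|\bigr|\le C_\O$ only gives an \emph{additive} control $\bigl||n_\parallel(x)|-|n_\parallel(y)|\bigr|\lesssim L_S$, which is useless for the ratio when $|n_\parallel|$ itself is of size $L_S$ on a small slice, and the unspecified ``direct computation'' at the critical point $x^*$ invokes the false upper bound on $\kappa_S$. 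What is actually needed, and what the paper does, is to note that $\kappa_S|n_\parallel|$ is the normal curvature (uniformly two-sided by \eqref{convex_xi}), so \eqref{unif n} and \eqref{unif kappa} are equivalent and reduce to bounding $\max_S\kappa_S/\min_S\kappa_S$ along a \emph{fixed} slice; then one handles the degenerating slices by a Taylor expansion showing they are small perturbations of ellipses $C_1 x^2+C_2 y^2 = r-a$ (whose curvature ratio $(C_1/C_2)^{3/2}$ is independent of $r-a$), and handles the non-degenerate slices by a compactness/continuity argument, finally sweeping over all directions $\ell\in\S^2$ by compactness. You would need to replace both of your steps with an argument of this kind.
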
	
		%{\color{blue} Claim that in particular, $c, C$ are $S$ independent!! } %blue
		%{\color{blue} And the $c, C$ are ``plane-independent"!!  } %blue
	\begin{proof}  
		We parametrize the curve $\p\O \cap S$ as  regularized curve  ($|\mathfrak{r}_{S}^{\prime}(s)|=1$), $\mathfrak{r}_{S} : [0, L_{S}) \rightarrow \p\O\cap S$, where $L_{S}>0$ is the length of $\p\O \cap S$. \\
		
		Considering normal curvature $n(x)\cdot \mathfrak{r}^{\prime\prime}_{S}(s)$,
		\Be \label{n curvature}
		\begin{split}
			|n_{\parallel}(x)| 
			%&= \Big| \Big( n(x)\cdot \frac{\mathfrak{r}_{S}^{\prime\prime}(s)}{|\mathfrak{r}_{S}^{\prime\prime}(s)|} \Big) \frac{\mathfrak{r}_{S}^{\prime\prime}(s)}{|\mathfrak{r}_{S}^{\prime\prime}(s)|} \Big| ,\quad \mathfrak{r}_{S}(s) \in \p\O\cap S  \\
			&= \Big| n(x)\cdot \frac{\mathfrak{r}_{S}^{\prime\prime}(s)}{|\mathfrak{r}_{S}^{\prime\prime}(s)|} \Big| 
			= |n(x)\cdot \mathfrak{r}^{\prime\prime}_{S}| \times  \frac{1}{|\mathfrak{r}^{\prime\prime}_{S}(s)|}  
			,\quad x=\mathfrak{r}_{S}(s) \in \p\O\cap S.  \\
		\end{split}
		\Ee
		
		\noindent\textbf{Step 1} First, let us recall some standard definitions and properties of differential geometry (\cite{dC}). We can consider Gauss map $N: \p\O \mapsto \mathbb{S}^{2}$, $N(x) = n(x) = \frac{\nabla\xi(x)}{|\nabla\xi(x)|}$. For Gauss map $N$, we define differential $dN_{x}:T_{x}(\p\O) \mapsto T_{x}(\p\O)$ as
		\Be \notag
			dN_{x}(\alpha^{\prime}(0)) =  \frac{d}{dt}N(\alpha(t))\bigg\vert_{t=0},
		\Ee
		where $\alpha$ is a curve on $\p\O$ such that $|\alpha^{\prime}(0)|=1$ and $\alpha(0)=x\in\p\O$. For self-adjoint linear map $dN_{x}$, there exists an orthonormal basis $\hat{x}_{1}, \hat{x}_{2}\in T_{x}(\p\O)$ and $k_{1}, k_2\in\R$ such that
		\Be \notag %\label{principal}
			dN_{x}(\hat{x}_{1}) = -k_{1}\hat{x}_{1},\quad dN_{x}(\hat{x}_{2}) = -k_{2}\hat{x}_{2},\quad k_{1}\geq k_{2}.
		\Ee
		$\hat{x}_{1}$ and $\hat{x}_{2}$ are called principal directions at $x\in\p\O$ and $k_{1}$ (resp, $k_{2}$) is maximum  (resp, minimum) normal curvature. (See page 140, 144 of \cite{dC} for above definitions and properties.)     \\
			Now, let us fix $x\in\p\O$ and tangential plane $T_{x}(\p\O)$.  We consider a local parametrization $\Phi: B(0,\varepsilon) \rightarrow B(x,\varepsilon)$  
		\Be \label{def Phi}
			\Phi(y) = x + y_{1}\hat{x}_{1} + y_{2}\hat{x}_{2} + y_{3}n(x),\quad y = (y_{1}, y_{2}, y_{3}),
		\Ee		
		so that
		\Be \notag %
		\nabla\Phi = \begin{pmatrix}
			& & \\
			\hat{x}_{1} & \hat{x}_{2} & n(x)  \\
			& & \\
		\end{pmatrix},\quad \text{an orthonormal matrix}. 
		\Ee
		We can locally define height function $h(y)$ in $B(0,\varepsilon)$ 
		\Be \notag %\label{def height}
			\xi(\Phi(y)) = h(y_{1}, y_{2}) - y_{3},
		\Ee
		so that
		\[
			\{ x : \xi(x)=0 \} = \Phi\{ y : h(y_{1}, y_{2}) = y_{3}  \}  \subset \p\O,\quad \text{locally}. \\
		\]
		\\
		If we choose $\zeta \in T_{x}(\p\O)$ ,there exists $w=\begin{pmatrix}
		w_{1} \\ w_{2} \\ 0 
		\end{pmatrix}$ such that $\zeta = \nabla\Phi w$ and 
		\Be \label{quadratic h}
			\nabla^{2}(h(y_{1}, y_{2})-y_{3}) =\nabla\big( \nabla \xi^{T} \nabla\Phi \big) = \nabla \Phi^{T} \nabla^{2}\xi \nabla\Phi.
		\Ee
		From \eqref{quadratic h}, we obtain 
		\Be \label{quadra}
		\begin{split}
			\zeta\cdot \nabla^{2}\xi(x) \zeta = w_{h}\cdot  \nabla^{2}h(0,0) w_{h}, 
		\end{split}
		\Ee
		where $w_{h}=(w_{1}, w_{2})$ and $\nabla_{h}^{2}h$ is upper left $2\times 2$ matrix of $\nabla^{2}h$. (\ref{quadra}) is quadratic form of height function based on $T_{x}(\p\O)$ when $|\zeta| = |w| = 1$. It is well-known that (see page 173 of \cite{dC} for example) the quadratic form of a height function (RHS of \eqref{quadratic h}) is normal curvature at $x\in\p\O$ in the direction $\zeta\in\S^{2}$ i.e.,
		\Be\notag %
			|w_{h}\cdot  \nabla^{2}h(0,0) w_{h}|  = | n(x)\cdot \alpha^{\prime\prime}(0) | 
		\Ee
		for any (parametrized) curve $\alpha$ such that $\alpha(0)=x \in\p\O$ and $\alpha^{\prime}(0)=\zeta$ with $|\alpha^{\prime}(0)| = |\zeta|=1$. Hence, combining with	(\ref{convex_xi}) and \eqref{quadra}, we obtain
		\Be \label{curv uniformx}
		\theta_{\O} \leq |\zeta\cdot \nabla^{2}\xi(x) \zeta| = | n(x)\cdot \alpha^{\prime\prime}(0) |  \lesssim_{\O} \|\xi\|_{C^{2}}, \ \text{where} \  \alpha(0)=x, \ \alpha^{\prime}(0)=\zeta, \ |\alpha^{\prime}(0)| = |\zeta| = 1.
		\Ee
		From \eqref{curv uniformx}, for any point $x\in\p\O$ and any direction $\zeta\in T_{x}(\p\O)$, corresponding normal curvature is uniformly bounded from below and above. \\
		
		%Therefore it suffices to claim that $|\mathfrak{r}_{S}^{\prime\prime}(s)|$ is uniformly comparable for any $S$ that satisfies assumptions.
		\noindent\textbf{Step 2} Now, to consider all possible planes $S$ that satisfy (\ref{S assume}), we first parametrize such planes. We choose a fixed point $p \in \O$ and  a unit vector $\ell \in \S^{2}$. Then, let us use $S_{p,\ell}$ to denote the plane
		\[
		S_{p,\ell} := \{ x\in\R^{3} : (x-p)\cdot \ell = 0 \},
		\]
		the plane which is perpendicular to $\ell$ and passes $p\in\O$. Moreover, by uniform convexity, for fixed $p\in\O$ and $\ell\in\S^{2}$, there exists $a_{p,\ell} < b_{p,\ell}$ such that plane $S_{p+r\ell,\ell}$ also satisfies (\ref{S assume}) for all $a_{p,\ell} < r < b_{p,\ell}$, i.e., plane perpendicular to $\ell$ and passes $p+r\ell$. Since $\ell$ is unit,
		\Be \label{b-a}
			|b_{p,\ell}- a_{p, \ell}| \leq \max_{x,y\in\p\O} |x-y| \lesssim_{\O} 1 \quad \forall \ell\in\S^{2}.
		\Ee
		%Now we consider $S_{p+r\ell, \ell}$ for all $(r,\ell) \in (a_{p,\ell}, b_{p,\ell})\times \S^{2}$. Let us split into 
		For fixed $p, \ell$, we split
		\[
		(a_{p,\ell}, b_{p,\ell}) = (a_{p,\ell}, a_{p,\ell}+\varepsilon_{p,\ell}] \cup [a_{p,\ell}+\varepsilon_{p,\ell}, b_{p,\ell}-\varepsilon_{p,\ell}] \cup [b_{p,\ell}-\varepsilon_{p,\ell}, b_{p,\ell}). 
		\]
		There exists $\varepsilon_{p,\ell} \ll 1$ such that for $r\in (a_{p,\ell}, a_{p,\ell}+\varepsilon_{p,\ell}]$, we can parametrize 
		\Be \notag % \label{loc S_p r}
			\{ \p\O\cap S_{p+r\ell, \ell} \ : \ r\in [a_{p,\ell}, a_{p,\ell}+\varepsilon_{p,\ell}] \}
		\Ee
		as a graph $\eta_{p,\ell}$ over tangent plane $S_{p+a_{p,\ell}\ell, \ell}$.	Similar as local parametrization of Step 1 (rotation and translation similar as \eqref{def Phi}), we consider local height function $\eta_{p,\ell}$ such that $ \vec{x} \mapsto  (\vec{x}, \eta_{p,\ell} (\vec{x}) )$ with $\vec{x} = (x,y)^{T}$ is an orthogonal parametrization near $S_{p+a_{p,\ell}\ell, \ell}\cap \p\O$. It is obvious that $\eta_{p,\ell} (0) = \nabla\eta_{p,\ell} (0)=0$ and its principal directions are $(1,0)$ and $(0,1)$ (for maximum and minimum curuvature, respectively). %{\color{blue} (Or we can just choose eigenvectors of $\nabla^{2}\eta$ to avoid above explanation)} 
		By the Taylor expansion, 
		\Be \notag %
		\eta_{p,\ell}(\vec{x}) = \frac{1}{2}\vec{x}\cdot \nabla^{2}\eta_{p,\ell}|_{(0,0)} \vec{x} + O_{\O}(|\vec{x}|^{3}) = C_{p,\ell,1}x^{2} + C_{p,\ell,2}y^{2} + O_{\O}(|\vec{x}|^{3}),\quad C_{p,\ell,1} > C_{p,\ell,2} > 0.
		\Ee
		Here, note that two princiapl directions are eigenvectors of $\nabla^{2}\eta_{p,\ell}$ in fact. Moreover, $\nabla^{2}\eta_{p,\ell}$ is  self-adjoint and positivie definite by \eqref{convex_xi} and \eqref{quadra}, so both two eigenvalues have positive sign. Thus, we obtain the RHS above. In other words, $\p\O \cap S_{p+r\ell, \ell}$ is a small perturbation of ellipse for any $r\in (a_{p,\ell}, a_{p,\ell}+\varepsilon_{p,\ell}]$, i.e., 
		\Be \label{ellipse}
			C_{p,\ell,1}x^{2} + C_{p,\ell,2}y^{2} + O_{\O}(|\vec{x}|^{3}) = r - a_{p,\ell}.
		\Ee
  		We can chosen sufficienlty small $\varepsilon_{p,\ell} \ll 1$ and note that $|\vec{x}|\rightarrow 0$ as $\varepsilon_{p,\ell} \rightarrow 0$.  \\

		The curve and corresponding curvature that satisfies \eqref{ellipse} is sufficiently smooth and therefore, the ratio
		\Be \notag %
			\frac{\min_{s}|\mathfrak{r}^{\prime\prime}_{S_{p+ r\ell}}(s)|}{ \max_{s}|\mathfrak{r}^{\prime\prime}_{S_{p+r\ell}}(s)|} 
		\Ee
		is continuous in $r\in (a_{p,\ell}, a_{p,\ell}+\varepsilon_{p,\ell}]$. Moreover,  
		\Be \notag %
			1\geq \frac{\min_{s}|\mathfrak{r}^{\prime\prime}_{S_{p+ r\ell}}(s)|}{ \max_{s}|\mathfrak{r}^{\prime\prime}_{S_{p+r\ell}}(s)|} \rightarrow \Big(\frac{C_{p,\ell,2}}{C_{p,\ell,1}}\Big)^{\frac{3}{2}}  \ \ \text{as} \ \ r\rightarrow a_{p,\ell},
		\Ee
		because the maximum and minimum curvatures of the ellipse  $C_{p,\ell,1}x^{2} + C_{p,\ell,2}y^{2} = r - a_{p,\ell}$ are given by $\frac{C_{p,\ell,1}}{\sqrt{ (r - a_{p,\ell})C_{p,\ell,2} }}$ and $\frac{C_{p,\ell,2}}{\sqrt{ (r - a_{p,\ell})C_{p,\ell,1} }}$, respectively. (Note that it is easy to check the maximum and minimum curvatures of the ellipse  $\frac{x^{2}}{a^{2}} + \frac{y^{2}}{b^{2}} = 1$ are given by $\frac{a}{b^{2}}$ and $\frac{b}{a^{2}}$, respectively when $a\geq b$.) Now by continuity argument, we can choose $\varepsilon_{p, \ell} \ll 1$ such that
		\Be \label{near a-p-ell}
			\frac{1}{2} \Big(\frac{C_{p,\ell,1}}{C_{p,\ell,2}}\Big)^{3} \leq \frac{\min_{s}|\mathfrak{r}^{\prime\prime}_{S_{p+ r\ell}}(s)|}{ \max_{s}|\mathfrak{r}^{\prime\prime}_{S_{p+r\ell}}(s)|} \leq 1,\quad \forall r\in (a_{p,\ell}, a_{p,\ell}+\varepsilon_{p,\ell}].
		\Ee
		
		\hide
		 {\color{red} Curvature of a curve is reprensented by second derivatives of regularized parametrization, so we obtain    [You need some argument to get it from the above expression of $\eta_{p,\ell} (\vec{x})$.] 
		\Be 
		\frac{1}{2}\frac{C_{p,\ell,2}}{C_{p,\ell,1}} \leq \frac{C_{p,\ell,2}}{C_{p,\ell,1}} - O(|\vec{x}|) \lesssim_{\O} \frac{|\mathfrak{r}^{\prime\prime}_{S_{p+ r\ell}}(s_{1})|}{|\mathfrak{r}^{\prime\prime}_{S_{p+r\ell}}(s_{2})|} \lesssim_{\O} \frac{C_{p,\ell,1}}{C_{p,\ell,2}} + O(|\vec{x}|) \leq 2\frac{C_{p,\ell,2}}{C_{p,\ell,1}}   ,\quad \forall s_{1}, s_{2} \in [0,L_{S_{p+r\ell}}),\  r\in (a_{p,\ell}, a_{p,\ell}+\varepsilon_{p,\ell}],
		\Ee
			}	  %red
		for some constants $C_{p,\ell,1}$ and $C_{p,\ell,2}$.   \\
		\unhide
		
		\noindent Similarly,  we repeat above process near $\p\O\cap S_{p+b_{p,\ell}\ell}$ using height function, say $\tilde{\eta}_{p,\ell}(\vec{x})$ to get
		\Be  \notag %
			\tilde{\eta}_{p,\ell}(\vec{x}) = \frac{1}{2}\vec{x}\cdot \nabla^{2}\tilde{\eta}_{p,\ell}|_{(0,0)} \vec{x} + O_{\O}(|\vec{x}|^{3}) = C^{\prime}_{p,\ell,1}x^{2} + C^{\prime}_{p,\ell,2}y^{2} + O_{\O}(|\vec{x}|^{3}),\quad C^{\prime}_{p,\ell,1} > C^{\prime}_{p,\ell,2} > 0.
		\Ee
		By same argument, we can choose $\varepsilon_{p,\ell} \ll 1$ (even smaller if necessary)
		\Be \label{near b-p-ell}
			\frac{1}{2} \Big(\frac{C^{\prime}_{p,\ell,1}}{C^{\prime}_{p,\ell,2}}\Big)^{3} \leq \frac{\min_{s}|\mathfrak{r}^{\prime\prime}_{S_{p+ r\ell}}(s)|}{ \max_{s}|\mathfrak{r}^{\prime\prime}_{S_{p+r\ell}}(s)|} \leq 1,\quad \forall r\in  [b_{p,\ell}-\varepsilon_{p,\ell}, b_{p,\ell}). 
		\Ee
		
		\hide
		\Be 
		\frac{1}{2}\frac{C^{\prime}_{p,\ell,2}}{C^{\prime}_{p,\ell,1}} \leq \frac{C^{\prime}_{p,\ell,2}}{C^{\prime}_{p,\ell,1}} - O(|\vec{x}|) \lesssim_{\O} \frac{|\mathfrak{r}^{\prime\prime}_{S_{p+ r\ell}}(s_{1})|}{|\mathfrak{r}^{\prime\prime}_{S_{p+r\ell}}(s_{2})|} \lesssim_{\O} \frac{C^{\prime}_{p,\ell,1}}{C^{\prime}_{p,\ell,2}} + O(|\vec{x}|) \leq 2\frac{C^{\prime}_{p,\ell,2}}{C^{\prime}_{p,\ell,1}}   ,\quad \forall s_{1}, s_{2} \in [0,L_{S_{p+r\ell}}),\  r\in (b_{p,\ell} - \varepsilon_{p,\ell}, b_{p,\ell}],
		\Ee
		for some constants $C^{\prime}_{p,\ell,1}$ and $C^{\prime}_{p,\ell,2}$. \\
		\unhide
		
		\noindent For $r\in [a_{p,\ell}+\varepsilon_{p,\ell}, b_{p,\ell}-\varepsilon_{p,\ell}] $, we obtain
		\Be \notag %
			0 < c_{p,\ell,r} \leq \frac{\min_{s}|\mathfrak{r}^{\prime\prime}_{S_{p+ r\ell}}(s)|}{ \max_{s}|\mathfrak{r}^{\prime\prime}_{S_{p+r\ell}}(s)|} \leq 1,
		\Ee
		where strict positivity of $c_{p,\ell,r} > 0$ comes from \eqref{curv uniformx} and $\max_{s}|\mathfrak{r}^{\prime\prime}_{S_{p+r\ell}}(s)| < \infty$. Moreover, this is continuous function in $r\in[a_{p,\ell}+\varepsilon_{p,\ell}, b_{p,\ell}-\varepsilon_{p,\ell}]$. Combining with compactness of $[a_{p,\ell}+\varepsilon_{p,\ell}, b_{p,\ell}-\varepsilon_{p,\ell}]$ (by \eqref{b-a}), we derive some $r$- independent $\mathcal{D}_{p,\ell} > 0$ such that 
		\Be \label{middle}
		D_{p,\ell} \lesssim_{\O} \frac{\min_{s}|\mathfrak{r}^{\prime\prime}_{S_{p+ r\ell}}(s)|}{\max_{s}|\mathfrak{r}^{\prime\prime}_{S_{p+r\ell}}(s)|} \leq 1 ,\quad r\in[a_{p,\ell}+\varepsilon_{p,\ell}, b_{p,\ell}-\varepsilon_{p,\ell}].
		\Ee
		Combining (\ref{near a-p-ell}), (\ref{near b-p-ell}), and (\ref{middle}), we have $\mathcal{C}_{p,\ell} > 0$ such that
		\[
		\mathcal{C}_{p,\ell} \lesssim_{\O} \frac{\min_{s}|\mathfrak{r}^{\prime\prime}_{S_{p+ r\ell}}(s)|}{\max_{s}|\mathfrak{r}^{\prime\prime}_{S_{p+r\ell}}(s)|} \leq 1 \quad \text{for all}\quad r \in (a_{p,\ell}, b_{p,\ell}).
		\]
		Now, we consider all possible $\ell\in \S^{2}$ and  can use similar continuity argument as (\ref{middle}). From compactness of $\S^{2}$, we can drop $\ell$ independence from lower bound and obtain
		\Be \label{unif k ratio}
		\mathcal{C}^{\prime}_{p} \leq \frac{\min_{s}|\mathfrak{r}^{\prime\prime}_{S_{p+ r\ell}}(s)|}{\max_{s}|\mathfrak{r}^{\prime\prime}_{S_{p+r\ell}}(s)|} \leq 1,
		%\frac{|\mathfrak{r}^{\prime\prime}_{S}(s_{1})|}{|\mathfrak{r}^{\prime\prime}_{S}(s_{2})|} \simeq_{\O} 1, \quad s_{1}, s_{2} \in [0, L_{S}) \quad \text{for all plane $S$ that satisfies our assumption (\ref{S assump})}.
		\Ee
		which gives \eqref{unif kappa}. Also, combining \eqref{unif k ratio},  \eqref{n curvature} and \eqref{curv uniformx}, we obtain \eqref{unif n}.
	\end{proof}
	 
	 \hide
	Now we study various properties of $\dot{\X} {\color{green}(\tau)}\cdot \nabla\xi {\color{green}(\X (\tau), v)}$ and $\dot{\V}{\color{green}(\tau)}\cdot \nabla\xi {\color{green}(\X (\tau), v)}$. Before we state lemma, let us briefly write simple facts{\color{red} It is too sloppy to state \eqref{2D x} and \eqref{2D v} without any proof or argument.} : when \eqref{assume_x} and \eqref{assume_x2} hold{\color{red}[you need to recall what is $\X$,  $\tilde{x}, \tilde{v}$]},
	\Be  
	|\dot{\X}| = | \dot{\X} \cdot \widehat{n_{\parallel}}(\xb(\X(\tau_{\pm}), v))|  = \max_{\tau_{-}\leq \tau\leq \tau_{+}}
	|\dot{\X} \cdot \widehat{n_{\parallel}}(\xb(\X(\tau), v))| {\color{red}\tau_+ \text{ is not defined yet?}}
	\Ee
	Similarly, when \eqref{assume_v} and \eqref{assume_v2} hold,{\color{red}[you need to recall what is $\V$, $\tilde{x}, \tilde{v}$]}
	\Be  
	|\dot{\V}(\tau)| = |\dot{\V}(\tau_{\pm}) \cdot \widehat{n_{\parallel}}( \xb(x, \V(\tau_{\pm})))| = \max_{\tau_{-}\leq \tau\leq \tau_{+}} |\dot{\V}(\tau) \cdot \widehat{n_{\parallel}}( \xb(x, \V(\tau)))|
	\Ee
	where $|\dot{\V}(\tau)| = \theta|v+\zeta|$ for all $\tau$ by by \eqref{dotv norm}.	Both \eqref{2D x} and \eqref{2D v} are trivial by convexity of $\O$ and $\p\O\cap S$. See Figure \ref{fig1} and Figure \ref{fig2}. \\
	\unhide
	
	\begin{lemma}\label{lem_mono}
		Suppose the domain $\O$ is given as in Definition \ref{def:domain} and \eqref{convex_xi}.  
		
		\noindent (i) Let two distinct points $x, \bar{x} \in \O$, and a nonzero velocity $v\neq 0$ are given. We assume \eqref{assume_x} and \eqref{assume_x2}, and recall $\X(\tau)$ in Definition \ref{def_para}. %Either $\xb(\X(\tau),v)$ is not defined on $\p\O$ for all $\tau \in \R$ or 
		There exists $\tau_{0}(x, \bar{x}, v) \in (\tau_{-}(x, \bar{x}, v), \tau_{+}(x, \bar{x}, v)) $ such that
		%such that $\xb(x(\tau),v) \in \p\O$ for $\tau \in [\tau_{-}(x, \bar{x}, v), \tau_{+}(x, \bar{x}, v)]$ and 
		%\Be\label{end_pointsx}
		%( - \nabla \xi(\xb(x(\tau),v)) \cdot v)=0 \ \ \text{for both }  \ \tau= \tau_{-}(x, \bar{x}, v)    \ \text{and} \ \tau= \tau_+(x, \bar{x}, v) ,
		%\Ee
		%and
		\Be \label{tau_0}
		\begin{cases}
			\dot{\X}\cdot \nabla \xi(\xb(\X(\tau),v)) > 0 \quad \text{for}\quad \tau_{-}(x, \bar{x}, v) \leq \tau < \tau_{0}(x, \bar{x}, v) , \\
			\dot{\X}\cdot \nabla \xi(\xb(\X(\tau),v)) = 0 \quad \text{for}\quad \tau = \tau_{0}(x, \bar{x}, v) , \\
			\dot{\X}\cdot \nabla \xi(\xb(\X(\tau),v)) < 0 \quad \text{for}\quad \tau_{0}(x, \bar{x}, v) < \tau \leq \tau_{+}(x, \bar{x}, v),
		\end{cases}\Ee
		where $\tau_{\pm}(x, \bx, v)$ is defined in \eqref{tau_pm}. Moreover, there exists $C_{\O} \gtrsim 1$ such that 
		\Be \label{mono Cst 1}
		|\dot{\X}\cdot \nabla\xi(\xb(\X(\tau),v))| \leq C_{\O} |\dot{\X}\cdot \nabla\xi(\xb(\X(\tau_{-}),v))|, \quad \tau_{-} \leq \tau \leq \tau_{0},
		\Ee
		\Be \label{mono Cst 2}
		|\dot{\X}\cdot \nabla\xi(\xb(\X(\tau),v))| \leq C_{\O} |\dot{\X}\cdot \nabla\xi(\xb(\X(\tau_{+}),v))|, \quad \tau_{0} \leq \tau \leq \tau_{+},  \\
		\Ee
		and
		\Be \label{mono Cst 3}
		|v \cdot \nabla\xi(\xb(\X(\tau),v))| \leq C_{\O} |v\cdot \nabla\xi(\xb(\X(\tau_{0}),v))|, \quad \tau_{-} \leq \tau \leq \tau_{+}.  \\
		\Ee

		\noindent (ii) Let $x  \in  \O$ and $v, \bar v, \zeta \in \R^3$ are given. We assume \eqref{assume_v} and \eqref{assume_v2}, and recall $\V(\tau)$ in Definition \ref{def_para}. There exist{\color{red}s} $\tau_{0}(x, v, \bar{v},\zeta) \in (\tau_{-}(x, v, \bar{v},\zeta), \tau_{+}(x, v, \bar{v},\zeta))$ such that 
		%$\xb(x, \V(\tau)) \in \p\O$ for $\tau \in [\tau_{-}(x,   v, \bar{v}, u), \tau_{+}(x,   v, \bar{v}, u)]$ and 
		%\Be\label{end_pointsv}
		%( - \nabla \xi(\xb(x,v(\tau))) \cdot v(\tau))=0 \ \ \text{for both }  \ \tau= \tau_{-}(x, v, \bar{v}, u)    \ \text{and} \ \tau= \tau_+(x, v, \bar{v}, u) , 
		%\Ee
		%and
		\Be \label{tau_0_v}
		\begin{cases}
			\dot{\V}(\tau)\cdot \nabla \xi(\xb(x,\V(\tau))) <0 \quad \text{for}\quad \tau_{-}(x, v, \bar{v},\zeta) \leq \tau < \tau_{0}(x, v, \bar{v},\zeta),  \\
			\dot{\V}(\tau)\cdot \nabla \xi(\xb(x ,\V(\tau))) = 0 \quad \text{for}\quad \tau = \tau_{0}(x, v, \bar{v},\zeta), \\
			\dot{\V}(\tau)\cdot \nabla \xi(\xb(x ,\V(\tau))) > 0 \quad \text{for}\quad \tau_{0}(x, v, \bar{v}, \zeta) < \tau \leq \tau_{+}(x, v, \bar{v}, \zeta).   
		\end{cases}\Ee
		Here, $\tau_{\pm}(x, v, \bv, \zeta)$ is defined in Definition \ref{def_para}. Moreover, there exists $C_{\O} \gtrsim 1$ such that 
		\Be \label{mono Cst 1 v}
		|\dot{\V}(\tau) \cdot \nabla\xi(\xb(x, \V(\tau)))| \leq C_{\O} |\dot{\V}(\tau_-)\cdot \nabla\xi(\xb(x, \V(\tau_{-})))|, \quad \tau_{-} \leq \tau \leq \tau_{0},
		\Ee
		\Be \label{mono Cst 2 v}
		|\dot{\V}(\tau) \cdot \nabla\xi(\xb(x, \V(\tau)))| \leq C_{\O} |\dot{\V}(\tau_+)\cdot \nabla\xi(\xb(x, \V(\tau_{+})))|, \quad \tau_{0} \leq  \tau \leq \tau_{+}, \\
		\Ee
		and
		\Be \label{mono Cst 3 v}
		|{\V}(\tau) \cdot \nabla\xi(\xb(x, \V(\tau)))| \leq C_{\O} |{\V}(\tau_{0})\cdot \nabla\xi(\xb(x, \V(\tau_{0})))|, \quad \tau_{-} \leq \tau \leq \tau_{+}.  \\
		\Ee
		% that satisfy (\ref{perp dotx}), (\ref{x para}), and (\ref{tau pm}). Then, for $\dot{x} := \frac{d}{d\tau}x(\tau) = x-\bar{x} $, the followings hold. (i) 
	\end{lemma}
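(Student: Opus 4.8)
The plan is to reduce both parts (i) and ( the plan is to reduce everything to a one-dimensional monotonicity statement on the convex curve $\partial\O\cap S$, where $S$ is the relevant plane ($S_{(x,\bx,v)}$ for part (i), $S_{(x,v,\bv,\zeta)}$ for part (ii)), and then use the uniform curvature comparability of Lemma \ref{lem_unif n} to upgrade the qualitative monotonicity into the quantitative bounds \eqref{mono Cst 1}--\eqref{mono Cst 3}. I will carry out (i) in detail and indicate how (ii) follows by the formally identical argument after the change of roles $\X(\tau)\leftrightarrow\V(\tau)$, $\dot\X\perp v\leftrightarrow\dot\V(\tau)\perp\V(\tau)$, which both hold by \eqref{perp}.

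\textbf{Step 1 (sign structure and existence of $\tau_0$).} Fix the plane $S=S_{(x,\bx,v)}$ and let $\mathfrak{r}_S$ be the regularized parametrization of the convex closed curve $\partial\O\cap S$ from Definition \ref{Def normal}. For each $\tau\in[\tau_-,\tau_+]$ the exit point $\xb(\X(\tau),v)$ lies on $\partial\O\cap S$ (since $\X(\tau)\in S$ and $v\in\mathrm{span}\{x-\tx,v\}$ by \eqref{S_x}), so we may write $\xb(\X(\tau),v)=\mathfrak{r}_S(s(\tau))$ for a continuous, strictly monotone function $s(\tau)$; strict monotonicity follows because as $\tau$ increases the base point $\X(\tau)=(1-\tau)\tx+\tau x$ moves along the line $\{\,\cdot + \R v\}$-direction is fixed and the foot of the backward ray sweeps monotonically along the convex arc between $\xb(\X(\tau_-),v)$ and $\xb(\X(\tau_+),v)$ (these are exactly the two grazing points, by the defining equation $v\cdot\nabla\xi(\xb(\X(\tau_\pm),v))=0$ of \eqref{tau_pm}). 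Now consider $g(\tau):=\dot\X\cdot\nabla\xi(\xb(\X(\tau),v))$. Since $\dot\X\perp v$ (by \eqref{perp}) and $\{\dot\X,v\}$ spans the plane $S$ translated to the origin, $\dot\X$ is (up to scaling) the in-plane normal direction rotated by $90^\circ$ from $v$, i.e.\ $\dot\X/|\dot\X|$ is tangent to nothing in particular but the sign of $g(\tau)=|\dot\X|\,\widehat{\dot\X}\cdot\nabla\xi(\xb(\X(\tau),v))$ is governed by which side of the grazing configuration we are on. At the two endpoints $\nabla\xi(\xb(\X(\tau_\pm),v))$ is $\perp v$, hence parallel to $\pm\dot\X$, so $g(\tau_-)$ and $g(\tau_+)$ are nonzero with opposite signs (the inward/outward alternation along the convex arc). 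By continuity there is at least one zero; by strict convexity of the curve — encoded by $\mathfrak{r}_S''\neq0$ and $-\nabla^2\xi\geq\theta_\O$ in \eqref{convex_xi} — the map $\tau\mapsto\widehat{\dot\X}\cdot n(\xb(\X(\tau),v))$ is strictly monotone (its $\tau$-derivative is, after the chain rule, $s'(\tau)\,\widehat{\dot\X}\cdot dN(\mathfrak{r}_S'(s))$, which has a fixed sign by \eqref{curv uniformx}), so the zero $\tau_0=\tau_0(x,\bx,v)$ is unique and the three-line sign pattern \eqref{tau_0} follows. That $\tau_0\in(\tau_-,\tau_+)$ is automatic since $g(\tau_\pm)\neq0$.

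\textbf{Step 2 (quantitative comparisons \eqref{mono Cst 1}, \eqref{mono Cst 2}).} Write $g(\tau)=|\dot\X|\,h(\tau)$ with $h(\tau):=\widehat{\dot\X}\cdot\nabla\xi(\xb(\X(\tau),v))$. By Step 1, $h$ is strictly monotone on $[\tau_-,\tau_0]$ with $h(\tau_0)=0$, so $|h(\tau)|=\int_\tau^{\tau_0}|h'|\,d\sigma\le(\tau_0-\tau_-)\sup|h'|$ while $|h(\tau_-)|=\int_{\tau_-}^{\tau_0}|h'|$; thus it suffices to show $\sup_{[\tau_-,\tau_0]}|h'|\le C_\O\,|h(\tau_-)|/(\tau_0-\tau_-)$, equivalently that $|h'|$ does not oscillate too wildly. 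Differentiating, $h'(\tau)=\dot\X\cdot\nabla^2\xi(\xb)\,\partial_\tau\xb(\X(\tau),v)$, and $\partial_\tau\xb$ is the in-plane velocity of the foot point; its magnitude is $s'(\tau)$ (unit-speed times the rate), and by the explicit geometry $s'(\tau)$ is itself comparable to $1/|v\cdot\nabla\xi(\xb)|$ up to $\O$-constants. Feeding in the two-sided curvature bounds $\theta_\O\le|\zeta\cdot(-\nabla^2\xi)\zeta|\lesssim_\O\|\xi\|_{C^2}$ of \eqref{curv uniformx} together with the uniform comparability $|\mathfrak{r}_S''(x)|\sim|\mathfrak{r}_S''(y)|$ and $|n_\parallel(x)|\sim|n_\parallel(y)|$ of Lemma \ref{lem_unif n} (which is exactly what makes the constant $S$-independent), one gets that $|h'|$ is comparable, uniformly along the arc, to its value near $\tau_-$, which yields \eqref{mono Cst 1}; the identical argument on $[\tau_0,\tau_+]$ gives \eqref{mono Cst 2}.

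\textbf{Step 3 (the numerator bound \eqref{mono Cst 3}).} For $v\cdot\nabla\xi(\xb(\X(\tau),v))$ the point is that this quantity vanishes precisely at $\tau=\tau_\pm$ (grazing) and is maximal in the "middle." Parametrize $\partial\O\cap S$ as above; then $\tau\mapsto v\cdot\nabla\xi(\mathfrak{r}_S(s(\tau)))$ is, up to the uniformly bounded reparametrization $s(\tau)$, the restriction of the smooth function $s\mapsto v\cdot n(\mathfrak{r}_S(s))|\nabla\xi|$ to the sub-arc between the two grazing feet. On a uniformly convex curve this function is unimodal with a single interior maximum, and a standard comparison (again via \eqref{curv uniformx} and Lemma \ref{lem_unif n}) bounds its value at any $\tau\in[\tau_-,\tau_+]$ by $C_\O$ times its value at the turning point, which is $\tau_0$ by Step 1 (the turning point of $v\cdot\nabla\xi$ along the arc is where the in-plane normal is $\perp v$, i.e.\ parallel to $\dot\X$, i.e.\ where $h=0$). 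This gives \eqref{mono Cst 3}.

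\textbf{Part (ii).} The argument is the same with $\X(\tau)$ replaced by $\V(\tau)$ and $x$ fixed as the base point of the ray: now $\xb(x,\V(\tau))\in\partial\O\cap S_{(x,v,\bv,\zeta)}$, the direction that plays the role of $v$ is $\V(\tau)$ itself, and the orthogonality we use is $\V(\tau)\perp\dot\V(\tau)$ from \eqref{perp} (together with $|\dot\V(\tau)|=\theta|v+\zeta|$, $\ddot\V=-\theta^2\V$). The only extra subtlety is that the "direction" $\V(\tau)$ rotates with $\tau$, so the foot point $\xb(x,\V(\tau))$ sweeps the convex arc as the ray direction rotates; but since $\theta<2\pi$ is fixed and the rotation is at constant angular speed, $\tau\mapsto\xb(x,\V(\tau))$ is still a smooth strictly monotone reparametrization of a sub-arc of $\partial\O\cap S$ between the two grazing feet $\xb(x,\V(\tau_\pm))$, and Steps 1--3 go through verbatim, giving the sign structure \eqref{tau_0_v} and the bounds \eqref{mono Cst 1 v}--\eqref{mono Cst 3 v}.

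\textbf{Main obstacle.} The delicate point is Step 2: obtaining the constant $C_\O$ in \eqref{mono Cst 1}--\eqref{mono Cst 2} \emph{uniformly over all planes $S$} and over the near-grazing regime where $\tau_0-\tau_-$ can be tiny. The integrals $|h(\tau_-)|$ and $\sup|h'|$ are each small and each degenerate as the configuration tends to purely tangential, and one must check their ratio stays bounded; this is exactly where the two curvature-comparability statements \eqref{unif n}--\eqref{unif kappa} of Lemma \ref{lem_unif n} are indispensable, since without them the ellipse-like cross sections $\partial\O\cap S$ could have arbitrarily bad eccentricity as $S$ varies and the ratio would blow up.
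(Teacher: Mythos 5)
Your Step 2 contains a genuine error. You propose to prove \eqref{mono Cst 1} by bounding $\sup_{[\tau_-,\tau_0]}|h'|\le C_\O|h(\tau_-)|/(\tau_0-\tau_-)$ and arguing that $|h'|$ ``does not oscillate too wildly.'' But $|h'(\tau)|$ is unbounded on $[\tau_-,\tau_0]$: the foot-point velocity $\partial_\tau\xb(\X(\tau),v)=\dot\X-\frac{n(\xb)\cdot\dot\X}{v\cdot n(\xb)}v$ has denominator $v\cdot n(\xb)\to 0$ as $\tau\to\tau_-$ (grazing), while the numerator $n(\xb)\cdot\dot\X\to \pm|n_\parallel(\xb(\tau_-))||\dot\X|\neq 0$, so $|\partial_\tau\xb|\to\infty$ and hence $|h'|\to\infty$ at $\tau_-$. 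The inequality you propose as a reduction therefore cannot hold, and the argument collapses. The paper avoids all of this: its proof of \eqref{mono Cst 1}--\eqref{mono Cst 3} is a one-line Cauchy--Schwarz statement, not a derivative estimate. The geometric key (your Step 3 implicitly uses it, but Step 2 does not) is \eqref{2D x} and \eqref{2D x 0}: at $\tau_\pm$ the in-plane normal $\widehat{n_\parallel}(\xb(\X(\tau_\pm),v))$ is parallel or antiparallel to $\dot\X$ (both lie in $S$ and both are orthogonal to $v$ there, a $1$-dimensional constraint), so $|\dot\X\cdot\widehat{n_\parallel}(\tau_\pm)|=|\dot\X|$ is already the maximum of $|\dot\X\cdot\widehat{n_\parallel}(\tau)|$ over all $\tau$. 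Similarly at $\tau_0$ the in-plane normal is aligned with $v$. Combined with the $S$-uniform comparability of $|n_\parallel|$ from Lemma~\ref{lem_unif n}, the bound $|\dot\X\cdot\nabla\xi(\xb(\tau))|\simeq|n_\parallel(\tau)|\,|\dot\X\cdot\widehat{n_\parallel}(\tau)|\le C_\O|n_\parallel(\tau_-)|\,|\dot\X|\simeq C_\O|\dot\X\cdot\nabla\xi(\xb(\tau_-))|$ is immediate. You should use this alignment-plus-comparability argument for \eqref{mono Cst 1}--\eqref{mono Cst 2} rather than analyzing $h'$.

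Your Step 1 also has a gap, though the conclusion is correct. You assert that $\tau\mapsto\widehat{\dot\X}\cdot n(\xb(\X(\tau),v))$ is strictly monotone because ``its $\tau$-derivative is $s'(\tau)\,\widehat{\dot\X}\cdot dN(\mathfrak{r}_S'(s))$, which has a fixed sign by \eqref{curv uniformx}.'' This is not justified: that derivative is the bilinear pairing $\nabla^2\xi(\xb)$ applied to the two tangent vectors $(I-n\otimes n)\widehat{\dot\X}$ (orthogonal projection of $\dot\X$) and $\partial_\tau\xb$ (projection of $\dot\X$ along $v$), which are distinct tangent directions in general, and \eqref{curv uniformx} controls only the quadratic form on a single vector, not the sign of the cross term. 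Indeed, the explicit ODE \eqref{ODE:dot_x_n} for $\dot\X\cdot\nabla\xi(\xb)$ contains an indefinite term proportional to $v\cdot\nabla^2\xi\,\dot\X$, so the raw quantity is not obviously monotone. The paper resolves this with the integrating-factor identity \eqref{ODE1:dot_x_n}: multiplying by $e^{\int_{\tau_-}^{\tau}\frac{v\cdot\nabla^2\xi\,\dot\X}{\nabla\xi\cdot v}\,\dd s}>0$ cancels exactly the indefinite term, leaving the strictly negative quadratic part $\dot\X\cdot\nabla^2\xi\,\dot\X\le-\theta_\O|\dot\X|^2$, so the modified quantity is strictly decreasing; since the exponential factor is positive, the zero set is unchanged and a unique $\tau_0$ follows from the endpoint signs \eqref{sign dotx}. (The same subtlety appears in part (ii), where the additional $\ddot{\V}\cdot\nabla\xi$ term must also be signed via $\ddot\V=-\theta^2\V$ and $\V\cdot\nabla\xi\le 0$.) Your high-level picture --- reduce to a convex cross-sectional curve and use uniform curvature comparability --- is aligned with the paper's, but the two technical pivots (integrating factor for the sign pattern, endpoint alignment of $\widehat{n_\parallel}$ with $\dot\X$ or $v$ for the quantitative bounds) are both missing from the proposal as written.
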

	\begin{proof} 
		%The assertions \eqref{end_pointsx} and \eqref{end_pointsv} are immediate consequence of the convexity of $\mathcal{O}$ in \eqref{convex_xi}. 
		(i) First we prove \eqref{tau_0}. From \eqref{ODE:dot_x_n}, \eqref{ODE:dot_v_n}, and \eqref{convex_xi}, we derive that 
		\Be \label{ODE1:dot_x_n}
		\begin{split}
			& \frac{d}{d\tau}\big[ (\dot{\X} \cdot \nabla \xi (\xb(\X(\tau), v) )) 
			e^{
				\int^\tau_{\tau_-} 
				\frac{(v\cdot  \nabla^2 \xi (\xb(\X(s), v) )  \cdot \dot{\X} )}{ \nabla \xi (\xb(\X(s), v) ) \cdot v } \dd s 
			}
			\big] \\
			& \ \  = 	-\big(-\dot{\X} \cdot   \nabla^2 \xi (\xb(\X(\tau), v)) \cdot \dot{\X} \big)
			e^{
				\int^\tau_{\tau_-} 
				\frac{(v\cdot  \nabla^2 \xi (\xb(\X(s), v) )  \cdot \dot{\X} )}{\nabla \xi (\xb(\X(s), v) ) \cdot v } \dd s 
			}< 0. \\
		\end{split}
		\Ee
		Since $\xb(\X(\tau),v)$ is well-defined for $\tau_{-} \leq \tau \leq \tau_{+}$ on $\p\O$, we have 
		\Be \label{sign dotx}
			\dot{\X}\cdot\nabla\xi(\xb(\X(\tau_{-}),v)) > 0 \quad \text{and} \quad \dot{\X}\cdot\nabla\xi(\xb(\X(\tau_{+}),v)) < 0.
		\Ee
		Since $e^{
			\int^\tau_{\tau_-} 
			\frac{(v\cdot  \nabla^2 \xi (\xb(\X(s), v) )  \cdot \dot{\X})}{ \nabla \xi (\xb(\X(s), v) ) \cdot v } \dd s 
		} > 0$ always, combining with \eqref{ODE1:dot_x_n} and \eqref{sign dotx}, there exists a unique $\tau_{0}(x, \bx, v)\in (\tau_{-}(x, \bar{x}, v), \tau_{+}(x, \bar{x}, v)) $ such that
		\[
			(\dot{\X} \cdot \nabla \xi (\xb(\X(\tau_0), v) )) 
			e^{
				\int^{\tau_0}_{\tau_-} 
				\frac{(v\cdot  \nabla^2 \xi (\xb(\X(s), v) )  \cdot \dot{\X})}{ \nabla \xi (\xb(\X(s), v) ) \cdot v } \dd s 
			} = 0 \ \ \Leftrightarrow \ \ \dot{\X} \cdot \nabla \xi (\xb(\X(\tau_0), v) )=0.
		\]
		This proves \eqref{tau_0}. To prove \eqref{mono Cst 1},  first note that $\dot{\X}$ and $\widehat{n_{\parallel}}(\xb(\X(\tau_{\pm}), v))$ are parallel or antiparallel to each other because $\dot{\X}$ and $\widehat{n_{\parallel}}(\xb(\X(\tau_{\pm}), v))$ belong to the plan $S_{(x, \bar x, v)}$, and $\dot{\X}\perp v$ and $\widehat{n_{\parallel}}(\xb(\X(\tau_{\pm}), v)) \perp v$ by \eqref{perp short} and \eqref{tau_pm}, respectively, i.e.,
		\Be \label{2D x}
		|\dot{\X}| = | \dot{\X} \cdot \widehat{n_{\parallel}}(\xb(\X(\tau_{\pm}), v))|  = \max_{\tau_{-}\leq \tau\leq \tau_{+}}
		|\dot{\X} \cdot \widehat{n_{\parallel}}(\xb(\X(\tau), v))|.
		\Ee
		Now, since $1\lesssim_{\O} |\nabla\xi| \lesssim_{\O} 1$, using \eqref{2D x} and Lemma \ref{lem_unif n},  
		\Be \label{dot x opt}
		\begin{split}
		|\dot{\X}\cdot \nabla\xi(\xb(\X(\tau),v))| 
		& \lesssim 	|\dot{\X}\cdot n(\xb(\X(\tau),v))|
		\\
		&\lesssim |\dot{\X}\cdot n_{\parallel}(\xb(\X(\tau),v))| = |n_{\parallel}(\xb(\X(\tau),v))| |\dot{\X}\cdot \widehat{{n}_{\parallel}}(\xb(\X(\tau),v))| \\
			&\leq C_{\O} |n_{\parallel}(\xb(\X(\tau_{-}),v))| |\dot{\X}\cdot \widehat{{n}_{\parallel}}(\xb(\X(\tau_{-}),v))|
			 \\
			&= C_{\O} |\dot{\X}\cdot n_{\parallel}(\xb(\X(\tau_{-}),v))| 
			= C_{\O}  |\dot{\X}\cdot n(\xb(\X(\tau_{-}),v))|   \\
			&\lesssim C_{\O}  |\dot{\X}\cdot \nabla\xi(\xb(\X(\tau_{-}),v))|.   \\
		\end{split}
		\Ee
		%where we have used \eqref{2D x} and Lemma \ref{lem_unif n}. 
		Estimate (\ref{mono Cst 2}) is obtained similarly.  \\
		Now, let us prove \eqref{mono Cst 3}. $v$ and $n_{\parallel}(\xb(\X(\tau_0), v))$ are parallel or antiparallel to each other because $\dot{\X}\perp v$ (by \eqref{perp}) and \eqref{tau_0}, i.e.,
		\Be \label{2D x 0}
			|v \cdot n_{\parallel}(\xb(\X(\tau_0), v))| = |v||n_{\parallel}(\xb(\X(\tau_0), v))|.
		\Ee
		%\\ we have $|v \cdot n_{\parallel}(\xb(\X(\tau_0), v))| = |v||n_{\parallel}(\xb(\X(\tau_0), v))|$.  Since  $\dot{\X}\perp v$ and $\dot{\X} \perp n_{\parallel}(\xb(\X(\tau_0),v))$ in the plane $S_{(x,\bx,v)}$ by \eqref{perp} and \eqref{tau_0}, respectively, $v$ and $n_{\parallel}(\xb(\X(\tau_0),v))$ are parallel to each other. 
		Therefore, using \eqref{2D x 0} and Lemma \ref{lem_unif n},
		\begin{equation*}
		\begin{split}
			|v \cdot \nabla\xi(\xb(\X(\tau_0),v))| &\gtrsim |v \cdot n_{\parallel}(\xb(\X(\tau_0), v))| 
			= |v||n_{\parallel}(\xb(\X(\tau_0), v))|  \\
			&\gtrsim_{\O} |v\cdot \widehat{{n}_{\parallel}}(\xb(\X(\tau), v))||n_{\parallel}(\xb(\X(\tau), v))|   \\
			&= |v \cdot n_{\parallel}(\xb(\X(\tau), v))| \gtrsim |v \cdot \nabla\xi(\xb(\X(\tau),v)) | .
		\end{split}
		\end{equation*}
		
		\noindent (ii) To prove \eqref{tau_0_v},  we derive that
		\Be \notag % \label{ODE1:dot_v_n}
		\begin{split}
			&\frac{d}{d\tau}\big[ (\dot{\V} (\tau)  \cdot \nabla \xi (\xb(x, \V(\tau))) )
			e^{\int^\tau_{\tau_-}
				\frac{-\tb(x, \V(s))
					(
					-\V(s)\cdot  \nabla^2 \xi (\xb(x, \V(s))) \cdot \dot{\V} (s) )
				}{ \nabla \xi (\xb(x, \V(s))) \cdot \V(s) }
				\dd s	}
			\big]\\
			& \ \
			= 
			\Big[ \tb(x, \V(\tau))
			\big(- \dot{\V} (\tau) \cdot   \nabla^2 \xi (\xb(x, \V(\tau))) \cdot \dot{\V}(\tau)\big)
			+ \ddot{\V}(\tau)\cdot\nabla\xi(\xb(x, \V(\tau)))
			\Big] \\
			&\quad\quad  \times 
			e^{\int^\tau_{\tau_-}
				\frac{-\tb(x, \V(s))
					(
					- \V(s)\cdot  \nabla^2 \xi (\xb(x, \V(s))) \cdot \dot{\V} (s) )
				}{ \nabla \xi (\xb(x, \V(s))) \cdot \V(s) }
				\dd s	} > 0,
		\end{split}
		\Ee
		where we have used the fact: $\ddot{\V}(\tau)\cdot\nabla\xi(\xb(x, v(\tau))) = -\theta^{2} \V(\tau)\cdot\nabla\xi(\xb(x, \V(\tau))) \geq 0$, by \eqref{perp} and $\V(\tau)\cdot\nabla\xi(\xb(x, \V(\tau))) \leq  0$. Similar as \eqref{sign dotx}, we have  
		\Be \notag %\label{sign dotv}
		\dot{\V}(\tau_{-})\cdot\nabla\xi(\xb(\X(\tau_{-}),v)) < 0 \quad \text{and} \quad \dot{\V}(\tau_{+})\cdot\nabla\xi(\xb(\X(\tau_{+}),v)) > 0.
		\Ee
		Now, \eqref{tau_0_v} is obtained similar as proof of \eqref{tau_0}, since $\tb(x, \V(\tau)) > 0$ always. To prove \eqref{mono Cst 1 v}, note that $\dot{\V}(\tau_{\pm})$ and $\widehat{n_{\parallel}}(\xb(x, \V(\tau_{\pm})))$ are parallel or antiparallel to each other because  $\dot{\V}(\tau_{\pm})$ and $\widehat{n_{\parallel}}(\xb(x, \V(\tau_{\pm})))$ belong to the plan $S_{(x,v, \bar v, \zeta)}$, and $\dot{\V}(\tau)\perp \V(\tau)$ and $\widehat{n_{\parallel}}(\xb(x, \V(\tau_{\pm}))) \perp \V(\tau_{\pm})$ by \eqref{perp} and \eqref{tau_pm}, respectively, i.e.,
		\Be \label{2D v}
		|\dot{\V}(\tau)| = |\dot{\V}(\tau_{\pm}) \cdot \widehat{n_{\parallel}}( \xb(x, \V(\tau_{\pm})))| = \max_{\tau_{-}\leq \tau\leq \tau_{+}} |\dot{\V}(\tau) \cdot \widehat{n_{\parallel}}( \xb(x, \V(\tau)))|.
		\Ee
		Using \eqref{2D v} and Lemma \ref{lem_unif n},
		\Be \notag %
		\begin{split}
		 |\dot{\V}\cdot \nabla\xi(\xb(x, \V(\tau)))|  
			&\lesssim 	|\dot{\V}\cdot n(\xb(x, \V(\tau)))| \\
			&= |\dot{\V}\cdot n_{\parallel}(\xb(x,\V(\tau)))| = |n_{\parallel}(\xb(x, \V(\tau)))| |\dot{\V}\cdot \widehat{n_{\parallel}}(\xb(x, \V(\tau)))| \\
			&\leq C_{\O} |n_{\parallel}(\xb(x, \V(\tau_{-})))| |\dot{\V}\cdot \widehat{n_{\parallel}}(\xb(x, \V(\tau_{-})))| \\
			&\leq C_{\O}  |\dot{\V}\cdot n(\xb(x, \V(\tau_{-})))| \\
			&\leq C_{\O}  |\dot{\V}\cdot \nabla\xi(\xb(x, \V(\tau_{-})))|.   \\
		\end{split}
		\Ee
		%where we have used \eqref{2D v} and Lemma \ref{lem_unif n}. 
		Estimate (\ref{mono Cst 2 v}) is obtained similarly. \\
		Now, let us prove \eqref{mono Cst 3 v}. $\V(\tau_0)$ and  $n_{\parallel}(\xb(x, \V(\tau_0)))$ are parallel to each other because   $\V(\tau_0)$ and $n_{\parallel}(\xb(x, \V(\tau_0)))$ belong to $S_{(x, v, \bar v, \zeta)}$, and $\dot{\V}(\tau)\perp \V(\tau)$ (by \eqref{perp}) and \eqref{tau_0_v}. Therefore,
		\Be \label{2D v 0}
		|\V(\tau_0) \cdot n_{\parallel}(\xb(x, \V(\tau_0)))| = |\V(\tau_0)||n_{\parallel}(\xb(x, \V(\tau_0)))|.
		\Ee
		% and $\dot{\V}(\tau)\perp \V(\tau)$ (by \eqref{perp}), we have $|\V(\tau_0) \cdot n_{\parallel}(\xb(x, \V(\tau_0)))| = |\V(\tau_0)||n_{\parallel}(\xb(x, \V(\tau_0)))|$.   Since  $\dot{\V}(\tau_0) \perp \V(\tau_0)$ and $\dot{\V}(\tau_0) \perp n_{\parallel}(\xb(x, \V(\tau_0)))$ in the plane $S_{(x, v,\bv, \zeta)}$ by \eqref{perp} and \eqref{tau_0_v}, respectively, $\V(\tau_0)$ and  $n_{\parallel}(\xb(x, \V(\tau_0)))$ are parallel to each other. Therefore, using Lemma \ref{lem_unif n},
		Therefore, using \eqref{2D v 0} and Lemma \ref{lem_unif n},
		\begin{equation*}
		\begin{split}
		|\V(\tau_0) \cdot \nabla\xi(\xb(x, \V(\tau_0)))|  &\gtrsim |\V(\tau_0) \cdot n_{\parallel}(\xb(x, \V(\tau_0)))| = |\V(\tau_0)||n_{\parallel}(\xb(x, \V(\tau_0)))|  \\
		&\gtrsim_{\O} |\V(\tau)\cdot \widehat{{n}_{\parallel}}(\xb(x, \V(\tau)))||n_{\parallel}(\xb(x, \V(\tau)))|		 \\
		&= |\V(\tau) \cdot n_{\parallel}(\xb(x, \V(\tau)))| \gtrsim  |\V(\tau) \cdot \nabla\xi(\xb(x, \V(\tau)))|.
		\end{split}
		\end{equation*}
	\end{proof}
	
	\begin{lemma} \label{lem_min_tb}
		Recall $\X(\tau) = \X (\tau; x, \bar x, v)$, $\V(\tau) = \V (\tau; x,v, \bar v, \zeta)$ as in Definition \ref{def_para}, and $\tau_{0}(x, \bar x, v)$ and $\tau_0 (x, v, \bar v, \zeta)$ at \eqref{tau_0} and \eqref{tau_0_v}, respectively in Lemma \ref{lem_mono}. \\
		(i) Recall all assumptions in (i) of Lemma \ref{lem_mono}. 
		When $\tau_{-} \leq \tau_{*} \leq \tau_{0}$, we get
		\Be \label{min tb x-}
			\max_{\tau_{-} \leq  s \leq \tau_{*} } \tb(\X(s), v) = \tb(\X(\tau_{-}), v),\quad \min_{\tau_{-} \leq  s \leq \tau_{*} } \tb(\X(s), v) = \tb(\X(\tau_{*}), v).
		\Ee
		By symmetry, when $\tau_{0} \leq \tau_{*} \leq \tau_{+}$, 
		\Be \notag %\label{min tb x+}
			\max_{\tau_{*} \leq  s \leq \tau_{+} } \tb(\X(s), v) = \tb(\X(\tau_{+}), v),\quad  \min_{\tau_{*} \leq  s \leq \tau_{+} } \tb(\X(s), v) = \tb(\X(\tau_{*}), v).
		\Ee
	 \\
		%Moreover, 
		%\Be \label{min tb x0}
		%\min_{\tau_{-} \leq  s \leq \tau_{+} } \tb(\X(s), v) = \tb(\X(\tau_{0}), v).
		%\Ee 
		(ii) Recall all assumptions in (ii) of Lemma \ref{lem_mono}. When $\tau_{-} \leq \tau_{*} \leq \tau_{0}$, , we get
		\Be \label{min tb v-}
		\max_{\tau_{-} \leq  s \leq \tau_{*} } \tb(x, \V(s)) = \tb(x, \V(\tau_{-})),\quad \min_{\tau_{-} \leq  s \leq \tau_{*} } \tb(x, \V(s)) = \tb(x, \V(\tau_{*})).
		\Ee
		By symmetry, when $\tau_{0} \leq \tau_{*} \leq \tau_{+}$, 
		\Be \notag %\label{min tb v+}
		\max_{\tau_{*} \leq  s \leq \tau_{+} } \tb(s, \V(s)) = \tb(x, \V(\tau_{+})),\quad \min_{\tau_{*} \leq  s \leq \tau_{+} } \tb(s, \V(s)) = \tb(x, \V(\tau_{*})).
		\Ee
		also holds.   
		%Moreover,
		%\Be \label{min tb v0}
		%\min_{\tau_{-} \leq  s \leq \tau_{+} } \tb(s, \V(s)) = \tb(x, \V(\tau_{0})).
		%\Ee
		Moreover, for all $\tau_{-} \leq \tau \leq \tau_{+}$, 
		\Be \label{tb minmax}
			\max_{\tau_- \leq \tau \leq \tau_+} (|\V(\tau)|\tb(x, \V(\tau)))  \lesssim_{\O} 1 + \min_{\tau_- \leq \tau \leq \tau_+} (|\V(\tau)|\tb(x, \V(\tau))).
		\Ee
	\end{lemma}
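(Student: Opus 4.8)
The plan is to differentiate $\tb$ directly along the two parametrizations $\X(\tau)$ and $\V(\tau)$, read off the sign of the resulting derivative from the trichotomies \eqref{tau_0} and \eqref{tau_0_v} of Lemma \ref{lem_mono}, and conclude the two monotonicity statements; the final bound \eqref{tb minmax} will then come from the monotonicity together with the fact that $|\V(\tau)|\equiv|v+\zeta|$ is constant in $\tau$. The only analytic ingredient beyond Lemma \ref{lem_mono} is the first-variation formula for the backward exit time, obtained by differentiating the identity $\xi(\xb(\cdot))=0$.

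\emph{Part (i).} Differentiating $\xi\big(y-\tb(y,v)v\big)=0$ in $y$ gives, wherever $\nabla\xi(\xb(y,v))\cdot v\neq 0$,
\Be \notag
\nabla_y\tb(y,v)=\frac{\nabla\xi(\xb(y,v))}{\nabla\xi(\xb(y,v))\cdot v}.
\Ee
Since $\dot\X$ is constant in $\tau$, the chain rule and \eqref{xv para} yield
\Be \notag
\frac{d}{ds}\tb(\X(s),v)=\frac{\dot\X\cdot\nabla\xi(\xb(\X(s),v))}{v\cdot\nabla\xi(\xb(\X(s),v))}.
\Ee
By the definition of $\tau_\pm$ in \eqref{tau_pm} and convexity of $\mathcal{O}$, the denominator is strictly negative for $s\in(\tau_-,\tau_+)$ (it vanishes only at the two grazing values $\tau_\pm$), so $\tb(\X(\cdot),v)$ is $C^1$ on $(\tau_-,\tau_+)$; by \eqref{tau_0} the numerator is $>0$ on $(\tau_-,\tau_0)$ and $<0$ on $(\tau_0,\tau_+)$. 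Hence $s\mapsto\tb(\X(s),v)$ is strictly decreasing on $(\tau_-,\tau_0)$ and strictly increasing on $(\tau_0,\tau_+)$; since $\xb(\X(\tau),v)$ is assumed well-defined on $\p\O$ for $\tau\in[\tau_-,\tau_+]$, the function is continuous up to the endpoints, so the monotonicity holds on the closed sub-intervals. Evaluating at the endpoints of $[\tau_-,\tau_*]\subset[\tau_-,\tau_0]$ and of $[\tau_*,\tau_+]\subset[\tau_0,\tau_+]$ gives \eqref{min tb x-} and its symmetric counterpart.

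\emph{Part (ii).} This is entirely parallel. Differentiating $\xi\big(x-\tb(x,w)w\big)=0$ in $w$ gives, where $\nabla\xi(\xb(x,w))\cdot w\neq 0$,
\Be \notag
\nabla_w\tb(x,w)=\frac{-\,\tb(x,w)\,\nabla\xi(\xb(x,w))}{\nabla\xi(\xb(x,w))\cdot w},
\Ee
and hence, by \eqref{xv para},
\Be \notag
\frac{d}{d\tau}\tb(x,\V(\tau))=\frac{-\,\tb(x,\V(\tau))\,\big(\dot\V(\tau)\cdot\nabla\xi(\xb(x,\V(\tau)))\big)}{\V(\tau)\cdot\nabla\xi(\xb(x,\V(\tau)))}.
\Ee
Here $\tb(x,\V(\tau))>0$, and the denominator is strictly negative on $(\tau_-,\tau_+)$ (vanishing only at $\tau_\pm$, again by \eqref{tau_pm} and convexity), so the sign of the derivative equals that of $\dot\V(\tau)\cdot\nabla\xi(\xb(x,\V(\tau)))$, which by \eqref{tau_0_v} is $<0$ on $(\tau_-,\tau_0)$ and $>0$ on $(\tau_0,\tau_+)$. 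Thus $\tb(x,\V(\cdot))$ decreases on $[\tau_-,\tau_0]$ and increases on $[\tau_0,\tau_+]$, giving \eqref{min tb v-} and its symmetric version. Finally, for \eqref{tb minmax} I would use that $|\V(\tau)|=|v+\zeta|$ is independent of $\tau$, so that $|\V(\tau)|\tb(x,\V(\tau))=|x-\xb(x,\V(\tau))|$; by the monotonicity just proved this quantity is minimized over $[\tau_-,\tau_+]$ at $\tau_0$ and maximized at $\tau_-$ or $\tau_+$, and by the triangle inequality
\Be \notag
|x-\xb(x,\V(\tau_\pm))|\le |x-\xb(x,\V(\tau_0))|+|\xb(x,\V(\tau_0))-\xb(x,\V(\tau_\pm))|\le |x-\xb(x,\V(\tau_0))|+\max_{p,q\in\p\O}|p-q|,
\Ee
since $\xb(x,\V(\tau_0)),\ \xb(x,\V(\tau_\pm))\in\p\O$; as $\max_{p,q\in\p\O}|p-q|\lesssim_{\O}1$ because $\mathcal{O}$ is bounded, this is exactly \eqref{tb minmax}.

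I expect the only point requiring genuine care to be the strict negativity of $v\cdot\nabla\xi(\xb(\X(\tau),v))$ (respectively $\V(\tau)\cdot\nabla\xi(\xb(x,\V(\tau)))$) on the open interval $(\tau_-,\tau_+)$, since this is what makes $\tb$ differentiable there and thereby legitimizes the variation formulas. This follows from the convexity of $\mathcal{O}$ exactly in the spirit of the proof of Lemma \ref{lem_mono}: between the two grazing configurations at $\tau_\pm$ the backward ray meets $\p\O$ transversally. The degeneracy of the slope of $\tb$ at the endpoints $\tau_\pm$ is harmless, as only monotonicity on the closed sub-intervals is needed and $\tb$ is continuous up to $\tau_\pm$ by hypothesis.
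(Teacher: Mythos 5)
Your proof is correct and follows essentially the same route as the paper's: differentiate $\tb$ along $\X(\tau)$ and $\V(\tau)$ using the first-variation formula \eqref{nabla_x_bv_b}, read off the sign from the trichotomies \eqref{tau_0} and \eqref{tau_0_v}, and conclude the monotonicity. The one place you diverge is in proving \eqref{tb minmax}: the paper uses $\operatorname{dist}(x,\p\O)\le\min_\tau|x-\xb(x,\V(\tau))|$ together with a case split on $\operatorname{dist}(x,\p\O)\lessgtr 1$, whereas you exploit the already-established monotonicity to place the minimum at $\tau_0$ and then apply the triangle inequality through $\xb(x,\V(\tau_0))$, giving $\max\le\min+\operatorname{diam}(\p\O)$ directly and avoiding the case split — a slightly cleaner route to the same additive bound.
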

	\begin{proof}
		(i) Let $\tau_{-} \leq \tau_{*} \leq \tau_{0}$. From \eqref{nabla_x_bv_b} and \eqref{tau_0}, 
		\[
			\frac{d}{d\tau} \tb(\X(\tau), v) = \frac{\nabla\xi(\xb(\X(\tau), v))\cdot \dot{\X}}{\nabla\xi(\xb(\X(\tau), v))\cdot v} < 0
		\] 
		since $\nabla\xi(\xb(\X(\tau), v))\cdot v < 0$. $\tau_{0} \leq \tau_{*} \leq \tau_{+}$ case is similar. We also note that $\frac{d}{d\tau} \tb(\X(\tau), v)=0$ only at $\tau_{0}$ by \eqref{tau_0}. \\ 
		(ii) We can prove similary as (i) using \eqref{nabla_x_bv_b} and \eqref{tau_0_v}, since
		\[
		\frac{d}{d\tau} \tb(x, \V(\tau)) = -\tb(x, \V(\tau)) \frac{\nabla\xi(\xb(x, \V(\tau)))\cdot \dot{\V}(\tau)}{\nabla\xi(\xb(x, \V(\tau)))\cdot \V(\tau)} < 0.
		\] 
		%We also note that $\frac{d}{d\tau} \tb(x, \V(\tau))=0$ only at $\tau_{0}$ by \eqref{tau_0_v}. \\ 
		To prove \eqref{tb minmax}, let us consider $|\V(\tau)| \tb(x, \V(\tau))$. If $\text{dist} (x, \p\O) \leq 1$, then $\max_{\tau} |\V(\tau)|\tb(x, \V(\tau)) \lesssim_{\O} 1$ also.  If $\text{dist} (x, \p\O) \geq 1$, we have 
		\begin{equation*}
		\begin{split}
		\max_{\tau} (|\V(\tau)|\tb(x, \V(\tau))) &\leq \text{dist}(x,\p\O) + \max_{p,q\in\p\O}|p-q| \\
		&\leq \text{dist}(x,\p\O) \big( 1 + C_{\O} \big) \\
		&\leq  \big( 1 + C_{\O} \big) \min_{\tau} (|\V(\tau)|\tb(x, \V(\tau))).
		\end{split}		
		\end{equation*}
	\end{proof}
	
	\hide
	\begin{lemma} \label{kappa n}{\color{red}[Is this lemma different from Lemma \eqref{lem_unif n}?]}
		Let us consider a plane $S$ which satisfies (\ref{S assume}) and use $\kappa_{S}(y)$ to denote curvature of $\p\O\cap S$ at $y\in \p\O\cap S$, i.e.,
		\Be \notag
		\kappa_{S}(y) := |\mathfrak{r}^{\prime\prime}(s)|,\quad \text{where} \quad \mathfrak{r}(s)=y,
		\Ee 
		where $\mathfrak{r}_{S} : [0, L_{S}) \rightarrow \p\O\cap S$ is regularized{\color{red} [$\dot{r}\neq0$ or $\dot{r}=1$?]} parametriztion of the curve $\p\O \cap S$ when $L_{S} > 0$ is the length of $\p\O \cap S$. Then
		\Be \label{n sim kappa}
		1\lesssim_{\O} c  < |n_{\parallel}(x)| \kappa_{S}(y) \leq C \quad \text{for all $x,y \in \p\O \cap S$.}
		\Ee
		and 
		\Be  
		1 \lesssim_{\O} c  < \frac{|\kappa_{S}(x)|}{|\kappa_{S}(y)|} \leq C , \quad \forall x,y \in \p\O\cap S,	{[	\color{red}Is \ it \ different \ from  \ \eqref{near b-p-ell}?]}
		\Ee
		hold, where $c $ and $C $ are independent to $S$. (For example, $c = C = 1$ if $\O$ is a sphere.) \\
	\end{lemma}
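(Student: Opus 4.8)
The plan is to deduce the lemma from Lemma \ref{lem_unif n} essentially by bookkeeping; indeed the statement is little more than a reformulation of it, and this is why no genuinely new estimate is needed. First, the second estimate is already contained in Lemma \ref{lem_unif n}: with the identification $\kappa_S(x) = |\mathfrak{r}_S''(s)|$ for $\mathfrak{r}_S(s) = x$, the bound \eqref{unif kappa}, namely $c < |\mathfrak{r}_S''(x)|/|\mathfrak{r}_S''(y)| \le C$ with constants depending only on $\O$, is exactly $c < \kappa_S(x)/\kappa_S(y) \le C$, uniformly in $S$.

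For the first estimate, fix a plane $S$ satisfying \eqref{S assume}. By \eqref{n curvature}, for $y = \mathfrak{r}_S(s) \in \p\O \cap S$ we have $|n_\parallel(y)|\,\kappa_S(y) = |n(y)\cdot\mathfrak{r}_S''(s)|$, which is the absolute value of the normal curvature of $\p\O$ at $y$ in the unit tangent direction $\mathfrak{r}_S'(s)$. Applying \eqref{curv uniformx} with the unit-speed curve $\mathfrak{r}_S$ itself (i.e. $\alpha := \mathfrak{r}_S$, so $\alpha'(0) = \mathfrak{r}_S'(s)$ with $|\alpha'(0)| = 1$) yields constants $c_1, C_1 > 0$ depending only on $\O$ with $c_1 \le |n_\parallel(y)|\,\kappa_S(y) \le C_1$ for every $y \in \p\O\cap S$ and every admissible $S$. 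Now for arbitrary $x, y \in \p\O\cap S$ write $|n_\parallel(x)|\,\kappa_S(y) = \big(|n_\parallel(y)|\,\kappa_S(y)\big)\,|n_\parallel(x)|/|n_\parallel(y)|$ and use the uniform comparability \eqref{unif n} to bound $|n_\parallel(x)|/|n_\parallel(y)|$ above and below by constants depending only on $\O$; together with the previous bound this gives \eqref{n sim kappa} with $c, C$ independent of $S$. (Equivalently, combine the product bound with \eqref{unif kappa} through $|n_\parallel(x)|\,\kappa_S(y) = \big(|n_\parallel(x)|\,\kappa_S(x)\big)\,\kappa_S(y)/\kappa_S(x)$.)

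Finally the sphere case is an explicit check: if $\mathcal{O} = \{\,|x - x_0| < R\,\}$, then for any plane $S$ with $d := \mathrm{dist}(x_0, S) < R$ the cross section $\p\O\cap S$ is the circle of radius $\rho = \sqrt{R^2 - d^2} \in (0, R]$ centered at the foot $O'$ of the perpendicular from $x_0$ to $S$, so $\kappa_S \equiv 1/\rho$; and for $y \in \p\O\cap S$ the unit normal $n(y)$ is parallel to $y - x_0$ (with $|y - x_0| = R$), whose orthogonal projection onto $S$ is $y - O'$, of constant length $\rho$, so $|n_\parallel(y)| \equiv \rho/R$. Hence $|n_\parallel(y)|\,\kappa_S(y) \equiv 1/R$ and $\kappa_S(x)/\kappa_S(y) \equiv 1$, i.e. one may take $c = C$ in both estimates. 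The point worth noting — rather than a genuine obstacle — is that $\kappa_S$ alone is \emph{not} uniformly bounded above over admissible $S$ (it blows up for near-tangent cross sections, where $|n_\parallel|$ correspondingly degenerates), so the product $|n_\parallel|\,\kappa_S$, identified via \eqref{n curvature} with the normal curvature, is the correct quantity to control; once that identification is in hand, every input needed is already supplied by Lemma \ref{lem_unif n} and its proof.
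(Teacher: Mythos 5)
Your proof is correct and follows essentially the same route as the paper's: both identify $|n_{\parallel}(y)|\,\kappa_S(y)$ with the normal curvature of $\p\O$ in the tangent direction of the cross section (the paper cites Meusnier's theorem, you use the equivalent identity \eqref{n curvature}), bound it uniformly above and below via uniform convexity (the paper via principal curvature bounds, you via \eqref{curv uniformx}), and then pass from $y$ to a different base point $x$ using the comparability statements of Lemma \ref{lem_unif n}. No substantive difference.
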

	\begin{proof}
		By Meusnier's theorem (we refer \cite{dC}), normal curvature of $\p\O$ at $y\in \p\O\cap S$ is given by $\kappa_{S}(y) n_{\parallel}(y)$. Since $\O$ is uniformly convex, 
		\[
		\kappa_{1} \leq |\kappa_{S}(y) n_{\parallel}(y)| \leq \kappa_{2},
		\]
		where $\kappa_{1}$ and $\kappa_{2}$  are minimum and maximum principal curvature on $\p\O$ respectively which are uniformly bounded from below and above. Combining with Lemma \ref{lem_unif n}, we obtain both (\ref{n sim kappa}) and (\ref{unif kappa}). We note that $\kappa_{1}$, $\kappa_{2}$, and $c, C$ of Lemma \ref{lem_unif n} are all $S$ independent.  
	\end{proof}
	\unhide

	\begin{lemma} \label{lem_tau ratio}
	(i) Assume \eqref{assume_x} and \eqref{assume_x2}, and recall $\tau_0(x, \bx, v)$ from \eqref{tau_0}. Then,
	\Be \label{tau ratio x}	
		\frac{\tau_{0}(x, \bx, v) - \tau_{-}(x, \bx, v)}{\tau_{+}(x, \bx, v) - \tau_{-}(x, \bx, v)} \gtrsim_{\O} 1.  \\
	\Ee
	(ii) Assume \eqref{assume_v} and \eqref{assume_v2}, and recall $\tau_0(x, v, \bv, \zeta)$ from \eqref{tau_0_v}. Then,
	\Be \label{tau ratio v}
		\frac{\tau_{0}(x, v, \bv, \zeta) - \tau_{-}(x, v, \bv, \zeta)}{\tau_{+}(x, v, \bv, \zeta) - \tau_{-}(x, v, \bv, \zeta)} \gtrsim_{\O} 1.	
	\Ee
	\end{lemma}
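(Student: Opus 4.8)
\emph{Plan.} Throughout write $S$ for the relevant cross–sectional plane ($S_{(x,\bx,v)}$ in part (i), $S_{(x,v,\bv,\zeta)}$ in part (ii)), $K:=\overline{\mathcal O}\cap S$, and $\Gamma:=\partial\mathcal O\cap S=\partial K$. By \eqref{convex_xi} and Meusnier's theorem $\Gamma$ is a $C^{2}$ uniformly convex plane curve whose curvature $\kappa$ obeys $\kappa\ge\theta_\O>0$, and by Lemma~\ref{lem_unif n} the ratio $\kappa_{\max}/\kappa_{\min}$ over $\Gamma$ is $\le C$ with $C=C(\O)$ \emph{independent of} $S$. Put $r_-:=1/\kappa_{\max}$, $r_+:=1/\kappa_{\min}$, so $r_+/r_-\le C$, and recall the rolling–disk property implied by these curvature bounds (Blaschke): at every $p\in\Gamma$ there is a disk $B^{\mathrm{in}}_p$ of radius $r_-$ with $B^{\mathrm{in}}_p\subseteq K$ tangent to $\Gamma$ at $p$, and a disk $B^{\mathrm{out}}_p$ of radius $r_+$ with $K\subseteq B^{\mathrm{out}}_p$ tangent to $\Gamma$ at $p$. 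The idea in each case is to rewrite the affine parameter $\tau$ in a geometric variable that turns the ratio in \eqref{tau ratio x}, \eqref{tau ratio v} into a ratio of ``shadows'' of $K$, and to sandwich these shadows between the shadows of the rolling disks.

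\emph{Part (i).} Choose orthonormal coordinates $(e_1,e_2)$ in $S$ with $e_1\parallel v$ and $e_2\parallel\dot\X=x-\tx$ (using $\dot\X\perp v$ from \eqref{perp} and $\dot\X\neq0$ from \eqref{assume_x}). Since $\X(\tau)=(1-\tau)\tx+\tau x$ and $x-\tx\parallel e_2$, the $e_2$–coordinate $y(\tau)$ of $\X(\tau)$ is affine in $\tau$ with nonzero slope, and as $\xb(\X(\tau),v)-\X(\tau)$ is a multiple of $v\parallel e_1$ the point $\xb(\X(\tau),v)$ has the same $e_2$–coordinate $y(\tau)$. The grazing condition $v\cdot\nabla\xi=0$ at $\xb(\X(\tau_\pm),v)$ says the tangent of $\Gamma$ there is parallel to $e_1$, i.e.\ $\xb(\X(\tau_\pm),v)$ are the two $e_2$–extremal points of $K$; hence $|y(\tau_+)-y(\tau_-)|$ is the $e_2$–width of $K$, which is $\le 2r_+$ because $K\subseteq B^{\mathrm{out}}_{\,\cdot}$. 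The condition $\dot\X\cdot\nabla\xi=0$ at $p_0:=\xb(\X(\tau_0),v)$ says the normal of $\Gamma$ at $p_0$ is parallel to $e_1$, so the center of $B^{\mathrm{in}}_{p_0}$ is displaced from $p_0$ along $e_1$; thus $B^{\mathrm{in}}_{p_0}$ occupies exactly the $e_2$–band $[\,y(\tau_0)-r_-,\,y(\tau_0)+r_-\,]$, and $B^{\mathrm{in}}_{p_0}\subseteq K$ forces $y(\tau_0)$ to be at $e_2$–distance $\ge r_-$ from both extremes. Since $\tau_-<\tau_0<\tau_+$ (Lemma~\ref{lem_mono}(i)), the left side of \eqref{tau ratio x} equals $|y(\tau_0)-y(\tau_-)|/|y(\tau_+)-y(\tau_-)|\ge r_-/(2r_+)\ge 1/(2C)$.

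\emph{Part (ii).} Identify $S$ with $\R^2$, placing $x$ at the origin. Since $\V(\tau)$ is antiparallel to $\xb(x,\V(\tau))-x$ and $\arg\V(\tau)=\T(\tau)=\tau\theta$ is affine with $\theta>0$ (by \eqref{assume_v}), the left side of \eqref{tau ratio v} equals $(\psi(\tau_0)-\psi(\tau_-))/(\psi(\tau_+)-\psi(\tau_-))$ with $\psi(\tau):=\arg\big(\xb(x,\V(\tau))\big)$, the direction from $x$ to the hit point $q(\tau):=\xb(x,\V(\tau))$. As $\tau$ runs over $[\tau_-,\tau_+]$, $q(\tau)$ sweeps the arc of $\Gamma$ visible from $x$; the grazing condition at $\tau_\pm$ makes the rays $x\to q(\tau_\pm)$ tangent to $K$, so $[\psi(\tau_-),\psi(\tau_+)]$ is (up to orientation) the angular shadow of $K$ from $x$, of width $W$. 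The condition $\dot\V(\tau_0)\cdot\nabla\xi(q(\tau_0))=0$ with $\dot\V\perp\V$ forces $\nabla\xi(q(\tau_0))\parallel\V(\tau_0)$, i.e.\ the ray $x\to q(\tau_0)$ is along the normal of $\Gamma$ there; since $q(\tau_0)$ is on the visible arc and $|q-x|^2$ has a single critical point along that arc (it is monotone near the grazing endpoints, where the ray is tangent), $q(\tau_0)$ is the closest point $q_0$ of $K$ to $x$, at distance $d_0:=\mathrm{dist}(x,K)>0$, and $\psi(\tau_0)$ is the direction $x\to q_0$. Because $B^{\mathrm{in}}_{q_0}$ and $B^{\mathrm{out}}_{q_0}$ are tangent to $\Gamma$ at $q_0$, their centers lie on the ray $x\to q_0$ at distances $d_0+r_-$ and $d_0+r_+$, and $q_0$ is their own nearest point to $x$; hence their angular shadows from $x$ are the intervals of half–widths $\arcsin\frac{r_-}{d_0+r_-}$ and $\arcsin\frac{r_+}{d_0+r_+}$ \emph{centered at} $\psi(\tau_0)$. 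From $B^{\mathrm{in}}_{q_0}\subseteq K\subseteq B^{\mathrm{out}}_{q_0}$ we get $|\psi(\tau_0)-\psi(\tau_-)|\ge\arcsin\frac{r_-}{d_0+r_-}$ and $W\le 2\arcsin\frac{r_+}{d_0+r_+}$, whence, using $\arcsin y\ge y$ and $\arcsin y\le\frac{\pi}{2}y$ on $[0,1]$,
\[
\frac{|\psi(\tau_0)-\psi(\tau_-)|}{W}\ \ge\ \frac{\arcsin\frac{r_-}{d_0+r_-}}{2\arcsin\frac{r_+}{d_0+r_+}}\ \ge\ \frac{1}{\pi}\cdot\frac{r_-}{r_+}\cdot\frac{d_0+r_+}{d_0+r_-}\ \ge\ \frac{1}{\pi C},
\]
which gives \eqref{tau ratio v}.

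\emph{Main difficulty.} The delicate case is (ii): the trajectories here emanate from the free point $x$ with rotating directions rather than being parallel, so the natural ``shadow'' is an angular aperture. The crux is to verify that $\tau_0$ is precisely the parameter of the foot of the perpendicular from $x$ to $\Gamma$, and that the inner/outer rolling disks tangent at that foot have $q_0$ as their own nearest point to $x$ — this is what makes their angular shadows concentric with $\psi(\tau_0)$ and lets the two–sided comparison close. The uniform curvature comparability of Lemma~\ref{lem_unif n} is what renders $r_+/r_-\le C(\O)$, so all the estimates above are uniform in the plane $S$, in $d_0$, and in how grazing the trajectories are.
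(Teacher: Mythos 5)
Your proposal is correct and rests on exactly the same geometric mechanism as the paper's proof: inner and outer rolling circles tangent to $\Gamma$ at $\xb(\X(\tau_0),v)$ (resp.\ $\xb(x,\V(\tau_0))$) together with the $S$-uniform curvature comparability of Lemma~\ref{lem_unif n}, comparing linear widths in (i) and angular shadows from $x$ in (ii). The only cosmetic difference is in part (ii), where you bound the aperture ratio uniformly via the elementary inequalities $y\le\arcsin y\le\tfrac{\pi}{2}y$ on $[0,1]$, whereas the paper splits into the cases $\sin\big(\theta(\tau_0-\tau_-)\big)\ge\tfrac12$ and $<\tfrac12$; both give the same conclusion from the same rolling-circle picture.
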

	\begin{proof} 
		For fixed $S$, let 
		\Be \label{rR}
			R_{S} := (\min_{\tiny x\in \p\O\cap S}\kappa_{S}(x))^{-1}, \quad r_{S} := (\max_{x\in \p\O\cap S}\kappa_{S}(x))^{-1}. 
		\Ee
		Now, let us consider a circles $C_{1}$ with radius $r$. At any point $y\in\p\O\cap S$, we can place $C_{1}$ inside of $\p\O\cap S$ while $C_{1}$ is  tangential at $y\in\p\O\cap S$ by \eqref{rR}. Similarly, consider another circles $C_{2}$ with radius $R$ and for any point $y\in\p\O\cap S$, we can place $C_{2}$ outside of $\p\O\cap S$ while $C_{2}$ is  tangential at $y\in\p\O\cap S$.  \\
		
		\noindent (i) If we choose $y = \xb(\X(\tau_{0}), v)$, it is obvious that
		\[
		r_{S} \leq |\dot{\X}|(\tau_{0} - \tau_{-}),\quad |\dot{\X}|(\tau_{+} - \tau_{0}) \leq 2R_{S}.
		\]
		Therefore,
		\[
		|\dot{\X}(\tau_{0} - \tau_{-})| \geq r_{S} \geq \frac{r_{S}}{2R_{S}} |\dot{\X}(\tau_{+} - \tau_{0})|
		\]
		holds and we obtain \eqref{tau ratio x} using \eqref{unif kappa}.   \\
		
		\noindent (ii) If $\sin(\theta(\tau_{0} - \tau_{-})) \geq \frac{1}{2}$, we have
		\[
		\theta(\tau_{0} - \tau_{-}) \geq \frac{1}{2}
		\]
		and \eqref{tau ratio v} holds since $\theta(\tau_{+} - \tau_{-})\leq \pi$. \\
		If $0\leq \sin(\theta(\tau_{0} - \tau_{-})) \leq \frac{1}{2}$, let us choose $y = \xb(x, \V(\tau_{0}))$ and circle $C_{1}$ to get
		\[
			r_{S} \leq \big( |\xb(x, \V(\tau_{0})) - x| + r \big) \sin(\theta(\tau_{0} - \tau_{-})) 
		\]
		because $C_{1}$ is inside of $\p\O\cap S$. From $0\leq \sin(\theta(\tau_{0} - \tau_{-})) \leq \frac{1}{2}$,
		\[
		\frac{r_{S}}{2}\leq |\xb(x, \V(\tau_{0})) - x|\sin(\theta(\tau_{0} - \tau_{-})). 
		\]
		Also, considering outer circle $C_{2}$ tangential at $y = \xb(x, \V(\tau_{0}))$, 
		\begin{equation*}
		\begin{split}
		|\xb(x, \V(\tau_{0})) - x|(\theta(\tau_{+} - \tau_{0}))  &\lesssim |\xb(x, \V(\tau_{0})) - x|\sin(\theta(\tau_{+} - \tau_{0})) \\
		&\leq \max_{x,y\in\p\O\cap S}|x-y| \leq 2R_{S},
		\end{split}
		\end{equation*}
		where $A \lesssim B$ means $A \leq CB$ with some generic constant $C > 0$, since $(\theta(\tau_{+} - \tau_{0})) \leq \frac{\pi}{2}$. From above two inequalities, 
		\[
		|\xb(x, \V(\tau_{0})) - x|(\theta(\tau_{0} - \tau_{-})) \gtrsim \frac{r_{S}}{2} \gtrsim \frac{r_{S}}{4R_{S}} |\xb(x, \V(\tau_{0})) - x|(\theta(\tau_{+} - \tau_{0}))
		\]
		holds and we obtain \eqref{tau ratio v} using \eqref{unif kappa}. 	\end{proof}

	\section{Specular Singularity}
	
	\subsection{From fraction to Specular Singularity}
	\begin{lemma}\label{frac sim S}
		Suppose the domain is given as in Definition \ref{def:domain} and \eqref{convex_xi}.  \\
		(i) Let $x, \bx \in \O$, $v\in\R^{3}$ and assume \eqref{assume_x}, \eqref{assume_x2}. Recall shifted position $\tilde{x}=\tilde{x}(x, \bx, v)$ defined in \eqref{def_tildex} of Definition \ref{def_tilde}. For $|x - \tx|\leq 1$,
		\begin{align}
		&\frac{|V(s;t,x,v) - V(s;t, \tilde x, v)|}{|x- \tilde x|}  \mathbf{1}_{ \Big\{\substack{s \leq \min\{ t^{1}(x,v ), t^{1}(\tx, v ) \}  \\ s > \max\{ t^{1}(x,v ), t^{1}(\tx, v ) \}   } \Big\}  } 
		\notag \\
		&\leq |v | + 
		|v |^{2}
		\int_{0}^{1}
		\frac{1}{\mathfrak{S}_{sp}(\tau; x, \tx, v )}
		d\tau,  
		\label{est:V/x}\\
		& \frac{|X(s;t,x,v ) - X(s;t, \tilde x, v )|}{|x- \tilde x|}  \mathbf{1}_{ \Big\{ \substack{s \leq \min\{ t^{1}(x,v ), t^{1}(\tx, v ) \}  \\ s > \max\{ t^{1}(x,v ), t^{1}(\tx, v ) \}   } \Big\}  } 
		\notag \\
		&\leq 
		1 + |v |(t-s) + 
		|v |^{2}(t-s)
		\int_{0}^{1}
		\frac{1}{\mathfrak{S}_{sp}(\tau; x, \tx, v )}
		d\tau.   \label{est:X/x} 
		\end{align}
		%where we define $x(\tau)$ and $\dot{x}$ are defined in \eqref{xv para}. \\
		(ii) Let $x\in \O$, $v, \bv, \zeta \in\R^{3}$ and assume \eqref{assume_v}, \eqref{assume_v2}. Recall shifted velocity $\tilde{v}=\tilde{v}(v, \bv, \zeta)$ defined in \eqref{def_tildev} of Definition \ref{def_tilde}. For $|v - \tv|\leq 1$, 
		%And let us assume $|v+\zeta| = |\bv+\zeta|$ additionally. Then,
		\begin{align}
		& \frac{|V(s;t,x,v+\zeta) - V(s;t, x, \tv+ \zeta)|}{|v- \tilde{v}|}  \mathbf{1}_{ \Big\{ \substack{ s \leq \min\{ t^{1}(x,v+\zeta), t^{1}(x, \tv+\zeta) \}  \\  s > \max\{ t^{1}(x,v+\zeta), t^{1}(x, \tv+\zeta) \}  }  \Big\}  } \notag   \\
		& \lesssim 
		1
		+
		|v+\zeta| (t-s) 
		+
		|v+\zeta|^{2}
		\int_{0}^{1}
		\frac{1}{ \mathfrak{S}_{vel}(\tau; x, v, \tv, \zeta)}
		d\tau, \label{est:V/v}  \\
		& \frac{|X(s;t,x,v+\zeta) - X(s;t, x, \tv+ \zeta)|}{|v- \tv|}  \mathbf{1}_{ \Big\{ \substack{ s \leq \min\{ t^{1}(x,v+\zeta), t^{1}(x, \tv+\zeta) \}  \\  s > \max\{ t^{1}(x,v+\zeta), t^{1}(x, \tv+\zeta) \}}  \Big\}  }
		\notag   \\
		& \lesssim 
		(t-s)
		+
		|v+\zeta| (t-s)^{2} 
		+
		|v+\zeta|^{2} (t-s)
		\int_{0}^{1}
		\frac{1}{ \mathfrak{S}_{vel}(\tau; x, v, \tv, \zeta)}
		d\tau.  \label{est:X/v}
		\end{align}
		%where parameter $v(\tau)$ is defined as \eqref{xv para}.
	\end{lemma}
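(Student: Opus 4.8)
The plan is to integrate the $\tau$-derivative of the trajectory along the parametrizations $\X(\tau)$ and $\V(\tau)$ joining the two data, after first reducing to the post-reflection regime and rewriting the billiard formula \eqref{form_XV} so that all singular behaviour is concentrated in one explicit term.

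\textbf{Localization.} On the support of $\mathbf{1}_{\{s\le\min\{t^1(x,v),t^1(\tx,v)\}\}}$ both trajectories have already reflected (at most one reflection occurs, $\mathcal O$ being convex and $\O$ its exterior), so \eqref{form_XV} gives, in the reflected phase,
\[
V(s;t,x,v)=R_{\xb(x,v)}v,\qquad X(s;t,x,v)=x-(t-s)R_{\xb(x,v)}v+\tb(x,v)\big(R_{\xb(x,v)}v-v\big),
\]
and likewise at $\tx$; on $\mathbf{1}_{\{s>\max\{\cdots\}\}}$ both sides equal the free flow, so the differences vanish. Since $\tb(x,v),\tb(\tx,v)<\infty$ there, \eqref{assume_x}--\eqref{assume_x2} force $0,1\in[\tau_-,\tau_+]$, hence $[0,1]\subseteq[\tau_-,\tau_+]$, and Lemma \ref{lem_min_tb} gives $\max_{\tau\in[0,1]}\tb(\X(\tau),v)\le\max\{\tb(x,v),\tb(\tx,v)\}\le t-s$; thus for every $\tau\in[0,1]$ the ray from $\X(\tau)$ meets $\p\O$ and has reflected by time $s$, so $\tau\mapsto V(s;t,\X(\tau),v)$ and $\tau\mapsto X(s;t,\X(\tau),v)$ are $C^1$ on $[0,1]$ (and similarly along $\V(\tau)$ for (ii)).

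\textbf{Case (i).} Write $V(s;t,x,v)-V(s;t,\tx,v)=\int_0^1\frac{d}{d\tau}\big[R_{\xb(\X(\tau),v)}v\big]\,d\tau$ and $X(s;t,x,v)-X(s;t,\tx,v)=(x-\tx)-(t-s)\int_0^1\frac{d}{d\tau}\big[R_{\xb}v\big]\,d\tau+\int_0^1\frac{d}{d\tau}\big[\tb(R_{\xb}v-v)\big]\,d\tau$. By the chain rule with $\dot{\X}=x-\tx$, differentiating $\xi(\X(\tau)-\tb v)=0$ (as recorded in Section 4, cf.\ \eqref{nabla_x_bv_b}, \eqref{ODE:dot_x_n}) yields
\[
\frac{d}{d\tau}\tb(\X(\tau),v)=\frac{\nabla\xi(\xb)\cdot\dot{\X}}{\nabla\xi(\xb)\cdot v},\qquad \frac{d}{d\tau}\xb(\X(\tau),v)=\dot{\X}-\Big(\frac{d}{d\tau}\tb\Big)v,
\]
and $\frac{d}{d\tau}[R_{\xb}v]=-2\frac{d}{d\tau}[(n(\xb)\cdot v)n(\xb)]$, $\frac{d}{d\tau}n(\xb)=(\nabla n)(\xb)\frac{d}{d\tau}\xb$. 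Using $\dot{\X}\perp v$ from \eqref{perp} one reads off $|\widehat{\dot{\X}}\cdot\nabla\xi(\xb)|/|\nabla\xi(\xb)\cdot v|=1/\mathfrak{S}_{sp}(\tau;x,\tx,v)$, so $|\frac{d}{d\tau}\tb|=|\dot{\X}|/\mathfrak{S}_{sp}(\tau)$ and $|\frac{d}{d\tau}\xb|\le|\dot{\X}|\big(1+|v|/\mathfrak{S}_{sp}(\tau)\big)$. The one delicate point is that the two terms that would otherwise contribute an unwanted $|v|\int_0^1 1/\mathfrak{S}_{sp}$ carry the deflection $R_{\xb}v-v=-2(n(\xb)\cdot v)n(\xb)$, for which $|R_{\xb}v-v|=2|\nabla\xi(\xb)\cdot v|/|\nabla\xi(\xb)|$ produces the cancellation
\[
\frac{|R_{\xb}v-v|}{\mathfrak{S}_{sp}(\tau)}=\frac{2\,|\widehat{\dot{\X}}\cdot\nabla\xi(\xb)|}{|\nabla\xi(\xb)|}\le 2.
\]
With this, together with $\tb\le t-s$ on the indicator and $|\nabla\xi|\sim_{\O}1$, $|\nabla n|\lesssim_{\O}1$ ($\p\O\in C^2$), each integrand is bounded by $|\dot{\X}|$ times a factor of $|v|$, $t-s$, $1/\mathfrak{S}_{sp}(\tau)$ only; dividing by $|x-\tx|=|\dot{\X}|$ and integrating in $\tau$ gives \eqref{est:V/x} and \eqref{est:X/x}.

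\textbf{Case (ii) and the main obstacle.} Case (ii) is parallel, parametrizing the velocity by $\V(\tau)$ ($x$ fixed) and using $\V(\tau)\perp\dot{\V}(\tau)$, $\ddot{\V}=-\theta^2\V$ from \eqref{perp}; differentiating $\xi(x-\tb\V(\tau))=0$ gives $\frac{d}{d\tau}\tb(x,\V(\tau))=-\tb\,\frac{\nabla\xi(\xb)\cdot\dot{\V}}{\nabla\xi(\xb)\cdot\V}$, so by \eqref{def:S_v} $|\frac{d}{d\tau}\tb|=|\dot{\V}|/\mathfrak{S}_{vel}(\tau;x,v,\tv,\zeta)$ (the extra factor $\tb$ distinguishing $\mathfrak{S}_{vel}$ from $\mathfrak{S}_{sp}$ is already built in), $\frac{d}{d\tau}\xb=-(\frac{d}{d\tau}\tb)\V-\tb\dot{\V}$, and the same deflection cancellation gives $|R_{\xb}\V-\V|/\mathfrak{S}_{vel}\lesssim\tb\le t-s$. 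Since $\V(\cdot)$ traces a circular arc of radius $|v+\zeta|$ and opening $\theta$, $|v-\tv|=|\V(1)-\V(0)|=2|v+\zeta|\sin(\theta/2)\sim\theta|v+\zeta|=\int_0^1|\dot{\V}(\tau)|\,d\tau$, so dividing the integrated bound by $|v-\tv|$ is comparable to dividing by the arclength --- the source of the $\lesssim$ in \eqref{est:V/v}--\eqref{est:X/v}. The hard part will be exactly this step: showing that the \emph{only} singular contribution is the reciprocal specular singularity and that the potentially dangerous $\xb$- and $\tb$-terms collapse to $O(t-s)$ via the cancellation above --- which relies on the orthogonality relations \eqref{perp} built into the definitions of $\tx,\tv$ --- together with checking that on the support of the indicator all intermediate trajectories truly reflect before time $s$, where convexity of $\mathcal O$, \eqref{assume_x2}/\eqref{assume_v2}, and Lemma \ref{lem_min_tb} enter.
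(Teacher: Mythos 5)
Your proposal is correct and follows essentially the same route as the paper's proof: integrate $\nabla_x V\cdot\dot{\X}$ (resp.\ $\nabla_v V\cdot\dot{\V}$) along the shifted parametrization and recognize the specular singularity $\mathfrak{S}_{sp}$, $\mathfrak{S}_{vel}$ in the resulting quotient via the orthogonality relations of \eqref{perp}. The "deflection cancellation" you isolate, namely $|R_{\xb}v-v|/\mathfrak{S}_{sp}(\tau)=2|\widehat{\dot{\X}}\cdot\nabla\xi(\xb)|/|\nabla\xi(\xb)|\lesssim 1$, is mathematically the same observation that the paper records in the compact identity $\nabla_x X = R_{\xb}-(t-\tb-s)\nabla_x V$ of \eqref{nabla_x X}: the $\nabla_x\xb$ and $(R_{\xb}v)\otimes\nabla_x\tb$ contributions combine to an orthogonal matrix with no singularity, which is exactly your cancellation in disguise. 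One point where you are more careful than the paper's exposition: you verify explicitly, via $[0,1]\subseteq[\tau_-,\tau_+]$ and Lemma~\ref{lem_min_tb}, that on the support of the indicator every intermediate trajectory $\X(\tau)$, $0\le\tau\le 1$, really does hit $\p\O$ with $\tb(\X(\tau),v)\le t-s$, so that the reflected Duhamel formula is $C^1$ along the whole interpolation segment; the paper applies the fundamental theorem of calculus on $[0,1]$ without spelling this out. That added localization is correct and worth having.
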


	\begin{remark}
		In the regularity estimate, we are only interested in the case that $s \in [0,t]$. Therefore, if either $t^1(x,v)=-\infty$ or $t^1(\tilde x,v)=-\infty$, which means either one of the trajectory from $(t,x,v)$ or $(t, \tilde{x}, v)$ missed the boundary $\p\O$, then $\mathbf{1}_{ \Big\{ \substack{ s \leq \min\{ t^{1}(x,v), t^{1}(\tilde x, v) \}  \\  s > \max\{ t^{1}(x,v), t^{1}(\tilde x, v) \}  }  \Big\}  }\equiv 0$. We have the same conclusion for $\mathbf{1}_{ \Big\{ \substack{ s \leq \min\{ t^{1}(x,v+\zeta), t^{1}(x, \tv+\zeta) \}  \\  s > \max\{ t^{1}(x,v+\zeta), t^{1}(x, \tv+\zeta) \}  }  \Big\}  }$. The other nontrivial case, only one trajectory hits the boundary, will be discussed in the next lemma.   \\
		\end{remark}
	
	\begin{proof}
		If  $s > \max\{ t^{1}(x,v), t^{1}(\tx, v)\}$ or $s > \max\{ t^{1}(x,v+\zeta), t^{1}(x, \tv+\zeta)\}$, it is trivial. We consider $s \leq \min\{ t^{1}(x,v+\zeta), t^{1}(x, \tv+\zeta)\}$ and $s \leq \min\{ t^{1}(x,v ), t^{1}(\tx, v )\}$ cases only. \\
		{\bf Step 1}
		From \eqref{nabla_x_bv_b}, for $s \leq t-\tb(x,v)$,
		\Be
		\begin{split}\label{nabla_x V}
			&\nabla_{x} V(s;t,x,v) = \nabla_{x} [v- 2(n (\xb) \cdot v) n(\xb)]\\
			&
			= -2 (v\cdot n(\xb))\frac{1}{|\nabla\xi(\xb)|}\Big( I - n(\xb)\otimes n(\xb)\Big) \nabla^{2}\xi(\xb) \nabla_{x}\xb \\
			&\quad - \frac{2}{|\nabla\xi(\xb)|}(n(\xb)\otimes v) \Big( I - n(\xb)\otimes n(\xb) \Big) \nabla^{2}\xi(\xb) \nabla_{x}\xb \\
			&= - \frac{2}{|\nabla\xi(\xb)|} \big( (v\cdot n(\xb))R_{\xb} + n(\xb)\otimes v \big) \nabla^{2}\xi(\xb) \Big(  I - \frac{v\otimes n(\xb)}{v\cdot n(\xb)} \Big)
		\end{split}
		\Ee
		and
		\Be
		\begin{split}\label{nabla_x X}
			&\nabla_{x} X(s;t,x,v)  = 
			\nabla_{x} [\xb -  (t-\tb -s ) (v- 2 (n(\xb) \cdot v) n(\xb))]\\
			&= I - \frac{v\otimes n(\xb)}{v\cdot n(\xb)} + \big( v - 2(v\cdot n(\xb))n(\xb) \big)\otimes \nabla_{x}\tb - (t-\tb-s)\nabla_{x}V(s) \\
			&= R_{\xb} - (t-\tb-s)\nabla_{x}V(s;t,x,v).
		\end{split}\Ee 
		
		Similarly, we get
		\Be
		\begin{split}\label{nabla_v V}
			&\nabla_{v} V(s;t,x,v) = \nabla_{v} [v- 2(n (\xb) \cdot v) n(\xb)]\\
			&
			= R_{\xb} + \frac{2 \tb}{|\nabla\xi(\xb)|} \big( (v\cdot n(\xb))R_{\xb} + n(\xb)\otimes v \big) \nabla^{2}\xi(\xb) \Big(  I - \frac{v\otimes n(\xb)}{v\cdot n(\xb)} \Big)
		\end{split}
		\Ee
		and
		\Be
		\begin{split}\label{nabla_v X}
			&\nabla_{v} X(s;t,x,v)  = 
			\nabla_{v} [\xb -  (t-\tb -s ) (v- 2 (n(\xb) \cdot v) n(\xb))]\\
			&= -\tb\Big( I - \frac{v\otimes n(\xb)}{v\cdot n(\xb)} \Big) + \big( v - 2(v\cdot n(\xb))n(\xb) \big)\otimes \nabla_{v}\tb - (t-\tb-s)\nabla_{v}V(s) \\
			&= -\tb R_{\xb} - (t-\tb-s)\nabla_{v}V(s;t,x,v).
		\end{split}\Ee 
		
		%Note that $\p_i \xi (\xb(
		%x_{\mathfrak{p}, z}(x_0, v_0), v_\mathfrak{q}(x_0, v_0))
		%)=
		%\p_i \xi (\xb(
		%x_{\mathfrak{p}, 0}(x_0, v_0), v_\mathfrak{q}(x_0, v_0))) + O(||)
		%$	
		
		{\bf Step 2}	
		First, we consider \eqref{est:V/x}. %In this proof we do not consider shifted \eqref{def_tildex}. Instead we just use natural parametrization $x(\tau) = (1-\tau)\bar{x} + \tau x$. 
		From  \eqref{nabla_x V},
		\begin{align} 
		&V(s;t,x,v )- V(s;t, \tx, v ) 
		= \int^{1}_{0} 
		\nabla_x V(s;t, \X(\tau),v ) \dot{\X}
		\dd \tau  \notag \\
		& \lesssim 
		|x - \tx | |v |
		\int_{0}^{1}
		\bigg(1+
		|v |  \frac{\big|
			\frac{
				\dot{\X}}{|\dot{\X}|} \cdot  \nabla \xi(\xb (\X(\tau), v))  \big|}{| 	v \cdot \nabla \xi (\xb
			(\X(\tau), v )
			) |}\bigg)  
		d\tau.
		\notag  
		\end{align}
		Using \eqref{def:S_x}, we derive \eqref{est:V/x}.  \\
		
		For \eqref{est:X/x}, we use \eqref{nabla_x X} and then similarly as above,  
		\begin{align} \notag
		&X(s;t,x,v )- X(s;t, \tx, v ) 
		= \int^{1}_{0} 
		\nabla_x X(s;t,\X(\tau),v ) \dot{\X}
		\dd \tau  \notag \\
		& \lesssim 
		|x-\tx|
		+
		(t-s) |x - \tx | |v |
		\int_{0}^{1}
		\bigg(1+
		|v |  \frac{\big|
			\frac{
				\dot{\X}}{|\dot{\X}|} \cdot  \nabla \xi(\xb (\X(\tau), v ))  \big|}{| 	v \cdot \nabla \xi (\xb
			(\X(\tau), v )
			) |}\bigg)  
		d\tau
		\notag  
		\end{align}
		holds.   \\
		
		For $v$-directional parametrization, using \eqref{nabla_v V},
		%since $|v+\zeta|=|\bv+\zeta|$, we just use \eqref{def_tildev} assuming $\bv+\zeta = \tv+\zeta$. Then,
		\begin{align} \notag
		&V(s;t,x,v+\zeta) - V(s;t, x, \tv +\zeta) 
		= \int^{1}_{0} 
		\nabla_v V(s;t,x, v(\tau)) \dot{\V}
		\dd \tau  \notag \\
		& \lesssim 
		|v-\tv|
		+
		|v-\tv|  |v+\zeta| 
		\int_{0}^{1}
		\bigg( (t-s)  +
		|v+\zeta|  \frac{ \tb(x,\V(\tau)) \big|
			\frac{
				\dot{\V}(\tau)}{|\dot{\V}(\tau)|} \cdot  \nabla \xi(\xb (x, \V(\tau)))  \big|}{| \V(\tau) \cdot \nabla \xi (\xb(x, \V(\tau))) |}\bigg)  
		d\tau
		\notag  
		\end{align}
		which yields \eqref{est:V/v}. \eqref{est:X/v} is also gained similarly using \eqref{nabla_v X}. 
	\end{proof}
	
	\begin{lemma} \label{L sim S}
		Suppose the domain is given as in Definition \ref{def:domain} and \eqref{convex_xi}. \\
		(i) Let $x, \bx \in \O$, $v\in\R^{3}$ and assume \eqref{assume_x}, \eqref{assume_x2}. Recall shifted position $\tilde{x}=\tilde{x}(x, \bx, v)$ defined in \eqref{def_tildex} of Definition \ref{def_tilde}. If $\tb(\tx, v ), \tb(x,v ) < \infty$, 
		\begin{equation} \label{est:Lx}
		\begin{split}
		(t^{1}(x,v ) - s) \mathbf{1}_{t^{1}(\tilde{x},v ) < s \leq t^{1}(x,v )} 
		&\leq |x-\tx| \int_{0}^{1} \frac{1}{\mathfrak{S}_{sp}(\tau; x, \tx, v )} d\tau.
		\end{split}
		\end{equation}
		(ii) Let $x\in \O$, $v, \bv, \zeta \in\R^{3}$ and assume \eqref{assume_v}, \eqref{assume_v2}. Recall shifted velocity $\tilde{v}=\tilde{v}(v, \bv, \zeta)$ defined in \eqref{def_tildev} of Definition \ref{def_tilde}. If $\tb(x, \tv+\zeta), \tb(x,v+\zeta) < \infty$, 
		\begin{equation} \label{est:Lv}
		\begin{split}
		(t^{1}(x,v+\zeta) - s) \mathbf{1}_{ t^{1}(x, \tv+\zeta) < s \leq t^{1}(x,v+\zeta)} 
		&\lesssim |v-\tilde{v}|  \int_{0}^{1} \frac{1}{\mathfrak{S}_{vel}(\tau; x, v, \tv, \zeta)} d\tau.
		\end{split}
		\end{equation}
	\end{lemma}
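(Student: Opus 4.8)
This lemma supplies the estimate for the ``only one trajectory has collided'' regime $t^{1}(\tx,v)<s\le t^{1}(x,v)$ (resp.\ $t^{1}(x,\tv+\zeta)<s\le t^{1}(x,v+\zeta)$) left open by Lemma \ref{frac sim S}. The plan is to reduce each inequality to the elementary fact that $|\tb(a)-\tb(b)|$ is dominated by the integral of $|\frac{d}{d\tau}\tb|$ along the path of Definition \ref{def_para} joining $a$ to $b$, and then to recognize that integrand as a universal multiple of $1/\mathfrak{S}_{sp}$ (resp.\ $1/\mathfrak{S}_{vel}$). First I would dispose of the left-hand sides: the indicator in \eqref{est:Lx} can only be nonzero when $t^{1}(\tx,v)<t^{1}(x,v)$, i.e.\ $\tb(x,v)<\tb(\tx,v)$, and then $0\le t^{1}(x,v)-s<t^{1}(x,v)-t^{1}(\tx,v)=\tb(\tx,v)-\tb(x,v)$, so it suffices to bound $\tb(\tx,v)-\tb(x,v)$; since $\X(0)=\tx$ and $\X(1)=x$, the fundamental theorem of calculus gives $\tb(\tx,v)-\tb(x,v)\le\int_{0}^{1}\big|\tfrac{d}{d\tau}\tb(\X(\tau),v)\big|\,d\tau$, and the identical reduction applies to \eqref{est:Lv} with $\V(\tau)$, $\V(0)=\tv+\zeta$, $\V(1)=v+\zeta$.

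Next I would identify the integrand. From the proof of Lemma \ref{lem_min_tb}, $\frac{d}{d\tau}\tb(\X(\tau),v)=\frac{\nabla\xi(\xb(\X(\tau),v))\cdot\dot\X}{\nabla\xi(\xb(\X(\tau),v))\cdot v}$ with $\nabla\xi(\xb(\X(\tau),v))\cdot v<0$; since $\dot\X=x-\tx$ (so $|\dot\X|=|x-\tx|$), comparing with the definition \eqref{def:S_x} gives exactly $\big|\tfrac{d}{d\tau}\tb(\X(\tau),v)\big|=|x-\tx|/\mathfrak{S}_{sp}(\tau;x,\tx,v)$, and integrating in $\tau$ proves \eqref{est:Lx}. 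Similarly, from the proof of Lemma \ref{lem_min_tb}, $\frac{d}{d\tau}\tb(x,\V(\tau))=-\tb(x,\V(\tau))\frac{\nabla\xi(\xb(x,\V(\tau)))\cdot\dot\V(\tau)}{\nabla\xi(\xb(x,\V(\tau)))\cdot\V(\tau)}$ with $\nabla\xi(\xb(x,\V(\tau)))\cdot\V(\tau)<0$, so \eqref{def:S_v} yields $\big|\tfrac{d}{d\tau}\tb(x,\V(\tau))\big|=|\dot\V(\tau)|/\mathfrak{S}_{vel}(\tau;x,v,\tv,\zeta)$. The one extra ingredient for \eqref{est:Lv} is to trade $|\dot\V(\tau)|$ for $|v-\tv|$: by \eqref{perp}, $|\dot\V(\tau)|=\theta|v+\zeta|$ with $\theta\in[0,\pi]$ the angle between $v+\zeta$ and $\tv+\zeta$, and since $|\tv+\zeta|=|v+\zeta|$ by \eqref{def_tildev} we have $|v-\tv|=|(v+\zeta)-(\tv+\zeta)|=2|v+\zeta|\sin(\theta/2)$; the elementary inequality $\theta\le\pi\sin(\theta/2)$ on $[0,\pi]$ then gives $|\dot\V(\tau)|\le\tfrac{\pi}{2}|v-\tv|$, which is the universal constant absorbed into the ``$\lesssim$'' of \eqref{est:Lv}.

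I expect the main obstacle to be not the calculus above but justifying that the objects along the path are well posed: one needs $\xb(\X(\tau),v)$ (resp.\ $\xb(x,\V(\tau))$) to be a genuine point of $\p\O$ for every $\tau\in[0,1]$, with a non-grazing collision for a.e.\ $\tau$, so that $\frac{d}{d\tau}\tb$ exists a.e., is integrable, and the fundamental theorem of calculus applies. This is why the hypotheses $\tb(\tx,v),\tb(x,v)<\infty$ (resp.\ $\tb(x,\tv+\zeta),\tb(x,v+\zeta)<\infty$) are imposed together with \eqref{assume_x}, \eqref{assume_x2} (resp.\ \eqref{assume_v}, \eqref{assume_v2}): these put $\tau=0,1$ in the interval $[\tau_-,\tau_+]$ of Definition \ref{def_para}, and I then use, as is built into Definition \ref{def_S} and exploited throughout Lemma \ref{lem_mono} and Lemma \ref{lem_min_tb}, that $\X(\tau)$ (resp.\ $\V(\tau)$) keeps $\xb$ on $\p\O$ along all of $[0,1]$ — the situation in the applications, where $x,\bar x$ (resp.\ $v,\bar v,\zeta$) are localized. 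Granting this, the only remaining point is the integrability of $\frac{d}{d\tau}\tb$ near a possibly grazing endpoint, which follows from the convexity bounds of Lemma \ref{lem_mono} (the collision time degenerates there like a square root), after which the two computations above finish the proof.
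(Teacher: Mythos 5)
Your proof is correct and follows exactly the paper's argument: both bound the left side by $|\tb(\tx,v)-\tb(x,v)|$ (resp.\ the velocity analogue), apply the fundamental theorem of calculus along $\X(\tau)$ (resp.\ $\V(\tau)$), and identify $\big|\frac{d}{d\tau}\tb\big|$ with $|\dot\X|/\mathfrak{S}_{sp}$ (resp.\ $|\dot\V(\tau)|/\mathfrak{S}_{vel}$) via \eqref{nabla_x_bv_b} and the definitions \eqref{def:S_x}--\eqref{def:S_v}. Your explicit justification of $|\dot\V(\tau)|\le\tfrac{\pi}{2}|v-\tv|$ and of why $\xb(\X(\tau),v)$ stays well-defined on all of $[0,1]$ under the stated finiteness hypotheses is detail the paper omits, but the underlying computation is identical.
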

	\begin{proof}
		For \eqref{est:Lx}
		\Be \notag
		\begin{split}
			(t^{1}(x,v ) - s) \mathbf{1}_{t^{1}(\tilde{x},v ) < s \leq t^{1}(x,v )}  
			&\leq |\tb({x},v ) - \tb(\tx,v )| \\
			&=  \Big| \int_{0}^{1} \nabla_{x}\tb( \X(\tau),v ) \dot{\X} d\tau \Big|   \\
			&\leq  \int_{0}^{1} \Big|  \frac{\dot{\X}\cdot\nabla\xi(\xb(\X(\tau),v ))}{ v\cdot\nabla\xi(\xb(\X(\tau), v))}\Big|  d\tau  \\
			&\leq |\dot{\X}| \int_{0}^{1} \frac{1}{\mathfrak{S}_{sp}(\tau; x, \tx, v)} d\tau.
		\end{split}
		\Ee
		Similarly, for \eqref{est:Lv}
		\Be  \notag
		\begin{split}
			\begin{split}
				(t^{1}(x,v+\zeta) - s) \mathbf{1}_{t^{1}(x, \tv+\zeta) < s \leq t^{1}(x,v+\zeta)}  
				&\leq |\tb({x},v+\zeta) - \tb(x, \tv+\zeta)| \\
				&=  \Big| \int_{0}^{1} \nabla_{v}\tb(x, \V(\tau)) \dot{\V}(\tau) d\tau \Big|   \\
				&\leq \int_{0}^{1} \Big| \tb(x, \V(\tau)) \frac{\dot{\V}(\tau)\cdot\nabla\xi(\xb( x, \V(\tau)))}{ \V(\tau)\cdot\nabla\xi(\xb(x, \V(\tau)))}\Big|  d\tau  \\
				&\leq |\dot{\V}(\tau)| \int_{0}^{1} \frac{1}{\mathfrak{S}_{vel}(\tau; x, v, \tv, \zeta )} d\tau.
			\end{split}
		\end{split}
		\Ee
	\end{proof}

	\subsection{Averaging specular Singularity}
	%In the light of Proposition \ref{lem:1/2_to_H} the goal is now to establish a bound of the $\mathfrak H^1$-seminorm of \eqref{def:H} with $\beta=1$.  As we have described in the introduction after Definition \ref{def:Specular Singularity}, the major task is to control the velocity integral of a reciprocal of the specular singularity as in \eqref{int:S_introduction}. This is accomplished in Proposition \ref{prop:Desing}, which is proved based on several lemmas.

	%%%%%%%%%%%%%%%%%
	%%%%%%%%%%%%%%%%%
	%%%%%%%%%%%%%%%%%
	\hide
	\begin{lemma}\label{comparison_lemma} 
		%{\color{blue} CHECK How about sign of $a(\tau)$? if it okay?}  \\
		For given $a_0>0, b_0>0, \tau_2>\tau_1$, assume $|a(\tau)| \leq a_0$ and $0 \leq b(\tau) \leq b_0$ for all $\tau \in [  \tau_1,  \tau_2]$. If a continuous function $y(\tau)$ solves
		\Be
		\frac{d}{d\tau} \left(\frac{y  ^2}{2} \right) = a(\tau) y + b(\tau) \ \ \text{for } \tau \in [\tau_1, \tau
		_2],  \   \text{and} \    \ 
		y(\tau_1)=0, \label{eqtn_x} 
		\Ee
		then 
		\Be
		% \sqrt{b_0}|\tau-\tau_1|^{1/2} 
		0 \leq y(\tau) \leq   2 \sqrt{b_0}|\tau-\tau_1|^{1/2}  \  \ \text{for all}  \ \tau  \in \Big[\tau_1, \tau_1 + \frac{b_0}{16(a_0)^2}\Big]
		\cap [\tau_1, \tau_2]
		.\label{compare_x&y}%\\
		%x(\tau) \leq   2 \sqrt{b_0}|\tau-\tau_2|^{1/2} \ \ for \ all \   \tau  \in \Big[\tau_2-\frac{b_0}{16(a_0)^2}, \tau_2\Big].\label{compare_x&y2}
		\Ee

	\end{lemma}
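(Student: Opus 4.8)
The plan is to reduce \eqref{eqtn_x} to a one–sided scalar differential inequality for $z:=y^{2}$ and then run a \emph{barrier} (first–exit–time) argument, rather than quoting the classical ODE comparison theorem: the right–hand side will involve $\sqrt{z}$, which is not Lipschitz at the initial point $\tau_{1}$ where $z(\tau_{1})=0$, so the standard comparison result does not apply directly.

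First I would observe that $z=y^{2}=2\cdot(y^{2}/2)$ is differentiable on $[\tau_{1},\tau_{2}]$ with $z(\tau_{1})=0$, and that \eqref{eqtn_x} gives $z^{\prime}(\tau)=2a(\tau)y(\tau)+2b(\tau)$. Using $|a|\le a_{0}$, $0\le b\le b_{0}$, and $|y|=\sqrt{z}$ (since $z\ge 0$), this yields
\[
	z^{\prime}(\tau)\le 2a_{0}\sqrt{z(\tau)}+2b_{0},\qquad z(\tau_{1})=0,\qquad \tau\in[\tau_{1},\tau_{2}].
\]
I note here that the lower bound $0\le y$ in the conclusion is \emph{not} an analytic consequence of \eqref{eqtn_x} alone (e.g.\ for $a\equiv 0$, $b\equiv b_{0}$ the continuous function $y=-\sqrt{2b_{0}(\tau-\tau_{1})}$ also solves it); in every application of this lemma $y$ is a manifestly nonnegative object — a square, respectively the nonnegative quantities $\mathfrak{S}_{sp}$, $\mathfrak{S}_{vel}$ of \eqref{def:S_x}--\eqref{def:S_v} — so I treat $y\ge 0$ as given and focus on the upper bound, which carries the content.

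Next I would compare $z$ against the linear barrier $g(\tau):=4b_{0}(\tau-\tau_{1})$ on the interval $I:=[\tau_{1},\tau_{1}+\tfrac{b_{0}}{16a_{0}^{2}}]\cap[\tau_{1},\tau_{2}]$. Put $\phi:=z-g$; then $\phi$ is differentiable on $I$, $\phi(\tau_{1})=0$, and $\phi^{\prime}(\tau_{1})=2b(\tau_{1})-4b_{0}\le -2b_{0}<0$, so $\phi<0$ on $(\tau_{1},\tau_{1}+\delta)$ for some $\delta>0$. Suppose, for contradiction, that $\phi>0$ somewhere in $I$, and set $\tau^{*}:=\inf\{\tau\in I:\phi(\tau)>0\}$. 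A short continuity argument then gives $\tau^{*}>\tau_{1}$, $\phi\le 0$ on $[\tau_{1},\tau^{*}]$, $\phi(\tau^{*})=0$, and (from a sequence $\tau_{n}\downarrow\tau^{*}$ with $\phi(\tau_{n})>0$) $\phi^{\prime}(\tau^{*})\ge 0$. On the other hand, the differential inequality together with $z(\tau^{*})=g(\tau^{*})=4b_{0}(\tau^{*}-\tau_{1})$ gives
\[
	\phi^{\prime}(\tau^{*})=z^{\prime}(\tau^{*})-4b_{0}\le 2a_{0}\sqrt{4b_{0}(\tau^{*}-\tau_{1})}+2b_{0}-4b_{0}=4a_{0}\sqrt{b_{0}(\tau^{*}-\tau_{1})}-2b_{0}.
\]
Since $\tau^{*}\in I$ forces $\tau^{*}-\tau_{1}\le\tfrac{b_{0}}{16a_{0}^{2}}<\tfrac{b_{0}}{4a_{0}^{2}}$, we have $4a_{0}\sqrt{b_{0}(\tau^{*}-\tau_{1})}<2b_{0}$, hence $\phi^{\prime}(\tau^{*})<0$, contradicting $\phi^{\prime}(\tau^{*})\ge 0$. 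Therefore $\phi\le 0$ throughout $I$, i.e.\ $y(\tau)^{2}=z(\tau)\le 4b_{0}(\tau-\tau_{1})$, and taking square roots (with $y\ge 0$) gives $0\le y(\tau)\le 2\sqrt{b_{0}}\,|\tau-\tau_{1}|^{1/2}$ on $I$, which is \eqref{compare_x&y}.

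The only genuinely delicate point is the lack of Lipschitz continuity of $z\mapsto 2a_{0}\sqrt{z}$ at $z=0$, which is exactly why the barrier argument replaces a direct invocation of the comparison theorem; note that this argument uses only that $z$ is differentiable (so $a$, $b$ need not be assumed continuous) plus the sign $\phi^{\prime}(\tau_{1})<0$, which is what pins $\tau^{*}$ strictly away from $\tau_{1}$. The gap between the sufficient length $\tfrac{b_{0}}{4a_{0}^{2}}$ and the stated $\tfrac{b_{0}}{16a_{0}^{2}}$ is harmless slack and could be spent to make the final inequality strict if one wished.
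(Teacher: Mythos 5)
Your proof is correct, and it takes a genuinely different --- and noticeably shorter --- route than the paper's. The paper first constructs the solution $y_\varepsilon$ of the ``worst-case'' autonomous ODE $\frac{d}{d\tau}\big(\frac{y_\varepsilon^2}{2}\big)=a_0 y_\varepsilon+b_0$, perturbing the initial datum to $y_\varepsilon(\tau_1)=\varepsilon>0$ to sidestep the singularity of $y'=(a_0y+b_0)/y$ at $y=0$; it solves this implicitly, Taylor-expands the implicit relation $\mathfrak{F}(\tau,y_\varepsilon)=0$ to get $y_\varepsilon\le\varepsilon+2\sqrt{b_0}(\tau-\tau_1)^{1/2}$, compares $y$ against $y_\varepsilon$ via a first-contact argument, and finally sends $\varepsilon\downarrow 0$. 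Your barrier argument on $z=y^{2}$ skips all of that machinery: the linear barrier $g(\tau)=4b_0(\tau-\tau_1)$ together with the strictly negative slope $\phi'(\tau_1)\le 2b(\tau_1)-4b_0\le -2b_0<0$ confines a putative first contact $\tau^{*}$ strictly to the interior, where the differential inequality $z'\le 2a_0\sqrt{z}+2b_0$ and the length restriction $\tau^{*}-\tau_1<\frac{b_0}{4a_0^{2}}$ force $\phi'(\tau^{*})<0$, a contradiction. Both proofs are first-contact arguments that must work around the non-Lipschitz right-hand side at $y=0$; the paper does it by nudging the initial data off zero and removing the perturbation at the end, you by passing to $z=y^{2}$, where $z'(\tau_1)=2b(\tau_1)\le 2b_0$ sits strictly below the barrier's slope $4b_0$. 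Your route also makes the origin of the constant $\frac{b_0}{16a_0^{2}}$ transparent, including its factor-$4$ slack. Finally, your remark about the lower bound $0\le y$ is well taken and points at a genuine gap in the paper's Step~2: the claim there that $y(\tau)>0$ for $\tau_1<\tau\le T$ only rules out a later return to zero, not $y$ going negative immediately after $\tau_1$ (your example $a\equiv 0$, $b\equiv b_0$, $y=-\sqrt{2b_0(\tau-\tau_1)}$ is exactly such a continuous solution), so $0\le y$ is best read as a standing sign hypothesis --- harmless here, since the lemma targets the manifestly nonnegative quantities $\mathfrak{S}_{sp}$ and $\widetilde{\mathfrak{S}}_{vel}$ --- rather than a consequence of the equation.
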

	
	\begin{proof}In \textit{Step 1}, we will construct a solution $y_\e (\tau)$ which solves, for each $\e>0$,  
		\Be
		\frac{d}{d\tau}\left(\frac{ y_\e ^2}{2}\right)  = a_0 y_\e (\tau)+ b_0  \ \ \text{for } \tau \in [\tau_1, \tau
		_2],  \   \text{and} \    \ 
		y_\e(\tau_1)=\e
		,\label{eqtn_y_e}
		\Ee
		and prove an upper bound 
		\Be\label{upper_y_e} 
		y_\e (\tau) \leq \e + 2 \sqrt{b_0} (\tau-\tau_1)^{1/2} \ \ \text{for }  \  
		\tau \in \Big[\tau_1, \tau_1 + \frac{b_0}{16(a_0)^2}\Big]\cap [\tau_1, \tau_2]
		. 
		\Ee\hide
		Then in \textit{Step 2} by proving an equicontinuity of $\{y_\e (\tau)\}$ we can pass a limit (up to subsequences) to get $y(\tau)$ solves 
		\Be
		\frac{d}{d\tau} \left(\frac{y ^2}{2}\right)  = a_0 y (\tau)+ b_0, \ \ 
		y(0) = 0,\label{eqtn_y}
		\Ee 
		and satisfies 
		\Be\label{upper_y}
		y(\tau) \leq 2 \sqrt{b_0} (\tau-\tau_1)^{1/2} \ \ for \  \tau \in \Big[\tau_1, \tau_1 + \frac{b_0}{16(a_0)^2}\Big]
		.
		\Ee\unhide
		In \textit{Step 2}, we compare (\ref{eqtn_y_e}) and (\ref{eqtn_x}) and conclude the upper bound of (\ref{compare_x&y}). %In \textit{Step 3}, we prove the lower bound in (\ref{compare_x&y}).
		
		\smallskip

		\textit{Step 1. } The ODE in (\ref{eqtn_y_e}) is equivalent to, as long as $y_\e \neq 0$, 
		\Be
		\frac{d y_\e}{d \tau} = \frac{a_0 y_\e + b_0}{y_\e}.\label{ODE_y_e}
		\Ee
		From (\ref{ODE_y_e}), $a_0>0, b_0>0$ and $y_\e (\tau_1) =\e>0$, as long as a solution exists, clearly we have 
		\Be\label{y_e_increasing}
		\frac{d y_\e(\tau)}{d \tau}>0 \ \  \text{and} \ 	\ y_\e (\tau) \geq \e 
		\ \ \text{for} \ \tau \in [\tau_1, \tau_2].
		\Ee 
		Since (\ref{ODE_y_e}) is separable we solve it implicitly as   
		\Be\label{tau_y_e}
		\mathfrak{F} (\tau, y_\e) =0, \ \text{where} \ 
		\mathfrak{F} (\tau, y_\e) := \tau - \tau_1 -  \frac{y_\e- \e}{a_0} +  \frac{b_0}{(a_0)^2} \ln \bigg(\frac{y_\e + {b_0}/{a_0}}{\e + {b_0}/{a_0}}\bigg).
		%\tau - \tau_1 =  \frac{y_\e  - \e}{a_0} -  \frac{b_0}{(a_0)^2} \ln \bigg(\frac{y_\e + {b_0}/{a_0}}{
		%	\e + {b_0}/{a_0}}\bigg).
		\Ee
		%	Now we define $ 
		%\mathfrak{F} (\tau, y_\e) = \tau - \tau_1 -  \frac{y_\e- \e}{a_0} +  \frac{b_0}{(a_0)^2} \ln \bigg(\frac{y_\e + {b_0}/{a_0}}{\e + {b_0}/{a_0}}\bigg),$ and compute and conclude that 
		Note that, using $y_\e >0, a_0>0,$ and $b_0>0$,  
		$  %\label{F/ye}$
		\frac{\p\mathfrak{F}}{\p y_\e} %&=& - \frac{1}{\tilde{b}} + \frac{\tilde{a}}{(\tilde{b})^2} 
		%\frac{1}{\tilde{A} + {\tilde{a}}/{\tilde{b}}}\\
		%&=&\frac{\tilde{a}}{(\tilde{b})^2}  \frac{ 
		%- \tilde{b}/\tilde{a} \{ \tilde{A} + {\tilde{a}}/{\tilde{b}} \} + 1
		%}{\tilde{A} + {\tilde{a}}/{\tilde{b}}}\\
		= \frac{-1}{a_0} \frac{y_\e}{y_\e + {b_0}/{a_0}}
		\neq 0$. By the implicit function theorem we have a unique smooth solution $y_\e$ solving (\ref{ODE_y_e}) and therefore also solving (\ref{eqtn_y_e}). 
		
		Expanding (\ref{tau_y_e}), we get 
		\begin{align}\notag
		\tau-\tau_1  =  \left( \frac{1}{a_0} - \frac{b_0}{(a_0)^2} \frac{1}{\e + b_0/a_0}\right) (y_\e-\e) 
		+ \frac{b_0}{2 (a_0)^2} \frac{1}{(\e+ b_0/a_0)^2} (y_\e -\e)^2 
		+ \frac{b_0}{(a_0)^2}
		\frac{1}{(\e+ b_0/a_0)^3} O\big(|y_\e -\e|^3\big).\notag
		\end{align}
		Note that $\frac{1}{a_0} - \frac{b_0}{(a_0)^2} \frac{1}{\e + b_0/a_0}\geq
		\frac{1}{a_0} - \frac{b_0}{(a_0)^2} \frac{1}{  b_0/a_0}
		= 0$, and $(y_\e -\e)\geq0$ from (\ref{y_e_increasing}). Therefore we get, for $\e \ll b_0/a_0$,  
		\begin{align*}
		\tau-\tau_1   
		\geq \frac{b_0}{2 (a_0)^2} \frac{1}{(\e+ b_0/a_0)^2} (y_\e -\e)^2+ \frac{b_0}{(a_0)^2}
		\frac{1}{(\e+ b_0/a_0)^3}  |y_\e -\e|^3 
		\geq
		\Big(1 + \frac{a_0}{b_0} |y_\e - \e|  
		\Big) \times 
		\frac{1}{ 4b_0}   (y_\e -\e)^2.
		\end{align*}
		This yields (\ref{upper_y_e}) through a standard continuation argument for $ \tau_1 \leq \tau \leq \tau_1 + \frac{b_0}{16 (a_0)^2}$. 
		
		\smallskip
		
		\textit{Step 2. } We prove that 
		\Be\label{compare_x_and_y}
		0 \leq y(\tau) < y_\e (\tau) \  for \  all  \ \tau\in \Big[\tau_1, \tau_1 + \frac{b_0}{16(a_0)^2}\Big]\cap [\tau_1, \tau_2].
		\Ee
		Clearly $y(\tau_1)=0 < \e=   y_\e (\tau_1)$. Assume that there exists $T\leq \tau_1 + \frac{b_0}{16(a_0)^2}$ such that $y(\tau) < y_\e  (\tau)$ for all $0 \leq \tau < T$ and $y(T) = y_\e  (T)$. First we claim that 
		\Be\notag
		y(\tau)>0 \ 
		\text{  for }  \ 
		\tau_1 <\tau \leq T
		.\Ee
		Otherwise there exists $\tau_*> \tau_1$ such that $y(\tau)>0$ for all $\tau_1 <\tau< \tau_*$ but $\lim_{\tau \uparrow \tau_*} y(\tau)=0$. Then $ \lim_{\tau \uparrow \tau_*}\frac{d}{d\tau}y(\tau) = \lim_{\tau \uparrow \tau_*}\frac{a(\tau) y(\tau) +  b(\tau)}{ y(\tau)} =+ \infty$. This is a contradiction since $y(\tau)>0= y(\tau_*)$ for all $\tau_1 <\tau<\tau_*$, and hence we derive the claim.

		However we derive that, from (\ref{eqtn_x}) and (\ref{eqtn_y_e}),
		\[
		\frac{y (T)^2}{2}   = \int_0^T \{a(\tau) y(\tau) + b(\tau)   \} \dd \tau   <\frac{\e^2}{2}+  \int_0^T \{a_0 y _\e(\tau) + b_0  \} \dd \tau   = \frac{y_\e (T)^2}{2},
		\]
		where we have used $a(\tau) y(\tau) \leq |a(\tau) ||y(\tau)| \leq a_0 y(\tau) \leq a_0 y_\e (\tau)$. This strict inequality leads a contradiction to $y(T)= y_\e (T)$. This proves (\ref{compare_x_and_y}).
		
		From (\ref{upper_y_e}) and (\ref{compare_x_and_y}) we derive that for any $\e>0$ we have $y(\tau) \leq \e + 2\sqrt{b_0} (\tau-\tau_1)^{1/2}$ for all $\tau\in \Big[\tau_1, \tau_1 + \frac{b_0}{16(a_0)^2}\Big]\cap [\tau_1, \tau_2]$. By passing a limit $\e \downarrow 0$ we conclude the upper bound of (\ref{compare_x&y}). 
		%	
		%		
		%	\smallskip
		%	
		%	
		%	\textit{Step 3. } From \eqref{ODE_y_e} and the upper bound part in (\ref{compare_x&y}),
		%	\Be\label{lower_ODE}
		%	\frac{d y}{d \tau}
		%	= \frac{a y + b}{y} \geq \frac{b}{y}
		%	 \geq \frac{}{ 2   \sqrt{b_0} |\tau- \tau_1|^{1/2}}  \  \ \text{for all}  \ \tau  \in \Big[\tau_1, \tau_1 + \frac{b_0}{16(a_0)^2}\Big]
		%	\cap [\tau_1, \tau_2].
		%	\Ee
		%	Integrating \eqref{lower_ODE} from $\tau_1$ and using $y(\tau_1)=0$, we conclude the lower bound part in (\ref{compare_x&y}).
	\end{proof}	
	\unhide
	%%%%%%%%%%%%%%%%%
	%%%%%%%%%%%%%%%%%
	We start with the ODEs for the specular singularities. Let $\O= \{x \in \R^3: \xi(x)<0 \}$ for a $C^2$-function $\xi : \R^3 \rightarrow \R$. Let two arbitrary maps $\tau \mapsto \X(\tau) \in \O= \{x \in \R^3: \xi(x)<0 \}$  and $\tau \mapsto \V(\tau) \in \R^3$ are differentiable and $\ddot{\X}\equiv 0$. As long as $\xb(\X(\tau), v)$ is well-defined on $\p\O$, we compute 	
	\Be\begin{split}\label{ODE:dot_x_n}
		&\frac{d}{d\tau}(  \dot{\X} (\tau) \cdot \nabla \xi (\xb(\X(\tau), v)) )  \\
		&=   \frac{-1}{-\nabla \xi (\xb(\X(\tau), v)) \cdot v }
		\Big\{ ( \dot{\X} (\tau) \cdot \nabla \xi (\xb(\X(\tau), v)))(-v\cdot \nabla^2 \xi (\xb(\X(\tau), v))  \dot{\X} (\tau)) \\
		& \ \ \ \ \ \ \ \ \ \ \ \ \ \ \   \ \ \ \ \ \ \ \ \ \ \ \ \  \  \ %  \ 
		+(-\nabla \xi (\xb(\X(\tau), v)) \cdot v)(-\dot{\X} (\tau) \cdot \nabla ^2 \xi (\xb(\X(\tau), v))  \dot{\X}(\tau))		
		\Big\} , 
	\end{split}\Ee\Be 
	\begin{split} \label{ODE_x:nabla_xi_v}
		&\frac{d}{d\tau}
		(- \nabla \xi( \xb(\X(\tau), v)) \cdot v ) \\
		&=      \frac{1}{ - \nabla \xi(\xb(\X(\tau), v)) \cdot v }
		%  \frac{| \nabla \xi(\xb(x(\tau), v)) \cdot v   |^2  }{2}
		\Big\{	
		{ ( \dot{\X} \cdot \nabla \xi( \xb(\X(\tau), v)))   (-
			v \cdot  \nabla^2 \xi( \xb(\X(\tau), v))  v 
			)} 
		\\
		& \ \ \ \ \ \ \ \ \ \ \ \ \ \ \   \ \ \ \ \ \ \ \ \ \ \ \ \ \ \ % \ 
		+   (- \nabla \xi(\xb(\X(\tau), v)) \cdot v ) (-\dot{\X} \cdot  \nabla^2 \xi( \xb(\X(\tau), v))   v )
		\Big\}.
	\end{split}
	\Ee 
As long as $\xb(x, \V(\tau))$ is well-defined on $\p\O$, we obtain  
	\Be\begin{split}\label{ODE:dot_v_n}
		&\frac{d}{d\tau}\big(  \dot{\V} (\tau)  \cdot \nabla \xi (\xb(x, \V(\tau))) \big) 
		= \ddot{\V}(\tau)\cdot\nabla\xi(\xb(x, \V(\tau))) \\	
		&\quad + \frac{ \tb(x, \V(\tau))}{-\nabla \xi (\xb(x, \V(\tau))) \cdot \V(\tau) }
		\Big\{  \big( \dot{\V} (\tau) \cdot \nabla \xi (\xb(x, \V(\tau)))\big)
		\big(
		- \V(\tau)\cdot  \nabla^2 \xi (\xb(x, \V(\tau)))   \dot{\V} (\tau)\big)
		\\ 
		&   \ \ \ \ \ \ \ \ \ \ \ \ \ \ \   \ \ \ \ \ \ \ \ \ \ \ \ \ \ \  \ \ \  
		+\big(-\nabla \xi (\xb(x, \V(\tau))) \cdot \V(\tau)\big)\big(- \dot{\V} (\tau) \cdot   \nabla^2 \xi (\xb(x, \V(\tau)))  \dot{\V}(\tau)\big)
		\Big\},
	\end{split}\Ee\Be
	\begin{split}\label{ODE_v:nabla_xi_v}
		&	 \frac{d}{d\tau} \big(
		- \nabla \xi (\xb(x, \V(\tau)))  \cdot \V(\tau)
		\big) = - \nabla \xi(\xb(x, \V(\tau))) \cdot \dot{\V}(\tau)\\
		& \ \ \ \  \ \   +  \frac{-\tb(x, \V(\tau))}{-\nabla \xi (\xb(x, \V(\tau))) \cdot \V(\tau) }
		\Big\{\big( \dot{\V} (\tau) \cdot \nabla \xi (\xb(x, \V(\tau)))\big)
		\big(
		- \V(\tau)\cdot  \nabla^2 \xi (\xb(x, \V(\tau)))   {\V} (\tau)\big)\\
		& \ \ \ \ \ \ \ \ \ \ \ \ \ \ \   \ \ \ \ \ \ \ \ \ \ \ \ \ \ \  \ \ \       
		+\big(-\nabla \xi (\xb(x, \V(\tau))) \cdot \V(\tau)\big)\big(- \dot{\V} (\tau) \cdot   \nabla^2 \xi (\xb(x, \V(\tau)))   {\V}(\tau)\big)
		\Big\}.
	\end{split}
	\Ee
		%	\begin{lemma}Let $\O = \{x \in \R^3: \xi(x)<0 \}$ with a $C^2$-function $\xi: \R^3 \rightarrow \R$. We assume every postulation of Definition \ref{def:domain} for $\O$. We also assume \eqref{assume_x}, \eqref{assume_v}, and recall parametrization \eqref{xv para}.
		%Recall the notations in \eqref{x para}. 
		%	As long as $\xb(\X(\tau), v)$ is well-defined on $\p\O$, we obtain the followings.  \\
% \end{lemma}
	%	\begin{proof}
		These are the direct outcome of the  basic computation (\cite{CKL1, GKTT1})
		\Be\label{nabla_x_bv_b}
		\begin{split}
			&	\nabla_{x} \tb = \frac{ \nabla \xi(\xb)}{ \nabla \xi (\xb) \cdot v}
			,
			\  \ \
			\nabla_{v} \tb =
			- \tb 	\nabla_{x} \tb,
			% \frac{\tb \p_i \xi(\xb)}{- \nabla \xi (\xb) \cdot v},
			\\
			&		\nabla_{x} x_{\mathbf{b}} =  I - \frac{v\otimes n(\xb)}{v\cdot n(\xb)},
			\  \ \ \nabla_{v} x_{\mathbf{b}} = - \tb  \nabla_{x} x_{\mathbf{b}},  \\
			&		\nabla_{x} x_{\mathbf{b}} =  I - \frac{v\otimes n(\xb)}{v\cdot n(\xb)},
			\  \ \ \nabla_{v} x_{\mathbf{b}} = - \tb  \nabla_{x} x_{\mathbf{b}},  \\
			& \nabla_{x} n(\xb) = \frac{1}{|\nabla\xi(\xb)|} \Big( I - n(\xb)\otimes n(\xb) \Big)\nabla^{2}\xi(\xb).  \\
			%\Big( I - \frac{v\otimes n(\xb)}{v\cdot n(\xb)} \Big)  \\
			%& \nabla_{v} n(\xb) = -\tb \frac{1}{|\nabla\xi(\xb)|} \Big( I - n(\xb)\otimes n(\xb) \Big)\nabla^{2}\xi(\xb) \Big( I - \frac{v\otimes n(\xb)}{v\cdot n(\xb)} \Big) \\
		\end{split}
		\Ee

	%	\end{proof}}

The next differential inequalities are crucially used to prove Proposition \ref{prop_unif H}.

	\begin{lemma}[ODE for Specular Singularity]\label{lemma:ODE}Suppose the domain is given as in Definition \ref{def:domain} and \eqref{convex_xi}. 
	
		 %Then $\mathfrak{S}(\tau;x,\bar{x}, v)$ of \eqref{def:S_x} satisfies that, f
	(i)	Recall $\tilde{x} =\tilde{x} (x, \bar{x}, v) $ in Definition \ref{def_tilde} under assumption \eqref{assume_x} and \eqref{assume_x2}. 
	%	Let $x, \tilde{x}\in \O$ and $v\in\R^{3}$ with condition . 
	%Assume a general parametrization for position is given so that
	%\[
	%	\X(0)= \tilde{x},\quad \X(1)=x,\quad \dot{\X}(\tau) = \dot{\X} = \text{constant}
	%\]
	%and 
	%\Be	\label{x perp}
	%	\dot{\X}\cdot v  =0.
	%\Ee 
	%We define $\tau_{\pm}(x, \tilde{x}, v)$ as in \eqref{tau_pm} and 
%	We assume \eqref{assume_x}, \eqref{assume_x2}, and 
We also	recall $\mathfrak{S}_{sp}(\tau; x, \tilde{x}, v)$ in \eqref{def:S_x} and $\X(\tau)$ in \eqref{xv para}. For $\tau \in (\tau_-(x, \bar{x}, v), \tau_+(x, \bar{x}, v))$,
		\Be\label{ODE:S_x}
		\begin{split}
			\frac{d \mathfrak{S}_{sp}(\tau;x,\tilde{x}, v)}{d\tau} 
			&
			\geq  
			\frac{1}{\mathfrak{S}_{sp}(\tau; x, \tilde{x}, v)} \frac{ \theta_\O   |\dot{\X}|^{2} }{ |\dot{\X} \cdot \nabla \xi (\xb(\X(\tau), v))| }  \big(  |v|^2 + \mathfrak{S}^{2}_{sp}(\tau; x, \tilde{x}, v)  \big)  .
		\end{split}
		\Ee	
		%and $\mathfrak{S}(\tau;x,v, \bar v)$ of \eqref{def:S_v} satisfies that, f
		(ii) We recall $\tilde{v} =\tilde{v} (v, \bar{v}, \zeta) $ in Definition \ref{def_tilde} under assumption \eqref{assume_v} and \eqref{assume_v2}. Recall $\mathfrak{S}_{vel}(\tau; x, v, \tilde{v}, \zeta)$ in \eqref{def:S_v} and $\V(\tau)$ in \eqref{xv para}. Define 
			\Be \label{def:S vel tilde}
		\widetilde{\mathfrak{S}}_{vel}(\tau; x, v, \tilde{v}, \zeta) 
		:= \Big| \frac{\V(\tau)\cdot\nabla\xi(\xb(x, \V(\tau)))}{\dot{\V}(\tau)\cdot\nabla\xi(\xb(x, \V(\tau)))} \Big|
		= \frac{\tb(x, \V(\tau))}{|\dot{\V}(\tau)|} \mathfrak{S}_{vel}(\tau; x, v, \tilde{v}, \zeta).
		\Ee
		%	Let $x\in\O$ and $v, \tilde{v}, \zeta \in \R^{3}$. 
		%Assume a general parametrization for velocity is given so that
		%\[
		%	\V(0) = \tilde{v}+\zeta,\quad \V(1) = v+\zeta, \quad \V(\tau)\cdot\dot{\V}(\tau) = 0
		%\]
		%and
		%\Be	\label{v perp}
		%	\V(\tau)\cdot\dot{\V}(\tau) = 0,\quad \ddot{\V}(\tau) = -\theta^{2} \V(\tau),\quad\text{  for some $\theta > 0$}
		%\Ee 
		%We define $\tau_{\pm}(x, v, \tilde{v}, \zeta)$ as in \eqref{tau_pm} and 
	%	We assume \eqref{assume_v}, \eqref{assume_v2}, and . Let us define
	For $\tau \in (\tau_-(x, v, \bar{v}, \zeta), \tau_+(x,  v, \bar{v}, \zeta))$,  
		\Be\label{ODE:S_v}
		\begin{split}
			\frac{d \widetilde{\mathfrak{S}}_{vel}(\tau;x  , v, \tilde v, \zeta)}{d\tau} 
			&\geq   1 +\frac{ \theta_{\O} |\V(\tau)|^{2}  \tb(x, \V(\tau))  }{ \widetilde{\mathfrak{S}}_{vel}(\tau; x, v, \tilde{v}, \zeta) |\dot\V(\tau) \cdot\nabla\xi(\xb(x, \V(\tau)))| } 
			\Big( 1 + \widetilde{\mathfrak{S}}^{2}_{vel}(\tau; x, v, \tilde{v}, \zeta)  \Big).
		\end{split}
		\Ee	
		
		\hide
		\Be\begin{split}
			\frac{d}{d\tau} \mathfrak{S}(\tau)% &=   |\dot v| + \frac{\tb|\dot v|}{\big| \frac{\dot v}{|\dot v|} \cdot \nabla \xi\big|^2 (- \nabla \xi \cdot v)}
			%\Big[
			%\Big(\frac{\dot v}{|\dot v|} \cdot \nabla \xi\Big) v - (\nabla \xi \cdot v) \frac{\dot v}{|\dot v|}
			%\Big] \cdot (- \nabla^2 \xi) \cdot\Big[
			%\Big(\frac{\dot v}{|\dot v|} \cdot \nabla \xi\Big) v - (\nabla \xi \cdot v) \frac{\dot v}{|\dot v|}
			%\Big]\\
			&=   |\dot v| +
			\frac{1}{\mathfrak{S}(\tau)}
			\frac{\tb|\dot v|}{\big| \frac{\dot v}{|\dot v|} \cdot \nabla \xi\big|^3 }
			%\frac{-\frac{\dot v}{|\dot v|} \cdot \nabla \xi}{(- \nabla \xi \cdot v)}
			\Big[
			\Big(\frac{\dot v}{|\dot v|} \cdot \nabla \xi\Big) v - (\nabla \xi \cdot v) \frac{\dot v}{|\dot v|}
			\Big] \cdot (- \nabla^2 \xi) \cdot\Big[
			\Big(\frac{\dot v}{|\dot v|} \cdot \nabla \xi\Big) v - (\nabla \xi \cdot v) \frac{\dot v}{|\dot v|}
			\Big]\\
			&\geq    |\dot v| +
			\frac{1}{\mathfrak{S}(\tau)}
			\frac{\tb|\dot v|}{\big| \frac{\dot v}{|\dot v|} \cdot \nabla \xi\big|^3 }
			%\frac{-\frac{\dot v}{|\dot v|} \cdot \nabla \xi}{(- \nabla \xi \cdot v)}
			\theta_\O \bigg(
			\Big|
			(\nabla \xi \cdot v) - \Big(\nabla \xi \cdot \frac{\dot v}{|\dot v|}\Big) \Big(\frac{\dot v}{|\dot v|} \cdot v\Big)
			\Big|^2
			+ \Big| \frac{\dot v}{|\dot v|} \cdot \nabla \xi\Big|^2 \Big|\Big(I-\frac{\dot v}{|\dot v|} \otimes \frac{\dot v}{|\dot v|} \Big) v\Big|^2
			\bigg)
			\\
			& \geq  |\dot v| +
			\frac{1}{\mathfrak{S}(\tau)}
			\frac{\theta_\O\tb }{\big|  \dot v  \cdot \nabla \xi\big|  } 
			\bigg(
			\frac{  \big|
				\nabla \xi   \cdot  \big( 
				I - \frac{\dot v }{|\dot v |} \otimes \frac{\dot v }{|\dot v |} 
				\big) v
				\big|^2}{ \big| \frac{\dot v}{|\dot v|} \cdot \nabla \xi\big|^2}+
			\Big|\Big(I-\frac{\dot v}{|\dot v|} \otimes \frac{\dot v}{|\dot v|} \Big) v\Big|^2
			\bigg)
		\end{split}\Ee	\unhide\end{lemma}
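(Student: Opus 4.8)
The plan is to read \eqref{ODE:S_x} and \eqref{ODE:S_v} as Riccati-type differential inequalities, obtained by differentiating the \emph{square} of the singularity along the parametrized trajectory and substituting the scalar ODEs \eqref{ODE:dot_x_n}--\eqref{ODE_v:nabla_xi_v} recorded above. For part (i) I would first reduce $\mathfrak{S}_{sp}$ to a ratio of two scalars: set $p(\tau):=\dot{\X}\cdot\nabla\xi(\xb(\X(\tau),v))$ and $q(\tau):=-v\cdot\nabla\xi(\xb(\X(\tau),v))$; since $\ddot{\X}\equiv 0$ the speed $|\dot{\X}|$ is constant, so by \eqref{def:S_x} we have $\mathfrak{S}_{sp}=q|\dot{\X}|/|p|$ with $q>0$, and by \eqref{tau_pm} together with Lemma~\ref{lem_mono} the sign of $p$ is $+$ on $(\tau_-,\tau_0)$ and $-$ on $(\tau_0,\tau_+)$. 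Likewise for part (ii) I would set $P(\tau):=\dot{\V}(\tau)\cdot\nabla\xi(\xb(x,\V(\tau)))$, $Q(\tau):=-\V(\tau)\cdot\nabla\xi(\xb(x,\V(\tau)))$ and $T(\tau):=\tb(x,\V(\tau))$, so that $\widetilde{\mathfrak{S}}_{vel}=Q/|P|$ by \eqref{def:S vel tilde}, with $Q>0$, $T>0$, $P<0$ on $(\tau_-,\tau_0)$ and $P>0$ on $(\tau_0,\tau_+)$.

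Next I would work on the branch $(\tau_-,\tau_0)$ and compute $\frac{d}{d\tau}(q^2/p^2)=2q(p\dot q-q\dot p)/p^3$, feeding in $\dot p$ from \eqref{ODE:dot_x_n} and $\dot q$ from \eqref{ODE_x:nabla_xi_v} and using the Hessian symmetry $v^{T}\nabla^2\xi\,\dot{\X}=\dot{\X}^{T}\nabla^2\xi\,v$. The key algebraic point is that all terms reassemble into one Hessian quadratic form evaluated at a single vector, namely $q(p\dot q-q\dot p)=(pv+q\dot{\X})^{T}\big(-\nabla^2\xi(\xb(\X(\tau),v))\big)(pv+q\dot{\X})$; this is the ``completing the square'' that makes convexity usable. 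Then \eqref{convex_xi} bounds this below by $\theta_{\O}|pv+q\dot{\X}|^2$, and the orthogonality $\dot{\X}\perp v$ from \eqref{perp} kills the cross term, leaving $p^2|v|^2+q^2|\dot{\X}|^2$. Dividing through, rewriting the ratios in terms of $\mathfrak{S}_{sp}$ and of $|p|=|\dot{\X}\cdot\nabla\xi(\xb(\X(\tau),v))|$, and using $\frac{d\mathfrak{S}_{sp}}{d\tau}=\frac{1}{2\mathfrak{S}_{sp}}\frac{d}{d\tau}\mathfrak{S}_{sp}^2$, I obtain precisely \eqref{ODE:S_x} on $(\tau_-,\tau_0)$. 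The complementary branch $(\tau_0,\tau_+)$ follows verbatim after the reflection $\tau\mapsto\tau_-+\tau_+-\tau$, which restores the sign of $p$ used in the computation while leaving $\mathfrak{S}_{sp}$, $|v|$, $|\dot{\X}|$ and $\theta_{\O}$ invariant; this is the form in which the inequality feeds Proposition~\ref{prop_avg S}, where $1/\mathfrak{S}_{sp}$ is integrated separately on the two monotone pieces of $[\tau_-,\tau_+]$.

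For part (ii) I would run the identical scheme on $(\tau_-,\tau_0)$ with $\frac{d}{d\tau}(Q^2/P^2)$, substituting $\dot P$ from \eqref{ODE:dot_v_n} and $\dot Q$ from \eqref{ODE_v:nabla_xi_v}; now two extra summands survive, $\ddot{\V}\cdot\nabla\xi$ inside $\dot P$ and $-\nabla\xi\cdot\dot{\V}$ inside $\dot Q$, in addition to the Hessian quadratic form $(P\V+Q\dot{\V})^{T}\big(-T\,\nabla^2\xi(\xb(x,\V(\tau)))\big)(P\V+Q\dot{\V})$. Here I would invoke $\ddot{\V}=-\theta^2\V$ and $|\dot{\V}|=\theta|\V|$ from \eqref{perp}, and the identity $\frac{d}{d\tau}\tb(x,\V(\tau))=-\tb(x,\V(\tau))\,\frac{\dot{\V}(\tau)\cdot\nabla\xi(\xb(x,\V(\tau)))}{\V(\tau)\cdot\nabla\xi(\xb(x,\V(\tau)))}$ coming from \eqref{nabla_x_bv_b}. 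Since $\V\cdot\nabla\xi\le 0$ and $T>0$, the term $\ddot{\V}\cdot\nabla\xi=-\theta^2\V\cdot\nabla\xi\ge 0$, so every lower-order contribution enters with a favorable sign; applying \eqref{convex_xi} and $\dot{\V}\perp\V$ as before and rearranging then produces the additive constant $1$ and the factor $1+\widetilde{\mathfrak{S}}_{vel}^2$ in \eqref{ODE:S_v}, again first on $(\tau_-,\tau_0)$ and then on $(\tau_0,\tau_+)$ by the reflection.

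The analytic heart — recognizing the completed square and closing with the uniform convexity \eqref{convex_xi} and the orthogonality \eqref{perp} — is short; I expect the real work, and the main obstacle, to be the sign bookkeeping. Concretely: (a) the denominator $\dot{\X}\cdot\nabla\xi$ (resp.\ $\dot{\V}\cdot\nabla\xi$) vanishes at $\tau_0$ by Lemma~\ref{lem_mono}, so the differential inequality can only be run on each of the two monotone sign regimes, and the reflection argument orienting $\tau$ away from the adjacent grazing endpoint must be made precise; and (b) in the velocity case one must carry the two inhomogeneous terms generated by $\ddot{\V}\cdot\nabla\xi$ and $-\nabla\xi\cdot\dot{\V}$ all the way through the computation and verify they do not spoil the lower bound — this is exactly where $\ddot{\V}=-\theta^2\V$, $|\dot{\V}|=\theta|\V|$, $\tb>0$ and $\V\cdot\nabla\xi\le 0$ are used decisively.
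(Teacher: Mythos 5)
Your proposal is correct and takes essentially the same route as the paper: it differentiates $\mathfrak{S}_{sp}^2$ (resp.\ $\widetilde{\mathfrak{S}}_{vel}^2$), completes the square to recognize $q(p\dot q-q\dot p)=(pv+q\dot\X)\cdot(-\nabla^2\xi)(pv+q\dot\X)$ (and the analogous form $(P\V+Q\dot\V)$), applies \eqref{convex_xi}, and uses \eqref{perp} to decompose the resulting norm — precisely the algebra behind \eqref{dS_force}--\eqref{lower:dS_force} and \eqref{v_frac:est}--\eqref{lower:dS_force v}. Your explicit remark that the inequality as written holds only on $(\tau_-,\tau_0)$ and must be read with reversed orientation on $(\tau_0,\tau_+)$, and that the two extra terms in part (ii) enter with favorable sign via $\ddot\V=-\theta^2\V$, $\V\cdot\nabla\xi\le 0$ and $\tb>0$, is a sharper account of the sign bookkeeping than the paper's terse ``proof for $\tau\in[\tau_0,\tau_+]$ is same.''
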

		\begin{remark}
		Actually for the differential inequalities \eqref{ODE:S_x} and \eqref{ODE:S_v} we do not need the whole setting of Definition \ref{def_tilde}, but only arbitrary $x, \tilde{x}, v, \tilde{v}, \zeta$ satisfy \eqref{perp}. 
		\end{remark}

	\begin{proof} 
		\textbf{Step 1.} First we prove \eqref{ODE:S_x}. Recall $\tau_0(x, \tilde{x}, v)$ in \eqref{tau_0} and let us consider the case $\tau \in [\tau_-(x, \tilde{x}, v), \tau_0(x, \tilde{x}, v)]$. Simply let us write $\mathfrak{S}_{sp}(\tau) = \mathfrak{S}_{sp}(\tau;x,\tilde{x}, v)$ here. Using \eqref{nabla_x_bv_b}, \eqref{ODE:dot_x_n}, and \eqref{ODE_x:nabla_xi_v},
		\begin{align}
		\frac{d}{d\tau} \mathfrak{S}_{sp}(\tau) 
		&=  \frac{1}{\mathfrak{S}_{sp}(\tau)} \frac{ |\dot{\X}|^{2} }{\dot{\X} \cdot \nabla \xi (\xb( \X(\tau), v))}
		\eqref{dS_force}_{*}, \label{dS_force} 
		\\
		%\notag
		%\\
		%& \ \ \  \times 
		%	\bigg\{
		%v\cdot (- \nabla^2 \xi) \cdot v
		%	+ \Big[ \frac{\dot x}{|\dot x|} \cdot (-\nabla^2 \xi) \cdot v
		%+ v \cdot (-\nabla^2 \xi) \cdot \frac{\dot x}{|\dot x|}
		%\Big] \Big(\frac{- \nabla \xi \cdot v}{\frac{\dot x}{|\dot x|} \cdot \nabla \xi}\Big) \notag\\
		%& \ \ \ \ \  \ \ \ 
		%	+
		%	\frac{\dot x}{|\dot x|} \cdot (-\nabla^2 \xi) \cdot \frac{\dot x}{|\dot x|}
		% \Big(\frac{- \nabla \xi \cdot v}{\frac{\dot x}{|\dot x|} \cdot \nabla \xi}\Big) ^2
		%	\bigg\},\\
		\text{where} \ \eqref{dS_force}_{*} &=  \frac{1}{\big|\frac{\dot \X}{|\dot \X|} \cdot \nabla \xi\big|^2 }
		\Big[
		\Big(\frac{\dot \X}{|\dot \X|} \cdot \nabla \xi \Big) v - (\nabla \xi \cdot v) \frac{\dot \X}{|\dot \X|}
		\Big]	\cdot (- \nabla^2 \xi) \cdot \Big[
		\Big(\frac{\dot \X}{|\dot \X|} \cdot \nabla \xi \Big) v - (\nabla \xi \cdot v) \frac{\dot \X}{|\dot \X|}
		\Big]\notag\\
		&\geq \frac{\theta_\O}{\big|\frac{\dot \X}{|\dot \X|} \cdot \nabla \xi\big|^2 }\Big| 
		\Big(\frac{\dot{\X}}{|\dot{\X}|} \cdot \nabla \xi \Big) v - (\nabla \xi \cdot v) \frac{\dot{\X}}{|\dot{\X}|}
		\Big|^2.\label{lower:dS_force}%\\	%&= \frac{\theta_\O}{\big|\frac{\dot x}{|\dot x|} \cdot \nabla \xi\big|^2 } 
		%\Big\{
		%\big| \frac{\dot{x}  }{|\dot{x}  |} \cdot \nabla \xi (\xb(x(\tau), v))\big|^2
		%	 \big|
		%	\big( 
		%	I - \frac{\dot x }{|\dot x |} \otimes \frac{\dot x }{|\dot x |} 
		%	\big)v
		%	 \big|^2
		%	 + \big|
		%	 \nabla \xi (\xb(x(\tau), v))  \cdot  \big( 
		%	I - \frac{\dot x }{|\dot x |} \otimes \frac{\dot x }{|\dot x |} 
		%	\big) v
		%	 \big|^2\Big\}.
\end{align}
	Here, we have used the convexity \eqref{convex_xi} to derive the lower bound estimate in \eqref{lower:dS_force}. Here, we abbreviated $\nabla\xi = \nabla\xi(\xb(\X(\tau),v))$  and $\nabla^{2}\xi = \nabla^{2}\xi(\xb(\X(\tau),v))$ for notational simplicity. Now, using the decomposition

		\Be \notag
		v= \Big(v \cdot \frac{\dot\X}{|\dot\X|}\Big) \frac{\dot\X}{|\dot\X|} + \Big( I - \frac{\dot \X}{|\dot \X|} \otimes \frac{\dot \X}{|\dot \X|} \Big)v, 
		\Ee	
		and $\dot{\X}\cdot v = 0$ from \eqref{perp},
		\begin{align}
		\eqref{lower:dS_force}&=
		\theta_\O 
		\Big|
		\Big( 
		I - \frac{\dot \X }{|\dot \X |} \otimes \frac{\dot \X }{|\dot \X |} 
		\Big)v
		\Big|^2+ 
		\frac{\theta_\O}{\big|\frac{\dot \X}{|\dot \X|} \cdot \nabla \xi\big|^2 } 
		\Big|
		\nabla \xi \cdot v - \Big(\nabla \xi\cdot \frac{\dot \X}{|\dot \X|}  \Big) \Big(\frac{\dot \X}{|\dot \X|}  \cdot v \Big)
		\Big|^2\notag\\
		%&= \theta_\O 
		%\Big|
		%\Big( 
		% I - \frac{\dot \X }{|\dot \X |} \otimes \frac{\dot \X }{|\dot \X |} 
		%\Big)v
		%\Big|^2+  \frac{\theta_\O}{\big|\frac{\dot \X}{|\dot \X|} \cdot \nabla \xi\big|^2 } 
		%\Big|
		%\nabla \xi (\xb(\X(\tau), v))  \cdot  \Big( 
		%I - \frac{\dot \X }{|\dot \X |} \otimes \frac{\dot \X }{|\dot \X |} 
		%\Big) v
		%\Big|^2 \notag  \\
		&= \theta_\O 
		|v|^2+  \frac{\theta_\O}{\big|\frac{\dot \X}{|\dot \X|} \cdot \nabla \xi\big|^2 } 
		\Big|
		\nabla \xi (\xb(\X(\tau), v))  \cdot v \Big|^2  = \theta_\O 
		|v|^2 + \theta_\O  \mathfrak{S}^{2}_{sp}(\tau),   \notag   
		\end{align}
		since $\dot{\X}\cdot v = 0$ from \eqref{perp}.   \\
		
		From the above equality, combining with \eqref{dS_force} and \eqref{lower:dS_force}, we conclude \eqref{ODE:S_x}. Proof for $\tau \in [\tau_0(x, \tilde{x}, v), \tau_+(x, \tilde{x}, v)]$ is same. \\

		\smallskip 
		
		\textbf{Step 2.} Next we prove \eqref{ODE:S_v}. Recall $\tau_0(x, v, \bv, \zeta)$ in \eqref{tau_0_v} and let us consider $\tau \in [\tau_- (x,v,\bar{v}, \zeta), \tau_0 (x,v,\bar{v}, \zeta)] $. Simply let us write $\mathfrak{S}_{vel}(\tau) = \mathfrak{S}_{vel}(\tau; x, v, \tilde{v}, \zeta)$ here. 
		%
		%  : =  \frac{1}{\tb(\xb(x, v(\tau)) }
		%	\frac{- \nabla \xi(\xb(x, v(\tau))) \cdot v(\tau)  }{
		% - \frac{\dot{v}}{|\dot v|}\cdot \nabla \xi (\xb(x , v(\tau)))  
		%}$$ 
		Using \eqref{ODE:dot_v_n}, and \eqref{ODE_v:nabla_xi_v},
		
		\hide
		\begin{align}
		& \frac{d}{d\tau}  \left(
		\frac{- \nabla \xi(\xb(x, v(\tau))) \cdot v(\tau)  }{
			- \frac{\dot{v}(\tau)}{|\dot v(\tau)|}\cdot \nabla \xi (\xb(x , v(\tau)))  
		}\right)  \\ 
		% &=   |\dot v| + \frac{\tb|\dot v|}{\big| \frac{\dot v}{|\dot v|} \cdot \nabla \xi\big|^2 (- \nabla \xi \cdot v)}
		%\Big[
		%\Big(\frac{\dot v}{|\dot v|} \cdot \nabla \xi\Big) v - (\nabla \xi \cdot v) \frac{\dot v}{|\dot v|}
		%\Big] \cdot (- \nabla^2 \xi) \cdot\Big[
		%\Big(\frac{\dot v}{|\dot v|} \cdot \nabla \xi\Big) v - (\nabla \xi \cdot v) \frac{\dot v}{|\dot v|}
		%\Big]\\
		&=   |\dot v(\tau)|+
		\left(
		\frac{- \nabla \xi(\xb(x, v(\tau))) \cdot v(\tau)  }{
			- \frac{\dot{v}(\tau)}{|\dot v(\tau)|}\cdot \nabla \xi (\xb(x , v(\tau)))  
		}\right)^{-1}
		\frac{\tb(x, v(\tau))|\dot v(\tau)|}{\big| \frac{\dot v(\tau)}{|\dot v(\tau)|} \cdot \nabla \xi(\xb(x, v(\tau)))\big|^3 } \notag\\
		&\quad\quad		 
		\times 
		%\frac{-\frac{\dot v}{|\dot v|} \cdot \nabla \xi}{(- \nabla \xi \cdot v)}
		\Big[
		\Big(\frac{\dot v(\tau)}{|\dot v(\tau)|} \cdot \nabla \xi(\xb(x, (\tau))) \Big) v(\tau) - (\nabla \xi(\xb(x, (\tau))) \cdot v(\tau)) \frac{\dot v(\tau)}{|\dot v(\tau)|}
		\Big] \cdot (- \nabla^2 \xi) \\
		&\quad\quad	\quad\quad\quad\quad	\quad\quad	\quad\quad		 \cdot\Big[
		\Big(\frac{\dot v(\tau)}{|\dot v(\tau)|} \cdot \nabla \xi(\xb(x, (\tau)))\Big) v(\tau) - (\nabla \xi (\xb(x, (\tau)))\cdot v(\tau)) \frac{\dot v(\tau)}{|\dot v(\tau)|}
		\Big]. \label{D_Sv}
		\end{align} 	
		
		Using the above identity together with \eqref{nabla_x_bv_b} and (\ref{convex_xi}), we derive that 
		\Be \notag
		\begin{split}
			\frac{d}{d \tau} \mathfrak{S}_{vel}(\tau;x,  v,  \bar {v}, u)  
			& =\frac{1}{\tb(x, v(\tau))}
			\Big\{|\dot v(\tau)| + (\ref{D_Sv}) +
			\frac{ \dot v(\tau) \cdot \nabla \xi(\xb(x, v(\tau)))  }{ \nabla \xi(\xb(x, v(\tau))) \cdot v(\tau)}
			\frac{ \nabla \xi(\xb(x, v(\tau))) \cdot v(\tau)  }{ \frac{\dot{v}(\tau)}{|\dot v(\tau)|}\cdot \nabla \xi (\xb(x , v(\tau)))  
			}
			\Big\} \\
			& = \frac{2|\dot{v}(\tau)|}{\tb(x, v(\tau))} + \frac{1}{\tb(x, v(\tau))} \eqref{D_Sv}. 
		\end{split}
		\Ee
		On the other hand, from \eqref{convex_xi}, 
		\Be\begin{split} \notag
			\eqref{D_Sv}	&\geq    
			\frac{\theta_\O }{\mathfrak{S}_{vel}(\tau; x, v, \bv, u)}
			\frac{ |\dot v(\tau)|}{\big| \frac{\dot v(\tau)}{|\dot v(\tau)|} \cdot \nabla \xi (\xb(x, v(\tau)))\big|^3 }  \\
			%\frac{-\frac{\dot v}{|\dot v|} \cdot \nabla \xi}{(- \nabla \xi \cdot v)}
			&\quad \times \bigg\{
			\Big|
			(\nabla \xi (\xb(x, v(\tau))) \cdot v(\tau)) - \Big(\nabla \xi(\xb(x, v(\tau)))\cdot \frac{\dot v(\tau)}{|\dot v(\tau)|}\Big) \Big(\frac{\dot v(\tau)}{|\dot v(\tau)|} \cdot v(\tau)\Big)
			\Big|^2  \\
			&\quad\quad\quad\quad
			+ \Big| \frac{\dot v(\tau)}{|\dot v(\tau)|} \cdot \nabla \xi(\xb(x, v(\tau))) \Big|^2 \Big|\Big(I-\frac{\dot v(\tau)}{|\dot v(\tau)|} \otimes \frac{\dot v(\tau)}{|\dot v(\tau)|} \Big) v(\tau) \Big|^2
			\bigg\}
			\\
			& \geq   
			\frac{1}{\mathfrak{S}_{vel}(\tau; x, v, \bv, u)}
			\frac{\theta_\O  |\dot{v }(\tau)|  }{\big|  \dot v(\tau)  \cdot \nabla \xi(\xb(x, v(\tau))) \big|  } 
			\bigg\{
			\frac{  \big|
				\nabla \xi(\xb(x, v(\tau)))   \cdot  \big( 
				I - \frac{\dot v(\tau) }{|\dot v(\tau) |} \otimes \frac{\dot v(\tau) }{|\dot v(\tau) |} 
				\big) v(\tau)
				\big|^2}{ \big| \frac{\dot v(\tau)}{|\dot v(\tau)|} \cdot \nabla \xi(\xb(x, v(\tau))) \big|^2}  \\
			&\quad\quad\quad\quad +
			\Big|\Big(I-\frac{\dot v(\tau)}{|\dot v(\tau)|} \otimes \frac{\dot v(\tau)}{|\dot v(\tau)|} \Big) v(\tau) \Big|^2
			\bigg\}.
		\end{split}\Ee	
		The above two results would imply \eqref{ODE:S_v} immediately. 
		% {\color{red} (Define in introduction : remove later) Remind that $v(\tau) = |v(\tau)|e^{i\theta(\tau)}$, $\theta(\tau) = (1-\tau)\tilde{\theta} + \tau\theta$, then $\dot{v}(\tau) = i(\theta-\tilde{\theta})v(\tau) = i\dot{\theta}v(\tau)$, $\ddot{v} = - \dot{\theta}^{2}v(\tau)$. $|v(\tau)|$ is constant in $\tau$. }  \\
		We compute 
		%($\nabla_{v}\tb = -\tb\nabla_{x}\tb$, $\nabla_{v}\xb = -\tb\nabla_{x}\xb$), \\
		\unhide
		
		\Be \label{v_frac:est}
		\begin{split} 
			%&\frac{d}{d\tau}\Big(  \frac{\tb(x, \V(\tau))}{|\dot{\V}(\tau)|} \mathfrak{S}_{vel}(\tau; x, v, \tilde{v}, \zeta) \Big)  
			&\frac{d}{d\tau}\Big(  \frac{\V(\tau)\cdot\nabla\xi(\xb(x, \V(\tau)))}{\dot{\V}(\tau)\cdot\nabla\xi(\xb(x, \V(\tau)))}\Big)   \\
			&=
			\frac{1}{\dot{\V}(\tau)\cdot\nabla\xi(\xb(x, \V(\tau)))} 
			\Big[ \dot{\V}(\tau)\cdot\nabla\xi(\xb(x, \V(\tau)))  \\
			&\quad\quad + \frac{\tb(x, \V(\tau))}{\V(\tau)\cdot \nabla\xi(\xb(x, \V(\tau)))} 
			\Big\{ (\dot{\V}(\tau)\cdot \nabla\xi(\xb(x, \V(\tau)))) \V(\tau)\cdot \nabla^{2}\xi(\xb(x, \V(\tau))) \V(\tau)   \\
			&\hspace{6cm}  -
			({\V}(\tau)\cdot \nabla\xi(\xb(x, \V(\tau)))) \dot\V(\tau)\cdot \nabla^{2}\xi(\xb(x, \V(\tau))) \V(\tau) 
			\Big\} \Big]  \\
			%\V(\tau) \cdot \nabla^{2}\xi(\xb(x, \V(\tau)))  \nabla_{\V}\xb(x, \V(\tau)) \dot{\V}(\tau)\big) \\
			%&\quad\quad\quad\quad\quad  - \frac{v(\tau)\cdot\nabla\xi(\xb(x, v(\tau)))}{(\dot{v}(\tau)\cdot\nabla\xi(\xb(x, v(\tau))))^{2}}\big[ \ddot{v}(\tau)\cdot\nabla\xi(\xb(x, v(\tau))) + \dot{v}(\tau)\nabla^{2}\xi\nabla_{v}\xb(x, v(\tau))\dot{v}(\tau) \big]  \\
			&\quad - 
			\frac{ \V(\tau)\cdot\nabla\xi(\xb(x, \V(\tau))) }{ |\dot{\V}(\tau)\cdot\nabla\xi(\xb(x, \V(\tau)))|^{2} } 
			\Big[ \ddot{\V}(\tau)\cdot\nabla\xi(\xb(x, \V(\tau)))  \\
			&\quad\quad + \frac{\tb(x, \V(\tau))}{\V(\tau)\cdot \nabla\xi(\xb(x, \V(\tau)))} 
			\Big\{ (\dot{\V}(\tau)\cdot \nabla\xi(\xb(x, \V(\tau)))) \V(\tau)\cdot \nabla^{2}\xi(\xb(x, \V(\tau))) \dot\V(\tau)   \\
			&\hspace{6cm}  -
			({\V}(\tau)\cdot \nabla\xi(\xb(x, \V(\tau)))) \dot\V(\tau)\cdot \nabla^{2}\xi(\xb(x, \V(\tau))) \dot\V(\tau) 
			\Big\} \Big].  \\
		\end{split}\Ee	
		Since $\ddot\V(\tau) = -\theta^{2}\V(\tau)$ from \eqref{perp}, $- 
		\frac{ \V(\tau)\cdot\nabla\xi(\xb(x, \V(\tau))) }{ |\dot{\V}(\tau)\cdot\nabla\xi(\xb(x, \V(\tau)))|^{2} }  \ddot{\V}(\tau)\cdot\nabla\xi(\xb(x, \V(\tau))) > 0$ and then
		we use \eqref{v_frac:est} to obtain
		\Be  \label{lower:dS_force v}
		\begin{split} 
			%&\frac{d}{d\tau}\Big(  \frac{\tb(x, \V(\tau))}{|\dot{\V}(\tau)|} \mathfrak{S}_{vel}(\tau; x, v, \tilde{v}, \zeta) \Big)   \\ 
			& \frac{d}{d\tau} \widetilde{\mathfrak{S}}_{vel}(\tau; x, v, \tilde{v}, \zeta)   \\ 
			&\geq 1 + \tb(x,\V(\tau)) \frac{1}{ AB^{2} } (B\V(\tau) - A\dot{\V}(\tau)) \cdot \nabla^{2}\xi(\xb(x, \V(\tau))) (B \V(\tau) - A\dot{\V}(\tau))  \\
			&\geq 1 - \tb(x, \V(\tau)) \frac{\theta_{\O}}{AB^{2}} |(\dot{\V}(\tau)\cdot\nabla\xi(\xb(x, \V(\tau)))) \V(\tau) - ( \V(\tau)\cdot\nabla\xi(\xb(x, \V(\tau))))\dot{\V}(\tau)|^{2} \\
			&= 1 -  \frac{\theta_{\O}\tb(x, \V(\tau)) }{(\V(\tau)\cdot\nabla\xi(\xb(x, \V(\tau))))} \\
			&\quad \times 
			\Big\{
			\frac{  \big|
				\nabla \xi(\xb(x, \V(\tau)))   \cdot  \big( 
				I - \frac{\dot \V(\tau) }{|\dot \V(\tau) |} \otimes \frac{\dot \V(\tau) }{|\dot \V(\tau) |} 
				\big) \V(\tau)
				\big|^2}{ \big| \frac{\dot \V(\tau)}{|\dot \V(\tau)|} \cdot \nabla \xi(\xb(x, \V(\tau))) \big|^2}+
			\Big|\Big(I-\frac{\dot \V(\tau)}{|\dot \V(\tau)|} \otimes \frac{\dot \V(\tau)}{|\dot \V(\tau)|} \Big)  \V(\tau) \Big|^2
			\Big\} \\
			&= 1 + \tb(x, \V(\tau)) \frac{ \theta_{\O} |\V(\tau)|^{2} }{ \widetilde{\mathfrak{S}}_{vel}(\tau; x, v, \tilde{v}, \zeta) |\dot\V(\tau) \cdot\nabla\xi(\xb(x, \V(\tau)))| } 
			\Big( 1 + \widetilde{\mathfrak{S}}^{2}_{vel}(\tau; x, v, \tilde{v}, \zeta)  \Big),    \\
		\end{split}\Ee	
		where we used \eqref{convex_xi}, $\dot{\V}(\tau) \cdot \V(\tau) = 0$ by \eqref{perp}, and notation,
		\[
			A := \V(\tau)\cdot\nabla\xi(\xb(x, \V(\tau))) < 0,\quad B:= \dot{\V}(\tau)\cdot\nabla\xi(\xb(x, \V(\tau))) < 0.
		\]
		Sign of $B$ comes from \eqref{tau_0_v}. Proof for $\tau \in [\tau_0, \tau_+)$ is same.
		%{\color{red} we earned extra $1$ because of $\ddot{v}(\tau)= - \dot{\theta}^{2}v(\tau)$.}  Now,
		\hide
		\Be
		\begin{split} 
			%\Big( \frac{v\cdot\nabla\xi}{\tb \widehat{\dot{v}}\cdot\nabla\xi} \Big)^{-1} \frac{1}{2}\frac{d}{d\tau}\mathfrak{S}^{2}_{vel}(\tau) &=  |\dot{v}| \frac{d}{d\tau}\Big( \frac{v\cdot\nabla\xi}{\tb \dot{v}\cdot\nabla\xi} \Big)  \\
			&\frac{d}{d\tau}\mathfrak{S}_{vel}(\tau; x, v, \bv, \zeta) \\
			&=  |\dot{v}(\tau)| \frac{d}{d\tau}\Big( \frac{v(\tau)\cdot\nabla\xi(\xb(x, v(\tau)))}{\tb(x, v(\tau)) \dot{v}(\tau)\cdot\nabla\xi(\xb(x, v(\tau)))} \Big)  \\
			&= |\dot{v}(\tau)|\Big\{ -\frac{1}{\tb^{2}(x, v(\tau))}(\nabla_{v}\tb(x, v(\tau)) \dot{v}(\tau)) \frac{v(\tau)\cdot\nabla\xi(\xb(x, v(\tau)))}{\dot{v}(\tau)\cdot\nabla\xi(\xb(x, v(\tau)))} + \frac{1}{\tb(x, v(\tau))}\frac{d}{d\tau}\big( \frac{v(\tau)\cdot\nabla\xi(\xb(x, v(\tau)))}{\dot{v}(\tau)\cdot\nabla\xi(\xb(x, v(\tau)))} \big) \Big\}  \\
			&= |\dot{v}(\tau)|\Big\{ -\frac{1}{\tb^{2}(x, v(\tau))}( -\tb(x, v(\tau)) \frac{\dot{v}(\tau)\cdot\nabla\xi(\xb(x, v(\tau)))}{v(\tau)\cdot\nabla\xi(\xb(x, v(\tau))) } ) \frac{v(\tau)\cdot\nabla\xi(\xb(x, v(\tau))) }{\dot{v}(\tau)\cdot\nabla\xi(\xb(x, v(\tau))) } \\
			&\quad\quad\quad\quad\quad\quad
			 + \frac{1}{\tb(x, v(\tau))}\frac{d}{d\tau}\big( \frac{v(\tau)\cdot\nabla\xi(\xb(x, v(\tau))) }{\dot{v}(\tau)\cdot\nabla\xi(\xb(x, v(\tau))) } \big) \Big\}  \\
			&= \frac{|\dot{v}(\tau)|}{\tb(x, v(\tau))}\Big\{ 1 + \frac{d}{d\tau}\big( \frac{v(\tau)\cdot\nabla\xi(\xb(x, v(\tau)))}{\dot{v}(\tau)\cdot\nabla\xi(\xb(x, v(\tau))) } \big) \Big\}
		\end{split}\Ee
		
		Hence using \eqref{v_frac:est},	
		\Be
		\begin{split} 
			&\frac{d}{d\tau}\mathfrak{S}_{vel}(\tau; x, v, \bv, \zeta) \\
			&\geq  \frac{|\dot{v}(\tau)|}{\tb(x,v(\tau))}\Big\{ 3 + \tb(x,v(\tau)) \frac{\theta_{\O}}{|v(\tau)\cdot\nabla\xi(\xb(x, v(\tau)))|} 
			\Big\{ |v(\tau)|^{2} + \mathfrak{S}_{vel}^{2}(\tau; x, v, \bv, \zeta)
			\Big\} \Big\}  \\
			&\geq  \frac{|\dot{v}(\tau)|}{\tb(x,v(\tau))} \Big[ 3 + \frac{\theta_{\O} }{ \mathfrak{S}_{vel}(\tau; x, v, \bv, \zeta) |\widehat{\dot{v}(\tau)}\cdot \nabla\xi(\xb(x, v(\tau)))| } 
			\Big\{ |v(\tau)|^{2} + \mathfrak{S}_{vel}^{2}(\tau; x, v, \bv, \zeta) 
			\Big\}  \Big] \\
			&\geq  \frac{3|\dot{v}(\tau)|}{\tb(x,v(\tau))}  + \frac{\theta_{\O} |\dot{v}(\tau)| }{ \mathfrak{S}_{vel}(\tau; x, v, \bv, \zeta) \tb(x,v(\tau)) |\widehat{\dot{v}(\tau)}\cdot \nabla\xi(\xb(x, v(\tau)))| } 
			\Big\{ |v(\tau)|^{2} + \mathfrak{S}_{vel}^{2}(\tau; x, v, \bv, \zeta) 
			\Big\}   \\
		\end{split}
		\Ee
		\unhide
	\end{proof}

\begin{proposition}\label{prop_avg S} %[Estimate of the Lipschitz Constant]
	Suppose the domain is given as in Definition \ref{def:domain} and \eqref{convex_xi}.  \\
	%Suppose the domain $\O = \R^3 \backslash  \overline{\mathcal{O}}$ is given as in Definition \ref{def:domain}. 
	\noindent (i) Assume $\tb(x(\tau_*), v) < \infty$ and $\tau_{*}\in [\tau_{-}(x, \bx, v), \tau_{+}(x, \bx, v)]$. Then we have 
	\begin{equation} \label{int:Sx}
		\begin{split}
			\int_{\tau_{-}(x, \bar{x}, v)}^{\tau_{*} }  
			\frac{ \dd \tau 	}{\mathfrak{S}_{sp}(\tau; x,\bar{x}, v)} 
			%&\lesssim {\color{red} C_{\Omega} 
			%	\frac{| \widehat{\dot{x}}\cdot\nabla\xi(\xb(x(\tau_{-}), v)) | }{|v\cdot\nabla\xi(\xb(x(\tau_{*}), v)) |} 
			%	(\tau_{*} - \tau_{-})
			%	,\quad \text{  for }\quad  \tau_{*} < \tau_{+},  }   \\
			&\lesssim C_{\Omega} 
			\frac{\tau_{*} - \tau_{-} (x, \bx, v) }{|v\cdot\nabla\xi(\xb(x(\tau_{*}), v)) |}. 
			%	\int_{\tau_{*}(x, \bar{x}, v)}^{\tau_{**} (x, \bar{x}, v)}  
			%\frac{ \dd \tau 	}{\mathfrak{S}_{sp}(\tau; x,\bar{x}, v)}
			%&\lesssim C_{\Omega} \sqrt{\widehat{\dot{x}}\cdot\nabla\xi(\xb(x(\tau_{-}), v)} 
			%\frac{\tau_{**} - \tau_{*}}{ \min\{ |v\cdot\nabla\xi(\xb(x(\tau_{*}), v)) |, |v\cdot\nabla\xi(\xb(x(\tau_{**}), v)) | \} },\quad \tau_{*} < \tau_{+},  \\	
		\end{split}
	\end{equation} 
	%for $\tau_{-} < \tau_{*} < \tau_{**} < \tau_{+}$.  \\
	\noindent (ii) Assume $\tb(x, v(\tau)) < \infty$ and $\tau_{*}\in [\tau_{-}(x, v, \bv, \zeta), \tau_{+}(x, v, \bv, \zeta)]$. Then we have 
	\begin{equation} \label{int:Sv}
		\begin{split}
			\int_{\tau_{-}(x, v, \bar{v}, \zeta)}^{\tau_{*}}  
			\frac{ \dd \tau 	}{\mathfrak{S}_{vel}(\tau; x,v, \bar{v}, \zeta)}
			&\lesssim C_{\Omega}  \frac{ (\tau_{*} - \tau_{-}) }{ |\V(\tau_{*})\cdot \nabla\xi(\xb(x, \V(\tau_{*})))| }	
			\frac{1}{|\V(\tau_*)|} \big( 1 + \min_{\tau} (|\V(\tau)|\tb(x, \V(\tau))) \big).
			% ,\quad \text{  for }\quad  \tau_{*} < \tau_{+},  \\ 
			%\int_{\tau_{*}(x, v, \bar{v}, \zeta)}^{\tau_{**} (x, v, \bar{v}, \zeta)}  
			%\frac{ \dd \tau 	}{\mathfrak{S}_{vel}(\tau; x,v, \bar{v}, \zeta)}
			%&\lesssim C_{\Omega}   \\
		\end{split}
	\end{equation} 
	(Remind that $|\V(\tau)| = |v+\zeta|$ for all $\tau$. )  \\
\end{proposition}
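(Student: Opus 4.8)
The plan is to integrate the differential inequalities of Lemma~\ref{lemma:ODE}. I describe \eqref{int:Sx} in detail; \eqref{int:Sv} is obtained by running the same scheme on $\widetilde{\mathfrak{S}}_{vel}$. Multiplying \eqref{ODE:S_x} by $2\mathfrak{S}_{sp}$ turns it into
\[
\frac{d}{d\tau}\,\mathfrak{S}_{sp}^{2}(\tau;x,\tilde x,v)\ \geq\ \frac{2\theta_{\O}|\dot{\X}|^{2}}{|\dot{\X}\cdot\nabla\xi(\xb(\X(\tau),v))|}\Big(|v|^{2}+\mathfrak{S}_{sp}^{2}(\tau;x,\tilde x,v)\Big),\qquad \tau\in(\tau_{-},\tau_{+}),
\]
and, because $v\cdot\nabla\xi(\xb(\X(\tau_{\pm}),v))=0$ by \eqref{tau_pm} while $\dot{\X}\cdot\nabla\xi(\xb(\X(\tau_{\pm}),v))\neq0$ by \eqref{tau_0}, the initial value is $\mathfrak{S}_{sp}(\tau_{\pm};x,\tilde x,v)=0$.

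\textbf{Step 1 (pointwise lower bound) and Step 2 (averaging).} On $[\tau_{-},\tau_{0}]$ I estimate the denominator by \eqref{mono Cst 1} and $|\nabla\xi|\lesssim_{\O}1$, so $|\dot{\X}\cdot\nabla\xi(\xb(\X(\tau),v))|\leq C_{\O}|\dot{\X}\cdot\nabla\xi(\xb(\X(\tau_{-}),v))|\lesssim_{\O}|\dot{\X}|$. Dropping $\mathfrak{S}_{sp}^{2}$ from the right-hand side and integrating from $\tau_{-}$ yields $\mathfrak{S}_{sp}^{2}(\tau)\gtrsim_{\O}|\dot{\X}|\,|v|^{2}(\tau-\tau_{-})$, hence $\mathfrak{S}_{sp}(\tau)\gtrsim_{\O}|v|\sqrt{|\dot{\X}|}\sqrt{\tau-\tau_{-}}$ on $[\tau_{-},\tau_{0}]$, and symmetrically $\mathfrak{S}_{sp}(\tau)\gtrsim_{\O}|v|\sqrt{|\dot{\X}|}\sqrt{\tau_{+}-\tau}$ on $[\tau_{0},\tau_{+}]$ via \eqref{mono Cst 2}. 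Integrating $1/\mathfrak{S}_{sp}$: if $\tau_{*}\leq\tau_{0}$ one gets directly $\int_{\tau_{-}}^{\tau_{*}}\frac{d\tau}{\mathfrak{S}_{sp}(\tau)}\lesssim_{\O}\frac{\sqrt{\tau_{*}-\tau_{-}}}{|v|\sqrt{|\dot{\X}|}}$; if $\tau_{*}>\tau_{0}$, split the integral at $\tau_{0}$, bound the two pieces by $\frac{\sqrt{\tau_{0}-\tau_{-}}}{|v|\sqrt{|\dot{\X}|}}$ and $\frac{\sqrt{\tau_{+}-\tau_{0}}}{|v|\sqrt{|\dot{\X}|}}$, and use $\tau_{0}-\tau_{-}\leq\tau_{*}-\tau_{-}$ together with Lemma~\ref{lem_tau ratio}(i), which gives $\tau_{+}-\tau_{0}\lesssim_{\O}\tau_{+}-\tau_{-}\lesssim_{\O}\tau_{0}-\tau_{-}\leq\tau_{*}-\tau_{-}$. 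Thus $I_{*}:=\int_{\tau_{-}}^{\tau_{*}}\frac{d\tau}{\mathfrak{S}_{sp}(\tau)}\lesssim_{\O}\frac{\sqrt{\tau_{*}-\tau_{-}}}{|v|\sqrt{|\dot{\X}|}}$ for every $\tau_{*}\in[\tau_{-},\tau_{+}]$.

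\textbf{Step 3 (conversion) and the velocity case.} To conclude \eqref{int:Sx} it suffices to prove the upper bound $|v\cdot\nabla\xi(\xb(\X(\tau_{*}),v))|\lesssim_{\O}|v|\sqrt{|\dot{\X}|}\sqrt{\tau_{*}-\tau_{-}}$, because then $I_{*}\lesssim_{\O}\frac{\sqrt{\tau_{*}-\tau_{-}}}{|v|\sqrt{|\dot{\X}|}}=\frac{\tau_{*}-\tau_{-}}{|v|\sqrt{|\dot{\X}|}\sqrt{\tau_{*}-\tau_{-}}}\lesssim_{\O}\frac{\tau_{*}-\tau_{-}}{|v\cdot\nabla\xi(\xb(\X(\tau_{*}),v))|}$. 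From \eqref{ODE_x:nabla_xi_v}, using $\|\nabla^{2}\xi\|_{\infty}\lesssim_{\O}1$ and the identity $\frac{|\dot{\X}\cdot\nabla\xi(\xb)|}{|v\cdot\nabla\xi(\xb)|}=\frac{|\dot{\X}|}{\mathfrak{S}_{sp}(\tau)}$ (read off from \eqref{def:S_x}), one has $\big|\frac{d}{d\tau}\big(v\cdot\nabla\xi(\xb(\X(\tau),v))\big)\big|\lesssim_{\O}\frac{|\dot{\X}|\,|v|^{2}}{\mathfrak{S}_{sp}(\tau)}+|\dot{\X}|\,|v|$; integrating this from $\tau_{-}$ (where the left member vanishes) and inserting the Step~2 bound gives $|v\cdot\nabla\xi(\xb(\X(\tau_{*}),v))|\lesssim_{\O}|\dot{\X}|\,|v|^{2}I_{*}+|\dot{\X}|\,|v|(\tau_{*}-\tau_{-})\lesssim_{\O}|v|\sqrt{|\dot{\X}|}\sqrt{\tau_{*}-\tau_{-}}$, the last term being absorbed through $|\dot{\X}|(\tau_{*}-\tau_{-})\leq|\X(\tau_{+})-\X(\tau_{-})|\lesssim_{\O}1$. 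For (ii) the same three steps are carried out for $\widetilde{\mathfrak{S}}_{vel}$ using \eqref{ODE:S_v}, \eqref{mono Cst 1 v}--\eqref{mono Cst 2 v}, Lemma~\ref{lem_tau ratio}(ii), \eqref{ODE_v:nabla_xi_v}, and the identity \eqref{def:S vel tilde} (recall $|\dot{\V}(\tau)|=\theta|v+\zeta|$ is independent of $\tau$); the factors $\tb(x,\V(\tau))$ are controlled by the monotonicity of $\tau\mapsto\tb(x,\V(\tau))$ on $[\tau_{-},\tau_{0}]$ (Lemma~\ref{lem_min_tb}(ii)) and by \eqref{tb minmax}, which lets one replace every quantity $|\V(\tau)|\tb(x,\V(\tau))$ occurring along the trajectory by $\min_{\tau}\big(|\V(\tau)|\tb(x,\V(\tau))\big)$ up to the $\O$-constant and the additive $1$ — and this is exactly what produces the factor $\frac{1}{|\V(\tau_{*})|}\big(1+\min_{\tau}(|\V(\tau)|\tb(x,\V(\tau)))\big)$ in \eqref{int:Sv}.

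\textbf{Main obstacle.} Step~3 is the delicate point. Lemma~\ref{lemma:ODE} only yields \emph{lower} bounds on $\mathfrak{S}_{sp}$ and $\widetilde{\mathfrak{S}}_{vel}$, whereas the conversion needs an \emph{upper} bound on the grazing quantity $|v\cdot\nabla\xi(\xb(\X(\tau_{*}),v))|$, which is of genuine square-root order in $\tau_{*}-\tau_{-}$ near the grazing endpoints and so cannot be reached by any Lipschitz-type argument; the resolution is to feed the averaged bound $I_{*}$ from Step~2 back into the evolution equation \eqref{ODE_x:nabla_xi_v} for $\tau\mapsto v\cdot\nabla\xi(\xb(\X(\tau),v))$. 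In the velocity case this must be synchronized with the $\tb$-bookkeeping, where \eqref{tb minmax} is indispensable, and keeping track of where the additive $1$ enters is the only remaining subtlety.
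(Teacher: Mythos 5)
Your three-step scheme (Gronwall on the ODE of Lemma \ref{lemma:ODE} $\to$ averaging $\to$ conversion of $\sqrt{\tau_*-\tau_-}$ into $|v\cdot\nabla\xi|$) is structurally the same as the paper's. For part (i) the proposal is correct; Steps 1--2 coincide with the paper's \eqref{int root x}, and in Step 3 you replace the paper's proof of \eqref{upper_vn} (which differentiates $(v\cdot\nabla\xi)^2$ directly via \eqref{ODE_x:nabla_xi_v} and obtains a $\tau$-uniform bound $\frac{d}{d\tau}(v\cdot\nabla\xi)^2\lesssim_{\O}|v|^2|\dot{\X}|$) by the self-improving argument that feeds $I_*$ into the first-derivative evolution. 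Both give $|v\cdot\nabla\xi(\xb(\X(\tau_*),v))|\lesssim_{\O}|v|\sqrt{|\dot{\X}|}\sqrt{\tau_*-\tau_-}$, so for (i) nothing is lost.

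For part (ii), however, your Step 3 has a genuine gap and does not produce \eqref{int:Sv}. Integrating $\big|\frac{d}{d\tau}(\V\cdot\nabla\xi(\xb(x,\V(\tau))))\big|$ from \eqref{ODE_v:nabla_xi_v} produces, besides the singular piece $\frac{|\dot{\V}||\V|^2}{\mathfrak{S}_{vel}(\tau)}$ that your $J_*$-feedback handles, the \emph{non-singular} contribution $\tb(x,\V(\tau))\,|\dot{\V}\cdot\nabla^2\xi\cdot\V|\lesssim_{\O}\theta|\V|\cdot\big(|\V|\tb(x,\V(\tau))\big)$. After integration and the bound $\theta(\tau_*-\tau_-)\lesssim 1$, this contributes $\sqrt{\theta}\,|\V|\,M\,\sqrt{\tau_*-\tau_-}$ with $M:=|\V|\tb(x,\V(\tau_-))$, i.e. it is \emph{linear} in $M$. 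In contrast, the paper's \eqref{upper_vn2} gives $|\V(\tau_*)\cdot\nabla\xi|\lesssim_{\O}\sqrt{\theta\,\alpha(\alpha+M)}\,|\V|\,\sqrt{\tau_*-\tau_-}$ with $\alpha:=|\widehat{\dot{\V}(\tau_-)}\cdot\nabla\xi(\xb(x,\V(\tau_-)))|\lesssim_{\O}1$, which for large $M$ has the \emph{square-root} dependence $\sqrt{\alpha M}$. Tracing the two bounds through the conversion step, your version yields
\[
\int_{\tau_-}^{\tau_*}\frac{\dd\tau}{\mathfrak{S}_{vel}(\tau;x,v,\bar v,\zeta)}
\ \lesssim_{\O}\ \frac{\sqrt{\alpha M}\,(\alpha+M)}{|\V(\tau_*)|}\,
\frac{\tau_*-\tau_-}{|\V(\tau_*)\cdot\nabla\xi(\xb(x,\V(\tau_*)))|},
\]
and $\sqrt{\alpha M}(\alpha+M)\sim\sqrt{\alpha}\,M^{3/2}$ when $M\gg1$, so after \eqref{tb minmax} you obtain $\big(1+\min_\tau|\V|\tb\big)^{3/2}$ rather than the claimed first power. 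The missing ingredient is exactly the paper's squaring device: multiplying the evolution of $\V\cdot\nabla\xi$ by $\V\cdot\nabla\xi$ removes the $1/(\V\cdot\nabla\xi)$ singularity \emph{algebraically} (no $J_*$ needed) and, crucially, attaches a factor $|\V\cdot\nabla\xi|\lesssim_{\O}\alpha|\V|$ to each non-singular term (proved from \eqref{mono Cst 3 v}, \eqref{2D v 0} and Lemma \ref{lem_unif n}), which is what keeps the $M$-dependence inside the square root. The same device is harmless to omit in (i) because there the non-singular term is $|\dot{\X}||v|$, whose time-integral $|\dot{\X}|(\tau_*-\tau_-)\leq|\X(\tau_+)-\X(\tau_-)|\lesssim_{\O}1$ is absorbed for free; in (ii) the analogous quantity $|\dot{\V}|\tb(x,\V(\tau))(\tau_*-\tau_-)$ is of size $\sim M$, not $\sim 1$, and cannot be absorbed.
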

	\begin{proof}%[Proof of Proposition \ref{prop_avg S}] 
		\textbf{Step 1.} We first prove \eqref{int:Sx} when $ \tau \in [\tau_-, \tau_0]$, where $\tau_{-}(x,\bx, v)$ and $\tau_0=\tau_0(x,\bx, v)$ are defined in \eqref{tau_pm} and \eqref{tau_0}.
		From the ODE \eqref{ODE:S_x}, 
		\begin{equation} \label{ode_sp}
		\begin{split}
		\frac{d}{dt} G_{sp}(\tau; x, \tx, v) \geq \frac{2\theta_{\O}|\dot{\X}|^{2} }{|\dot{\X}\cdot\nabla\xi(\xb( \X(\tau), v))|  } G_{sp}(\tau; x, \tx, v),\quad G_{sp}(\tau; x, \tx, v) := |v|^{2} + \mathfrak{S}^{2}_{sp}(\tau; x, \tx, v).
		\end{split}
		\end{equation}
	 From $\mathfrak{S}_{sp}(\tau_{-})=0$, we derive an upper bound of $G_{sp}(\tau; x, \tx, v) $ by applying the Gronwall's inequality to \eqref{ode_sp}. Then we derive that, in terms of $\mathfrak{S}_{sp}(\tau; x, \tx, v)$,
		\begin{equation*} 
		\begin{split}
		\frac{1}{\mathfrak{S}_{sp}(\tau; x, \tx, v)} 
		&\leq \frac{1}{|v|}\Big( e^{\int_{\tau_{-}}^{\tau} \frac{2\theta_{\O}|\dot{\X}|^{2}}{ |\dot{\X}\cdot\nabla\xi(\xb( \X(s), v))| } ds    } -1 \Big)^{-\frac{1}{2}} \\
		&\leq \frac{1}{|v|}\Big[ \int_{\tau_{-}}^{\tau} \frac{2\theta_{\O}|\dot{\X}|^{2}}{ \max_{\tau_{-}\leq s\leq \tau}|\dot{\X}\cdot\nabla\xi(\xb( \X(s), v))| }     \Big]^{-\frac{1}{2}} \\
		&\lesssim_{\O} \frac{1}{|v||\dot{\X}|} \sqrt{\frac{ |\dot{\X} \cdot\nabla\xi(\xb( \X(\tau_{-} (x, \bar x, v)  ), v))| }{ (\tau-\tau_{-}  (x, \bar x, v) ) }}.
		\end{split}
		\end{equation*}
Here, we have used the fact $0\leq |{\dot{\X}}\cdot\nabla\xi(\xb( \X(\tau), v))| \leq C_{\O} |{\dot{\X}}\cdot\nabla\xi(\xb( \X(\tau_{-}), v))|$ for $\tau\in[\tau_{-}, \tau_{0}]$ by \eqref{mono Cst 1}. Hence, for $\tau_{*} \leq \tau_{0}$, 
		%Then, since $\widehat{\dot{x}}\cdot\nabla\xi(\tau_{-}) \simeq |n_{\parallel}|$,
		\begin{equation} \label{int root x} 
		\begin{split}
		%\int_{\tau_{-}(x, \bar{x}, v)}^{\tau_{*} (x, \bar{x}, v)}  
		%\frac{ \dd \tau 	}{\mathfrak{S}_{sp}(\tau; x,\bar{x}, v)} d\tau  
		\int^{\tau_{*}}_{\tau_{-}} \frac{1}{\mathfrak{S}_{sp}(s; x, \tx, v)} ds  
		%&\leq C_{\O}\frac{ \sqrt{\widehat{\dot{x}}\cdot\nabla\xi(\tau_{-})}}{|v|\sqrt{|\dot{x}|}}
		%\Big\{ \sqrt{s-\tau_{-}} \vert_{A}^{B} \Big\} \\
		%&\leq C_{\O}\frac{ \sqrt{\widehat{\dot{x}}\cdot\nabla\xi(\tau_{-})}}{|v|\sqrt{|\dot{x}|}} \frac{B-A}{\sqrt{B-\tau_{-}} + \sqrt{A-\tau_{-}}} \\
		&\leq C_{\O}\frac{ \sqrt{ {\dot{\X}}\cdot\nabla\xi( \xb( \X(\tau_{-}), v ) )}}{|v||\dot{\X}|}   \sqrt{\tau_{*} - \tau_{-} }\\
		&\leq	\frac{C_\O   |\widehat{\dot{\X}}\cdot\nabla\xi(\xb( \X(\tau_{-}), v))|}{|v| \sqrt{|\dot\X|}}	\sqrt{\tau_*- \tau_- (x, \bar x, v)}
		.
		\end{split}
		\end{equation}
	
	Next we claim that  
	\Be\label{upper_vn}
	\frac{1}{|v|\sqrt{|\dot{\X}|}}
	\leq  \frac{C_\O  \| \nabla \xi \|_{L^\infty (\p\O)} \sqrt{\tau_*-\tau_-(x, \bar x, v)}}{	|v\cdot\nabla\xi(\xb(\X(\tau_*), v))| }
 \ \ 	\text{for any} \ 
	\tau_*  \in [ \tau_-(x, \bar x, v) ,\tau_0(x, \bar x, v)].
	\Ee	
Combining \eqref{upper_vn} and \eqref{int root x}, we can prove \eqref{int:Sx}.

The proof of \eqref{upper_vn} comes from \eqref{ODE_x:nabla_xi_v}:
	\begin{equation*}
	\begin{split}
		\frac{d}{d\tau}(v\cdot\nabla\xi(\xb(\X(\tau), v)))^2 &= 2v\cdot\nabla^2 \xi(\xb( \X(\tau), v)) \big[ (v\cdot\nabla\xi(\xb(\X(\tau), v))) {\dot{\X}} - ( {\dot{\X}}\cdot\nabla\xi(\xb( \X(\tau), v)))v \big] \\
		&\leq C_{\O}|v|^2 | {\dot{\X}}\cdot\nabla\xi(\xb( \X(\tau_{-}), v)) |\\
		& \leq C_{\O} \| \nabla \xi \|_{L^\infty(\p\O)} |v|^2 |\dot \X|. 
	\end{split}
\end{equation*}
We integrate the above inequality from $\tau_-$ to $\tau_*$ and use $v\cdot\nabla\xi(\xb(\X(\tau), v))|_{\tau = \tau_-} =0$ from \eqref{tau_pm}. Then we can prove the claim \eqref{upper_vn}. 
	
%%%%%%%%%%%%%%%%%%%%%%
%%%%%%%%%%%%%%%%%%%%%%
%%%%%%%%%%%%%%%%%%%%%%
%%%%%%%%%%%%%%%%%%%%%%
%%%%%%%%%%%%%%%%%%%%%%

\hide	Now	let us obtain upper bound of $v\cdot \nabla \xi(\xb(\X(\tau), v))$. Using \eqref{nabla_x_bv_b}, we get
		\[
			\frac{d}{d\tau}(v\cdot\nabla\xi(\xb(\X(\tau), v))) = v\cdot\nabla^{2}\xi (\xb( \X(\tau), v)) \dot{\X} - v\cdot \nabla^2\xi(\xb(\X(\tau), v)) v \Big(\frac{\dot{\X}\cdot\nabla\xi(\xb( \X(\tau), v))}{v\cdot\nabla\xi(\xb( \X(\tau), v))}\Big),
		\]
		and so obtain
		\begin{equation*}
		\begin{split}
			\frac{d}{d\tau}(v\cdot\nabla\xi(\xb(\X(\tau), v)))^2 &= 2v\cdot\nabla^2 \xi(\xb( \X(\tau), v)) \big[ (v\cdot\nabla\xi(\xb(\X(\tau), v))) {\dot{\X}} - ( {\dot{\X}}\cdot\nabla\xi(\xb( \X(\tau), v)))v \big] \\
			&\leq C_{\O}|v|^2 | {\dot{\X}}\cdot\nabla\xi(\xb( \X(\tau_{-}), v)) |,
		\end{split}
		\end{equation*}
		since $\hat{v}\cdot\nabla\xi(\xb(\X(\tau), v)) \lesssim_{\O} | \widehat{\dot{\X}}\cdot\nabla\xi(\xb( \X(\tau_{-}), v)) |$ can be easily proved mimicking \eqref{dot x opt}. 
		Integrating from $\tau_{-}$ to $\tau$, 
		\[
			|v\cdot\nabla\xi(\xb(\X(\tau), v))| \leq C_{\O} |v| \sqrt{{\dot{\X}}\cdot\nabla\xi(\xb( \X(\tau_{-}), v))} \sqrt{\tau-\tau_{-}}
		\]
		Combining with \eqref{int root x}, we obtain \eqref{int:Sx} for $\tau_* \in [\tau_-, \tau_0]$. \unhide
		%%%%%%%%%%%%%%%%%%%%%%
		%%%%%%%%%%%%%%%%%%%%%%
		%%%%%%%%%%%%%%%%%%%%%%
		%%%%%%%%%%%%%%%%%%%%%%
		%%%%%%%%%%%%%%%%%%%%%%
		\hide
		\Be \label{est:tau-0}
		\int^{\tau_{*}}_{\tau_{-}} \frac{1}{\mathfrak{S}_{sp}(s; x, \tx, v)} ds  
		\leq C_{\O}  |\widehat{\dot{\X}}\cdot\nabla\xi(\xb( \X(\tau_{-}), v))|
		\frac{\tau_{*}-\tau_{-}}{|v\cdot\nabla\xi(\xb( \X(\tau_{*}), v))|}.
		\Ee
		\unhide
		
		%{\color{blue} \textbf{NOTE : result have same scaling as $\mathfrak{S}_{sp}$!!!} } \\
		Next, let us consider the case $\tau_*  \in [\tau_0 , \tau_+]$. Following the same argument to prove \eqref{int:Sx}, using \eqref{mono Cst 2} instead of \eqref{mono Cst 1}, we can derive that for $\tau_* \in [\tau_-, \tau_0]$
	%	
	%	By symmetry, \eqref{mono Cst 2}, and \eqref{int:Sx} of $\tau_* \in [\tau_-, \tau_0]$ case, it is obvious that 
		\Be \label{est:tau0+}
			\int^{\tau_{+}}_{\tau_{*}} \frac{\dd \tau }{\mathfrak{S}_{sp}(\tau; x, \tx, v)} 
			\lesssim C_{\O}  %|\widehat{\dot{\X}}\cdot\nabla\xi(\xb( \X(\tau_{+}), v))|
			\frac{\tau_{+}-\tau_{*}}{|v\cdot\nabla\xi(\xb( \X(\tau_{*}), v))|}.
		\Ee
		Now we split $\int_{\tau_-}^{\tau_*} = \int_{\tau_-}^{\tau_0} + \int_{\tau_0}^{\tau_+} - \int_{\tau_*}^{\tau_+} \leq |\int_{\tau_-}^{\tau_0}| + |\int_{\tau_0}^{\tau_+} | +| \int_{\tau_*}^{\tau_+}|$ and apply \eqref{int:Sx} with $\tau_* \in [\tau_-, \tau_0]$ and \eqref{est:tau0+} to derive that 
		\Be \notag
		\begin{split}
	 			\int_{\tau_{-}}^{\tau_{*} }  
	 		\frac{ \dd \tau 	}{\mathfrak{S}_{sp}(\tau; x,\bar{x}, v)} 
	 		&\lesssim_{\Omega} 
	 		\frac{(\tau_{0} - \tau_{-}) }{|v\cdot\nabla\xi(\xb(x(\tau_{0}), v)) |}
	 		+ \frac{(\tau_{+} - \tau_{0}) }{|v\cdot\nabla\xi(\xb(x(\tau_{0}), v)) |}
	 		+ \frac{(\tau_{+} - \tau_{*}) }{|v\cdot\nabla\xi(\xb(x(\tau_{*}), v)) |}  . 
		\end{split}
		\Ee
		Note that from \eqref{mono Cst 3}, we have $\frac{1 }{|v\cdot\nabla\xi(\xb(x(\tau_{0}), v)) |} \leq \frac{C_\O}{|v\cdot\nabla\xi(\xb(x(\tau_{*}), v)) |}$ 
	. On the other hand, from \eqref{tau ratio x}, we easily obtain $\tau_+ - \tau_0 \lesssim \tau_0 - \tau_-\lesssim \tau_* - \tau_-$ and $\tau_+ - \tau_* \lesssim \tau_+ - \tau_0\lesssim \tau_* - \tau_-$. Hence all three terms above (on the RHS) are bounded by  
		\[
		 \frac{C(\tau_{*} - \tau_{-}) }{|v\cdot\nabla\xi(\xb(x(\tau_{*}), v)) |}.
		\]
		Therefore we prove \eqref{int:Sx} for $\tau_* \in [\tau_0, \tau_+]$.

		\smallskip
		
		%%%%%%%%%%%%%	
		%%%%%%%%%%%%%	
		%%%%%%%%%%%%%	
		\hide
		\textbf{Step 2.}	Next we prove \eqref{Desing_x} when $ \tau \in [\tau_0, \tau_+]$.

		For $\tau \in [\tau_0, \tau_+]$ we redefine $\tilde\tau= -(\tau - \tau_+) \in [0, \tau_+- \tau_0]$. Then $\mathfrak{S}(\tilde{\tau})= \mathfrak{S} (\tau_+-  \tau)$ solve 
		\Be
		\frac{d }{d \tilde{\tau}}
		\mathfrak{S}(\tilde{\tau};x,\bar{x}, v)
		=	\eqref{ODE_S}_*|_{\tau= \tau_+ - \tilde\tau}
		\times \frac{1}{-\dot x \cdot \nabla \xi(\xb(x(\tau), v))  }	\frac{1}{\mathfrak{S}(\tilde\tau;x,\bar{x}, v)}
		\Ee
		Then 
		\Be
		\frac{d }{ d \tilde\tau } \Big(\frac{\mathfrak{S}_{sp} (\tilde\tau)^2}{2}\Big)\geq  \frac{ \big|
			\big( 
			I - \frac{\dot x }{|\dot x |} \otimes \frac{\dot x }{|\dot x |} 
			\big)v
			\big|^2 }{-\dot x  \cdot \nabla \xi(\xb(x(\tau), v))  } \ \ \text{for} \ 
		\tilde\tau \in [0, \tau_+ -\tau_0],  \ \  \  \Big(\frac{\mathfrak{S}(\tilde\tau)^2}{2}\Big)\Big|_{\tilde\tau= 0}=0.
		\Ee\Be
		\int^{\tau_+}_{\tau_0} 
		\bigg(
		\frac{- \nabla \xi(\xb(x(\tau), v)) \cdot v  }{
			- \frac{\dot{x}  }{|\dot{x}  |} \cdot \nabla \xi (\xb(x(\tau), v))
		}
		\bigg)^{-1}  \dd \tau 	
		\Ee
		\unhide
		%%%%%%%%%%%%%	
		%%%%%%%%%%%%%	
		%%%%%%%%%%%%%	
		
		\textbf{Step 2.} We prove \eqref{int:Sv} first when $ \tau \in [\tau_-, \tau_0]$, where $\tau_{-}(x,v, \bv, \zeta)$ and $\tau_0=\tau_0(x, v, \bv, \zeta)$ are defined in \eqref{tau_pm} and \eqref{tau_0_v}. 
		From  the differential inequality \eqref{ODE:S_v},  
		\begin{equation} \label{ode_vel}
		\begin{split}
		\frac{d}{dt} G_{vel}(\tau; x, v, \tv, \zeta) \geq \tb(x, \V(\tau))  \frac{ 2\theta_{\O}|{\V}(\tau)|^{2} }{ |\dot{\V}(\tau) \cdot\nabla\xi(\xb(x, \V(\tau)))|} G_{vel}(\tau; x, v, \tv, \zeta) ,
		\end{split}
		\end{equation}
		where $G_{vel}(\tau; x, v, \tv, \zeta) := 1 + \widetilde{\mathfrak{S}}^{2}_{vel}(\tau; x, v, \tv, \zeta)$. We apply the Gronwall's inequality to \eqref{ode_vel} using $\widetilde{\mathfrak{S}}_{vel}(\tau_{-})=0$. Then in terms of $\widetilde{\mathfrak{S}}_{vel}(\tau; x, v, \tv, \zeta)$, we have that for $\tau\in[\tau_{-}, \tau_{0}]$, 
		\begin{equation*} 
		\begin{split}
		\frac{1}{\widetilde{\mathfrak{S}}_{vel}(\tau; x, v, \tv, \zeta)} &\leq \Big( e^{\int_{\tau_{-}}^{\tau} \frac{ 2\theta_{\O}|{\V}(\tau)|^{2} \tb(x, \V(s)) }{ |\dot{\V}(s)\cdot\nabla\xi(\xb(x, \V(s)))|} ds    } -1 \Big)^{-\frac{1}{2}} \\
		&\leq \Big[ \int_{\tau_{-}}^{\tau} \frac{2\theta_{\O}|{\V}(\tau)|^{2} \min_{\tau_{-}\leq s\leq \tau} \tb(x, \V(s)) }{ \max_{\tau_{-}\leq s\leq \tau}|\dot{\V}(\tau)\cdot\nabla\xi(\xb( x, \V(s)))| }     \Big]^{-\frac{1}{2}} \\
		&\lesssim_{\O} \frac{  \sqrt{ | {\dot{\V}(\tau_-)}\cdot\nabla\xi(\xb(x, \V(\tau_-)))| } }{ |{\V}(\tau)|\sqrt{\tb(x, \V(\tau))}\sqrt{ \tau-\tau_{-}}} ,
		\end{split}
		\end{equation*}
		where we have used \eqref{min tb x-} and \eqref{mono Cst 1 v}. 
		%Then using   \\
		%$0\leq |\widehat{\dot{v}}\cdot\nabla\xi(\xb(x, v(s)))| \leq C_{\O} |\widehat{\dot{v}}\cdot\nabla\xi(\xb(x, v(\tau_{-})))|$,
		%\\
		Hence, from definition \eqref{def:S vel tilde}, we have that for $\tau_{*} \leq \tau_{0}$
		\begin{equation} \label{S vel est}
		\begin{split}
		\int_{\tau_{-}}^{\tau_{*}}  
		\frac{ \dd \tau 	}{\mathfrak{S}_{vel}(\tau; x,v, \bar{v}, \zeta)} &
		\lesssim_{\O} 
		\sqrt{\tb(x, \V(\tau_-))}
		\frac{  \sqrt{ | {\dot{\V}(\tau_-)}\cdot\nabla\xi(\xb(x, \V(\tau_-)))| } }{ |\dot{\V}(\tau_*)| |{\V}(\tau_*)| } 
		\sqrt{\tau_{*}-\tau_{-}}  .
		%&\leq C_{\O}\frac{ \sqrt{\widehat{\dot{x}}\cdot\nabla\xi(\tau_{-})}}{|v|\sqrt{|\dot{x}|}} \frac{B-A}{\sqrt{B-\tau_{-}} + \sqrt{A-\tau_{-}}} \\
		%&\leq C_{\O}\frac{ \max_{0\leq\tau\leq \tau_{0}} \sqrt{ \tb(x,v(\tau))|\widehat{\dot{v}(\tau)}\cdot\nabla\xi(\tau)| }}{|v(\tau)|\sqrt{|\dot{v}(\tau)|}} \frac{B-A}{\sqrt{B-\tau_{-}} + \sqrt{A-\tau_{-}}} \\
		\end{split}
		\end{equation}
	
		Next we claim that 
		\begin{equation} \label{upper_vn2}
		\begin{split}
			&|\V(\tau)\cdot\nabla\xi(\xb(x, \V(\tau)))|\\
			& \lesssim \sqrt{ {\theta}  |\widehat{\dot{\V}(\tau_{-})}\cdot \nabla\xi(\xb(x, \V(\tau_{-})))| |\V(\tau)|^{2} \Big( |\widehat{\dot{\V}(\tau_{-})}\cdot \nabla\xi(\xb(x, \V(\tau_{-})))| + \tb(x, \V(\tau_-))|\V(\tau)|\Big) } \sqrt{\tau - \tau_{-}} \\
			&\lesssim_{\O} \sqrt{ |\dot\V(\tau)||\V(\tau)| (1 + \tb(x, \V(\tau_-)) | \V(\tau)|)	} \sqrt{\tau - \tau_{-}} .
		\end{split}
	\end{equation}

	From \eqref{S vel est}, \eqref{upper_vn2}, and \eqref{tb minmax}, we can easily get \eqref{int:Sv} for $\tau_* \in [\tau_-, \tau_0]$. For the case $\tau \in [\tau_0, \tau_+]$, we follow the same argument of the last part in \textbf{Step 1}, using \eqref{mono Cst 3 v} and \eqref{tau ratio v}. This finishes the proof.

Now we only need to prove the claim \eqref{upper_vn2}. From \eqref{ODE_v:nabla_xi_v}, \eqref{mono Cst 1 v} and \eqref{min tb v-}, %that
		%To obtain upper bound of $\V(\tau)\cdot \nabla \xi(\xb(x, \V(\tau)))$. Using \eqref{perp} and \eqref{nabla_x_bv_b},
		%$\dot{\V}(\tau) = i\dot{\theta}v(\tau)$ and $\ddot{v}=-\dot{\theta}^{2}v(\tau)$, we compute
 	\begin{equation} \notag %
		\begin{split}
		&\frac{d}{d\tau}(\V(\tau)\cdot\nabla\xi(\xb(x, \V(\tau)))) \\
	%	&= \dot{\V}(\tau)\cdot\nabla\xi(\xb(x, \V(\tau))) + \V(\tau)\cdot \nabla^{2}\xi (\xb(x, \V(\tau)))
	%	\nabla_{v}\xb(x, \V(\tau)) \dot{\V}(\tau)  \\
		&= \dot{\V}(\tau)\cdot\nabla\xi(\xb(x, \V(\tau))) -\tb(x, \V(\tau)) \V(\tau) \cdot \nabla^{2}\xi(\xb(x, \V(\tau)))
		\Big( I - \frac{\V(\tau)\otimes \nabla\xi(\xb(x, \V(\tau)))}{\V(\tau)\cdot\nabla\xi(\xb(x, \V(\tau)))} \Big)\dot{\V}(\tau).  \\
		\end{split}
		\end{equation} 
		%using $\tb |v(\tau)|\dot{\theta} (\tau_{+}-\tau_{-}) = \tb |\dot{v}| (\tau_{+}-\tau_{-}) \simeq C_{\O}$ and $\dot{\theta} \lesssim 1$,
		Therefore,
		\begin{equation} \notag %
		\begin{split}
		&\frac{1}{2}\frac{d}{d\tau}(\V(\tau)\cdot\nabla\xi(\xb(x, \V(\tau))))^{2} \\
		&= (\dot{\V}(\tau)\cdot\nabla\xi(\xb(x, \V(\tau)))) (\V(\tau)\cdot\nabla\xi(\xb(x, \V(\tau)))) \\
		&\quad -\tb(x, \V(\tau)) ( \V(\tau)\cdot\nabla\xi(\xb(x, \V(\tau)))) \V(\tau)\cdot \nabla^{2}\xi(\xb(x, \V(\tau))) \dot{\V}(\tau) \\
		&\quad + \tb(x, \V(\tau)) (\dot{\V}(\tau)\cdot\nabla\xi(\xb(x, \V(\tau)))) \V(\tau)\cdot \nabla^{2}\xi(\xb(x, \V(\tau))) \V(\tau)  \\
		&\leq C_{\O} {\theta}  |\widehat{\dot{\V}(\tau_{-})}\cdot \nabla\xi(\xb(x, \V(\tau_{-})))| |\V(\tau)|^{2} \Big( |\widehat{\dot{\V}(\tau_{-})}\cdot \nabla\xi(\xb(x, \V(\tau_{-})))| + \tb(x, \V(\tau_-))|\V(\tau)|\Big),
		\end{split}
		\end{equation}
		where we perform the following estimate for the second term,
			\begin{equation*}
			\begin{split}
			&|\tb(x, \V(\tau)) ( \V(\tau)\cdot\nabla\xi(\xb(x, \V(\tau)))) \V(\tau)\cdot \nabla^{2}\xi(\xb(x, \V(\tau))) \dot{\V}(\tau)| \\
			%&\leq  C_{\O} \theta \tb(x, \V(\tau_-))|\V(\tau)|^{3} |\widehat{\V(\tau)}\cdot\nabla\xi(\xb(x, \V(\tau))) |   \\
			&\leq  C_{\O} \theta \tb(x, \V(\tau_-))|\V(\tau)|^{3} |\widehat{\V(\tau)}\cdot n_{\parallel}(\xb(x, \V(\tau))) |   \\
			%&\leq  C_{\O} \theta \tb(x, \V(\tau_-))|\V(\tau)|^{3} |\widehat{\V(\tau)}\cdot \widehat{n_{\parallel}}(\xb(x, \V(\tau))) |  |n_{\parallel}(\xb(x, \V(\tau)))| \\
			&\leq  C_{\O} \theta \tb(x, \V(\tau_-))|\V(\tau)|^{3}  |\widehat{\dot{\V}(\tau_{-})}\cdot \widehat{n_{\parallel}}(\xb(x, \V(\tau_{-}))) |   
			|n_{\parallel}(\xb(x, \V(\tau)))| \\
			&\leq  C_{\O} \theta \tb(x, \V(\tau_-))|\V(\tau)|^{3} |\widehat{\dot{\V}(\tau_{-})}\cdot \nabla\xi(\xb(x, \V(\tau_{-}))) |.  \\
			\end{split}
			\end{equation*}
			%For the 2nd term, we use $|\dot{\V}| = \theta|\V|$ (written in \eqref{perp}) and \eqref{min tb v-} to
	Here, we have used the fact $|\dot \V (\tau)| = \theta |\V(\tau)|= \cos^{-1} (\widehat{v+ \zeta} \cdot \widehat{\bar v+ \zeta} )$ in \eqref{perp} and \eqref{unif n} in Lemma \ref{lem_unif n}. 
	
	\hide
		Integrating from $\tau_{-}$ to $\tau$,

		%{\color{blue} \textbf{(NOTE :We  derived $|n_{\parallel}|$'s in extra !!)  } } \\
		
		%\[
		%|v(\tau)\cdot\nabla\xi| \leq C_{\O}|v(\tau)| \sqrt{\langle x \rangle \dot{\theta} (\tau - \tau_{-})} = C_{\O} \sqrt{\langle x \rangle |v(\tau)| |\dot{v}(\tau)|  (\tau - \tau_{-})} \\
		%\]
		Now, combining with \eqref{S vel est} and using \eqref{min tb v-} again,
		% $|\dot{v}(\tau)| = \dot{\theta}|v(\tau)|$,
		\begin{equation} 
		\begin{split}
		&\int_{\tau_{-}}^{\tau_{*}}  
		\frac{ \dd \tau 	}{\mathfrak{S}_{vel}(\tau; x,v, \bar{v}, \zeta)}  \\
		&\lesssim \frac{ (\tau_{*} - \tau_{-}) }{ |\V(\tau_{*})\cdot \nabla\xi(\xb(x, \V(\tau_{*})))| }	
		\frac{1}{|\V(\tau_*)|}
		\max_{\tau_- \leq \tau \leq \tau_*} \{ |\V(\tau)|\tb(x, \V(\tau)) , \sqrt{|\V(\tau)|\tb(x, \V(\tau))}   \}
		\end{split}
		\end{equation}
		Applying \eqref{tb minmax}, we get \eqref{int:Sv} for $\tau_* \in [\tau_-, \tau_0]$.  
		\unhide
	\end{proof}
	
	\hide
	For the sake of briefness we introduce the following notations
	\Be\label{def:V}
	\begin{split}
		\mathcal{V}(t,s, X , V ) 
		=
		\mathcal{V}(t,s,X(s ), V(s ))   := e^{- \int^t_ s \nu(f) (\tau, X(\tau ), V(\tau )) \dd \tau},  \\
		\Ga (t,s, X , V )    := \Gamma_{\text{gain}}(f,f)(s,X(s ), V(s )) .
	\end{split}
	\Ee 
	%From the Duhamel's formula \eqref{f_expan_V}, 
	A solution $f$ to \eqref{f_eqtn} has the Duhamel's formula of  
	\Be\label{f_expan_V}
	\begin{split}
		f(t,x,v+\zeta) &= \mathcal{V}(t,0,X,V) f(0, X(0), V(0)) 
		+  \int^t_0 \mathcal{V}(t,s, X, V) \Ga(t,s,X,V) \dd s,\\ 
		f(t,\bar x,\bar v+\zeta) &= \mathcal{V}(t,0,\bar X,\bar V) f(0, \bar X(0), \bar V(0)) 
		+  \int^t_0 \mathcal{V}(t,s, \bar X, \bar V) \Ga(t,s,\bar X,\bar V) \dd s,
	\end{split}\Ee
	with the notations
	\Be\label{bar X}
	\begin{split}
		( X  ,   V  )=( X (s),   V (s))&= (X(s;t,  x,  v+ \zeta), V(s;t,  x,  v+ \zeta)) ,\\
		( \bar X ,   \bar V )=(\bar X (s),  \bar V (s))&= (X(s;t,\bar x, \bar v+ \zeta), V(s;t,\bar x, \bar v+ \zeta)),  \\
	\end{split}
	\Ee
	
	\subsection{Specular Singularity and $\mathfrak{H}^1$-seminorm}
	In this section we prove Proposition \ref{lem:1/2_to_H}, which relates the H\"older seminorm of the solutions
	\Be\begin{split}\label{def:holder}
		[f(t,\cdot, v)]_{sp}
		&: = \sup_{\substack{
				x, \bar x \in \bar \O \\
				0 < |x-\bar x| \leq 1} }
		\langle v\rangle^{-1}   e^{-\varpi \langle v  \rangle^2 t} 
		\frac{|f(t,x,v) - f(t, \bar x, v)|}{|x-\bar x|^{1/2}},  \\ 
		[f(t,x, \cdot)]_{vel}&: = \sup_{\substack{
				v, \bar v \in \R^3 \\
				0 < |v-\bar v| \leq 1} }
		\langle v\rangle^{-1}  e^{-\varpi \langle v  \rangle^2 t} 
		\frac{|f(t,x,v) - f(t,  x, \bar v)|}{|v-\bar v|^{1/2}},
	\end{split}\Ee
	to a version of the H\"older seminorm of a special moment associated to $\mathbf{k}$ in \eqref{bar k}: 
	\begin{definition} Recall $\mathbf{k}$ in \eqref{bar k}. We define the $\mathfrak{H}^\beta$-seminorms, for $0 \leq \beta \leq 1$ and $\varpi\geq 0$, 
		\Be\label{def:H}
		\begin{split}
			\mathfrak H^\beta_{sp} (t,v)&: =\sup_{\substack{
					x, \bar x \in \bar \O \\
					0 < |x-\bar x| \leq 1} }
			\Big( \int_{\R^3} % \underbar 
			\mathbf{k}%_{c_{\vartheta^\prime}}
			(v,   v, u) 
			{\color{white} e^{-\varpi \langle v+u \rangle^2 t}}
			\frac{ |f (t,x,v+u) - f (t,\bar{x}  ,v+u)| }{ |x-\bar{x} |^{\b} } \dd u\Big),\\ 
			\mathfrak H^\beta_{vel}(t,x) &: =
			\sup_{\substack{
					v, \bar v \in \bar \R^3 \\
					0 < |v-\bar v| \leq 1} }
			\Big( \int_{\R^3}   
			% \underbar 
			\mathbf{k}%_{c_{\vartheta^\prime}}
			(v,   \bar v, u)  
			{\color{white} e^{-\varpi \langle v+u \rangle^2 t}}
			\frac{ |f (x,v+u) - f ( x  ,\bar v+u)| }{ |v-\bar{v} |^{\b} } \dd u\Big). 
		\end{split} \Ee
		The natural time-dependent weight in $\mathfrak H^\beta_{vel}(t,x) $ would be $e^{-\varpi  \max\{\langle v+u \rangle^2,  \langle \bar v+u \rangle^2 \}t}$. But the difference of these weights is bounded by $|v-\bar v|t$, and therefore the difference of these difference seminorms might be an order of $t \| w f \|_\infty$. For the sake of simplicity we choose the one in \eqref{def:H} and will endure this error.
	\end{definition}

	We list some necessary lemmas to prove Proposition \ref{lem:1/2_to_H}, while proving them are adjourned. 
	
	First we express the difference of Duhamel's formulas of $f$ solving \eqref{f_eqtn} and \eqref{specular}. To count several situations conveniently, including the case that one trajectory hits the boundary while other one misses it, we introduce a following notation:
	\begin{definition}Let $(\bar{x}, \bar{v} ), (x,v) \in \O \times \R^3$% and $(\bar{x}, \bar{v} ) \neq (x,v)$
		, and $\zeta \in \R^3$. We define 
		\begin{align}\label{I_int} 
		I_{\textit{int}} &= I_{\textit{int}}(t,x,v, \bar x, \bar v, \zeta)  \\
		&:=\notag
		\begin{cases} 
		\big[0,  t_1(t,\bar x, \bar v+\zeta)   \big] \cup   \big[  {t}_1(t,   x,  v+\zeta) , t\big], 
		\ \  \text{for} \ \ \tb(x, v+ \zeta) \leq  \tb(\bar x, \bar v+ \zeta)<\infty,
		\\
		\big[0,  t_1(t,x,v+\zeta)   \big] \cup   \big[  {t}_1(t, \bar x, \bar v+\zeta) , t\big], 
		\ \   \text{for} \ \ \tb(x, \bar v+ \zeta) \leq  \tb(x,  v+ \zeta)<\infty,\\
		\big[0, t\big] , 
		\ \  \text{otherwise.}  
		% \big[0,  t_1(t,\bar x,   v+\zeta)   \big] \cup   \big[  {t}_1(t,   x,  v+\zeta) , t\big]   , 
		%   \ \ \text{for} \ \ \tb(x, v+ \zeta)< \infty   \ \text{and } \    \tb(\bar x, \bar v+ \zeta)= \infty,\\
		%   \big[0,  t_1(t,\bar x, \bar v+\zeta)   \big] \cup   \big[  {t}_1(t,   x,  v+\zeta) , t\big]    , 
		%  \ \ \text{for} \ \ \tb(x, v+ \zeta) = \infty   \ \text{and } \     \tb(\bar x, \bar v+ \zeta)< \infty,\\
		%[0,t]    , 
		%   \ \ \text{for} \ \ \tb(x, v+ \zeta) = \infty   \ \text{and } \     \tb(\bar x, \bar v+ \zeta)= \infty,
		\end{cases} 
		\end{align}
	\end{definition}
	In this time interval two characteristics $X(s;t,x,v)$ and $X(s;t,\bar x, \bar v)$ are either both before hitting the boundary or both after hitting it.  \\
	\unhide
	
	\hide
	\begin{lemma}Let $(\bar{x}, \bar{v} ), (x,v) \in \O \times \R^3$. 
		\begin{align}
		& |f(t,x,v+\zeta) - f(t,\bar x,   v+\zeta)| \lesssim |
		f(0,X(0), V(0)) - f(0,\bar X(0),\bar  V(0)) 
		|    
		\notag%
		\\
		& + \mathcal{P}(\| w f \|_\infty) \bigg\{ 
		\mathbf{1}_{\max\{\tb(x, v+\eta), \tb(\bar x, \bar v+\eta)\}<\infty}
		\frac{\langle v+\zeta\rangle  }{w (v +\zeta)}  
		|\tb(x,v+\zeta) - \tb (\bar x,   v+\zeta)|
		\label{diff:f1}%\\
		%&  \  \ \ \ \ \ \ \ \ \ \ \  \ \ \ \  \ \ \  
		+  
		\frac{1}{ \langle v +\zeta\rangle }
		\int_{I_{\textit{int}}} 
		|\bar{V}(s) - {V}(s)  |^\beta  \dd s
		%\label{diff:f2}
		\\
		%
		% \\	
		%& \ + \int_{I_{\textit{int}}}  |V(s)- \bar V(s)|^\beta \min \{	 \langle V(s)\rangle^{-1}	, \langle \bar V(s)\rangle^{-1}		   \} \dd s \Big\}, \label{diff:f6}
		% 
		%
		%
		%& \  +  \frac{1}{w (v +\zeta)} \int_{I_{\textit{int}}} \int_{\R^3} 
		% |V(s)-u| \sqrt{\mu(u)}
		%|f(s, X(s), u)-f(
		%\tau, \bar X(s), u)|    \dd u \dd s	
		%\label{diff:f3}\\
		& \  \ \ \ \ \ \ \ \ \ \ \  \ \ \ \  \ \ \   + 
		\int_{I_{\textit{int}}} |X(s)-\bar{X}(s)|^{ \b} \int_{\R^3} 
		\mathbf{k}% \underbar k_{c_{\vartheta^\prime}}
		(V(s),   V(s), u)   \frac{ |f (X(s),V(s)+u) - f (\bar{X}(s),V(s)+u)| }{ |X(s)-\bar{X}(s)|^{ \b} } \dd u \dd s
		\label{diff:f4}\\
		& \  \ \ \ \ \ \ \ \ \ \ \  \ \ \ \  \ \ \   + \int_{I_{\textit{int}}}
		|V(s)- \bar V(s)|^\beta \int_{\R^3} %\Big[ 
		\mathbf{k}%	\underbar k_{c_{\vartheta^\prime}}
		(V(s), \bar V(s), u)   %k_{c_{\vartheta^\prime}}(\bar{V}(s), \bar{V}(s)+u) \Big] \notag\\
		% & \ \ \ \ \ \ \ \ \ \ \  \ \  \ \ \ \ \ \ \ \ \ \ \ \ \ \ \ \ \ \ \  \times 
		\frac{ | f(\bar X(s), V(s)+u) - f(\bar X(s), \bar{V}(s)+u) | }{ |V(s)-\bar{V}(s)|^{\b} }  \dd u \dd s
		\bigg\},
		\label{diff:f5}
		\end{align}

	\end{lemma}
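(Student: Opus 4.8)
The plan is to subtract the Duhamel representations \eqref{f_expan} of $f(t,x,v+\zeta)$ and $f(t,\bar x,\bar v+\zeta)$, written along the specular characteristics \eqref{form_XV}, and to reorganize the difference so that each resulting piece is either one of the four terms on the right-hand side or an error of size $O(t)\,\mathcal P(\|wf\|_\infty)$. Throughout I would use the a priori bound $\|wf\|_\infty\lesssim\|wf_0\|_\infty$ of Lemma \ref{lem loc}, the elementary inequality $|e^{-a}-e^{-b}|\le|a-b|$ for $a,b\ge0$, and the crude pointwise bounds $|\Ga(f,f)(s,y,w)|\lesssim\langle w\rangle\,w^{-1}(w)\,\|wf\|_\infty^2$ and $0\le\nu(f)(s,y,w)\lesssim\langle w\rangle\,\|wf\|_\infty$, both immediate from the Carleman form of Lemma \ref{lem_Carl} (energy conservation moves the Gaussian weight from the post-collisional velocities to the pre-collisional ones). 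Writing $\mathcal V(t,s;X,V):=e^{-\int_s^t\nu(f)(\tau,X(\tau),V(\tau))\,\dd\tau}\in(0,1]$, abbreviating $X=X(s),V=V(s)$, and denoting by $(\bar X,\bar V)$ the characteristic through $(t,\bar x,\bar v+\zeta)$, one has
\[
\begin{aligned}
f(t,x,v+\zeta)-f(t,\bar x,\bar v+\zeta)
&=\mathcal V(t,0;X,V)\big[f(0,X(0),V(0))-f(0,\bar X(0),\bar V(0))\big]\\
&\quad+\big[\mathcal V(t,0;X,V)-\mathcal V(t,0;\bar X,\bar V)\big]f(0,\bar X(0),\bar V(0))\\
&\quad+\int_0^t\mathcal V(t,s;X,V)\big[\Ga(f,f)(s,X,V)-\Ga(f,f)(s,\bar X,\bar V)\big]\,\dd s\\
&\quad+\int_0^t\big[\mathcal V(t,s;X,V)-\mathcal V(t,s;\bar X,\bar V)\big]\Ga(f,f)(s,\bar X,\bar V)\,\dd s.
\end{aligned}
\]
Since $0<\mathcal V\le1$, the first summand is already dominated by $|f(0,X(0),V(0))-f(0,\bar X(0),\bar V(0))|$.

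For the two summands built from the difference $\mathcal V(t,s;X,V)-\mathcal V(t,s;\bar X,\bar V)$ I would use $|\mathcal V(t,s;X,V)-\mathcal V(t,s;\bar X,\bar V)|\le\int_s^t|\nu(f)(\tau,X,V)-\nu(f)(\tau,\bar X,\bar V)|\,\dd\tau$, noting (Lemma \ref{lem_specular G}) that $\nu(f)$ is continuous along each characteristic across its own bounce, so this is legitimate. Split the $\tau$-integral over $I_{\textit{int}}$ and over the complementary \emph{transition set} $[0,t]\setminus I_{\textit{int}}$, i.e.\ the set of times at which exactly one of the two characteristics has already met $\p\O$. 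On the transition set the Lebesgue measure equals $|\tb(x,v+\zeta)-\tb(\bar x,\bar v+\zeta)|$ when both exit times are finite (and otherwise the indicator $\mathbf 1_{\max\{\tb,\tb\}<\infty}$ vanishes, in which case the contribution is crudely $O(t)$), while along both characteristics the velocity keeps magnitude $|v+\zeta|$ or $|\bar v+\zeta|\simeq|v+\zeta|$; hence $|\nu(f)(X,V)|+|\nu(f)(\bar X,\bar V)|\lesssim\langle v+\zeta\rangle\|wf\|_\infty$ there, and multiplying by $|f(0,\bar X(0),\bar V(0))|\lesssim w^{-1}(v+\zeta)\|wf\|_\infty$ (resp.\ by $|\Ga(f,f)(s,\bar X,\bar V)|$) reproduces the term \eqref{diff:f1}. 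On $I_{\textit{int}}$ I would split $\nu(f)(X,V)-\nu(f)(\bar X,\bar V)=[\nu(f)(X,V)-\nu(f)(\bar X,V)]+[\nu(f)(\bar X,V)-\nu(f)(\bar X,\bar V)]$ and invoke \eqref{full nu x} and \eqref{full nu v} of Lemma \ref{lem_nu}: the first gives a $k_c$-weighted moment of $f(X,\cdot)-f(\bar X,\cdot)$, absorbed into \eqref{diff:f4}; the second gives $\lesssim|V-\bar V|\,\|f\|_\infty$, which upon integration in $s$ is absorbed into the term $\langle v+\zeta\rangle^{-1}\int_{I_{\textit{int}}}|\bar V-V|^{\b}\,\dd s$ of \eqref{diff:f1}.

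For $\int_0^t\mathcal V(t,s;X,V)[\Ga(f,f)(s,X,V)-\Ga(f,f)(s,\bar X,\bar V)]\,\dd s$ I would again split $[0,t]=I_{\textit{int}}\cup([0,t]\setminus I_{\textit{int}})$; on the transition set $|\Ga(f,f)(s,X,V)|+|\Ga(f,f)(s,\bar X,\bar V)|\lesssim\langle v+\zeta\rangle\,w^{-1}(v+\zeta)\,\|wf\|_\infty^2$ and integration over a set of measure $\le|\tb(x,v+\zeta)-\tb(\bar x,\bar v+\zeta)|$ again yields \eqref{diff:f1}. On $I_{\textit{int}}$ each of its two constituent intervals is entirely before, or entirely after, the bounce for \emph{both} characteristics, so $V(s)$ — which equals $v+\zeta$ before the bounce and $R_{\xb(x,v+\zeta)}(v+\zeta)$ after — is an admissible intermediate velocity; inserting it,
\[
\Ga(f,f)(s,X,V)-\Ga(f,f)(s,\bar X,\bar V)=\big[\Ga(f,f)(s,X,V)-\Ga(f,f)(s,\bar X,V)\big]+\big[\Ga(f,f)(s,\bar X,V)-\Ga(f,f)(s,\bar X,\bar V)\big].
\]
Applying \eqref{full k x} of Lemma \ref{lem_Gamma} to the first bracket produces the moment term of \eqref{diff:f4}, while \eqref{full k v} applied to the second produces the moment term of \eqref{diff:f5} together with a residual $\|wf\|_\infty^2\,|V-\bar V|^{2\b}\min\{\langle V\rangle^{-1},\langle\bar V\rangle^{-1}\}$, which, using $\langle V\rangle=\langle v+\zeta\rangle$ and $|V-\bar V|^{2\b}\le|V-\bar V|^{\b}$ (and the trivial bound where $|V-\bar V|$ is not small), is absorbed into $\langle v+\zeta\rangle^{-1}\int_{I_{\textit{int}}}|\bar V-V|^{\b}\,\dd s$. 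Collecting the four summands gives the asserted inequality.

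I expect the main obstacle to be the bookkeeping of the transition set: one must check carefully that it is exactly the portion of $[0,t]$ on which the two characteristics lie on opposite sides of their respective bounces, that its measure is precisely $|\tb(x,v+\zeta)-\tb(\bar x,\bar v+\zeta)|$ (with the degenerate case, in which one characteristic misses $\p\O$, isolated by the indicator and controlled by the smallness of $t$), and that on $I_{\textit{int}}$ the velocities and Gaussian weights along both characteristics are comparable to those of $v+\zeta$, so that the weights $\langle v+\zeta\rangle$ and $w^{-1}(v+\zeta)$ can legitimately be factored out of the crude $\Ga$- and $\nu$-bounds. The remaining work is the routine triangle-inequality splitting of $\Ga(f,f)$ and $\nu(f)$ into a ``same velocity / different space'' piece and a ``same space / different velocity'' piece, which is exactly what makes Lemmas \ref{lem_Gamma} and \ref{lem_nu} directly applicable.
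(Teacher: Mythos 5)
Your proposal is correct and follows essentially the same route the paper takes: subtracting the Duhamel formulas and using $|e^{-a}-e^{-b}|\le|a-b|$ yields exactly the four-term decomposition \eqref{basic f-f1}--\eqref{basic f-f4}, the complement of $I_{\textit{int}}$ is handled by crude sup-norm bounds over a set of measure $|\tb-\bar{t}_{\mathbf{b}}|$, and on $I_{\textit{int}}$ the insertion of the intermediate point $(\bar X,V)$ followed by Lemmas \ref{lem_Gamma} and \ref{lem_nu} produces the two moment terms. The only points needing the standard care you already flag are the comparability of the weights $w^{-1}(v+\zeta)$ and $w^{-1}(\bar v+\zeta)$ (absorbed by slightly shrinking $\vartheta$ as in Lemma \ref{lem_nega}) and the absorption of the residual $|V-\bar V|^{2\b}$ term when $|V-\bar V|>1$.
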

	\unhide

\section{$\mathfrak{H}^{2\b}_{sp, vel}$ estimates} 

\subsection{Difference estimates}
When \eqref{assume_x} with $v+\zeta$ and \eqref{assume_v} hold, we split
\begin{align}
%\begin{split}
%& f(s, X, V) - f(s, \bar{X}, \bar{V}) \notag  \\
& f(s, X(s;t,x,v+\zeta), V(s;t,x,v+\zeta)) - f(s, X(s;t, \bar{x}, \bar{v}+\zeta), V(s;t, \bar{x}, \bar{v}+\zeta))  \label{diff f}  \\
%&\leq f(s, X(s;t,x,v), V(s;t,x,v)) - f(s, X(s;t, \bar{x}, v ), V(s;t, \bar{x}, v ))  \notag \\
%&\quad + f(s, X(s;t, \bar{x}, v ), V(s;t, \bar{x}, v )) - f(s, X(s;t, \bar{x}, \bar{v} ), V(s;t, \bar{x}, \bar{v} ))  \notag \\
&\leq  f(s, X(s;t,x,v+\zeta), V(s;t,x,v+\zeta)) - f(s, X(s;t, \tilde{x}, v+\zeta ), V(s;t, \tilde{x}, v+\zeta ))   \label{split1}   \\
&\quad + f(s, X(s;t, \tilde{x}, v+\zeta ), V(s;t, \tilde{x}, v+\zeta ))  - f(s, X(s;t, \bar{x}, v+\zeta ), V(s;t, \bar{x}, v+\zeta ))    \label{split2}   \\
&\quad + f(s, X(s;t, \bar{x}, v+\zeta ), V(s;t, \bar{x}, v+\zeta )) - f(s, X(s;t, \bar{x}, \tv+\zeta ), V(s;t, \bar{x}, \tv+\zeta ))   \label{split3}    \\
&\quad + f(s, X(s;t, \bar{x}, \tv+\zeta ), V(s;t, \bar{x}, \tv+\zeta ))   - f(s, X(s;t, \bar{x}, \bv+\zeta ), V(s;t, \bar{x}, \bv+\zeta )),   \label{split4}        
%\end{split}
\end{align}
\hide
\begin{align}
%\begin{split}
%& f(s, X, V) - f(s, \bar{X}, \bar{V}) \notag  \\
& f(s, X(s;t,x,v), V(s;t,x,v)) - f(s, X(s;t, \bar{x}, \bar{v}), V(s;t, \bar{x}, \bar{v}))  \\
%&\leq f(s, X(s;t,x,v), V(s;t,x,v)) - f(s, X(s;t, \bar{x}, v ), V(s;t, \bar{x}, v ))  \notag \\
%&\quad + f(s, X(s;t, \bar{x}, v ), V(s;t, \bar{x}, v )) - f(s, X(s;t, \bar{x}, \bar{v} ), V(s;t, \bar{x}, \bar{v} ))  \notag \\
&\leq  f(s, X(s;t,x,v), V(s;t,x,v)) - f(s, X(s;t, \tilde{x}, v ), V(s;t, \tilde{x}, v ))      \\
&\quad + f(s, X(s;t, \tilde{x}, v ), V(s;t, \tilde{x}, v ))  - f(s, X(s;t, \bar{x}, v ), V(s;t, \bar{x}, v ))       \\
&\quad + f(s, X(s;t, \bar{x}, v ), V(s;t, \bar{x}, v )) - f(s, X(s;t, \bar{x}, \tilde{v} ), V(s;t, \bar{x}, \tilde{v} ))       \\
&\quad + f(s, X(s;t, \bar{x}, \tilde{v} ), V(s;t, \bar{x}, \tilde{v} ))   - f(s, X(s;t, \bar{x}, \bar{v} ), V(s;t, \bar{x}, \bar{v} ))       
%\end{split}
\end{align}
\unhide
where $\tilde{x} = \tx(x, \bx, v+\zeta)$ and $\tv = \tv(v, \bv, \zeta)$ are defined in Definition \ref{def_tilde}. We estimate each \eqref{split1}--\eqref{split4}.  \\

\begin{lemma} \label{f-f split}
	Let $f : \overline{\O}\times \R^{3} \rightarrow \R_{+} \cup \{0\}$ be a function which satisfies specular reflection \eqref{specular}, where $\O$ is a domain as in Definition \ref{def:domain}. Let $w(v) = e^{\vartheta|v|^{2}}$ for some $0 < \vartheta$.  \\
	If \eqref{assume_x} holds with $v+\zeta$, then \eqref{split1} and \eqref{split2} enjoy the following estimates.
	%If \eqref{assume_x} and\eqref{assume_x2} hold with $v+\zeta$, then \eqref{split1} and \eqref{split2} enjoy the following estimates.
	\begin{eqnarray}
	\frac{\eqref{split1}}{ |x - \bx|^{\g} } &\lesssim&    \Big[ 1 + |v+\zeta|(t-s)  + ( |v+\zeta| + |v+\zeta|^{2}(t-s) ) \mathcal{T}_{sp}(x, \tx, v+\zeta)
	\Big]^{2\b}  
	\notag  \\
	&&\quad 
	\times  
	%{\color{blue}  
		\Big[ \frac{ e^{\varpi \langle v+\zeta \rangle^2 s}  }{ \langle v+\zeta \rangle^{s_{1}}}  
		\sup_{ \substack{ v\in \R^{3} \\ 0 < |x - \bx|\leq 1   } } e^{-\varpi \langle v \rangle^2 s}  
		\langle v \rangle^{s_{1}} \frac{|f(s, x, v ) - f(s, \bx, v)|}{|x - \bx|^{\g}}  + \frac{\|w f(s)\|_{\infty}}{w(v+\zeta)} \Big] 
	%}     
	\notag  \\
	&& + \Big[1 + |v+\zeta| +  (|v+\zeta| + |v+\zeta|^{2}) \mathcal{T}_{sp}(x, \tx, v+\zeta)
	\Big]^{2\b}  \notag \\
	&&\quad
	\times
	%{\color{blue}  
	\Big[ 
		\frac{ e^{\varpi \langle v+\zeta \rangle^2 s} }{ \langle v+\zeta \rangle^{s_{2}} }	
		\sup_{ \substack{ x\in \overline{\O} \\ 0 < |v - \bv|\leq 1   } } e^{\varpi \langle v \rangle^2 s}  \langle v \rangle^{s_{2}} \frac{|f(s, x, v ) - f(s, x, \bv)|}{|v - \bv|^{\g}} + \frac{\|wf(s)\|_{\infty}}{w(v+\zeta)}   \Big],
	%}   
	 \label{est:split1} \\
	\frac{\eqref{split2}}{ |x - \bx|^{\g} }  &\lesssim& \Big[ \frac{e^{\varpi \langle v+\zeta \rangle^2}s}{\langle v+\zeta \rangle^{s_{1}}}  \sup_{ \substack{ v\in \R^{3} \\ 0 < |x - \bx|\leq 1   } } \Big( e^{-\varpi \langle v \rangle^2 s} \langle v \rangle^{s_{1}} \frac{|f(s, x, v ) - f(s, \bar{x}, v)|}{|x - \bx|^{\g}} \Big) 
	+ \frac{\|wf(s)\|_{\infty}}{w(v+\zeta)}  \Big] .     \label{est:split2}  
	\end{eqnarray}
	Similarly, if \eqref{assume_v} holds, then \eqref{split3} and \eqref{split4} enjoy the following estimates.
	\begin{eqnarray}
	\frac{\eqref{split3}}{|v-\bv|^{\g}} &\lesssim& 
	\Big[ (t-s) + |v+\zeta|(t-s)^{2} + ( |v+\zeta| + |v+\zeta|^{2}(t-s) ) \mathcal{T}_{vel}(\bx, v, \tv, \zeta; t, s)
	\Big]^{2\b}  
	\notag  \\
	&&\quad\quad 
	\times  
	%{\color{blue}  
	\Big[ \frac{ e^{\varpi \langle v+\zeta \rangle^2 s}  }{ \langle v+\zeta \rangle^{s_{1}}}  
		\sup_{ \substack{ v\in \R^{3} \\ 0 < |x - \bx|\leq 1   } }  e^{-\varpi \langle v \rangle^2 s}  
		\langle v \rangle^{s_{1}} \frac{|f(s, x, v ) - f(s, \bx, v)|}{|x - \bx|^{\g}}  + \frac{\|wf(s)\|_{\infty}}{w(v+\zeta)}  \Big] 
	%}      
	\notag  \\
	&& + \Big[1 + |v+\zeta|(t-s) +  |v+\zeta|^{2} \mathcal{T}_{vel}(\bx, v, \tv, \zeta; t,s)
	\Big]^{2\b}  \notag \\
	&&\quad\quad 
	\times
	%{\color{blue}  
	\Big[ 
		\frac{ e^{\varpi \langle v+\zeta \rangle^2 s} }{ \langle v+\zeta \rangle^{s_{2}} }	
		\sup_{ \substack{ x\in \overline{\O} \\ 0 < |v - \bv|\leq 1   } } e^{-\varpi \langle v \rangle^2 s}  \langle v \rangle^{s_{2}} \frac{|f(s, x, v ) - f(s, x, \bv)|}{|v - \bv|^{\g}} + \frac{\|wf(s)\|_{\infty}}{w(v+\zeta)}   \Big],
	%}    
	\label{est:split3}   \\
	\frac{\eqref{split4}}{|v-\bv|^{\g}} &\lesssim& \Big[ \frac{ e^{\varpi \langle v+\zeta \rangle^2 s}  }{\langle v+\zeta \rangle^{s_{1}}}  \sup_{ \substack{ v\in \R^{3} \\ 0 < |x - \bx|\leq 1   } } \Big( e^{-\varpi \langle v \rangle^2 s}  \langle v \rangle^{s_{1}} \frac{|f(s, x, v ) - f(s, \bar{x}, v)|}{|x - \bx|^{\g}} \Big) + \frac{\|w f(s)\|_{\infty}}{w(v+\zeta)}   \Big]   (t-s)^{\g} 	\notag   \\
	&&\quad + \Big[ \frac{ e^{\varpi \langle v+\zeta \rangle^2 s}  }{\langle v +\zeta\rangle^{s_{2}}}  \sup_{ \substack{ x\in \overline{\O} \\ 0 < |v - \bv|\leq 1   } } \Big( e^{-\varpi \langle v \rangle^2 s}  \langle v \rangle^{s_{2}} \frac{|f(s, x, v ) - f(s, x, \bv)|}{|v - \bv|^{\g}} \Big)
	+ \frac{\|w f(s)\|_{\infty}}{w(v+\zeta)}   \Big] . 	\label{est:split4}  
	\end{eqnarray}
	Here $\mathcal{T}_{sp}$ and $\mathcal{T}_{vel}$ are defined as
	\Be \label{def_Tsp}
	\begin{split}
		\mathcal{T}_{sp}(x, \tx, v+\zeta)  &:= \Big[  \fint_{0}^{1} \frac{1}{\mathfrak{S}_{sp}(\tau; x, \tx, v+\zeta)} d\tau  
		\mathbf{1}_{ \{  
			\tb(\X(\tau), v+\zeta) < \infty, \ 0\leq \tau \leq 1  
			\} } 
		\\
		&\quad +  \fint_{\tau_{-}}^{1} \frac{1}{\mathfrak{S}_{sp}(\tau; x, \tx, v+\zeta)}   d\tau  
		\mathbf{1}_{  \{  
			\tb( \X(\tau), v+\zeta) < \infty, \ \tau_{-}\leq \tau \leq 1  
			\} } 
		\\
		&\quad +  \fint_{0}^{\tau_{-}} \frac{1}{\mathfrak{S}_{sp}(\tau; x, \tx, v+\zeta)}  d\tau
		\mathbf{1}_{  \{  
			\tb( \X(\tau), v+\zeta) < \infty, \ 0\leq \tau \leq \tau_{-}  
			\} } \Big]  \mathbf{1}_{\zeta\in P^{1}_{(x, \bx, v+\zeta)} },
	\end{split}
	\Ee 
	where 
	\Be \notag %\label{def P1}
		P^{1}_{(x, \bx, v+\zeta)} := \{\zeta\in\R^{3} : \text{\eqref{assume_x} and \eqref{assume_x2} with $S_{(x, \bx, v+\zeta)}$ hold} \},
	\Ee
	and
	\Be \label{def_Tvel}
	\begin{split}
		\mathcal{T}_{vel}(\bx, v, \tv, \zeta;t,s)  &:=  \Big[ \fint_{0}^{1} \frac{1}{\mathfrak{S}_{vel}(\tau; \bx, v, \tv, \zeta)} d\tau  
		\mathbf{1}_{ \Big\{ \substack{
				\tb(\bx, \V(\tau)) < \infty, \ 0\leq \tau \leq 1  
				\\ 
				\min_{0\leq \tau \leq 1}\tb(\bx, \V(\tau)) \leq t-s }  \Big\} } 
		\\
		&\quad 
		+  \fint_{\tau_{-}}^{1} \frac{1}{\mathfrak{S}_{vel}(\tau; \bx, v, \tv, \zeta)} d\tau  
		\mathbf{1}_{ \Big\{ \substack{
				\tb(\bx, \V(\tau)) < \infty, \ \tau_{-}\leq \tau \leq 1  
				\\ 
				\min_{\tau_{-} \leq \tau \leq 1}\tb(\bx, \V(\tau)) \leq t-s  }  \Big\} } 
		\\
		&\quad +  \fint_{0}^{\tau_{-}} \frac{1}{\mathfrak{S}_{vel}(\tau; \bx, v, \tv, \zeta)} d\tau
		\mathbf{1}_{ \Big\{ \substack{
				\tb(\bx, \V(\tau)) < \infty, \ 0\leq \tau \leq \tau_{-}  
				\\ 
				\min_{0\leq \tau \leq \tau_{-}  }\tb(\bx, \V(\tau)) \leq t-s  }  \Big\} }  \Big]  \mathbf{1}_{\zeta\in P^{2}_{(\bx, v, \bv, \zeta)} },
	\end{split}
	\Ee  
	where 
	\Be \notag %\label{def P2}
	P^{2}_{(\bx, v, \bv, \zeta)} := \{\zeta\in\R^{3} : \text{\eqref{assume_v} and \eqref{assume_v2} with $S_{(\bx, v, \bv, \zeta)}$ hold} \}.
	\Ee
	Here, we used notation, $\fint_{a}^{b} := \frac{1}{b-a} \int_{a}^{b}$.   \\
\end{lemma}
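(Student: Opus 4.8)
The plan is to decompose each of the four difference terms \eqref{split1}--\eqref{split4} using the structure already available: the Duhamel formula \eqref{f_expan}, the trajectory estimates of Lemma \ref{frac sim S} and Lemma \ref{L sim S}, the nonlinear estimates of Lemma \ref{lem_Gamma}, the negativity lemma \ref{lem_nega}, and the specular-reflection invariance of $\Gamma_{\text{gain}},\nu$ in Lemma \ref{lem_specular G}. The organizing idea is that \eqref{split1} and \eqref{split3} are the ``hard'' pieces, in which the spatial (resp. velocity) argument is moved only between $x$ and the shifted point $\tilde x$ (resp. $\bar v$ and $\tilde v$), so the trajectories $s\mapsto X(s),V(s)$ and the shifted ones share the orthogonality \eqref{perp short}; here the difference quotients are controlled by $\mathcal{T}_{sp}$ and $\mathcal{T}_{vel}$, i.e. by $\int_0^1 \mathfrak{S}_{sp}^{-1}$ and $\int_0^1 \mathfrak{S}_{vel}^{-1}$ via \eqref{est:V/x}--\eqref{est:X/v} and \eqref{est:Lx}--\eqref{est:Lv}. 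By contrast \eqref{split2} and \eqref{split4} move the argument between $\tilde x$ and $\bar x$ (resp. $\tilde v$ and $\bar v$), which lie on the same ray through the relevant velocity, so the two trajectories coincide after an elementary reparametrization; these produce only the ``good'' $\langle v\rangle$-weighted seminorm of $f$ itself plus $(t-s)^\gamma$-type factors, with no specular singularity. Throughout, $\gamma=2\beta$.

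The detailed steps I would carry out, in order: (1) For \eqref{split1}, write the difference as $\int_0^1 \partial_\tau f(s,X(s;t,\X(\tau),v+\zeta),V(s;t,\X(\tau),v+\zeta))\,d\tau$, split according to whether $s$ is before or after the bounce time $t^1$ of the two endpoint trajectories, and on the ``both hit / both miss'' part use the chain rule together with \eqref{est:V/x}, \eqref{est:X/x}; on the ``exactly one hits'' part use Lemma \ref{L sim S} to bound the time the trajectory spends in the transition by $|x-\tilde x|\int_0^1\mathfrak{S}_{sp}^{-1}$, feeding the $L^\infty$ bound of $f$ from Lemma \ref{lem loc} into the residual term. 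Since $f$ is only Hölder, not Lipschitz, the $\partial_\tau$ here must be interpreted via the Duhamel expansion \eqref{f_expan}: differentiate inside the integral, apply Lemma \ref{lem_Gamma} to $\Gamma_{\text{gain}}$ and Lemma \ref{lem_nu} to $\nu$, and recognize the emerging kernel integrals $\int \mathbf{k}_c(\cdot)\frac{|f(\cdot)-f(\cdot)|}{|\cdot|^{2\beta}}$ as exactly the quantities bounded by the two $f$-seminorms on the right-hand sides of \eqref{est:split1}. (2) For the weights: after the bounce the exponential factor $e^{-\varpi\langle v\rangle^2(t-s)}$ along the trajectory becomes $e^{\varpi\langle v+\zeta\rangle^2 s}/\langle v+\zeta\rangle^{s_i}$ times a controllable constant by Lemma \ref{lem_nega}, which is how the $e^{\varpi\langle v+\zeta\rangle^2 s}\langle v+\zeta\rangle^{-s_i}$ prefactors appear; here $s_1=1,s_2=2$ matching the two seminorms in Theorem \ref{theo:Holder}. (3) For \eqref{split2}: since $\tilde x$ and $\bar x$ lie on the line $\bar x+\mathrm{span}\{\widehat{v+\zeta}\}$, the trajectories $X(s;t,\tilde x,v+\zeta)$ and $X(s;t,\bar x,v+\zeta)$ differ by a time shift only (same $\xb$-geometry), so $|X(s)-\bar X(s)|=|\tilde x-\bar x|\le |x-\bar x|$ and $|V(s)-\bar V(s)|=0$ before the bounce and bounded similarly after; plug into \eqref{f_expan} and read off \eqref{est:split2}. (4) Repeat (1)--(3) verbatim for the velocity side: \eqref{split3} via \eqref{est:V/v}, \eqref{est:X/v}, \eqref{est:Lv}, \eqref{def:S vel tilde}, noting the extra $\tb(x,\V(\tau))$ weights built into $\mathfrak{S}_{vel}$ and the constraint $\min_\tau\tb(\bar x,\V(\tau))\le t-s$ that defines when the velocity-shifted trajectory actually reaches $\p\O$ within $[s,t]$ (this is the origin of the indicator functions in \eqref{def_Tvel}); and \eqref{split4} as the elementary ``same ray'' case giving \eqref{est:split4}.

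The main obstacle, and the step deserving the most care, is the bookkeeping of the bounce-time indicators in \eqref{split1} and \eqref{split3}: the three sub-cases in the definitions \eqref{def_Tsp}, \eqref{def_Tvel} (trajectory from $\X(\tau)$ hits $\p\O$ for all $\tau\in[0,1]$, only for $\tau\in[\tau_-,1]$, only for $\tau\in[0,\tau_-]$) correspond exactly to which portion of the path $\tau\mapsto\X(\tau)$ projects onto a part of the boundary cross-section $\p\O\cap S_{(x,\bar x,v+\zeta)}$ versus ``escapes'' it, and one must check that on each such portion Lemma \ref{frac sim S} and Proposition \ref{prop_avg S} apply with $\tau_-$ replaced by the correct endpoint. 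A secondary technical point is that $f$ being merely $C^{0,2\beta}$ means one cannot differentiate $f$ along the $\tau$-family directly; every ``$\int_0^1\partial_\tau$'' must be routed through the Duhamel representation so that only $\Gamma_{\text{gain}}$ and $\nu$ — which are smooth in the macroscopic variables by Lemmas \ref{lem_Gamma}, \ref{lem_nu} — are differentiated, and the initial-data term $f_0(X(0),V(0))$ is handled by the assumed Hölder bound on $f_0$ composed with the $C^{0,1/2}$ regularity of $(x,v)\mapsto(X(0),V(0))$ (Lemma \ref{lem:Holder_tb}, used through $\mathcal{T}_{sp},\mathcal{T}_{vel}$). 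Modulo these two points the estimates \eqref{est:split1}--\eqref{est:split4} follow by collecting terms.
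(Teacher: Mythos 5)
There is a genuine gap: the proposed mechanism does not match the lemma's hypotheses or the structure of its conclusion, and would not produce the stated estimate.

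The lemma is stated for a \emph{generic} nonnegative function $f$ satisfying only the specular reflection condition \eqref{specular}; it does not assume $f$ solves the Boltzmann equation, so the Duhamel formula \eqref{f_expan} and the nonlinear estimates of Lemmas \ref{lem_Gamma} and \ref{lem_nu} are not available tools here. More decisively, the right-hand sides of \eqref{est:split1}--\eqref{est:split4} involve the H\"older seminorms of $f(s,\cdot,\cdot)$ \emph{at the same time} $s$ as the left-hand side. An estimate produced by plugging the Duhamel representation into $f(s,X(s),V(s))$ and then differentiating the integrand would instead yield a bound in terms of $f_0$ and time-integrals of $\Gamma_{\text{gain}}(f,f)$ over $[0,s]$ — a structurally different inequality, which is in fact what the \emph{later} Proposition \ref{prop_unif H} derives via \eqref{basic f-f1}--\eqref{basic f-f4}, using the present lemma and its companion Lemma \ref{nu G split} as inputs. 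You are conflating the geometric-splitting lemma with the closing iteration that follows it.

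The correct mechanism is much more elementary: no differentiation of $f$ is performed at all. One decomposes the difference of $f$ along the two trajectories by inserting pivot points (either the bounce points $\xb$ on $\p\O$, or the turning parameter $\X(\tau_\pm)$, $\V(\tau_\pm)$), using the specular BC \eqref{specular} to switch to the pre-reflection velocity at each pivot, and then for each piece \emph{multiplying and dividing} by $|X(s)-\bar X(s)|^\gamma$ (or $|V(s)-\bar V(s)|^\gamma$) so that the resulting ratio $|f(\cdots)-f(\cdots)|/|X-\bar X|^\gamma$ is bounded by the supremum seminorm of $f(s,\cdot,\cdot)$ appearing on the RHS, while the leftover factor $|X(s)-\bar X(s)|^\gamma/|x-\bx|^\gamma$ is controlled by Lemma \ref{frac sim S} and Lemma \ref{L sim S} in terms of $\mathcal{T}_{sp}$, $\mathcal{T}_{vel}$. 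The smoothness that is differentiated along the $\tau$-family is that of the characteristic maps $X(s;t,\X(\tau),\cdot)$ and $V(s;t,\X(\tau),\cdot)$ (via \eqref{nabla_x V}--\eqref{nabla_v X}), not of $f$. A minor secondary point: the prefactors $e^{\varpi\langle v+\zeta\rangle^2 s}\langle v+\zeta\rangle^{-s_i}$ come from trivial weight insertion using $|V(s;t,\cdot,v+\zeta)|=|v+\zeta|$, not from Lemma \ref{lem_nega}, which is used only later when one integrates in $\zeta$ against $\mathbf{k}_c$. Your identification of \eqref{split1}, \eqref{split3} as the singular pieces and \eqref{split2}, \eqref{split4} as the collinear pieces, and your reading of the three indicator cases in $\mathcal{T}_{sp}$, $\mathcal{T}_{vel}$, are correct and match the paper; the gap is only in how $f$ itself enters the estimate.
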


\begin{remark}
	In the definition $\mathcal{T}_{vel}$ \eqref{def_Tvel}, we put extra condition of the following type
	\[
	\min_{0\leq \tau \leq 1}\tb(x, \V(\tau)) \leq t-s,
	\]
	which is missing in the definition of $\mathcal{T}_{sp}$ in \eqref{def_Tsp}. In fact, we can put similar condition in the definition of $\mathcal{T}_{sp}$ also. However, above condition is not important for $\mathcal{T}_{sp}$ case as we see in \eqref{int:Sx} : unlike to \eqref{int:Sv}, it does not contain any $\tb(x, \V(\tau))$-related terms. \\
\end{remark}	

\begin{proof}
	In this proof, we drop $\zeta$ in \eqref{split1}--\eqref{split4} for notational convenience. \\
	{\bf Step 0} (Trivial dynamics) Assume \eqref{assume_x2} with $S_{(x, \bx, v)}$ ($S_{(x, \bx, v+\zeta)}$ in fact) and \eqref{assume_v2} with $S_{(\bx, v, \bv, 0)}$ ($S_{(\bx, v, \bv, \zeta)}$ in fact) do not hold. (The definition of $S_{(x, \bx, v)}$ and $S_{(\bx, v, \bv, 0)}$ are given in \eqref{S_x}, \eqref{S_v}.) Then, all the backward in time trajectories do not hit $\p\O$, hence \eqref{est:split1}--\eqref{est:split4} hold obviously, using the following trivial trajectory estimates,
	\Be \notag %\label{tri dynamics}
	\begin{split}
		\frac{ | X(s; t, x, v) - X(s; t, \tx, v)| }{ |x - \tx| } &= 1, \quad 
		\frac{ | X(s; t, \tx, v) - X(s; t, \bx, v)| }{ |\tx - \bx| } = 1,  \\
		\frac{ | V(s; t, x, v) - V(s; t, \tx, v)| }{ |x - \tx| } &= 0, \quad 
		\frac{ | V(s; t, \tx, v) - V(s; t, \bx, v)| }{ |\tx - \bx| } = 0,  \\
		\frac{ | X(s; t, \bx, v) - X(s; t, \bx, \tv)| }{ |v - \tv| } &= (t-s), \quad 
		\frac{ | X(s; t, \bx, \tv) - X(s; t, \bx, \bv)| }{ |\tv - \bv| } = (t-s),  \\
		\frac{ | V(s; t, \bx, v) - V(s; t, \bx, \tv)| }{ |v - \tv| } &= 1, \quad 
		\frac{ | V(s; t, \bx, \tv) - V(s; t, \bx, \bv)| }{ |\tv - \bv| } = 1.  \\
	\end{split}
	\Ee
	We omit the details. \\
	
	Now let us consider nontrivial cases. In the following \textbf{Step 1} and \textbf{Step 2}, we assume \eqref{assume_x2} with $S_{(x, \bx, v)}$ ($S_{(x, \bx, v+\zeta)}$ in fact) and \eqref{assume_v2} with $S_{(\bx, v, \bv, 0)}$ ($S_{(\bx, v, \bv, \zeta)}$ in fact), in addition to \eqref{assume_x} and \eqref{assume_v}.  \\
	
	\noindent {\bf Step 1} (Nonsingular parts) Let us treat \eqref{split2} first. Since $\tx-\bx$ is parallel to $v$ ($v+\zeta$ in fact), we do not see any specular singularity in \eqref{split2}. \\

	\begin{equation} \label{est:split2 raw}
	\begin{split}
	\eqref{split2} &\leq  \frac{ |f(s, X(s;t, \tilde{x}, v ), V(s;t, \tilde{x}, v ))  - f(s, X(s;t, \bar{x}, v ), V(s;t, \bar{x}, v ))| }{ | X(s;t, \tilde{x}, v ) -  X(s;t, \bar{x}, v )|^{\gamma} } | X(s;t, \tilde{x}, v ) -  X(s;t, \bar{x}, v )|^{\gamma}  \\
	&\quad \quad \times \big( \mathbf{1}_{s > \max\{  t^{1}(\tilde{x},v),  t^{1}(\bar{x},v) \}} + \mathbf{1}_{s \leq \min\{ t^{1}(\tilde{x},v), t^{1}(\bar{x},v) \}} \big)  \\
	&\quad + |f(s, X(s;t, \tilde{x}, v ), V(s;t, \tilde{x}, v ))  - f(s, X(s;t, \bar{x}, v ), V(s;t, \bar{x}, v ))|  \\
	&\quad \quad \times  \mathbf{1}_{\min\{  t^{1}(\tilde{x},v),  t^{1}(\bar{x},v) \}< s \leq \max\{  t^{1}(\tilde{x},v),  t^{1}(\bar{x},v) \}}   \\
	%&\leq  \frac{ |f(s, X(s;t, \tilde{x}, v ), V(s;t, \tilde{x}, v ))  - f(s, X(s;t, \bar{x}, v ), V(s;t, \bar{x}, v ))| }{ | X(s;t, \tilde{x}, v ) -  X(s;t, \bar{x}, v )|^{\gamma} } | \tilde{x} - \bar{x} |^{\gamma}  \\
	%&\quad \quad \times \big( \mathbf{1}_{s > \max\{  t^{1}(\tilde{x},v),  t^{1}(\bar{x},v) \}} + \mathbf{1}_{s \leq \min\{  t^{1}(\tilde{x},v),  t^{1}(\bar{x},v) \}} \big)  \\
	%&\quad +  \big( |f(s, X(s;t, \tilde{x}, v ), V(s;t, \tilde{x}, v ))  - f(t^{1}(\bar{x},v), \xb(\bar{x},v), v)|   \\
	%&\quad\quad  + |f(t^{1}(\bar{x},v), \xb(\bar{x},v), R_{\xb(\bar{x},v)}v)  - f(s, X(s;t, \bar{x}, v ), V(s;t, \bar{x}, v ))|  \big) \times  \mathbf{1}_{  t^{1}(\tilde{x},v) < s \leq  t^{1}(\bar{x},v) }  \\
	%&\quad +  \big( |f(s, X(s;t, \tilde{x}, v ), V(s;t, \tilde{x}, v ))  - f(t^{1}(\tilde{x},v), \xb(\tilde{x},v), v)|   \\
	%&\quad\quad  + |f(t^{1}(\tilde{x},v), \xb(\tilde{x},v), R_{\xb(\tilde{x},v)}v)  - f(s, X(s;t, \bar{x}, v ), V(s;t, \bar{x}, v ))|  \big) \times  \mathbf{1}_{  t^{1}(\bar{x},v) < s \leq  t^{1}(\tilde{x},v) }  \\
	&\leq  \frac{ |f(s, X(s;t, \tilde{x}, v ), V(s;t, \tilde{x}, v ))  - f(s, X(s;t, \bar{x}, v ), V(s;t, \bar{x}, v ))| }{ | X(s;t, \tilde{x}, v ) -  X(s;t, \bar{x}, v )|^{\gamma} } | \tilde{x} - \bar{x} |^{\gamma}  \\
	&\quad \quad \times \big( \mathbf{1}_{s > \max\{  t^{1}(\tilde{x},v),  t^{1}(\bar{x},v) \}} + \mathbf{1}_{s \leq \min\{  t^{1}(\tilde{x},v),  t^{1}(\bar{x},v) \}} \big)  \\
	&\quad +  \big( |f(s, X(s;t, \tilde{x}, v ), v )  - f(s, \xb(\bar{x},v), v)|   \\
	&\quad\quad  + |f(s, \xb(\bar{x},v), R_{\xb(\bar{x},v)}v )  - f(s, X(s;t, \bar{x}, v ), R_{\xb(\bar{x},v)}v )|  \big) \times  \mathbf{1}_{  t^{1}(\tilde{x},v) < s \leq  t^{1}(\bar{x},v) }  \\
	&\quad +  \big( |f(s, X(s;t, \tilde{x}, v ), R_{\xb(\tilde{x},v)}v )  - f(s, \xb(\tilde{x},v), R_{\xb(\tilde{x},v)}v )|   \\
	&\quad\quad  + |f(s, \xb(\tilde{x},v), v)  - f(s, X(s;t, \bar{x}, v ), v )|  \big) \times  \mathbf{1}_{  t^{1}(\bar{x},v) < s \leq  t^{1}(\tilde{x},v) }  \\
	&\leq  \frac{ |f(s, X(s;t, \tilde{x}, v ), V(s;t, \tilde{x}, v ))  - f(s, X(s;t, \bar{x}, v ), V(s;t, \bar{x}, v ))| }{ | X(s;t, \tilde{x}, v ) -  X(s;t, \bar{x}, v )|^{\gamma} } | \tilde{x} - \bar{x} |^{\gamma}  \\
	&\quad \quad \times \big( \mathbf{1}_{s > \max\{  t^{1}(\tilde{x},v),  t^{1}(\bar{x},v) \}} + \mathbf{1}_{s \leq \min\{  t^{1}(\tilde{x},v),  t^{1}(\bar{x},v) \}} \big)  \\
	&\quad +  \Big( \frac{ |f(s, X(s;t, \tilde{x}, v ), v )  - f(s,  \xb(\bar{x},v), v)|  }{ |X(s;t, \tilde{x}, v ) - \xb(\bar{x},v)|^{\gamma} }  
	\\
	&\quad\quad\quad + \frac{ |f(s, \xb(\bar{x},v), R_{\xb(\bar{x},v)}v )  - f(s, X(s;t, \bar{x}, v ), R_{\xb(\bar{x},v)}v )| }{ |\xb(\bar{x},v) - X(s;t, \bar{x}, v )|^{\gamma} } 
	\Big) 
	|\bar{x}-\tilde{x}|^{\gamma} \mathbf{1}_{  t^{1}(\tilde{x},v) < s \leq  t^{1}(\bar{x},v) }  \\
	&\quad +  \Big( \frac{ |f(s, X(s;t, \tilde{x}, v ), R_{\xb(\tilde{x},v)}v )  - f(s,  \xb(\tilde{x},v), R_{\xb(\tilde{x},v)}v )| }{|X(s;t, \tilde{x}, v ) - \xb(\tilde{x},v)|^{\gamma}} \\
	&\quad\quad\quad
	+ \frac{|f(s, \xb(\tilde{x},v), v)  - f(s, X(s;t, \bar{x}, v ), v )| }{|\xb(\tilde{x},v) - X(s;t, \bar{x},v) |^{\gamma}} \Big) 
	|\bar{x}-\tilde{x}|^{\gamma} \mathbf{1}_{  t^{1}(\bar{x},v) < s \leq  t^{1}(\tilde{x},v) },  \\
	\end{split}
	\end{equation}
	where we used the facts that $(\tx - \bx) \parallel v$ ($v+\zeta$ in fact) and $|\tx - \bx| \leq |x - \bx|$ by \eqref{def_tildex}. When a denominator is larger than $1$,  the second term of the RHS in \eqref{est:split2} controls LHS. Therefore, \eqref{est:split2 raw} gives \eqref{est:split2}. 	\\
	
	Let us treat \eqref{split4}. Since $\bv$ and $\tv$ are parallel to each other ($\bv+\zeta$ and $\tv+\zeta$ in fact), we do not see any specular singularity in \eqref{split4}, neither. \\
	\begin{equation} \label{est:split4 raw}
	\begin{split}
	\eqref{split4} 
	&\leq  \Big( \frac{ |f(s, X(s;t, \bar{x}, \tilde{v} ), V(s;t, \bar{x}, \tilde{v} ))  - f(s, X(s;t, \bar{x}, \tilde{v} ), V(s;t, \bar{x}, \bar{v} ))| }{ | V(s;t, \bar{x}, \tilde{v} ) -  V(s;t, \bar{x}, \bar{v} )|^{\g} }
	|\bar{v}-\tilde{v}|^{\gamma}  \\
	&\quad + \frac{ | f(s, X(s;t, \bar{x}, \tilde{v} ), V(s;t, \bar{x}, \bar{v} )) - f(s, X(s;t, \bar{x}, \bar{v} ), V(s;t, \bar{x}, \bar{v} ))| }{ | X(s;t, \bar{x}, \tilde{v} ) -  X(s;t, \bar{x}, \bar{v} )|^{\g} }
	|(\bar{v}-\tilde{v})(t-s) |^{\gamma}  \Big) \\
	&\quad \quad \times \big( \mathbf{1}_{s > \max\{  t^{1}(\bar{x}, \tilde{v}),  t^{1}(\bar{x}, \bar{v}  ) \}} + \mathbf{1}_{s \leq \min\{ t^{1}(\bar{x}, \tilde{v}), t^{1}(\bar{x}, \bar{v} ) \}} \big)   \\
	&\quad +  \Big( \frac{ |f(s, X(s;t, \bar{x}, \tilde{v} ), \tilde{v} )  - f(s, \xb(\bar{x},\bar{v}), \tilde{v})|  }{ |X(s;t, \bar{x}, \tilde{v} ) - \xb(\bar{x},\bar{v})|^{\gamma} } |(\bar{v} - \tilde{v})(t-s)|^{\gamma}  \\
	&\quad\quad  + \frac{ |f(s, \xb(\bar{x},\bar{v}), R_{\xb(\bar{x},\bar{v})}\tilde{v} )  - f(s, \xb(\bar{x},\bar{v}), R_{\xb(\bar{x},\bar{v})}\bv )| }{ |R_{\xb(\bar{x},\bar{v})}\tilde{v} - R_{\xb(\bar{x},\bar{v})}\bv|^{\gamma} } |\bv - \tv|^{\gamma}   \\
	&\quad\quad  + \frac{ |f(s, \xb(\bar{x},\bar{v}), R_{\xb(\bar{x},\bar{v})}\bv )  - f(s, X(s;t, \bar{x}, \bar{v} ), R_{\xb(\bar{x},\bar{v})}\bar{v} )| }{ |\xb(\bar{x},\bar{v}) - X(s;t, \bar{x}, \bar{v} )|^{\gamma} } |(\bar{v} - \tilde{v})(t-s)|^{\gamma} \Big) \mathbf{1}_{  t^{1}(\bar{x}, \tilde{v}) < s \leq  t^{1}(\bar{x},\bar{v}) }  \\ 
	&\quad +  \Big( \frac{ |f(s, X(s;t, \bar{x}, \tilde{v} ), R_{\xb(\bar{x}, \tilde{v})}\tilde{v} )  - f(s, \xb(\bar{x}, \tilde{v}), R_{\xb(\bar{x}, \tilde{v})}\tilde{v} )| }{|X(s;t, \bar{x}, \tilde{v} ) - \xb(\bar{x}, \tilde{v})|^{\gamma}} |(\bar{v} - \tilde{v})(t-s)|^{\gamma}   \\
	&\quad\quad  + \frac{ |f(s, \xb(\bar{x}, \tilde{v}), \tilde{v} )  - f(s, \xb(\bar{x}, \tilde{v}), \bar{v} ) | }{|\bar{v} - \tilde{v}|^{\gamma}}
	|\bar{v} - \tilde{v}|^{\gamma}   \\
	&\quad\quad  + \frac{ |f(s, \xb(\bar{x}, \tilde{v}), \bar{v} ) - f(s, X(s;t, \bar{x}, \bar{v} ), \bar{v} ) | }{|\xb(\bar{x}, \tilde{v}) - X(s;t, \bar{x}, \bar{v} ) |^{\gamma}}
	|(\bar{v} - \tilde{v})(t-s)|^{\gamma} \Big) \mathbf{1}_{  t^{1}(\bar{x},\bv) < s \leq  t^{1}(\bx,\tv) },  \\
	\end{split}
	\end{equation}
	where we used specular boundary condition \eqref{specular}, $|\tv - \bv| \lesssim |v - \bv|$ by \eqref{def_tildev}. So we get \eqref{est:split4} from \eqref{est:split4 raw}. \\
	
	\noindent {\bf Step 2} (Singular parts)
	Now, we treat main contributions :\eqref{est:split1} and \eqref{est:split3}. We will see specular singularity in these estimates. Using specular condition \eqref{specular}, we split \eqref{split1} into
	\begin{equation} \label{est:split1 raw}
	\begin{split}
	%&f(s, X(s;t,x,v), V(s;t,x,v)) - f(s, X(s;t, \tilde{x}, v ), V(s;t, \tilde{x}, v ))  \\
	\eqref{split1} 
	&\leq \Big( \frac{ |f(s, X(s;t,x,v), V(s;t,x,v)) - f(s, X(s;t, \tilde{x}, v ), V(s;t, x, v ))| }{ | X(s;t, x, v ) -  X(s;t, \tx, v )|^{\gamma} } \\
	&\quad\quad\quad \times | X(s;t, x, v ) -  X(s;t, \tx, v )| ^{\gamma} 
	\mathbf{1}_{s \leq \min\{ t^{1}(x,v), t^{1}(\tilde{x},v) \}} \Big)  \\
	&\quad + \Big( \frac{ |f(s, X(s;t, \tx, v), V(s;t,x,v)) - f(s, X(s;t, \tilde{x}, v ), V(s;t, \tilde{x}, v ))| }{ | V(s;t, x, v ) -  V(s;t, \tx, v )|^{\gamma} } \\
	&\quad\quad\quad\quad \times  | V(s;t, x, v ) -  V(s;t, \tx, v )| ^{\gamma} 
	\mathbf{1}_{s \leq \min\{ t^{1}(x,v), t^{1}(\tilde{x},v) \}} \Big) \\
	&\quad + \frac{ |f(s, X(s;t,x,v), V(s;t,x,v)) - f(s, X(s;t, \tilde{x}, v ), V(s;t, \tilde{x}, v ))| }{ | X(s;t, x, v ) -  X(s;t, \tx, v )|^{\gamma} } | x - \tx |^{\gamma}  
	\mathbf{1}_{s > \max\{  t^{1}(x,v),  t^{1}(\tilde{x},v) \}}  \\
	&\quad + A_{1} + A_{2} + A_{3} + A_{4} \\
	&\quad + B_{1} + B_{2} + B_{3} + B_{4} ,\\
	%& \text{below is symmetric term $x\leftrightarrow \tx$}   \\
	%&\quad +  \Big( \frac{ |f(s, X(s;t, x, v ), v)  - f(s,  \xb(\tilde{x},v), v )| }{|X(s;t, x, v ) - \xb(\tilde{x},v)|^{\gamma}}  ( |x-\tilde{x}| + |v|(t^{1}(\tilde{x},v) - s) )^{\gamma}    \\
	%&\quad\quad  + \frac{|f(s, \xb(\tilde{x},v), R_{\xb(\tilde{x},v)}v )  - f(s, X(s;t, \tilde{x}, v ), R_{\xb(\tx,v)}v )| }{|\xb(\tilde{x},v) - X(s;t, \tilde{x},v) |^{\gamma}}
	%|v(t^{1}(\tilde{x},v) - s)|^{\gamma} 
	%\Big) \times  \mathbf{1}_{  t^{1}(x,v) < s \leq  t^{1}(\tilde{x},v) }   \mathbf{1}_{t^{1}(x,v) > -\infty}   \\
	%&\quad +  \Big( \frac{ |f(s, X(s;t, x, v ), v)  - f(s,  \xb(\tilde{x},v), v )| }{|X(s;t, x, v ) - \xb(\tilde{x},v)|^{\gamma}}  ( |x-\tilde{x}| + |v|(t^{1}(\tilde{x},v) - s) )^{\gamma}    \\
	%&\quad\quad  + \frac{|f(s, \xb(\tilde{x},v), R_{\xb(\tilde{x},v)}v )  - f(s, X(s;t, \tilde{x}, v ), R_{\xb(\tx,v)}v )| }{|\xb(\tilde{x},v) - X(s;t, \tilde{x},v) |^{\gamma}}
	%|v(t^{1}(\tilde{x},v) - s)|^{\gamma} 
	%\Big) \times  \mathbf{1}_{  t^{1}(x,v) < s \leq  t^{1}(\tilde{x},v) }   \mathbf{1}_{t^{1}(x,v) = -\infty}   \\
	\end{split}
	\end{equation}
	where
	\Be \notag %\label{A1}
	\begin{split}
		A_{1} &:=  \Big( \frac{ |f(s, X(s;t, x, v ), R_{\xb(x,v)}v )  - f(s,  \xb(x,v), R_{\xb(x,v)}v)|  }{ |X(s;t, x, v ) - \xb(x,v)|^{\gamma} }  | v(t^{1}(x,v) - s)|^{\gamma} \\
		&\quad\quad + \frac{ |f(s, \xb(x,v),  v )  - f(s, X(s;t, \tx, v ),  v )| }{ |\xb(x,v) - X(s;t, \tilde{x}, v )|^{\gamma} } ( |x-\tilde{x}| + |v|(t^{1}(x,v) - s)  )^{\gamma}  \Big)  \\
		&\quad\quad\quad\times \mathbf{1}_{ t^{1}(\tilde{x},v) < s \leq  t^{1}(x,v) } \mathbf{1}_{t^{1}(\tx,v) > -\infty} ,  \\
	\end{split}
	\Ee
	\Be\notag %\label{A2}
	\begin{split}
		A_{2} &:=  \Big( \frac{ |f(s, X(s;t, x, v ), V(s;t, x, v ) )  -   f(s, X(s;t, \X(\tau_{-}) , v ),  V(s;t, x, v) ) |  }{ |X(s;t, x, v ) - X(s;t, \X(\tau_{-}) , v )|^{\gamma} }  \\
		&\quad\quad\quad \times  |X(s;t, x, v ) - X(s;t, \X(\tau_{-}) , v )|^{\gamma}    \\
		&\quad\quad +   \frac{ | f(s, X(s;t, x(\tau_{-}) , v ),  V(s;t, x, v) )   - f(s, X(s;t, \X(\tau_{-}) , v ),  V(s;t, \X(\tau_{-}) , v) )| }{ |V(s;t, x, v) - V(s;t, \X(\tau_{-}) , v) |^{\gamma} } \\
		&\quad\quad\quad\quad \times |V(s;t, x, v) - V(s;t, \X(\tau_{-}) , v) |^{\gamma} \Big) 
		\mathbf{1}_{  s\leq \min\{t^{1}(\X(\tau_{-}),v), t^{1}(x,v)\} }  \mathbf{1}_{t^{1}(\tx,v) = -\infty} ,  \\
	\end{split}
	\Ee
	\Be\notag %\label{A3}
	\begin{split}
		A_{3} &:=  \Big( \frac{ |f(s, X(s;t, x, v ), R_{\xb(x,v)}v )  -   f(s, \xb(x,v), R_{\xb(x,v)}v ) |  }{ |X(s;t, x, v ) - \xb(x,v)|^{\gamma} }   |v(t^{1}(x,v) - s)|^{\gamma}     \\
		&\quad\quad +   \frac{ | f(s, \xb(x,v), v)  - f(s, X(s;t, \X(\tau_{-}) , v ),  V(s;t, \X(\tau_{-}) , v) )| }{ |V(s;t, x, v) - V(s;t, \X(\tau_{-}) , v) |^{\gamma} } \\
		&\quad\quad\quad \times \big( |x - \X(\tau_{-})| + |v|(t^{1}(x,v) - s)  \big)^{\gamma}  \Big)  
		\mathbf{1}_{  t^{1}(\X(\tau_{-}),v) < s \leq  t^{1}(x,v) }  \mathbf{1}_{t^{1}(\tx,v) = -\infty} ,  \\
	\end{split}
	\Ee
	\Be \notag %\label{A4}
	\begin{split}
		A_{4} &:= \frac{ |f(s, X(s;t, \X(\tau_{-}) , v ),  v )  - f(s, X(s;t, \tx, v ),  v )| }{ |X(s;t, \X(\tau_{-}) , v ) - X(s;t, \tilde{x}, v )|^{\gamma} } \\
		&\quad\quad \times |\X(\tau_{-}) -\tilde{x}|^{\gamma} \times  \mathbf{1}_{  t^{1}(\tilde{x},v) < s \leq  t^{1}(x,v) }  \mathbf{1}_{t^{1}(\tx,v) = -\infty},   \\
	\end{split}
	\Ee
	and
	\Be \notag %\label{B1234}
	\begin{split}
		B_{1,2,3,4} &:= \text{interchanging $x$ and $\tx$ in $A_{1,2,3,4}$ },  \ \ \text{respectively}.  \\
		%B_{2} &:= \text{interchanging $x$ and $\tx$ in $A_{2}$ },  \\
		%B_{3} &:= \text{interchanging $x$ and $\tx$ in $A_{3}$ },  \\
		%B_{4} &:= \text{interchanging $x$ and $\tx$ in $A_{4}$ },  \\
	\end{split}
	\Ee
	Applying \eqref{est:V/x} and \eqref{est:X/x} in Lemma \ref{frac sim S}, and \eqref{est:Lx} in Lemma \ref{L sim S}, to \eqref{est:split1 raw}, we obtain \eqref{est:split1}.    \\
	
	\hide
	\begin{eqnarray} \notag
	&&\frac{\eqref{split1}}{|x-\bx|^{\g}}  \\
	&&\lesssim \Big[ 1 + |v+\zeta|t  + ( |v+\zeta| + |v+\zeta|^{2}t ) \mathcal{T}_{sp}(x, \tx, v+\zeta)
	%\notag \\
	%&&\quad   \times
	%\Big( \int_{0}^{1} \frac{1}{\mathfrak{S}_{vel}(\tau; \bx, v, \tv, \zeta)} d\tau 
	%+  \int_{\tau_{-}}^{1} \frac{1}{\mathfrak{S}_{vel}(\tau; \bx, v, \tv, \zeta)} d\tau  
	%+  \int_{0}^{\tau_{-}} \frac{1}{\mathfrak{S}_{vel}(\tau; \bx, v, \tv, \zeta)} d\tau
	%\Big) 
	\Big]^{2\b}  
	\notag  \\
	&&\quad\quad 
	\times  
	{\color{blue}  \Big[ \frac{ e^{\varpi \langle v+\zeta \rangle^2 s}  }{ \langle v+\zeta \rangle^{s_{1}}}  
		\sup_{v, |x - \bx|\leq 1} e^{-\varpi \langle v \rangle^2 s}  
		\langle v \rangle^{s_{1}} \frac{|f(s, x, v ) - f(s, \bx, v)|}{|x - \bx|^{2\b}}  + \frac{1}{w(v+\zeta)}\|w f(s)\|_{\infty} \Big] 
	}      \notag  \\
	&& + \Big[1 + |v+\zeta| +  (|v+\zeta| + |v+\zeta|^{2}) \mathcal{T}_{sp}(x, \tx, v+\zeta)
	%&&\quad\quad 		 
	%\times \Big( \int_{0}^{1} \frac{1}{\mathfrak{S}_{vel}(\tau; \bx, v, \tv, \zeta)} d\tau 
	%+  \int_{\tau_{-}}^{1} \frac{1}{\mathfrak{S}_{vel}(\tau; \bx, v, \tv, \zeta)} d\tau  
	%+  \int_{0}^{\tau_{-}} \frac{1}{\mathfrak{S}_{vel}(\tau; \bx, v, \tv, \zeta)} d\tau
	%\Big) 
	\Big]^{2\b}  \notag \\
	%&\quad\quad\quad\quad\quad\quad  
	%\times \Big( \mathbf{1}_{0\leq \min\{ t^{1}(\bx, v+\zeta), t^{1}(\bx, \tv+\zeta) \}} + \mathbf{1}_{0> \max\{ t^{1}(\bx, v+\zeta), t^{1}(\bx, \tv+\zeta) \}} \Big)  \\
	&&\quad\quad 
	\times
	{\color{blue}  \Big[ 
		\frac{ e^{\varpi \langle v+\zeta \rangle^2 s} }{ \langle v+\zeta \rangle^{s_{2}} }	
		\sup_{x, |v - \bv|\leq 1} e^{\varpi \langle v \rangle^2 s}  \langle v \rangle^{s_{2}} \frac{|f(s, x, v ) - f(s, x, \bv)|}{|v - \bv|^{2\b}} + \frac{1}{w(v+\zeta)} \|wf(s)\|_{\infty}  \Big]
	}    \notag  
	\end{eqnarray}
	where we defined
	
	\Be  
	\begin{split}
		\mathcal{T}_{sp}(x, \tx, v+\zeta)  &:=  \fint_{0}^{1} \frac{1}{\mathfrak{S}_{sp}(\tau; x, \tx, v+\zeta)} d\tau  
		\mathbf{1}_{ \{  
				\tb(x, v(\tau)) < \infty, \ 0\leq \tau \leq 1  
				\} } 
		\\
		&\quad +  \fint_{\tau_{-}}^{1} \frac{1}{\mathfrak{S}_{sp}(\tau; x, \tx, v+\zeta)}   d\tau  
		\mathbf{1}_{  \{  
				\tb(x, v(\tau)) < \infty, \ \tau_{-}\leq \tau \leq 1  
				\} } 
		\\
		&\quad +  \fint_{0}^{\tau_{-}} \frac{1}{\mathfrak{S}_{sp}(\tau; x, \tx, v+\zeta)}  d\tau
		\mathbf{1}_{  \{  
				\tb(x, v(\tau)) < \infty, \ 0\leq \tau \leq \tau_{-}  
				\} } 
	\end{split}
	\Ee \\
	where $\fint_{a}^{b} := \frac{1}{b-a} \int_{a}^{b}$. 
	\unhide
	
	For \eqref{split3},
	\begin{equation} \label{est:split3 raw}
	\begin{split}
	%&f(s, X(s;t,\bx,v), V(s;t, \bx, v)) - f(s, X(s;t, \bx, \tv ), V(s;t, \bx, \tv ))  \\
	\eqref{split3} 
	&\lesssim \frac{ |f(s, X(s;t,\bx,v), V(s;t, \bx, v)) - f(s, X(s;t, \bx, \tv ), V(s;t, \bx, v ))| }{ | X(s;t, \bx, v ) -  X(s;t, \bx, \tv )|^{\gamma} } \\
	&\quad\quad \times | X(s;t, \bx, v ) -  X(s;t, \bx, \tv ) | ^{\gamma} 
	\mathbf{1}_{s \leq \min\{ t^{1}(\bx,v), t^{1}(\bar{x}, \tv) \}} \\
	&\quad + \frac{ |f(s, X(s;t,\bx, \tv), V(s;t, \bx, v)) - f(s, X(s;t, \bx, \tv ), V(s;t, \bx, \tv ))| }{ | V(s;t, \bx, v ) -  V(s;t, \bx, \tv )|^{\gamma} } \\
	&\quad\quad \times | V(s;t, \bx, v ) -  V(s;t, \bx, \tv )|^{\gamma} 
	\mathbf{1}_{s \leq \min\{t^{1}(\bx,v), t^{1}(\bar{x}, \tv) \}} \\
	&\quad + \frac{ |f(s, X(s;t,\bx,v), V(s;t, \bx, v)) - f(s, X(s;t, \bx, \tv ), V(s;t, \bx, v ))| }{ | X(s;t, \bx, v ) -  X(s;t, \bx, \tv )|^{\gamma} } \\
	&\quad\quad \times| (v - \tv)(t-s) | ^{\gamma} 
	\mathbf{1}_{s > \max\{  t^{1}(\bx,v), t^{1}(\bar{x}, \tv)  \}}  \\
	&\quad + \frac{ |f(s, X(s;t,\bx, \tv), V(s;t, \bx, v)) - f(s, X(s;t, \bx, \tv ), V(s;t, \bx, \tv ))| }{ | V(s;t, \bx, v ) -  V(s;t, \bx, \tv )|^{\gamma} } \\
	&\quad\quad \times| v - \tv |^{\gamma}   
	\mathbf{1}_{s > \max\{ t^{1}(\bx,v), t^{1}(\bar{x}, \tv)  \}}  \\
	&\quad + C_{1} + C_{2} + C_{3} + C_{4}  \\
	&\quad + D_{1} + D_{2} + D_{3} + D_{4},  \\
	%&\quad +  \Big( \frac{ |f(s, X(s;t, \bar{x}, v ), V(s;t, \bar{x}, v ) )  - f(s, \xb(\bar{x},\tv), v)|  }{ |X(s;t, \bar{x}, v ) - \xb(\bar{x}, \tv)|^{\gamma} } ( |v - \tv|\tb(\bar{x}, \tv) + |v|( t^{1}(\bar{x}, \tv) - s ) )^{\gamma}  \\
	%&\quad\quad  + \frac{ |f(s, \xb(\bar{x},\tv), v)  - f(s, \xb(\bar{x},\tv), \tv)| }{ |v - \tv |^{\gamma} } |v - \tv |^{\gamma}   \\
	%&\quad\quad  + \frac{ |f(s, \xb(\bar{x},\tv), v)  - f(s, \xb(\bar{x},\tv), R_{\xb(\bx, \tv)}v)| }{ |v - R_{\xb(\bx, \tv)}v|^{\gamma} } |\cdot|^{\gamma}   \\
	%&\quad\quad  + \frac{ |f(s, \xb(\bar{x},\tilde{v}), R_{\xb(\bar{x},\tilde{v})}\tv )  - f(s, X(s;t, \bar{x}, \tv ), R_{\xb(\bar{x},\tilde{v})}\tv ) | }{ |\xb(\bar{x}, \tv) - X(s;t, \bar{x}, \tilde{v} )|^{\gamma} } (|v|( t^{1}(\bar{x}, \tv) - s ) )^{\gamma} \Big) \times  \mathbf{1}_{  t^{1}(\bar{x}, v) < s \leq  t^{1}(\bar{x},\tilde{v}) }  \\ 
	\end{split}
	\end{equation}
	where
	\Be \notag %\label{C1}
	\begin{split}
		&C_{1} :=  \Big( \frac{ |f(s, X(s;t, \bar{x}, v ), R_{\xb(\bar{x},v)}v )  - f(s, \xb(\bar{x}, v), R_{\xb(\bar{x},v)}v  )| }{|X(s;t, \bar{x}, v ) - \xb(\bar{x}, v)|^{\gamma}} ( |v|( t^{1}(\bar{x}, v) - s ) )^{\gamma}   \\
		&\quad\quad  + \frac{ |f(s, \xb(\bar{x}, v), R_{\xb(\bar{x},v)}v  )  - f(s, \xb(\bar{x}, v), R_{\xb(\bar{x}, v)}\tv  ) | }{|R_{\xb(\bar{x},v)}v  - R_{\xb(\bar{x}, v)}\tv|^{\gamma}}
		|v - \tv|^{\gamma}   \\
		&\quad\quad  + \frac{ |f(s, \xb(\bar{x},v), \tv  ) - f(s, X(s;t, \bar{x}, \tv ), \tv ) | }{|\xb(\bar{x}, v) - X(s;t, \bar{x}, \tv )  |^{\gamma}}
		( |v - \tv|\tb(\bar{x}, v) + |v|( t^{1}(\bar{x}, v) - s ) )^{\gamma} \Big) \\
		&\quad\quad\quad \times  \mathbf{1}_{ t^{1}(\bx, \tv) < s \leq  t^{1}(\bx, v) } \mathbf{1}_{t^{1}(\bx, \tv) > -\infty},   \\
	\end{split}
	\Ee
	\Be \notag %\label{C2}
	\begin{split}
		C_{2} &:=  \Big( \frac{ |f(s, X(s;t, \bx, v ), V(s;t, \bx, v ) )  -   f(s, X(s;t, \bx , \V(\tau_{-}) ),  V(s;t, \bx, v ) ) |  }{ |X(s;t, \bx, v ) - X(s;t, \bx , \V(\tau_{-}) )|^{\gamma} }   \\
		&\quad\quad\quad \times |X(s;t, \bx, v ) - X(s;t, \bx , \V(\tau_{-}) )|^{\gamma}   \\
		&\quad\quad +   \frac{ | f(s, X(s;t, \bx , \V(\tau_{-}) ),  V(s;t, \bx, v ) )   -  f(s, X(s;t, \bx , \V(\tau_{-}) ),  V(s;t, \bx , \V(\tau_{-}) )   ) | }{ |V(s;t, \bx, v ) - V(s;t, \bx , \V(\tau_{-}) ) |^{\gamma} } \\
		&\quad\quad\quad\quad \times |V(s;t, \bx, v ) - V(s;t, \bx , \V(\tau_{-}) ) |^{\gamma}   \Big) 
		\mathbf{1}_{ s\leq \min\{ t^{1}(\bx, \V(\tau_{-})),  t^{1}(\bx, v) \} } \mathbf{1}_{t^{1}(\bx, \tv) = -\infty}  , \\
	\end{split}
	\Ee
	\Be \notag %\label{C3}
	\begin{split}
		C_{3} &:=  \Big( \frac{ |f(s, X(s;t, \bar{x}, v ), R_{\xb(\bar{x},v)}v )  - f(s, \xb(\bar{x}, v), R_{\xb(\bar{x},v)}v  )| }{|X(s;t, \bar{x}, v ) - \xb(\bar{x}, v)|^{\gamma}} ( |v|( t^{1}(\bar{x}, v) - s ) )^{\gamma}   \\
		&\quad\quad  + \frac{ |f(s, \xb(\bar{x}, v), R_{\xb(\bar{x},v)}v  )  - f(s, \xb(\bar{x}, v), R_{\xb(\bar{x}, v)}\V(\tau_{-})  ) | }{|R_{\xb(\bar{x},v)}v  - R_{\xb(\bar{x}, v)}\V(\tau_{-})|^{\gamma}}
		|v - \V(\tau_{-})|^{\gamma}   \\
		&\quad\quad  + \frac{ |f(s, \xb(\bar{x},v), \V(\tau_{-})  ) - f(s, X(s;t, \bar{x}, \V(\tau_{-}) ), \V(\tau_{-}) ) | }{|\xb(\bar{x}, v) - X(s;t, \bar{x}, \V(\tau_{-}) )  |^{\gamma}}  \\
		&\quad\quad\quad\quad \times
		\big( |v - \V(\tau_{-})|\tb(\bar{x}, v) + |v|( t^{1}(\bar{x}, v) - s ) \big)^{\gamma} \Big) 
		 \mathbf{1}_{ t^{1}(\bx, \V(\tau_{-})) < s \leq  t^{1}(\bx, v) \} } \mathbf{1}_{t^{1}(\bx, \tv) = -\infty} ,  \\
	\end{split}
	\Ee
	\Be \notag %\label{C4}
	\begin{split}
		C_{4} &:= \Big( \frac{ |f(s, X(s;t, \bx , \V(\tau_{-}) ),  \V(\tau_{-}) )  - f(s, X(s;t, \bx, \tv ),  \V(\tau_{-})  )| }{ |X(s;t, \bx , \V(\tau_{-}) ) - X(s;t, \bx, \tv ) |^{\gamma} }  |(\V(\tau_{-}) - \tv )(t-s)|^{\gamma}  \\
		&\quad\quad +  \frac{ |f(s, X(s;t, \bx, \tv ),  \V(\tau_{-})  )  - f(s, X(s;t, \bx, \tv ),  \tv )| }{ | \V(\tau_{-}) - \tv |^{\gamma} }  | \V(\tau_{-}) - \tv |^{\gamma}  \Big) \\
		&\quad\quad  \times  \mathbf{1}_{ t^{1}(\bx, \tv) < s \leq  t^{1}(\bx, v) } \mathbf{1}_{t^{1}(\bx, \tv) = -\infty},   \\
	\end{split}
	\Ee			
	and
	\Be\notag %\label{D1234}
	\begin{split}
		D_{1,2,3,4} &:= \text{interchanging $v$ and $\tv$ in $C_{1,2,3,4}$, respectively. } \\
		%B_{2} &:= \text{interchanging $x$ and $\tx$ in $A_{2}$ },  \\
		%B_{3} &:= \text{interchanging $x$ and $\tx$ in $A_{3}$ },  \\
		%B_{4} &:= \text{interchanging $x$ and $\tx$ in $A_{4}$ },  \\
	\end{split}
	\Ee
	Applying \eqref{est:V/v} and \eqref{est:X/v} in Lemma \ref{frac sim S}, and \eqref{est:Lv} in Lemma \ref{L sim S}, to \eqref{est:split3 raw}, we obtain \eqref{est:split3}.    \\
	
	\hide
	\begin{eqnarray} \notag
		&&\frac{\eqref{split3}}{|v-\bv|^{\g}}  \\
		&&\lesssim \Big[ t + |v+\zeta|t^{2} + ( |v+\zeta| + |v+\zeta|^{2}t ) \mathcal{T}_{vel}(\bx, v, \tv, \zeta)
		%\notag \\
		%&&\quad   \times
		%\Big( \int_{0}^{1} \frac{1}{\mathfrak{S}_{vel}(\tau; \bx, v, \tv, \zeta)} d\tau 
		%+  \int_{\tau_{-}}^{1} \frac{1}{\mathfrak{S}_{vel}(\tau; \bx, v, \tv, \zeta)} d\tau  
		%+  \int_{0}^{\tau_{-}} \frac{1}{\mathfrak{S}_{vel}(\tau; \bx, v, \tv, \zeta)} d\tau
		%\Big) 
		\Big]^{2\b}  
		\notag  \\
		&&\quad\quad 
		\times  
		{\color{blue}  \Big[ \frac{ e^{\varpi \langle v+\zeta \rangle^2 s}  }{ \langle v+\zeta \rangle^{s_{1}}}  
			\sup_{v, |x - \bx|\leq 1} e^{-\varpi \langle v \rangle^2 s}  
			\langle v \rangle^{s_{1}} \frac{|f(s, x, v ) - f(s, \bx, v)|}{|x - \bx|^{2\b}}  + \frac{1}{w(v+\zeta)}\|w f(s)\|_{\infty} \Big] 
		}      \notag  \\
		&& + \Big[1 + |v+\zeta|t +  |v+\zeta|^{2} \mathcal{T}_{vel}(\bx, v, \tv, \zeta)
		%&&\quad\quad 		 
		%\times \Big( \int_{0}^{1} \frac{1}{\mathfrak{S}_{vel}(\tau; \bx, v, \tv, \zeta)} d\tau 
		%+  \int_{\tau_{-}}^{1} \frac{1}{\mathfrak{S}_{vel}(\tau; \bx, v, \tv, \zeta)} d\tau  
		%+  \int_{0}^{\tau_{-}} \frac{1}{\mathfrak{S}_{vel}(\tau; \bx, v, \tv, \zeta)} d\tau
		%\Big) 
		\Big]^{2\b}  \notag \\
		%&\quad\quad\quad\quad\quad\quad  
		%\times \Big( \mathbf{1}_{0\leq \min\{ t^{1}(\bx, v+\zeta), t^{1}(\bx, \tv+\zeta) \}} + \mathbf{1}_{0> \max\{ t^{1}(\bx, v+\zeta), t^{1}(\bx, \tv+\zeta) \}} \Big)  \\
		&&\quad\quad 
		\times
		{\color{blue}  \Big[ 
		\frac{ e^{\varpi \langle v+\zeta \rangle^2 s} }{ \langle v+\zeta \rangle^{s_{2}} }	
			\sup_{x, |v - \bv|\leq 1} e^{\varpi \langle v \rangle^2 s}  \langle v \rangle^{s_{2}} \frac{|f(s, x, v ) - f(s, x, \bv)|}{|v - \bv|^{2\b}} + \frac{1}{w(v+\zeta)} \|wf(s)\|_{\infty}  \Big]
		}    \notag  
	\end{eqnarray}
	where we defined
	\Be  
	\begin{split}
		\mathcal{T}_{vel}(\bx, v, \tv, \zeta)  &:=  \fint_{0}^{1} \frac{1}{\mathfrak{S}_{vel}(\tau; \bx, v, \tv, \zeta)} d\tau  
			\mathbf{1}_{ \Big\{ \tiny \begin{aligned}
			&\tb(x, v(\tau)) < \infty, \ 0\leq \tau \leq 1  
			\\ 
			&\min_{0\leq \tau \leq 1}\tb(x, v(\tau)) \leq t-s \end{aligned}  \Big\} } 
		\\
		&\quad +  \fint_{\tau_{-}}^{1} \frac{1}{\mathfrak{S}_{vel}(\tau; \bx, v, \tv, \zeta)} d\tau  
		 	\mathbf{1}_{ \Big\{ \tiny \begin{aligned}
		 			&\tb(x, v(\tau)) < \infty, \ \tau_{-}\leq \tau \leq 1  
		 			\\ 
		 			&\min_{\tau_{-} \leq \tau \leq 1}\tb(x, v(\tau)) \leq t-s \end{aligned}  \Big\} } 
	 	\\
		&\quad +  \fint_{0}^{\tau_{-}} \frac{1}{\mathfrak{S}_{vel}(\tau; \bx, v, \tv, \zeta)} d\tau
			\mathbf{1}_{ \Big\{ \tiny \begin{aligned}
					&\tb(x, v(\tau)) < \infty, \ 0\leq \tau \leq \tau_{-}  
					\\ 
					&\min_{0\leq \tau \leq \tau_{-}  }\tb(x, v(\tau)) \leq t-s \end{aligned}  \Big\} } 
	\end{split}
	\Ee \\
	%where $\fint_{a}^{b} := \frac{1}{b-a} \int_{a}^{b}$.  
	\unhide
\end{proof}
\begin{lemma} \label{nu G split}
	In \eqref{split1}--\eqref{split4}, let us replace $f$ into $\nu(f)$ or $\Gamma_{\text{gain} }(f,f)$. Corresponding Lemma \ref{f-f split} (with $\nu(f)$ or $\Gamma_{\text{gain} }(f,f)$, instead of $f$) satisfies the same estimates as \eqref{est:split1}--\eqref{est:split4}, except that we replace
	\[
		\frac{1}{w(v+\zeta)} \|wf(s)\|_{\infty} 
	\]
	on the RHS of each \eqref{est:split1}--\eqref{est:split4}, into
	\[
		\frac{\langle v+\zeta \rangle}{w(v+\zeta)} \|wf(s)\|^{2}_{\infty},\quad \text{for} \quad \Gamma_{\text{gain}}(f,f) \ \text{case}
	\]	
	and
	\[
		\frac{\langle v+\zeta \rangle}{w(v+\zeta)} \|wf(s)\|_{\infty},\quad \text{for} \quad \nu(f) \ \text{case},
	\]
	respectively.
\end{lemma}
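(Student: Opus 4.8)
The plan is to recognise Lemma~\ref{nu G split} as a direct corollary of Lemma~\ref{f-f split}, because the proof of the latter is entirely kinematic. Examining \eqref{est:split1 raw}, \eqref{est:split2 raw}, \eqref{est:split3 raw}, \eqref{est:split4 raw}, one sees that the only properties of $f$ used are: (a) the specular reflection boundary condition \eqref{specular}, invoked to rewrite the function across the velocity jump of $V(s;\cdot)$ at $s=t^1$; and (b) elementary pointwise and incremental bounds on $f$. Every trajectory difference is then handled by the purely geometric estimates of Lemma~\ref{frac sim S} and Lemma~\ref{L sim S}, which do not see the function at the endpoints at all, and the structural factors $[\cdots]^{2\b}$, $(t-s)^\g$, $\mathcal T_{sp}$, $\mathcal T_{vel}$ together with the indicator splittings are produced from the characteristics and the convexity of $\O$ alone. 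Hence I would rerun the argument of Lemma~\ref{f-f split} verbatim with $g\in\{\nu(f),\,\Gamma_{\text{gain}}(f,f)\}$ in place of $f$. The one point that must be checked for this to be legitimate is that $g$ again satisfies \eqref{specular}: this is precisely Lemma~\ref{lem_specular G}. The output is the family \eqref{est:split1}--\eqref{est:split4} with every weighted H\"older-in-$x$ (resp.\ H\"older-in-$v$) seminorm of $f$ replaced by the same seminorm of $g$, and with the crude remainder $w(v+\zeta)^{-1}\|wf(s)\|_\infty$ — which in Lemma~\ref{f-f split} comes from bounding $g=f$ pointwise by $w^{-1}\|wf\|_\infty$ — replaced by a crude pointwise bound on $g$ at velocity $\simeq v+\zeta$.

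The second step is to push the $g$-seminorms back onto $f$ using Lemma~\ref{lem_Gamma} and Lemma~\ref{lem_nu}. By \eqref{full k x}, \eqref{full k v}, \eqref{full nu x}, \eqref{full nu v}, the weighted H\"older seminorm (in $x$ or in $v$) of $\Gamma_{\text{gain}}(f,f)$, resp.\ $\nu(f)$, at velocity $v+\zeta$ is bounded by $\|wf(s)\|_\infty$, resp.\ $\langle v+\zeta\rangle$, times a $k_c$- or $\mathbf k_c$-moment of the corresponding weighted H\"older seminorm of $f$, plus a lower-order term of pure $L^\infty$ type. To see that such a moment \emph{reproduces} rather than worsens the $f$-seminorm terms already present in \eqref{est:split1}--\eqref{est:split4}, I would use that an increment in the velocity slot is admissible, $\frac{|f(x,v+u)-f(x,\bar v+u)|}{|v-\bar v|^{2\b}}\le\sup_{|w-\bar w|\le1}\frac{|f(x,w)-f(x,\bar w)|}{|w-\bar w|^{2\b}}$, and that the moments $\int k_c(v+\zeta,\cdot)\langle\cdot\rangle^{-s_i}$, $\int\mathbf k_c(v+\zeta,\cdot)\langle\cdot\rangle^{-s_i}$ converge with a $\langle v+\zeta\rangle^{-1}$ gain by the standard kernel bounds ((3.11), (3.52) of \cite{gl}), the time-exponential weights being absorbed through the uniform negativity Lemma~\ref{lem_nega}. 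Consequently each $g$-seminorm term is dominated by the matching $f$-seminorm term of \eqref{est:split1}--\eqref{est:split4}, the surviving prefactor $\|wf(s)\|_\infty$ (for $\Gamma_{\text{gain}}$) or $\langle v+\zeta\rangle$ (for $\nu$) being harmless since, in the eventual use in Proposition~\ref{prop_unif H}, these contributions are closed by smallness of $T$ and folded into $\mathcal P_2(\|wf\|_\infty)$; and the lower-order $L^\infty$ remainders merge with the modified crude term.

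For the crude term I would then invoke the standard $L^\infty$ bounds on $\|\nu(f)(s)\|_\infty$ and $\|\Gamma_{\text{gain}}(f,f)(s)\|_\infty$ (of orders $\langle\cdot\rangle\|f(s)\|_\infty$ and $\langle\cdot\rangle\,w^{-1}\|wf(s)\|_\infty^2$ in the relevant velocity range), which yield exactly the replacements $\frac{\langle v+\zeta\rangle}{w(v+\zeta)}\|wf(s)\|_\infty$ for $\nu(f)$ and $\frac{\langle v+\zeta\rangle}{w(v+\zeta)}\|wf(s)\|_\infty^2$ for $\Gamma_{\text{gain}}(f,f)$. All remaining trajectory-dependent factors $\mathcal T_{sp}$, $\mathcal T_{vel}$, the powers $[\cdots]^{2\b}$, $(t-s)^\g$, and the indicator splittings are copied unchanged from Lemma~\ref{f-f split}.

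I expect the genuine work here to be bookkeeping rather than estimation: one must go through \eqref{est:split1 raw}--\eqref{est:split4 raw} line by line and verify that at every occurrence the original proof used \emph{only} the specular symmetry and the $\mathfrak S$-based trajectory bounds, never a finer property of $f$ (a derivative bound, a pointwise lower bound, or a cancellation special to solutions of \eqref{f_eqtn}) that $\nu(f)$ or $\Gamma_{\text{gain}}(f,f)$ would fail to inherit — which is indeed the case, but it is the step that has to be executed carefully, in particular in tracking the single power of $\langle v+\zeta\rangle$ lost to $\nu$ in \eqref{full nu x}, \eqref{full nu v} and the $\|wf\|_\infty$ picked up in front of the seminorm of $\Gamma_{\text{gain}}(f,f)$ in \eqref{full k x}, \eqref{full k v}.
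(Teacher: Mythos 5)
Your first paragraph is precisely the paper's proof: the argument of Lemma \ref{f-f split} is purely kinematic, using only the specular boundary condition \eqref{specular} and the trajectory estimates of Lemmas \ref{frac sim S} and \ref{L sim S}, hence it applies verbatim to any $g$ satisfying \eqref{specular}; Lemma \ref{lem_specular G} provides that for $g=\nu(f)$ and $g=\Gamma_{\text{gain}}(f,f)$. The paper's proof is exactly this one sentence and nothing more.

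Your second paragraph, however, misreads the target. The conclusion of Lemma \ref{nu G split} retains the H\"older seminorms of $\nu(f)$ (resp.\ $\Gamma_{\text{gain}}(f,f)$) on the right-hand side --- as is plain from how it is invoked in Substep 1-2 of Proposition \ref{prop_unif H}, where the bracket contains $\frac{|\Gamma_{\text{gain}}(s,x,v)-\Gamma_{\text{gain}}(s,\bx,v)|}{|x-\bx|^{2\b}}$. The conversion of $g$-seminorms back into $k_c$- or $\mathbf{k}_c$-moments of $f$-differences via Lemmas \ref{lem_Gamma} and \ref{lem_nu} is a subsequent step carried out inside Proposition \ref{prop_unif H}, not inside Lemma \ref{nu G split}; you have effectively folded part of that proposition's proof into this lemma. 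A further imprecision: you cite $\|\nu(f)(s)\|_\infty$ ``of order $\langle\cdot\rangle\|f(s)\|_\infty$'' and then claim this ``yields exactly'' the replacement $\frac{\langle v+\zeta\rangle}{w(v+\zeta)}\|wf(s)\|_\infty$. It does not: $\nu(f)$ grows in $v$ and carries no $w^{-1}(v+\zeta)$ decay, so the pointwise bound you quote is $\langle v+\zeta\rangle\|wf(s)\|_\infty$ without the exponential factor. In the actual downstream application the missing decay is supplied by the prefactors $w_0^{-1}(\bv+\zeta)$ and $w^{-1/2}(\bv+\zeta)$ already present in \eqref{basic f-f3}--\eqref{basic f-f4}, so the logic closes there, but the step as you wrote it is a non sequitur.
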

\begin{proof}
	It is obvious because both $\nu(f)$ and $\Gamma_{\text{gain} }(f,f)$ also satisfy \eqref{specular} by Lemma \ref{lem_specular G}. We omit the proof. 
\end{proof}

\hide
specular reflection condition for $\Gamma$ original form
\Be
\begin{split}
	&\Gamma_{\text{gain}}(f,f)( t-\tb, X(t-\tb), V(t-\tb) )  \\
	&= \iint |(V(t-\tb)-u)\cdot\sigma| \sqrt{\mu}(u)
	f(t-\tb, X(t-\tb), u + ((V(t-\tb)-u)\cdot\sigma)\sigma )  \\
	&\quad \times f(t-\tb, X(t-\tb), V(t-\tb) - ((V(t-\tb)-u)\cdot\sigma)\sigma )   \\
	&= \iint |(R_{X(t-\tb)}V(t-\tb) - R_{X(t-\tb)}u)\cdot R_{X(t-\tb)}\sigma| \sqrt{\mu}(R_{X(t-\tb)}u)
	f(t-\tb, X(t-\tb), R_{X(t-\tb)}u + (R_{X(t-\tb)}(V(t-\tb)-u)\cdot R_{X(t-\tb)}\sigma)R_{X(t-\tb)}\sigma )  \\
	&\quad \times
	f(t-\tb, X(t-\tb), R_{X(t-\tb)}V(t-\tb) - (R_{X(t-\tb)}(V(t-\tb)-u)\cdot R_{X(t-\tb)}\sigma)R_{X(t-\tb)}\sigma )  d\sigma du \\
	&= \iint |(v - R_{X(t-\tb)}u)\cdot R_{X(t-\tb)}\sigma| \sqrt{\mu}(R_{X(t-\tb)}u)
	f(t-\tb, X(t-\tb), R_{X(t-\tb)}u + ( (v-R_{X(t-\tb)}u)\cdot R_{X(t-\tb)}\sigma)R_{X(t-\tb)}\sigma )  \\
	&\quad \times
	f(t-\tb, X(t-\tb), v - ( (v - R_{X(t-\tb)}u)\cdot R_{X(t-\tb)}\sigma)R_{X(t-\tb)}\sigma )  d R_{X(t-\tb)}\sigma d R_{X(t-\tb)}u \\
	&= \iint |(v - u)\cdot \sigma| \sqrt{\mu}(u)  
	f(t-\tb, X(t-\tb), u + ( (v-u)\cdot \sigma)\sigma )  
	f(t-\tb, X(t-\tb), v - ( (v - u)\cdot \sigma)\sigma )  d\sigma du   \\
	&\quad \quad \text{by}\quad R_{X(t-\tb)}u \rightarrow u, \ \ R_{X(t-\tb)}\sigma \rightarrow \sigma \\
	&= \Gamma_{\text{gain}}(f,f)( t-\tb, X(t-\tb), v )  \\
\end{split}
\Ee
Note that 
\Be
\begin{split}
	& f(t-\tb, X(t-\tb), u + ((V(t-\tb)-u)\cdot\sigma)\sigma )  \\
	&=  f(t-\tb, X(t-\tb), R_{X(t-\tb)}u + (R_{X(t-\tb)}(V(t-\tb)-u)\cdot R_{X(t-\tb)}\sigma)R_{X(t-\tb)}\sigma )  \\
\end{split}
\Ee	
\unhide

\subsection{Integrability for $\b < \frac{1}{2}$}

\begin{lemma} \label{curvature 1}
	Consider $\O$ as in Definition \ref{def:domain}. We $x \in \O$ and choose a unique $\hat{z}$ whose backward trajectory hit $\p\O$ verically, i.e., $\hat{z}\cdot \hat{n}(\xb(x, \hat{z})) = -1$. For any plane $S$ which includes $x$ and $x+\hat{z}$, curvature at any point $y\in\p\O\cap S$ is uniformly nonzero of which lower/upper bounds depend only on $\O$. 
\end{lemma}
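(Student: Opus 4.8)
\textbf{Proof proposal for Lemma \ref{curvature 1}.}

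The plan is to reduce the claim to the uniform curvature bounds already established in Lemma \ref{lem_unif n}, specifically \eqref{curv uniformx} and \eqref{unif kappa}. The point $\hat z$ is nothing but the inward-normal direction at the foot point $\xb(x,\hat z)$: the condition $\hat z \cdot \hat n(\xb(x,\hat z)) = -1$ forces $\hat z = - n(\xb(x,\hat z))$, i.e. the segment from $x$ to the boundary is along the interior normal. Thus any plane $S$ through $x$ and $x+\hat z$ is a plane through the boundary point $p := \xb(x,\hat z)$ that contains the normal line $p + \mathbb{R}\, n(p)$. First I would observe that such a plane automatically satisfies \eqref{S assume}: since $\mathcal{O}$ is uniformly convex and bounded, a plane containing an interior-normal ray at a boundary point must cut $\p\O$ in a closed curve (it enters and exits $\mathcal{O}$), so $\p\O\cap S$ is neither empty nor a single point. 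Hence Lemma \ref{lem_unif n} applies verbatim to this $S$.

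Next I would invoke \eqref{curv uniformx}: for \emph{any} boundary point $y$ and \emph{any} unit tangent direction $\zeta\in T_y(\p\O)$, the associated normal curvature $|n(y)\cdot\alpha''(0)|$ satisfies $\theta_\O \le |n(y)\cdot\alpha''(0)| \lesssim_\O \|\xi\|_{C^2}$. Applying this with $y\in\p\O\cap S$ and $\zeta = \mathfrak{r}_S'(s)$ the unit tangent to the curve $\p\O\cap S$, together with the Meusnier-type identity \eqref{n curvature}, namely $|n_\parallel(y)| = |n(y)\cdot\mathfrak{r}_S''(s)|\,/\,|\mathfrak{r}_S''(s)|$, gives
\[
|\mathfrak{r}_S''(s)| \;=\; \frac{|n(y)\cdot \mathfrak{r}_S''(s)|}{|n_\parallel(y)|} \;\ge\; \frac{\theta_\O}{|n_\parallel(y)|} \;\ge\; \theta_\O,
\]
since $|n_\parallel(y)|\le |n(y)| = 1$. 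This is the desired uniform \emph{lower} bound on the curvature of $\p\O\cap S$, and it depends only on $\O$ (through $\theta_\O$). For the \emph{upper} bound one cannot argue pointwise, because $|n_\parallel(y)|$ may be small (the plane $S$ can be nearly tangent to $\p\O$ at some $y$, making the planar curve's curvature large there); this is precisely why I want $S$ to contain the normal line at $p$. At the point $p$ itself, $n_\parallel(p) = n(p)$ (the normal lies in $S$), so $|\mathfrak{r}_S''(s_p)| = |n(p)\cdot\mathfrak{r}_S''(s_p)| \le C_\O$ by \eqref{curv uniformx}; then the uniform comparability \eqref{unif kappa} of Lemma \ref{lem_unif n}, which holds for this $S$, propagates this bound: $|\mathfrak{r}_S''(y)| \le C\,|\mathfrak{r}_S''(p)| \le C\, C_\O$ for all $y\in\p\O\cap S$, with $C$ independent of $S$.

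The main obstacle is the upper bound, and the key subtlety is exactly the role of the hypothesis that $S$ contains the normal ray at $p = \xb(x,\hat z)$: without it one cannot control the curvature of the planar section near points where $S$ grazes $\p\O$. The remaining care is purely bookkeeping — checking that the particular $S$'s in question do satisfy \eqref{S assume} so that Lemma \ref{lem_unif n} is applicable, and noting that the constants produced by \eqref{curv uniformx} and \eqref{unif kappa} are $\O$-dependent only (not $S$-dependent), which is already part of the statements of those results. I would present the argument in two short steps (lower bound via \eqref{curv uniformx} and \eqref{n curvature}; upper bound via the normal-containing property at $p$ plus \eqref{unif kappa}), and conclude that $c_\O \le |\mathfrak{r}_S''(y)| \le C_\O$ uniformly over all admissible $S$ and all $y\in\p\O\cap S$.
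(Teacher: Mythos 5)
Your proof is correct and takes a genuinely different route from the paper's. The paper proves this lemma by parametrizing all planes through $x$ and $x+\hat z$ by an angle $\varphi\in[0,2\pi]$, observing that each cross section $\p\O\cap S_\varphi$ is a uniformly convex curve with finite maximum/minimum curvature, and then appealing to compactness of $[0,2\pi]$ to extract uniform bounds. Your argument instead reuses the already-established Lemma \ref{lem_unif n}: the lower bound $|\mathfrak{r}_S''|\ge\theta_\O$ follows from \eqref{curv uniformx} and \eqref{n curvature} since $|n_\parallel|\le 1$, and the upper bound comes from the key geometric observation that $S$ contains the normal line at $p=\xb(x,\hat z)$, so $|n_\parallel(p)|=1$ and the planar curvature at $p$ equals the (bounded) normal curvature there, after which \eqref{unif kappa} propagates the bound over the whole cross section with an $\O$-only constant. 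Your route buys two things: it makes explicit \emph{why} the bounds depend only on $\O$ (each constant in the chain is already known to be $\O$-only from Lemma \ref{lem_unif n}), and it isolates the exact role of the hypothesis that $S$ contains the normal ray — namely, it pins down one point of the cross section where the projected normal has full length, which is precisely the point where the planar curvature cannot blow up. The paper's compactness-in-$\varphi$ argument is shorter but leaves the uniformity in $x\in\O$ implicit, whereas your argument handles it transparently.
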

\begin{proof}
	Imagine a plane which is perpendicular to $\hat{z}$. Then using an angle on the plane (as cylindrical coordinate or spherical coordinate), we can parametrize all possible planes $S$ as $S_{\varphi}$ with $[0,2\pi)$ wher $S_{0}=S_{2\pi}$. Each cross section $\p\O\cap S_{\varphi}$ is uniformly convex, so there are finite maximum and minimum of curvature on the curve depending on $\O$. Now using compactness of $\varphi\in[0,2\pi]$, we finish the proof.  
\end{proof}

\begin{lemma}[Integrability] \label{lem_int sing}
	\noindent (i)When $\b < \frac{1}{2}$	
	\Be \label{est : integ1}
		\int_{\{\zeta :  \xb(x, v+\zeta)\in\p\O\}} \frac{e^{-c|\zeta|^{2}}}{|\zeta|} 
		\frac{ \langle v+\zeta \rangle^{r} }{ |(v+\zeta)\cdot\nabla\xi(\xb(x, v+\zeta))|^{2\b} } %\frac{1}{|v+\zeta|^{2\b}} 
		d\zeta \leq C_{\b} \langle v \rangle^{r +1 - 2\b}.
	\Ee
	\noindent (ii) When $0 < \b \leq  \frac{1}{4}$,
	\Be \label{est : integ2}
	\int_{\{\zeta :  \xb(x, v+\zeta)\in\p\O\}} \frac{e^{-c|\zeta|^{2}}}{|\zeta|} 
	\frac{ \langle v+\zeta \rangle^{r} }{ |(v+\zeta)\cdot\nabla\xi(\xb(x, v+\zeta))|^{2\b} } \frac{1}{|v+\zeta|^{2\b}} d\zeta \leq C_{\b} \langle v \rangle^{r+1-4\b}.
	\Ee
	\noindent (iii) When $\frac{1}{4} < \b < \frac{1}{2}$,
	\Be \label{est : integ3}
	\int_{\{\zeta :  \xb(x, v+\zeta)\in\p\O\}} \frac{e^{-c|\zeta|^{2}}}{|\zeta|} 
	\frac{ \langle v+\zeta \rangle^{r} }{ |(v+\zeta)\cdot\nabla\xi(\xb(x, v+\zeta))|^{2\b} } \frac{1}{|v+\zeta|^{2\b}} d\zeta \leq C_{\b} \langle v \rangle^{r}.
	\Ee
	\hide
	When $|v-\bv| \leq 1$,
	\Be 
		\int_{\{\zeta :  \xb(x, v+\zeta)\in\p\O\}} \frac{e^{-c|\zeta|^{2}}}{|\zeta|} 
		\frac{ \langle v+\zeta \rangle^{r}}{|(v+\zeta)\cdot\nabla\xi|^{2\b}} 
		\frac{1}{|v+\zeta|^{2\b}}d\zeta
	\Ee
	\unhide
\end{lemma}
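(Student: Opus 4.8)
The plan is to reduce all three integrals to a single model estimate for the quantity $|(v+\zeta)\cdot\nabla\xi(\xb(x,v+\zeta))|$ near the grazing set, and then integrate in $\zeta$ using a convenient change of variables adapted to the boundary geometry. First I would fix $x\in\O$ and decompose $\zeta$ (equivalently the outgoing velocity $v+\zeta$) into coordinates tailored to the point $\xb(x,v+\zeta)\in\p\O$: write $w:=v+\zeta$ and split $w=w_{\perp}+w_{\parallel}$ where $w_{\perp}=(w\cdot n(\xb(x,w)))n(\xb(x,w))$ is the normal part and $w_{\parallel}$ is the tangential part at the hitting point. The singular denominator is exactly $|w\cdot\nabla\xi(\xb(x,w))|\simeq_{\O}|w_{\perp}|$, so the estimate is really about how often the outgoing ray is nearly tangent to $\p\O$. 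The key geometric input is Lemma \ref{curvature 1} (together with Lemma \ref{lem_unif n}): for rays through $x$, the cross-sectional curvature of $\p\O$ is uniformly bounded above and below, so that the set of directions $\hat w$ for which $|w_{\perp}|\lesssim \delta|w|$ has measure $\lesssim_{\O}\sqrt{\delta}$ — a one-dimensional square-root law coming from the quadratic tangency of a convex curve.

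Concretely, I would parametrize: fix the distance $|w|=\rho$ and the angle; near the grazing cone the solid angle element splits into a benign transversal direction and a single critical direction along which $|w_\perp|/|w|$ vanishes to first order as one leaves tangency. Thus, schematically, $\int_{\S^2} |w_\perp/|w||^{-2\b}\, d\hat w \lesssim_{\O}\int_0^{\pi}(\sin\phi)^{-2\b}\,d\phi$ (with $\phi$ the angle to the tangent cone), which is finite iff $2\b<1$, i.e. $\b<\tfrac12$. This immediately yields \eqref{est : integ1}: writing $d\zeta=\rho^2\,d\rho\,d\hat w$ and using $|w|^{-1}=\rho^{-1}$ from the $\frac{1}{|\zeta|}$-type weight (note $|\zeta|\ge ||w|-|v||$, but near $w\approx 0$ the Gaussian is harmless; the genuine singularity is the $1/|\zeta|$ at $\zeta=0$, which is integrable in $\R^3$), the $\langle w\rangle^{r}e^{-c|\zeta|^2}$ factor together with $|v-\zeta|$-bookkeeping gives the stated growth $\langle v\rangle^{r+1-2\b}$. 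The power count $r+1-2\b$ comes from: one power of $\langle v\rangle$ from the near-grazing shell thickness when $|v|$ is large (the grazing directions occupy a shell of radius $\sim |v|$), $r$ from the weight $\langle w\rangle^r\sim \langle v\rangle^r$ on that shell, and $-2\b$ from the gained negative power after integrating the square-root singularity against the measure — this is the standard trade seen in \cite{Guo10, KimLee}-type grazing estimates. For parts (ii) and (iii), the only new feature is the extra factor $|v+\zeta|^{-2\b}=\rho^{-2\b}$. When $\b\le\tfrac14$ we have $2\b\le\tfrac12<2$, so $\rho^{-2\b}$ is still locally integrable against $\rho^2\,d\rho$ and one simply gains an additional $\langle v\rangle^{-2\b}$ on the large-$|v|$ shell, giving $\langle v\rangle^{r+1-4\b}$ as in \eqref{est : integ2}. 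When $\tfrac14<\b<\tfrac12$, the extra $\rho^{-2\b}$ is concentrated near $\rho=0$ where $w=v+\zeta\approx 0$, i.e. $\zeta\approx -v$; there the denominator $|w\cdot\nabla\xi(\xb)|$ no longer helps (small $w$ is automatically near-grazing from any distance), so one estimates that region crudely by $\int_{|w|\le 1}|w|^{-2\b}|w\cdot\nabla\xi(\xb(x,w))|^{-2\b}\,dw$, which by the same curvature argument is $\lesssim_{\O}\int_0^1\rho^{2-2\b}\,d\rho\int(\sin\phi)^{-2\b}d\phi<\infty$; this bounded contribution gives the $\langle v\rangle^{0}$-type bound, while the remaining region $|w|\gtrsim 1$ is handled as in (ii) but now produces no $\langle v\rangle$ gain beyond $\langle v\rangle^r$ because $r+1-4\b<r$ is absorbed — hence the clean $\langle v\rangle^{r}$ of \eqref{est : integ3}.

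The main obstacle I anticipate is making the change of variables $\zeta\mapsto (\text{hitting point } \xb(x,w),\ \text{normal component } |w_\perp|,\ \text{tangential component})$ rigorous with uniform Jacobian bounds: the map $w\mapsto \xb(x,w)$ degenerates precisely on the grazing set, so one must either (a) work on the boundary sphere bundle and use that, for fixed $x$, the map from outgoing directions to $(\xb, w_\parallel/|w_\parallel|)$ is a diffeomorphism off a measure-zero set with Jacobian comparable to $|w_\perp|/|w|$ — exactly the factor that tames the singularity — or (b) avoid the global change of variables entirely and instead foliate $\R^3_\zeta$ by the planes $S$ through $x$ and $x+\hat z$ of Lemma \ref{curvature 1}, reducing to a $2$D integral on each convex cross-section where the tangency analysis is elementary, then integrate over the foliation parameter using compactness (as in the proof of Lemma \ref{lem_unif n}). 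I would pursue (b) as it reuses machinery already set up in the Geometric Lemmas section and keeps the curvature bounds manifestly uniform in $\O$; the cost is carefully tracking the weight $\langle v+\zeta\rangle^r$ and the Gaussian $e^{-c|\zeta|^2}$ across the foliation, which is routine but tedious. Everything else — the Gronwall-type passage from the pointwise ODE bounds, the uniform curvature comparability — is quoted directly from Lemmas \ref{lem_unif n}, \ref{lem_mono}, \ref{lem_tau ratio}, and Proposition \ref{prop_avg S}, so the real content of this lemma is isolating the scalar inequality $\int_0^{\pi}(\sin\phi)^{-2\b}\,d\phi<\infty \Leftrightarrow \b<\tfrac12$ and bookkeeping the $\langle v\rangle$-powers.
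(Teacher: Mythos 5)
Your plan — foliate $\R^3_\zeta$ by the planes through $x$ that contain the perpendicular direction $\hat z$ of Lemma~\ref{curvature 1}, use uniform cross-sectional curvature (Lemmas~\ref{lem_unif n}, \ref{curvature 1}) to replace the singular denominator by a power of the angular distance to the grazing cone, and land on the one-dimensional criterion $\int_0^\pi(\sin\phi)^{-2\b}\,d\phi<\infty\iff \b<\tfrac12$ — is what the paper does. Your ``option~(b)'' is exactly the paper's spherical coordinates with $\hat z$ as polar axis, the azimuthal angle $\varphi$ labelling the planes $S_\varphi$, and $\theta_g(\varphi)$ the grazing cone; the one step your sketch does not make explicit is the paper's reduction that the estimate is \emph{extremal at $x\in\p\O$}, where $\theta_g\equiv\pi/2$, so one may pass to the half-space integral with the concrete denominator $|\zeta|\sin\theta\sin\varphi$ and factor out $\int_0^\pi\sin^{-2\b}\varphi\,d\varphi$. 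For part (iii) the paper uses H\"older in the planar variables with $q>2$ so that $(4\b-1)q<2$, instead of your $|w|\lessgtr 1$ split; both can be made to work, but you should check that the $|w|\gtrsim 1$ region is localized by the Gaussian to $|w|\sim|v|$ before claiming $|w|^{-2\b}\lesssim\langle v\rangle^{-2\b}$ there.

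There is, however, a wrong geometric claim in your sketch that contradicts your own subsequent estimate. You assert that $\{\hat w:\ |w_\perp|\lesssim\delta|w|\}$ has measure $\lesssim\sqrt\delta$ on $\S^2$, ``a square-root law from quadratic tangency.'' For the \emph{deciding case} $x\in\p\O$, the relation between the angle $\phi$ of $\hat w$ to the tangent plane at $x$ and the normal component at the return point is \emph{linear}: $|w_\perp|/|w|\sim\sin\phi$ (this is the content of the paper's inequality $|\zeta\cdot\nabla\xi(\xb(x,\zeta_{g,\varphi}))|\lesssim|\zeta\cdot\nabla\xi(\xb(x,\zeta))|$ together with $|n_\parallel|\gtrsim_\O 1$), so the sublevel set is an equatorial band of width $\sim\delta$, measure $\sim\delta$, not $\sqrt\delta$. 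If the $\sqrt\delta$ law were correct, you would get $\int_0^1\delta^{-2\b}\,d(\sqrt\delta)\sim\int_0^1\delta^{-2\b-\frac12}\,d\delta$, convergent only for $\b<\tfrac14$ — strictly weaker than the lemma. The square-root you have in mind is a different (velocity-lemma type) statement: when $x$ is strictly \emph{interior} to $\O$, the normal component at $\xb$ scales like $\sqrt{\epsilon}$ in the angular deviation $\epsilon$ from grazing; this makes the grazing directions \emph{thinner} (measure $\sim\delta^2$) for interior $x$, so that case is better, not worse, than the boundary case — which is exactly why the paper can reduce to $x\in\p\O$. The same confusion infects your option-(a) Jacobian claim ``$\sim|w_\perp|/|w|$'': that holds for interior $x$ near grazing but degenerates to $\sim 1$ on $\p\O$, another reason to stay with option~(b). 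Your displayed computation $\int(\sin\phi)^{-2\b}\,d\phi$ already uses the correct linear law, so the conclusion $\b<\tfrac12$ stands, but remove the $\sqrt\delta$ remark: it is inconsistent with the argument and, if taken seriously, fails.
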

\begin{proof}
	Let us prove \eqref{est : integ1} first. We consider a fixed point $x$ and $\zeta\in\R^{3}$ such that $\xb(x, \zeta)\in\p\O$ is well defined. There exist a unique $\hat{z}$ whose backward trajectory hit $\p\O$ verically, i.e., $\hat{z}\cdot \hat{n}(\xb(x, \hat{z})) = -1$. We consider spherical coordinate of $\zeta\in\R^{3}$ whose $\hat{z}$ is z-axis. Then for each $\varphi\in[0,2\pi)$, there exists $\theta_{g} = \theta_{g}(\varphi) \in [0, \frac{\pi}{2}]$ such that $\zeta_{g,\varphi}\cdot \nabla\xi(\xb(x, \zeta_{g,\varphi})) = 0$, where 
	\[
		\zeta_{g,\varphi} := |\zeta|(\sin\theta_{g}\cos\varphi, \sin\theta_{g}\sin\varphi, \cos\theta_{g}),
	\]  
	whose spherical component is $(|\zeta|, \theta_{g}(\varphi), \varphi)$ so that its trajectory grazes on $\p\O$.  \\
	
	Now, let us use $S_{\varphi}$ to denote the $\varphi$-plane in above coordinate. Since the cross section $\p\O\cap S_{\varphi}$ with a fixed $\varphi$ is a two dimensional uniformly convex curve on the cross section,
	\Be \label{2D angle}
		|\zeta \cdot \widehat{n_{\parallel}}(\xb(x, \zeta_{g,\varphi}))| \leq |\zeta \cdot \widehat{n_{\parallel}}(\xb(x, \zeta))|
	\Ee
	is obvious, where $\zeta\in\p\O\cap S_{\varphi}$ and $n_{\parallel}$ is projection of $\frac{\nabla\xi(\xb(x, \zeta))}{|\nabla\xi(\xb(x, \zeta))|}$ onto $\p\O\cap S_{\varphi}$. Combining \eqref{2D angle} and Lemma \ref{lem_unif n}, we can derive 
	\[
		|\zeta \cdot\nabla\xi(\xb(x, \zeta_{g,\varphi}))| \lesssim_{\O} |\zeta \cdot\nabla\xi(\xb(x, \zeta))|,
	\]
	using similar argument as \eqref{dot x opt}, where $\zeta$ has spherical coordinates $(|\zeta|, \theta, \varphi)$, $0\leq \theta \leq \theta_{g}(\varphi)$ for fixed $\varphi$. Then, for given $|v|>0$,
	\begin{eqnarray}
		%\eqref{est : integ} &=
		&& \int_{\{\zeta :  \xb(x, \zeta)\in\p\O\}} \frac{e^{-c|v-\zeta|^{2}}}{|v-\zeta|} \frac{ \langle \zeta \rangle^{r} }{|\zeta \cdot\nabla\xi(\xb(x, \zeta))|^{2\b}} 
		%\frac{1}{|\zeta|^{2\b}} 
		d\zeta  \notag \\
		&\lesssim& \int_{0}^{2\pi}   \int_{0}^{\infty} \int_{0}^{\theta_{g}(\varphi)} 
		\frac{e^{-c|v-\zeta|^{2}}}{|v-\zeta|} \frac{  \langle \zeta \rangle^{r} }{|\zeta \cdot\nabla\xi(\xb(x, \zeta_{g,\varphi}))|^{2\b}} 
		%\frac{1}{|\zeta|^{2\b}} 
		|\zeta|^{2} \sin\theta d|\zeta|d\theta d\varphi    \notag   \\
		&\lesssim& \sup_{\substack{ \hat{v} = (\theta_{v}, \varphi_{v}) \\
		0\leq \theta_{v} \leq \theta_{g}(\varphi)}}
		\int_{0}^{2\pi}   \int_{0}^{\infty} \int_{0}^{\theta_{g}(\varphi)} 
		\frac{e^{-c|v-\zeta|^{2}}}{ |v-\zeta| } \frac{  \langle \zeta \rangle^{r} }{ \sin^{2\b}(\theta_{g}(\varphi) - \theta)} \frac{1}{|\zeta|^{2\b}} |\zeta|^{2}\sin\theta d|\zeta| d\theta  d\varphi,    \label{integ:bdry}
	\end{eqnarray}
	where we also used $|n_{\parallel}(\xb(x, \zeta_{g,\varphi}))| \gtrsim_{\O} 1$ in the last step, which is true by Lemma \ref{lem_unif n} and Lemma \ref{curvature 1}. (If $v=0$, \eqref{est : integ1} is obvious.) \\
	Notice that we get optimal $\theta_{g}(\varphi) =\frac{\pi}{2}$ for all $\varphi$ only when $x\in\p\O$. Therefore, \eqref{integ:bdry} is optimal when $x \in \p\O$ and 
	\begin{eqnarray}
	\text{LHS of } \ \eqref{est : integ1} 
	&\lesssim& \sup_{\substack{ \hat{v}\in\mathbb{S}^{2} \\ \hat{v}\cdot\nabla\xi(x) \leq 0}}
	\int_{ \{\zeta\cdot \nabla\xi(x) \leq 0\} }
	\frac{e^{-c|v-\zeta|^{2}}}{ |v-\zeta| } \frac{  \langle \zeta \rangle^{r} }{ |\zeta\cdot\nabla\xi(x)|^{2\b} } \frac{1}{|\zeta|^{2\b}} d\zeta, \quad x\in\p\O,
	\label{est : integ pre} 
	\end{eqnarray}
	which is integration on half space $\{\zeta\in\R^{3} : \zeta\cdot \nabla\xi(x) \leq 0 \}$ when $x\in\p\O$. To make estimate easier, let us change axis of spherical coordinate. We assign a direction vector in tangential plane of $x\in\p\O$ to $\hat{z}$ axis (so that $\hat{z}\cdot\nabla\xi(x)=0$) and also assign $-\frac{\nabla\xi(x)}{ |\nabla\xi(x)| }$ to $\hat{y}$. Then, 
	\Be \label{new coord}
		\{\zeta : \zeta\cdot \nabla\xi(x) \leq 0 \} = \{ \zeta = (|\zeta|, \theta, \varphi)  \ : \ 0\leq |\zeta| <\infty, \ \ 0\leq \theta \leq \pi, \ \ 0\leq \varphi \leq \pi \},
	\Ee
	in spherical coordinate and  
	\[
		\Big| \zeta\cdot\frac{\nabla\xi(x)}{ |\nabla\xi(x)| } \Big|= |\zeta|\sin\theta\sin\varphi.
	\]
	We write $v = |v|(\sin\theta_{v}\cos\varphi_{v}, \sin\theta_{v}\sin\varphi_{v}, \cos\varphi_{v})$, then  
	\begin{equation} \notag %
	\begin{split}
		|v-\zeta|_{\varphi_{v}} &:= \sqrt{|v|^{2} +|\zeta|^{2} - 2|v||\zeta|\cos(\theta - \theta_{v})} \leq |v-\zeta|,
	\end{split}
	\end{equation}
	where $|v-\zeta|_{\varphi_{v}}$ is 2D distance in a fixed $\varphi_{v}$ plane, when both $v$ and $\zeta$ have coordinate $(\theta_{v}, \varphi_{v})$ and $(\theta, \varphi_{v})$, respectively. Now we treat $v$ and $\zeta$ as like 2D vectors in fixed $\varphi_{v}$ plane. Therefore, applying \eqref{new coord} to \eqref{est : integ pre},
	\begin{eqnarray}
		\eqref{est : integ1}
		&\lesssim& \int_{0}^{\pi} \frac{1}{\sin^{2\b}\varphi} d\varphi \Big[ \int_{0}^{\infty} \int_{0}^{\pi} 
		\frac{ e^{-c|v-\zeta|^{2}_{\varphi_{v}}} }{ |v-\zeta|_{\varphi_{v}} }    \langle \zeta \rangle^{r} \sin^{1-2\b}\theta \frac{1}{|\zeta|^{2\b-1}} dA \Big]  \notag \\
		&\lesssim& C_{\b} \int_{0}^{\infty} \int_{0}^{\pi} 
		\frac{ e^{-c|v-\zeta|^{2}_{\varphi_{v}}} }{ |v-\zeta|_{\varphi_{v}} }    \langle \zeta \rangle^{r} \sin^{1-2\b}\theta \frac{1}{|\zeta|^{2\b-1}} dA,\quad 2\b-1 < 0,  \label{integ grow} \\
		&\lesssim&\langle v \rangle^{r +1-2\b} \notag, 
	\end{eqnarray}
	where $dA = |\zeta|d|\zeta| d\theta$ is 2D measure in $\varphi_{v}$ plane. This proves \eqref{est : integ1}. \\
	
	Proof for \eqref{est : integ2} is nearly same as \eqref{est : integ1} since $4\b-1 \leq 0$. We modify \eqref{integ grow} to get
	\Be \label{integ : mid}
		C_{\b} \int_{0}^{\infty} \int_{0}^{\pi} 
		\frac{ e^{-c|v-\zeta|^{2}_{\varphi_{v}}} }{ |v-\zeta|_{\varphi_{v}} }    \langle \zeta \rangle^{r} \sin^{1-2\b}\theta \frac{1}{|\zeta|^{4\b-1}} dA 
		\lesssim \langle v \rangle^{r + 1 -4\b}.
	\Ee
	To prove \eqref{est : integ3}, from the LHS of \eqref{integ : mid} with $4\b - 1>0$,
	we use H\"older inequality with $p < 2$ and $q >2$ (to be justified below) so that
	\begin{eqnarray}  
		\eqref{est : integ1}
		&\lesssim& C_{\b} \int_{0}^{\infty} \int_{0}^{\pi} 
		\frac{ e^{-c|v-\zeta|^{2}_{\varphi_{v}}} }{ |v-\zeta|_{\varphi_{v}} }   \langle \zeta \rangle^{r} \sin^{1-2\b}\theta \frac{1}{|\zeta|^{4\b-1}} dA   \notag \\
		&\lesssim& C_{\b}  \langle v \rangle^{r} \Big[ \iint e^{-\frac{cq}{2}|v-\zeta|_{\varphi_{v}}^{2}}    \frac{1}{|\zeta|^{(4\b-1)q}} dA \Big]^{\frac{1}{q}}  \notag \\
		&\lesssim& C_{\b}  \langle v \rangle^{r}, \notag 
	\end{eqnarray}
	where $q=q(\b) >2$ and $p=p(\b) < 2$ can be chosen depending on $\b < \frac{1}{2}$ so that
	\Be	\notag % \label{pq}
		(4\b-1)q < 2.		
	\Ee
	%\eqref{est : integ bar} is obtained similarly. \\
\end{proof}

\begin{corollary} \label{lem_int T}
	Let $\b < \frac{1}{2}$	 and $|v-\bv| \leq 1$. We also assume \eqref{assume_x} and \eqref{assume_x2} with $S_{(x, \bx, v+\zeta)}$, and \eqref{assume_v} and \eqref{assume_v2} with $S_{(\bx, v, \bv, \zeta)}$. Then, we get the followings,
	%\Be  
	%	\int_{\{\zeta :  \xb(x, v+\zeta)\in\p\O\}} \frac{e^{-c|\zeta|^{2}}}{|\zeta|} 
	%\frac{ \langle v+\zeta \rangle^{r} }{ |(v+\zeta)\cdot\nabla\xi(\xb(x, v+\zeta))|^{2\b} } \frac{1}{|v+\zeta|^{2\b}} d\zeta \leq C_{\b}  
	%\Ee
	\begin{eqnarray} 
		\int_{\zeta} k_{c}(v, v+\zeta)   \langle v+\zeta \rangle^{r} \mathcal{T}^{2\b}_{sp}(x, \tx, v+\zeta) d\zeta  
		&\lesssim& C_{\b} \langle v \rangle^{r+1-2\b},  \label{est : int Tsp}  
		\\
		\int_{\zeta} \mathbf{k}_{c}(v, \bv, \zeta)   \langle v+\zeta \rangle^{r} \mathcal{T}^{2\b}_{vel}(\bx, v, \tv, \zeta; t,s) d\zeta  
		&\lesssim& C_{\b} \langle v \rangle^{r+\max\{1-4\b,0\}} \big( 1 +  \langle v \rangle (t-s) \big),  \label{est : int Tvel}  
	\end{eqnarray}
	where $\mathcal{T}_{sp}$ and $\mathcal{T}_{vel}$ are defned in \eqref{def_Tsp} and \eqref{def_Tvel}, respectively. \\ 
\end{corollary}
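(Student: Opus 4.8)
The plan is to prove the corollary in two stages: first a \emph{pointwise-in-$\zeta$} bound that reduces $\mathcal{T}_{sp}$ and $\mathcal{T}_{vel}$ to a sum of two ``frozen-position fractions'' (independent of $\zeta$), and then an application of the integrability estimates of Lemma \ref{lem_int sing}.

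\textit{Stage 1 (pointwise reduction).} For $\mathcal{T}_{sp}(x,\tilde x,v+\zeta)$ in \eqref{def_Tsp} I would treat the three indicator-cases separately. When every trajectory $\X(\tau)$, $0\le\tau\le1$, hits $\p\O$ — i.e.\ $[0,1]\subset(\tau_-,\tau_+)$ — I avoid Proposition \ref{prop_avg S} and use instead the unimodality of $\mathfrak{S}_{sp}$ implied by Lemma \ref{lemma:ODE} (it increases on $[\tau_-,\tau_0]$ and decreases on $[\tau_0,\tau_+]$, cf.\ \eqref{ODE:S_x} and \eqref{tau_0}): then $\mathfrak{S}_{sp}(\tau)\ge\min\{\mathfrak{S}_{sp}(0),\mathfrak{S}_{sp}(1)\}$ on $[0,1]$, so $\fint_0^1\mathfrak{S}_{sp}^{-1}\,d\tau\le\mathfrak{S}_{sp}(0)^{-1}+\mathfrak{S}_{sp}(1)^{-1}$; since $\X(1)=x$, $\X(0)=\tilde x$ and $\xb(\tilde x,v+\zeta)=\xb(\bar x,v+\zeta)$ by \eqref{def_tildex}, while the denominator in \eqref{def:S_x} is $\le C_\Omega$, this is $\lesssim_\Omega|(v+\zeta)\cdot\nabla\xi(\xb(x,v+\zeta))|^{-1}+|(v+\zeta)\cdot\nabla\xi(\xb(\bar x,v+\zeta))|^{-1}$. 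In the remaining two cases only one endpoint is ``transversal'', the other being grazing ($\tau_-$, or $\tau_+$ up to the abuse of notation of Definition \ref{def_para}), and there $\mathfrak{S}_{sp}^{-1}$ has an \emph{integrable} singularity at the grazing endpoint, which is exactly what \eqref{int:Sx}, together with its time-reversed version \eqref{est:tau0+} and the monotonicity \eqref{mono Cst 3}, controls: $\fint_{\tau_-}^{1}\mathfrak{S}_{sp}^{-1}\,d\tau\lesssim_\Omega|(v+\zeta)\cdot\nabla\xi(\xb(x,v+\zeta))|^{-1}$ and $\fint_{0}^{\tau_+}\mathfrak{S}_{sp}^{-1}\,d\tau\lesssim_\Omega|(v+\zeta)\cdot\nabla\xi(\xb(\bar x,v+\zeta))|^{-1}$. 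Hence in all cases
\[
\mathcal{T}_{sp}(x,\tilde x,v+\zeta)\lesssim_\Omega\Big(\tfrac{1}{|(v+\zeta)\cdot\nabla\xi(\xb(x,v+\zeta))|}+\tfrac{1}{|(v+\zeta)\cdot\nabla\xi(\xb(\bar x,v+\zeta))|}\Big)\mathbf{1}_{\zeta\in P^1_{(x,\bar x,v+\zeta)}}.
\]
Running the same scheme for $\mathcal{T}_{vel}(\bar x,v,\tilde v,\zeta;t,s)$ with Proposition \ref{prop_avg S}(ii) (and \eqref{ODE:S_v}, Lemmas \ref{lem_mono}, \ref{lem_min_tb}, \ref{lem_tau ratio}) produces the fractions $|(v+\zeta)\cdot\nabla\xi(\xb(\bar x,v+\zeta))|^{-1}$ and $|(\bar v+\zeta)\cdot\nabla\xi(\xb(\bar x,\bar v+\zeta))|^{-1}$ — using $\V(1)=v+\zeta$ and $\V(0)=\tilde v+\zeta\parallel\bar v+\zeta$, so the $\xb$'s coincide, the speed mismatch $|\bar v+\zeta|/|v+\zeta|$ being bounded once one separates $|v+\zeta|\ge2$ from the bounded range $|v+\zeta|\le2$ — plus the extra factor $|\V(\tau_*)|^{-1}\big(1+\min_\tau(|\V(\tau)|\tb(\bar x,\V(\tau)))\big)$ appearing in \eqref{int:Sv}. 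Since $|\V(\tau)|\equiv|v+\zeta|$ and \eqref{def_Tvel} carries the side condition $\min_\tau\tb(\bar x,\V(\tau))\le t-s$, this factor is $\lesssim|v+\zeta|^{-1}\big(1+|v+\zeta|(t-s)\big)$ by \eqref{tb minmax}, so
\[
\mathcal{T}_{vel}(\bar x,v,\tilde v,\zeta;t,s)\lesssim_\Omega\frac{1+|v+\zeta|(t-s)}{|v+\zeta|}\Big(\tfrac{1}{|(v+\zeta)\cdot\nabla\xi(\xb(\bar x,v+\zeta))|}+\tfrac{1}{|(\bar v+\zeta)\cdot\nabla\xi(\xb(\bar x,\bar v+\zeta))|}\Big)\mathbf{1}_{\zeta\in P^2_{(\bar x,v,\bar v,\zeta)}}.
\]

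\textit{Stage 2 (integration).} Raising the Stage-1 bounds to the power $2\beta$ and using $(a+b)^{2\beta}\le a^{2\beta}+b^{2\beta}$ (valid as $2\beta<1$), it remains to integrate each term against $k_c$ or $\mathbf{k}_c$. For \eqref{est : int Tsp}, since $k_c(v,v+\zeta)\le|\zeta|^{-1}e^{-c|\zeta|^2}$, each term is bounded by the left side of \eqref{est : integ1} with centre $x$, resp.\ $\bar x$ (the supports matching those in Lemma \ref{lem_int sing} thanks to the $\tb<\infty$ indicators in \eqref{def_Tsp}), hence by $C_\beta\langle v\rangle^{r+1-2\beta}$. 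For \eqref{est : int Tvel} I write $\mathbf{k}_c(v,\bar v,\zeta)=k_c(v,v+\zeta)+k_c(\bar v,\bar v+\zeta)$, pair the $(v+\zeta)$-fraction with $k_c(v,v+\zeta)$ and the $(\bar v+\zeta)$-fraction with $k_c(\bar v,\bar v+\zeta)$ (the $|v+\zeta|^{-2\beta}$ from the prefactor is exactly the extra power \eqref{est : integ2}--\eqref{est : integ3} require, and $|v+\zeta|^{-2\beta}\lesssim|\bar v+\zeta|^{-2\beta}$ once $|\bar v+\zeta|\ge2$, the bounded range being a finite-volume integral with integrable singularities), and absorb $(1+|v+\zeta|(t-s))^{2\beta}\le(1+\langle v\rangle(t-s))(1+|\zeta|)^{2\beta}$ — here $t-s\le T\ll1$ is used — with $(1+|\zeta|)^{2\beta}$ swallowed by the Gaussian at the cost of shrinking $c$. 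What remains is precisely the left side of \eqref{est : integ2} when $\beta\le\tfrac14$ and of \eqref{est : integ3} when $\beta>\tfrac14$, giving $C_\beta\langle v\rangle^{r+\max\{1-4\beta,0\}}(1+\langle v\rangle(t-s))$, with $\langle\bar v\rangle$ replaced by $\langle v\rangle$ via $|v-\bar v|\le1$.

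The main obstacle is Stage 1 for $\mathcal{T}_{vel}$: one must bookkeep carefully which of the three integration windows has a grazing endpoint versus a transversal one, invoke Proposition \ref{prop_avg S}(ii) in the correct (possibly time-reversed) orientation, and check that the $\tb$-dependent factor in \eqref{int:Sv} is neutralised \emph{exactly} by the side condition $\min_\tau\tb\le t-s$ and \eqref{tb minmax}, so that no stray power of $\langle v\rangle$ or $(t-s)$ survives; the $\mathcal{T}_{sp}$ reduction and both integrations are then routine given Lemmas \ref{lem_int sing} and \ref{lem_unif n}.
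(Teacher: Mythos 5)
Your route is the same two-step plan the paper uses (apply Proposition \ref{prop_avg S} / Lemma \ref{lemma:ODE} to reduce the $\fint\mathfrak{S}^{-1}$ terms to frozen-position fractions, then integrate with Lemma \ref{lem_int sing}), and your explicit unimodality observation for the $\fint_0^1$ window is a genuinely needed detail that the paper's two-line proof leaves implicit, since applying \eqref{int:Sx} (resp.\ \eqref{int:Sv}) directly to $\int_0^1$ would leave an uncancelled factor $1-\tau_-$; the monotone-up-then-down structure of $\mathfrak{S}_{sp}$, $\mathfrak{S}_{vel}$ on $(\tau_-,\tau_0)$, $(\tau_0,\tau_+)$ from Lemmas \ref{lemma:ODE} and \ref{lem_mono} supplies exactly the pointwise lower bound $\mathfrak{S}(\tau)\ge\min\{\mathfrak{S}(0),\mathfrak{S}(1)\}$ that closes that case. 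One precision to record: your displayed Stage-1 bound for $\mathcal{T}_{vel}$ should carry the speed-mismatch ratio $|\bar v+\zeta|^2/|v+\zeta|^2$ on the $\bar v$-fraction (it enters twice, through $\tb(\bar x,\tilde v+\zeta)=\tfrac{|\bar v+\zeta|}{|v+\zeta|}\tb(\bar x,\bar v+\zeta)$ and through $|(\tilde v+\zeta)\cdot\nabla\xi|=\tfrac{|v+\zeta|}{|\bar v+\zeta|}|(\bar v+\zeta)\cdot\nabla\xi|$); you flag it verbally and the argument does close because on $|\bar v+\zeta|\gtrsim1$ the ratio is $O(1)$ while on the complementary ball the Gaussian and the condition $4\beta<2$ render $\frac{1}{|\zeta|}\frac{1}{|v+\zeta|^{4\beta}}\frac{1}{|\widehat{\bar v+\zeta}\cdot\nabla\xi|^{2\beta}}$ integrable by the same spherical decomposition as in Lemma \ref{lem_int sing}, but the factor should appear in the inequality as written.
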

\begin{proof}
	For \eqref{est : int Tsp}, we apply \eqref{int:Sx} to \eqref{def_Tsp} and then use \eqref{est : integ1}. For \eqref{est : int Tvel}, we apply \eqref{int:Sx} to \eqref{def_Tvel} and then use \eqref{est : integ2} and \eqref{est : integ3}.   
\end{proof}

\subsection{Uniform estimates for $\mathfrak{H}^{2\b}_{sp, vel}$}
We start from the following 
\begin{equation}  \notag
\begin{split}
& |f(t,x,v+ \zeta)-f(t,\bar{x}, \bar{v}+ \zeta)|  \\
&\leq   
e^{- \int^t_ 0 \nu(f) (\tau, X(\tau ), V(\tau )) d \tau}
|f(0,X(0 ), V(0 ))-f(0,\bar{X}(0 ), \bar{V}(0 ))| 
\\
&\quad + \int^t_0 
e^{- \int^t_ s \nu(f) (\tau, X(\tau ), V(\tau )) d \tau} 	
|\Gamma_{\text{gain}}(f,f)(s,X(s ), V(s ))
-\Gamma_{\text{gain}}(f,f)(s,\bar{X}(s ), \bar{V}(s ))|   
\\
&\quad +  
\big| e^{- \int^t_ 0 \nu(f) (\tau, X(\tau ), V(\tau )) d \tau}  -  
e^{- \int^t_0 \nu(f) (\tau, \bar{X}(\tau ), \bar{V}(\tau )) d \tau} \big|	
|f(0, \bar{X}(0), \bar{V}(0))|
\\
&\quad +\int^t_0
\big| e^{- \int^t_ s \nu(f) (\tau, X(\tau ), V(\tau )) d \tau}  -  
e^{- \int^t_s \nu(f) (\tau, \bar{X}(\tau ), \bar{V}(\tau )) d \tau} \big|
|	\Gamma_{\text{gain}}(f,f)(s,\bar{X}(s ), \bar{V}(s ))| ds, \\
\end{split}
\end{equation}
which is trivial by \eqref{f_expan}. Since, $|e^{-a}-e^{-b}| \leq |a-b|$ for $a\geq b \geq 0$, we obtain the following basic esitmate,
\begin{eqnarray} 
%\begin{split}
&& |f(t,x,v+ \zeta)-f(t,\bar{x}, \bar{v}+ \zeta)|   \label{basic f-f} \\
&&\leq   
|f(0,X(0 ), V(0 ))-f(0,\bar{X}(0 ), \bar{V}(0 ))| 
 \label{basic f-f1} \\
&&\quad + \int^t_0 
|\Gamma_{\text{gain}}(f,f)(s,X(s ), V(s ))
-\Gamma_{\text{gain}}(f,f)(s,\bar{X}(s ), \bar{V}(s ))|   
 \label{basic f-f2} \\
&&\quad +  \|w_{0}f_{0}\|_{\infty} \frac{1}{w_{0}(\bv+\zeta)} 
\int_{0}^{t } | \nu(f) (s, X(s), V(s ))  - \nu(f) (s, \bar{X}(s), \bar{V}(s)) | ds
 \label{basic f-f3} \\
&&\quad + t \sup_{0\leq s \leq t} \|wf(s)\|_{\infty}
\frac{1}{\sqrt{w(\bv+\zeta)}}  
\int_{0}^{t } | \nu(f) (s, X(s), V(s ))  - \nu(f) (s, \bar{X}(s), \bar{V}(s)) | ds , \label{basic f-f4}  
%\end{split}
\end{eqnarray}
where 
\[
	X(s) := X(s;t, x, v+\zeta),\quad V(s) := V(s;t, x, v+\zeta),\quad \bar{X}(s) := \bar{X}(s;t, \bx, \bv+\zeta), \quad \bar{V}(s) := \bar{V}(s;t, \bx, \bv+\zeta).
\]

%In the light of Proposition \ref{lem:1/2_to_H} the goal is now to establish a bound of the $\mathfrak H^1$-seminorm of \eqref{def:H} with $\beta=1$.  In this subsection we identify the major difficulty of estimating the $\mathfrak H^1$-seminorm, which is to control the velocity integral of a reciprocal of the specular singularity as in \eqref{int:S_introduction}. \\

\begin{proposition}[Seminorm estimate]\label{prop_unif H} 
Suppose the domain is given as in Definition \ref{def:domain} and \eqref{convex_xi}. For $0 < |(x,v)-(\bar x, \bar v)|\leq 1$ and $\zeta \in \R^3$, there exists $\varpi \gg_{f_{0}, \b} 1$ such that
\Be \label{est : H}
\begin{split}
	&\big[ \sup_{0\leq s \leq T}\mathfrak{H}_{sp}^{2\b}(s) + \sup_{0\leq s \leq T}\mathfrak{H}_{vel}^{2\b}(s) \big]  \\
	&\lesssim_{\b} \sup_{\substack{v\in\R^{3} \\ 0 < |x - \bx|\leq 1}} 
	\langle v \rangle \frac{|f_{0}( x, v ) - f_{0}(\bx, v)|}{|x - \bx|^{2\b}}  
	+ \sup_{\substack{x\in\bar{\O} \\ 0 < |v - \bv|\leq 1}}  \langle v \rangle^{2} \frac{|f_{0}( x, v ) - f_{0}( x, \bv)|}{|v - \bv|^{2\b}}  
	+ \|w_{0} f_{0}\|_{\infty}. \\
\end{split}
\Ee
for sufficiently small $T>0$ such that $\varpi T \ll 1$.
\end{proposition}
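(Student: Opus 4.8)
The plan is to close a Grönwall-type estimate for the two seminorms $\mathfrak{H}^{2\b}_{sp}(s)$ and $\mathfrak{H}^{2\b}_{vel}(s)$ simultaneously, using the nonlocal-to-local scheme advertised in the roadmap: start from the basic pointwise bound \eqref{basic f-f}--\eqref{basic f-f4}, insert the kernel $k_c$ (resp.\ $\mathbf{k}_c$) and integrate in $\zeta$, then use the difference splitting \eqref{diff f}--\eqref{split4} together with Lemma \ref{f-f split} and Lemma \ref{nu G split} to express the increments of $f$, $\nu(f)$, $\Gamma_{\text{gain}}(f,f)$ along the (shifted) trajectories in terms of the specular-singularity quantities $\mathcal{T}_{sp}$, $\mathcal{T}_{vel}$. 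The Carleman/nonlinear bounds Lemma \ref{lem_Carl}--Lemma \ref{lem_Gamma} convert the $\Gamma_{\text{gain}}$ increment into a $\mathbf{k}_c$-moment of a difference quotient of $f$, and Lemma \ref{lem_nega} handles the exponential weight $e^{-\varpi\langle v\rangle^2 t}$ when passing between the arguments $v$, $v+\zeta$, $\bar v$, $\bar v+\zeta$ (the constant $c$ in the kernel is halved at each such step, which is harmless).

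**Key steps in order.** First I would fix $0<|(x,v)-(\bar x,\bar v)|\le1$ and $\zeta$, write the four-term splitting \eqref{split1}--\eqref{split4} for $f$, for $\nu(f)$, and for $\Gamma_{\text{gain}}(f,f)$, and substitute the estimates \eqref{est:split1}--\eqref{est:split4} (and their $\nu$, $\Gamma_{\text{gain}}$ analogues from Lemma \ref{nu G split}). Second, I multiply by $k_c(v,v+\zeta)$ (for the $sp$ seminorm) or $\mathbf{k}_c(v,\bar v,\zeta)$ (for the $vel$ seminorm), multiply by the weight $e^{-\varpi\langle v\rangle^2 s}$, and integrate $d\zeta$ over $\R^3$. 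The crucial point is that every factor of the form $[1+|v+\zeta|(t-s)+(|v+\zeta|+|v+\zeta|^2(t-s))\,\mathcal{T}_{sp}]^{2\b}$ multiplying a ``$\sup$'' difference-quotient of $f$ gets controlled after integration by Corollary \ref{lem_int T}: $\int_\zeta k_c\,\langle v+\zeta\rangle^r\,\mathcal{T}^{2\b}_{sp}\,d\zeta\lesssim\langle v\rangle^{r+1-2\b}$ and the analogous bound \eqref{est : int Tvel} with the extra $(1+\langle v\rangle(t-s))$; one chooses the powers $s_1=1$, $s_2=2$ to match the weights in Definition \ref{def_H} so the surviving $\langle v\rangle$ powers are absorbed by $\langle v\rangle^{-2\b}$ in the definition of $\mathfrak{H}^{2\b}$. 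The ``good terms'' proportional to $\frac{\|wf\|_\infty}{w(v+\zeta)}$ (or $\frac{\langle v+\zeta\rangle}{w(v+\zeta)}\|wf\|_\infty^2$ etc.) integrate to a harmless constant times $\mathcal{P}_2(\|w_0f_0\|_\infty)$ because of the Gaussian decay. Third, the time integrals $\int_0^t\cdots ds$ of the two nonlocal terms produce $\int_0^t[\text{bounded}]\,ds\times\sup_{0\le s\le T}(\mathfrak{H}^{2\b}_{sp}(s)+\mathfrak{H}^{2\b}_{vel}(s))$; here the exponential weight $e^{-\varpi\langle v\rangle^2 t}e^{\varpi\langle v+\zeta\rangle^2 s}$ against $k_c$ is absorbed into $k_{c/2}$ by Lemma \ref{lem_nega} provided $\varpi T$ is small, and a factor $T$ (or $\varpi^{-1}$ after the $e^{-\varpi\langle v\rangle^2 s}$ time-weight is used to kill the $\langle v\rangle$-growth of coefficients) appears in front of the sup. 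Fourth, take the supremum over $x,\bar x$ (resp.\ $v,\bar v$), then over $\zeta$ has already been integrated out, then over $0\le s\le T$; choosing $\varpi\gg_{f_0,\b}1$ and then $T$ with $\varpi T\ll1$ makes the self-referential coefficient $<\tfrac12$, and one absorbs it to the left-hand side, yielding \eqref{est : H} with the initial-data seminorms (from \eqref{basic f-f1}, noting $\langle v+\zeta\rangle^{-2\b}\int k_c\langle v+\zeta\rangle\,d\zeta\lesssim\langle v\rangle^{1-2\b}$ matches the $\langle v\rangle$ weight on $f_0$) plus $\|w_0f_0\|_\infty$.

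**Main obstacle.** The hard part will be the bookkeeping of weights in Step 2--3: one must verify that the powers $\langle v+\zeta\rangle^{s_1},\langle v+\zeta\rangle^{s_2}$ emitted by Lemma \ref{f-f split}, combined with the $\langle v\rangle^{r+1-2\b}$ (or $\langle v\rangle^{r+\max\{1-4\b,0\}}(1+\langle v\rangle(t-s))$) from Corollary \ref{lem_int T}, and divided by the $\langle v\rangle^{-2\b}$ of Definition \ref{def_H}, really close — i.e.\ that no net positive power of $\langle v\rangle$ survives that cannot be killed by the time weight $e^{-\varpi\langle v\rangle^2 s}$ (this is exactly why $\varpi$ must be taken large depending on $f_0$ and $\b$, and why the extra $(1+\langle v\rangle(t-s))$ in \eqref{est : int Tvel} is affordable only after spending $e^{-\varpi\langle v\rangle^2 s}$). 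A secondary subtlety is that the splitting \eqref{split1}--\eqref{split4} uses shifted positions/velocities $\tilde x=\tilde x(x,\bar x,v+\zeta)$ and $\tilde v=\tilde v(v,\bar v,\zeta)$ that depend on $\zeta$, but by the remark following \eqref{fraction scheme} the singular factors evaluate at $x$ or $\bar x$ (not $\tilde x$), so the $\zeta$-integration in Corollary \ref{lem_int T} is legitimate; one must also handle the measure-zero-in-effect sets where \eqref{assume_x}, \eqref{assume_x2}, \eqref{assume_v}, \eqref{assume_v2} fail, but there the trajectories are straight lines (``Step 0'' of Lemma \ref{f-f split}) and the estimates are trivial. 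Finally, one should note that the bound is for the two separate seminorms; the combined $C^{0,\b}_{x,v}$ estimate \eqref{est:Holder} of Theorem \ref{theo:Holder} is then obtained in a subsequent step (the ``nonlocal-to-local'' closure via \eqref{lin_expan} and Lemma \ref{lem:Holder_tb}), which is outside the scope of this proposition.
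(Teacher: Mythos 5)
Your proposal is correct and follows the paper's proof essentially verbatim: the Duhamel expansion \eqref{basic f-f}--\eqref{basic f-f4}, the splittings and estimates from Lemmas \ref{f-f split} and \ref{nu G split}, the $\zeta$-integrability from Corollary \ref{lem_int T} together with the kernel modification of Lemma \ref{lem_nega}, and the $1/\varpi$ gain from the time weight, with $\varpi\gg_{f_0,\beta}1$ and $\varpi T\ll1$ as you indicate, exactly as in the paper's Steps 1--2. The one small streamlining you omit is that within this proposition each seminorm perturbs only one variable (one takes $x=\bar x$ for $\mathfrak{H}^{2\b}_{vel}$ and holds $v$ fixed for $\mathfrak{H}^{2\b}_{sp}$), so only two of the four splits \eqref{split1}--\eqref{split4} are nontrivial at a time; writing all four as you do is harmless since the extra two vanish identically.
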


\hide
{\color{blue}  
\begin{remark}
	For optimal weight, we change RHS (v-directional initial data) into
	\[
		\sup_{\substack{x\in\bar{\O} \\ |v - \bv|\leq 1}}  \langle v \rangle^{1+2\b} \frac{|f_{0}( x, v ) - f_{0}( x, \bv)|}{|v - \bv|^{2\b}}  ,\quad 1+2\b < 2.
	\]
\end{remark}
} %blue
\unhide

\begin{proof}
		{\bf Step 1} First, we estimate $\mathfrak{H}^{2\b}_{vel}$.  From definition \ref{def_H}, we estimate (we use $\bx$ instead of $x$ to use Lemma \ref{f-f split} directly).
		\Be \label{f-f : x v bv}
			e^{-\varpi \langle v \rangle^{2} t} 
			%\langle v \rangle^{2\b} 
			\int_{\R^{3}_{\zeta}} \mathbf{k}_{c}(v, \bv, \zeta) \frac{ |f(t, \bx, v + \zeta) - f(t, \bx, \bv + \zeta)| }{|v-\bv|^{2\b}} d\zeta,\quad\text{for}\quad  |v-\bv| \leq 1.
		\Ee
		 We note that $\tv+\zeta$ is well-defined only when \eqref{assume_v} holds. For given $v$ and $\bv$, however, the set of $\zeta\in\R^{3}$, where \eqref{assume_v} does not hold is of measure zero. So we assume \eqref{assume_v} without loss of generality throughout {\bf Step 1} and can use \eqref{est:split3} and \eqref{est:split4} in Lemma \ref{f-f split}.  \\
		
		\noindent For $f(t, \bx, v + \zeta) - f(t, \bx, \bv + \zeta)$ in integrand of \eqref{f-f : x v bv}, we use expansion \eqref{basic f-f} by replaincg
		\Be\notag % \label{bar X : v bv}
		\begin{split}
			( \bar X ,   \bar V ) := (\bar X (s),  \bar V (s))&= (X(s;t, \bx,  v+ \zeta), V(s;t, \bx, \bar v+ \zeta)),  \quad  |v-\bv| \leq 1, \\
		\end{split}
		\Ee
		Now, we apply  $e^{-\varpi \langle v \rangle^{2} t} \langle v \rangle^{2\b} \int_{\R^{3}_{\zeta}} \mathbf{k}(v, \bv, \zeta) \frac{ \cdot }{|v-\bv|^{2\b}} d\zeta$ to each \eqref{basic f-f1} -- \eqref{basic f-f4}. \\
		
		{\it Substep 1-1} In this substep, we consider \eqref{basic f-f1}. To consider \eqref{basic f-f1}, we put
		\Be \label{replace1-1}
			 s=0,  \quad x = \bx.
		\Ee
		in \eqref{split1}--\eqref{split4}. Then, it is sufficient to consider only \eqref{split3} and \eqref{split4} only. Since we are dealing with difference of $f$, let us use notation $\eqref{split3}_{f}$ and $\eqref{split4}_{f}$ to stress the function $f$. Using \eqref{est:split3}, we get 
		\begin{equation} \label{est : f-f1-split3}
		\begin{split}
			&e^{-\varpi \langle v \rangle^{2} t} 
			%\langle v \rangle^{2\b}
			 \int_{\R_{\zeta}^{3}} \mathbf{k}_{c}(v, \bv, \zeta) \frac{ |\eqref{split3}_{f} | }{|v-\bv|^{2\b}} d\zeta \\
			&\lesssim e^{-\varpi \langle v \rangle^{2} t} 
			%\langle v \rangle^{2\b}
			\int_{\R_{\zeta}^{3}} \mathbf{k}_{c}(v, \bv, \zeta)  
			\Big[ t + |v+\zeta|t^{2} + ( |v+\zeta| + |v+\zeta|^{2}t ) \mathcal{T}_{vel}(\bx, v, \tv, \zeta; t,0)
			\Big]^{2\b}  
			  \\
			&\quad\quad 
			\times  
			%{\color{blue}  
				\Big[ \frac{ 1 }{ \langle v+\zeta \rangle}  
				\sup_{\substack{v\in\R^{3} \\ 0 < |x - \bx|\leq 1}} 
				\langle v \rangle \frac{|f_{0}( x, v ) - f_{0}(\bx, v)|}{|x - \bx|^{2\b}}  + \frac{1}{w_{0}(v+\zeta)}\|w_{0} f_{0}\|_{\infty} \Big] 
			%}     
			d\zeta 
			  \\
			& + e^{-\varpi \langle v \rangle^{2} t} 
			%\langle v \rangle^{2\b}
			\int_{\R_{\zeta}^{3}} \mathbf{k}_{c}(v, \bv, \zeta) 
			\Big[1 + |v+\zeta|t +  |v+\zeta|^{2} \mathcal{T}_{vel}(\bx, v, \tv, \zeta; t,0)
			\Big]^{2\b}   \\
			&\quad\quad 
			\times
			%{\color{blue}  
				\Big[ 
				\frac{ 1}{ \langle v+\zeta \rangle^{2} }	
				\sup_{ \substack{ x\in \overline{\O} \\ 0 < |v - \bv|\leq 1   } }  \langle v \rangle^{2} \frac{|f_{0}( x, v ) - f_{0}( x, \bv)|}{|v - \bv|^{2\b}} + \frac{1}{w_{0}(v+\zeta)} \|w_{0}f_{0}\|_{\infty}  \Big]
			%}  
			d\zeta   \\
			&\lesssim_{\b}  
			\sup_{\substack{v\in\R^{3} \\ 0 < |x - \bx|\leq 1}} 
			\langle v \rangle \frac{|f_{0}( x, v ) - f_{0}(\bx, v)|}{|x - \bx|^{2\b}}  
			+ \sup_{ \substack{ x\in \overline{\O} \\ 0 < |v - \bv|\leq 1   } }  \langle v \rangle^{2} \frac{|f_{0}( x, v ) - f_{0}( x, \bv)|}{|v - \bv|^{2\b}}  
			+ \|w_{0} f_{0}\|_{\infty},
		\end{split}
		\end{equation}
		where we have used Corollary \ref{lem_int T}.   \\
		
		We similarly apply \eqref{replace1-1} and use $\eqref{est:split4}$ to get estimate for $\eqref{split4}_{f}$,
		\begin{equation} \label{est : f-f1-split4}
		\begin{split}
		&e^{-\varpi \langle v \rangle^{2} t} 
		%\langle v \rangle^{2\b}
		\int_{\R_{\zeta}^{3}} \mathbf{k}_{c}(v, \bv, \zeta) \frac{ |\eqref{split4}_{f} | }{|v-\bv|^{2\b}} d\zeta \\
		&\lesssim e^{-\varpi \langle v \rangle^{2} t} 
		%\langle v \rangle^{2\b}
		\int_{\R_{\zeta}^{3}} \mathbf{k}_{c}(v, \bv, \zeta) t^{2\b}
		\Big[ \frac{ 1}{\langle v +\zeta\rangle}  \sup_{\substack{v\in\R^{3} \\ 0 < |x - \bx|\leq 1}}  \Big(  \langle v \rangle \frac{|f_{0}(x, v ) - f_{0}(\bar{x}, v)|}{|x - \bx|^{2\b}} \Big) + \frac{1}{w_{0}(v+\zeta)}  \|w_{0} f_{0}\|_{\infty} \Big]  d\zeta    \\
		&\quad + e^{-\varpi \langle v \rangle^{2} t} 
		%\langle v \rangle^{2\b}
		\int_{\R_{\zeta}^{3}} \mathbf{k}_{c}(v, \bv, \zeta) 
		\Big[ \frac{ 1 }{\langle v+\zeta \rangle^{2}}  \sup_{ \substack{ x\in \overline{\O} \\ 0 < |v - \bv|\leq 1   } } \Big(  \langle v \rangle^{2} \frac{|f_{0}( x, v ) - f_{0}( x, \bv)|}{|v - \bv|^{2\b}} \Big)
		+ \frac{1}{w_{0}(v+\zeta)}  \|w_{0} f_{0}\|_{\infty} \Big] d\zeta  \\
		&\lesssim_{\b}
		\sup_{\substack{v\in\R^{3} \\ 0 < |x - \bx|\leq 1}} 
		\langle v \rangle \frac{|f_{0}( x, v ) - f_{0}(\bx, v)|}{|x - \bx|^{2\b}}  
		+ \sup_{ \substack{ x\in \overline{\O} \\ 0 < |v - \bv|\leq 1   } }  \langle v \rangle^{2} \frac{|f_{0}( x, v ) - f_{0}( x, \bv)|}{|v - \bv|^{2\b}}  
		+ \|w_{0} f_{0}\|_{\infty}.
		\end{split}
		\end{equation}
		Note that the bound of  \eqref{est : f-f1-split3} also control \eqref{est : f-f1-split4}.  \\	 
		 Hence, %we derive the following from \eqref{est : f-f1-split3} and \eqref{est : f-f1-split4}, 
		\Be \label{est : basic f-f1-dv}
		\begin{split}
			&e^{-\varpi \langle v \rangle^{2} t} 
			%\langle v \rangle^{2\b}
			\int_{\R_{\zeta}^{3}} \mathbf{k}_{c}(v, \bv, \zeta) \frac{ |\eqref{basic f-f1} | }{|v-\bv|^{2\b}} d\zeta   \lesssim \text{RHS of} \ \eqref{est : f-f1-split3}.
		\end{split}
		\Ee
		
		{\it Substep 1-2} In this substep, we consider  \eqref{basic f-f2}. We put
		\Be \label{replace1-2}
			x=\bx,
		\Ee
		and use \eqref{est:split3} replacing $f$ into $\Gamma_{\text{gain}}(f,f)$. Using Lemma \ref{nu G split}, Lemma \ref{lem_Gamma}, and \eqref{bf nega} of Lemma \ref{lem_nega}, we obtain
		\begin{equation} \notag % \label{est : basic f-f2-dv}
		\begin{split}
		&\int_{0}^{t} e^{-\varpi \langle v \rangle^{2} t} 
		%\langle v \rangle^{2\b}
		\int_{\R_{\zeta}^{3}} \mathbf{k}_{c}(v, \bv, \zeta) \frac{ |\eqref{basic f-f2} | }{|v-\bv|^{2\b}} d\zeta ds   \\
		&\lesssim \int_{0}^{t} e^{-\varpi \langle v \rangle^{2} (t-s)} 
		%\langle v \rangle^{2\b}
		\int_{\R_{\zeta}^{3}} \mathbf{k}_{c}(v, \bv, \zeta)
		e^{- \varpi \langle v \rangle^{2} s}  
		\Big[ t + |v+\zeta|t^{2} + ( |v+\zeta| + |v+\zeta|^{2}t ) \mathcal{T}_{vel}(\bx, v, \tv, \zeta; t, s)
		\Big]^{2\b}  
		 \\
		&\quad\quad 
		\times  
		%{\color{blue}  
		\Big[  e^{\varpi \langle v+\zeta \rangle^2 s} 
			\sup_{\substack{v\in\R^{3} \\ 0 < |x - \bx|\leq 1}}  e^{-\varpi \langle v \rangle^2 s}  
			\frac{|\Gamma_{\text{gain} }(s, x, v ) - \Gamma_{\text{gain} }(s, \bx, v)|}{|x - \bx|^{2\b}}  + \frac{\langle v+\zeta \rangle}{w(v+\zeta)}\|w f(s)\|^{2}_{\infty} \Big] 
		%}    
		d\zeta   \\
		& + \int_{0}^{t} e^{-\varpi \langle v \rangle^{2} (t-s)} 
		%\langle v \rangle^{2\b}
		\int_{\R_{\zeta}^{3}} \mathbf{k}_{c}(v, \bv, \zeta)
		e^{-\varpi \langle v \rangle^{2} s} 
		\Big[1 + |v+\zeta|t +  |v+\zeta|^{2} \mathcal{T}_{vel}(\bx, v, \tv, \zeta; t,s)
		\Big]^{2\b}    \\
		&\quad\quad 
		\times
		%{\color{blue}  
		\Big[ 
			e^{\varpi \langle v+\zeta \rangle^2 s}  
			\sup_{ \substack{ x\in \overline{\O} \\ 0 < |v - \bv|\leq 1   } } e^{-\varpi \langle v \rangle^2 s}  
			%\langle v \rangle^{2\b} 
			\frac{|\Gamma_{\text{gain} }(s, x, v ) - \Gamma_{\text{gain} }(s, x, \bv)|}{|v - \bv|^{2\b}} + \frac{\langle v+\zeta \rangle}{w(v+\zeta)} \|wf(s)\|^{2}_{\infty}  \Big]
		%}  
		d\zeta
		\\
		&\lesssim \int_{0}^{t} e^{-\varpi \langle v \rangle^{2} (t-s)} 
		%\langle v \rangle^{2\b}
		\int_{\R_{\zeta}^{3}} \mathbf{k}_{\frac{c}{2}}(v, \bv, \zeta)
		%e^{- \varpi \langle v \rangle^{2} s}  
		\Big[ t + |v+\zeta|t^{2} + ( |v+\zeta| + |v+\zeta|^{2}t ) \mathcal{T}_{vel}(\bx, v, \tv, \zeta; t, s)
		\Big]^{2\b}  
		\\
		&\quad\quad 
		\times  
		%{\color{blue}  
		\Big[ 
			\Big( 
			\|wf(s)\|_{\infty}
			\sup_{\substack{v\in\R^{3} \\ 0 < |x - \bx|\leq 1}}  e^{-\varpi \langle v \rangle^2 s}  
			 \int_{\R^{3}_{u}} k_{c}(v, v+u) \frac{|f(s, x, v+u ) - f(s, \bx, v+u)|}{|x - \bx|^{2\b}} du
			\Big) 
		%}
			\\
			&\quad\quad\quad \quad
			%{\color{blue}
			  + \frac{\langle v+\zeta \rangle}{w(v+\zeta)}\|w f(s)\|^{2}_{\infty} \Big] 
			%}    
		d\zeta   \\
		& + \int_{0}^{t} e^{-\varpi \langle v \rangle^{2} (t-s)} 
		%\langle v \rangle^{2\b}
		\int_{\R_{\zeta}^{3}} \mathbf{k}_{\frac{c}{2}}(v, \bv, \zeta)
		%e^{-\varpi \langle v \rangle^{2} s} 
		\Big[1 + |v+\zeta|t +  |v+\zeta|^{2} \mathcal{T}_{vel}(\bx, v, \tv, \zeta; t,s)
		\Big]^{2\b}    \\
		&\quad\quad 
		\times
		%{\color{blue} 
		 \Big[ 
			\Big(
			\|wf(s)\|_{\infty}
			\sup_{ \substack{ x\in \overline{\O} \\ 0 < |v - \bv|\leq 1   } } e^{-\varpi \langle v \rangle^2 s} 
			\int_{\R_{\zeta}^{3}} \mathbf{k}_{\frac{c}{2}}(v, \bv, u)
			\frac{|f(s, x, v+u ) - f(s, x, \bv+u)|}{|v - \bv|^{2\b}}
			\Big) 
		%}
		\\
		&\quad\quad\quad \quad
		%{\color{blue}
			 + \big( \frac{1}{\langle v+\zeta \rangle} + \frac{\langle v+\zeta \rangle}{w(v+\zeta)} \big) \|wf(s)\|^{2}_{\infty}  \Big]
		%}  
		d\zeta. 
		\end{split}
		\end{equation}
		Applying Corollary \ref{lem_int T} and Lemma \ref{lem_Gamma},
		\begin{equation} \label{est : basic f-f2-dv}
		\begin{split}
		&\int_{0}^{t} e^{-\varpi \langle v \rangle^{2} t} 
		%\langle v \rangle^{2\b}
		\int_{\R_{\zeta}^{3}} \mathbf{k}_{c}(v, \bv, \zeta) \frac{ |\eqref{basic f-f2} | }{|v-\bv|^{2\b}} d\zeta ds   \\
		&\lesssim_{\b} \int_{0}^{t} e^{-\frac{\varpi}{2} \langle v \rangle^{2} (t-s)} 
		%\langle v \rangle^{4\b} 
		ds \|w_{0}f_{0}\|_{\infty} \big[ \sup_{0\leq s \leq T}\mathfrak{H}_{sp}^{2\b}(s) + \sup_{0\leq s \leq T}\mathfrak{H}_{vel}^{2\b}(s) \big] 
		+ \|w_{0}f_{0}\|_{\infty}^{2}  \\
		&\lesssim_{\b} \frac{1}{\varpi} 
		\big[ \sup_{0\leq s \leq T}\mathfrak{H}_{sp}^{2\b}(s) + \sup_{0\leq s \leq T}\mathfrak{H}_{vel}^{2\b}(s) \big] 
		\mathcal{P}_{2}( \|w_{0}f_{0}\|_{\infty}) ,
		\end{split}
		\end{equation}
		where $\mathcal{P}_{2}(\cdot) = 1 + |\cdot| +|\cdot|^{2}$. Note that we should consider $\eqref{split3}_{\Gamma}$ and $\eqref{split4}_{\Gamma}$ separately. However, the bound of $\eqref{split3}_{\Gamma}$ also control $\eqref{split4}_{\Gamma}$ similar as {\it Substep 1-1}. 
		%We also note that, similar as {\it Substep 1-1}, we can assume \eqref{assume_v} without loss of generality in \eqref{est : basic f-f2-dv}. \\
		
		{\it Substep 1-3}  In this substep, we consider  \eqref{basic f-f3} and \eqref{basic f-f4}. We use \eqref{replace1-2} and then, from Lemma \ref{lem_nu}, we have same bound as \eqref{est : basic f-f2-dv}, but order of $\|wf(s)\|_{\infty}$ is reduced by $1$. ($\nu(f)$ is linear.)
		\begin{equation} \label{est : basic f-f34-dv}
		\begin{split}
		&\int_{0}^{t} e^{-\varpi \langle v \rangle^{2} t} 
		%\langle v \rangle^{2\b}
		\int_{\R_{\zeta}^{3}} \mathbf{k}_{c}(v, \bv, \zeta) \frac{ |\eqref{basic f-f3} | + |\eqref{basic f-f4} | }{|v-\bv|^{2\b}} d\zeta ds   \\
		&\lesssim \frac{1}{\varpi} 
		\big[ \sup_{0\leq s \leq T}\mathfrak{H}_{sp}^{2\b}(s) + \sup_{0\leq s \leq T}\mathfrak{H}_{vel}^{2\b}(s) \big] 
		\mathcal{P}_{1}( \|w_{0}f_{0}\|_{\infty}) .
		\end{split}
		\end{equation}
		We omit the detail. \\
		
		Putting \eqref{est : basic f-f1-dv}, \eqref{est : basic f-f2-dv}, and \eqref{est : basic f-f34-dv} altogether, we conclude 
		\Be \label{est : Hvel pre}
		\begin{split}
			 \sup_{0\leq s \leq T}\mathfrak{H}_{vel}^{2\b}(s)  
			&\lesssim_{\b} \sup_{\substack{v\in\R^{3} \\ 0 < |x - \bx|\leq 1}} 
			\langle v \rangle \frac{|f_{0}( x, v ) - f_{0}(\bx, v)|}{|x - \bx|^{2\b}}  
			+ \sup_{ \substack{ x\in \overline{\O} \\ 0 < |v - \bv|\leq 1   } }  \langle v \rangle^{2} \frac{|f_{0}( x, v ) - f_{0}( x, \bv)|}{|v - \bv|^{2\b}}  
			+ \|w_{0} f_{0}\|_{\infty} \\
			&\quad + \frac{1}{\varpi}  \big[ \sup_{0\leq s \leq T}\mathfrak{H}_{sp}^{2\b}(s) + \sup_{0\leq s \leq T}\mathfrak{H}_{vel}^{2\b}(s) \big] \mathcal{P}_{2}( \|w_{0}f_{0}\|_{\infty}) .
			\end{split}
		\Ee

	\noindent {\bf Step 2} We estimate $\mathfrak{H}^{2\b}_{sp}$. From definition \ref{def_H}, we control
	\Be \label{f-f : x bx v} 
	e^{-\varpi \langle v \rangle^{2} t} \int_{\R^{3}_{\zeta}} k_{c}(v, v+\zeta) \frac{ |f(t, x, v + \zeta) - f(t, \bx, v + \zeta)| }{|x-\bx|^{2\b}} d\zeta,\quad |x-\bx| \leq 1.
	\Ee
	 We note that $\tx$ is well-defined only when \eqref{assume_x} holds. For given $x$, $\bx$, and $v$, the set of $\zeta\in\R^{3}$, where \eqref{assume_x} does not hold is of measure zero. So we can assume \eqref{assume_x} without loss of generality throughout {\bf Step 2} and can use \eqref{est:split1} and \eqref{est:split2} in Lemma \ref{f-f split}. \\
	
	\noindent For $f(t, x, v + \zeta) - f(t, \bx, v + \zeta)$ in integrand of \eqref{f-f : x bx v}, we use \eqref{basic f-f} by replacing
	\Be \notag  %\label{bar X : x bx}
	\begin{split}
		( \bar X ,   \bar V ) := (\bar X (s),  \bar V (s))&= (X(s;t, x,  v+ \zeta), V(s;t, \bx, v+ \zeta)),  \quad  |x-\bx| \leq 1. \\
	\end{split}
	\Ee
	Now, we apply  $e^{-\varpi \langle v \rangle^{2} t} \int_{\R^{3}_{\zeta}} k_{c}(v, v+\zeta) \frac{ \cdot }{|x-\bx|^{2\b}} d\zeta$ to each \eqref{basic f-f1} -- \eqref{basic f-f4}. \\
	
	{\it Substep 2-1}	For \eqref{basic f-f1}, we replace 
	\Be \label{replace2-1}
	s=0
	\Ee
	in \eqref{split1} and \eqref{split2}. We use \eqref{est:split1} then similar as \eqref{est : basic f-f1-dv}, 
	%and \eqref{est:split1 raw}, we get 
	\hide
	\begin{equation} \label{est : f-f1-split1}
	\begin{split}
	&e^{-\varpi \langle v \rangle^{2} t} 
	\int_{\R_{\zeta}^{3}} k_{c}(v, v+\zeta) \frac{ |\eqref{split1}_{f} | }{|x-\bx|^{2\b}} d\zeta \\
	&\lesssim e^{-\varpi \langle v \rangle^{2} t} 
	\int_{\R^{3}_{\zeta}} k_{c}(v, v+\zeta)
	{\color{blue} \frac{1}{ \langle v+\zeta \rangle^{s_{1}}} }
	\Big[ 1 + |v+\zeta|t
	\\
	&\quad\quad  + (  |v+\zeta| + |v+\zeta|^{2}t )
	\Big( \int_{0}^{1} \frac{1}{\mathfrak{S}_{sp}(\tau; x, \tx, v+ \zeta)} d\tau 
	+  \int_{\tau_{-}}^{1} \frac{1}{\mathfrak{S}_{sp}(\tau; x, \tx, v+\zeta)} d\tau  
	+  \int_{0}^{\tau_{-}} \frac{1}{\mathfrak{S}_{sp}(\tau; x, \tx, v+\zeta)} d\tau
	\Big) 
	\Big]^{2\b}  
	d\zeta  
	\\
	&\quad \quad
	\times  
	{\color{blue}  \Big[ \sup_{v, |x - \bx|\leq 1} \langle v \rangle^{s_{1}} \frac{|f_{0}(x, v ) - f_{0}(\bx, v)|}{|x - \bx|^{2\b}} + \|w_{0} f_{0}\|_{\infty} \Big]
	} \\
	&\quad +  e^{-\varpi \langle v \rangle^{2} t} 
	\int_{\R^{3}_{\zeta}} k_{c}(v, v+\zeta)
	{\color{blue} \frac{1}{ \langle v+\zeta \rangle^{s_{2}}} }
	\Big[  1 + |v+\zeta|  \\
	&\quad\quad 
	+ ( |v+\zeta| + |v+\zeta|^{2} )			 
	\Big( \int_{0}^{1} \frac{1}{\mathfrak{S}_{sp}(\tau; x, \tx, v+\zeta)} d\tau 
	+  \int_{\tau_{-}}^{1} \frac{1}{\mathfrak{S}_{sp}(\tau; x, \tx, v+\zeta)} d\tau  
	+  \int_{0}^{\tau_{-}} \frac{1}{\mathfrak{S}_{sp}(\tau; x, \tx, v+\zeta) } d\tau
	\Big) 
	\Big]^{2\b}   d\zeta      \\
	%&\quad\quad\quad\quad\quad\quad  
	%\times \Big( \mathbf{1}_{0\leq \min\{ t^{1}(\bx, v+\zeta), t^{1}(\bx, \tv+\zeta) \}} + \mathbf{1}_{0> \max\{ t^{1}(\bx, v+\zeta), t^{1}(\bx, \tv+\zeta) \}} \Big)  \\
	&\quad\quad 
	\times
	{\color{blue}  \Big[ \sup_{x, |v - \bv|\leq 1} \langle v \rangle^{s_{2}} \frac{|f_{0}(x, v ) - f_{0}(x, \bv)|}{|v - \bv|^{2\b}} + \|w_{0}f_{0}]\|_{\infty}  \Big]
	}     \\
	\end{split}
	\end{equation}
	Similarly, from \eqref{est:split2},
	\begin{equation} \label{est : f-f1-split2}
	\begin{split}
	&e^{-\varpi \langle v \rangle^{2} t} 
	\int_{\R_{\zeta}^{3}} k_{c}(v, v+ \zeta) \frac{ |\eqref{split2}_{f} | }{|x-\bx|^{2\b}} d\zeta \\
	&\lesssim e^{-\varpi \langle v \rangle^{2} t} 
	\int_{\R^{3}_{\zeta}} k_{c}(v, v+\zeta)  
	{\color{blue} \frac{1}{ \langle v+\zeta \rangle^{s_{1}}} }
	d\zeta  
	{\color{blue}  \Big[ \sup_{v, |x - \bx|\leq 1} \langle v \rangle^{s_{1}} \frac{|f_{0}(x, v ) - f_{0}(\bx, v)|}{|x - \bx|^{2\b}} + \|w_{0} f_{0}\|_{\infty} \Big]
	}   \\
	&\quad +  e^{-\varpi \langle v \rangle^{2} t} 
	\int_{\R^{3}_{\zeta}} k_{c}(v, v+\zeta)  
	{\color{blue} \frac{1}{ \langle v+\zeta \rangle^{s_{2}}} }
	d\zeta    
	{\color{blue}  \Big[ \sup_{x, |v - \bv|\leq 1} \langle v \rangle^{s_{2}} \frac{|f_{0}(x, v ) - f_{0}(x, \bv)|}{|v - \bv|^{2\b}} + \|w_{0}f_{0}]\|_{\infty}  \Big] 
	}      \\
	\end{split}
	\end{equation}	 
	From \eqref{est : f-f1-split1} and \eqref{est : f-f1-split2}, 
	\unhide
	
	\Be \label{est : basic f-f1-dx}
	\begin{split}
		&e^{-\varpi \langle v \rangle^{2} t}  
		\int_{\R_{\zeta}^{3}} k_{c}(v, v+\zeta) \frac{ |\eqref{split3}_{f} | }{|x-\bx|^{2\b}} d\zeta \\
		&\lesssim e^{-\varpi \langle v \rangle^{2} t}  
		\int_{\R_{\zeta}^{3}} k_{c}(v, v+\zeta)  
		\Big[ ( |v+\zeta| + |v+\zeta|^{2}t ) \mathcal{T}_{sp}(x, \tx, v+\zeta)
		\Big]^{2\b}  
		\\
		&\quad\quad 
		\times  
		%{\color{blue}  
		\Big[ \frac{ 1 }{ \langle v+\zeta \rangle}  
			\sup_{\substack{v\in\R^{3} \\ 0 < |x - \bx|\leq 1}} 
			\langle v \rangle \frac{|f_{0}( x, v ) - f_{0}(\bx, v)|}{|x - \bx|^{2\b}}  + \frac{1}{w_{0}(v+\zeta)}\|w_{0} f_{0}\|_{\infty} \Big] 
		%}     
		d\zeta 
		\\
		& + e^{-\varpi \langle v \rangle^{2} t}  
		\int_{\R_{\zeta}^{3}} k_{c}(v, v+\zeta) 
		\Big[ (|v+\zeta| +  |v+\zeta|^{2}) \mathcal{T}_{sp}(x, \tx, v+\zeta)
		\Big]^{2\b}   \\
		&\quad\quad 
		\times
		%{\color{blue}  
		\Big[ 
			\frac{ 1}{ \langle v+\zeta \rangle^{2} }	
			\sup_{ \substack{ x\in \overline{\O} \\ 0 < |v - \bv|\leq 1   } }  \langle v \rangle^{2} \frac{|f_{0}( x, v ) - f_{0}( x, \bv)|}{|v - \bv|^{2\b}} + \frac{1}{w_{0}(v+\zeta)} \|w_{0}f_{0}\|_{\infty}  \Big]
		%}  
		d\zeta  \\
		&\lesssim_{\b}  
		\sup_{\substack{v\in\R^{3} \\ 0 < |x - \bx|\leq 1}} 
		\langle v \rangle  \frac{|f_{0}( x, v ) - f_{0}(\bx, v)|}{|x - \bx|^{2\b}}  
		+ \sup_{ \substack{ x\in \overline{\O} \\ 0 < |v - \bv|\leq 1   } }  \langle v \rangle^{2} \frac{|f_{0}( x, v ) - f_{0}( x, \bv)|}{|v - \bv|^{2\b}}  
		+ \|w_{0} f_{0}\|_{\infty},
	\end{split}
	\Ee
	where we have used Corollary \ref{lem_int T}.   \\
	
	{\it Substep 2-2} In this substep, we consider  \eqref{basic f-f2}. We put \eqref{replace2-1} and use Lemma \ref{nu G split}, \ref{lem_Gamma}, and \ref{lem_Gamma} as we did in {\it Substep 1-2}. Then similar as \eqref{est : basic f-f2-dv},  we obtain
	\begin{equation} \label{est : basic f-f2-dx}
	\begin{split}
	&\int_{0}^{t} e^{-\varpi \langle v \rangle^{2} t} 
	\int_{\R_{\zeta}^{3}} k_{c}(v, v+\zeta) \frac{ |\eqref{basic f-f2} | }{|x-\bx|^{2\b}} d\zeta ds   \\
	&\lesssim \int_{0}^{t} e^{-\varpi \langle v \rangle^{2} (t-s)}  
	\int_{\R_{\zeta}^{3}} k_{\frac{c}{2}}(v, v+\zeta)
	%e^{- \varpi \langle v \rangle^{2} s}  
	\Big[  ( |v+\zeta| + |v+\zeta|^{2}t ) \mathcal{T}_{sp}(x, \tx, v+\zeta)
	\Big]^{2\b}  
	\\
	&\quad\quad 
	\times  
	%{\color{blue}  
	\Big[  
		\Big( 
		\|wf(s)\|_{\infty}
		\sup_{\substack{v\in\R^{3} \\ 0 < |x - \bx|\leq 1}}  
		e^{-\varpi \langle v \rangle^2 s}  
		 \int_{\R^{3}_{u}} k_{c}(v, v+u) \frac{|f(s, x, v+u ) - f(s, \bx, v+u)|}{|x - \bx|^{2\b}} du
		\Big) 
	%}
	\\
	&\quad\quad\quad \quad
	%{\color{blue}
		+ \frac{\langle v+\zeta \rangle}{w(v+\zeta)}\|w f(s)\|^{2}_{\infty} \Big] 
	%}    
	d\zeta   \\
	& + \int_{0}^{t} e^{-\varpi \langle v \rangle^{2} (t-s)}  
	\int_{\R_{\zeta}^{3}} k_{\frac{c}{2}}(v, v+\zeta)
	%e^{-\varpi \langle v \rangle^{2} s} 
	\Big[(|v+\zeta| +  |v+\zeta|^{2}) \mathcal{T}_{sp}(v, \tx, v+\zeta)
	\Big]^{2\b}    \\
	&\quad\quad 
	\times \Big[
	%{\color{blue}  \Big[ 
		%\frac{ 1 }{ \langle v+\zeta \rangle^{2\b} }	
		\Big(
		\|wf(s)\|_{\infty}
		\sup_{ \substack{ x\in \overline{\O} \\ 0 < |v - \bv|\leq 1   } } e^{-\varpi \langle v \rangle^2 s}  
		%\langle v \rangle^{2\b} 
		\int_{\R_{\zeta}^{3}} \mathbf{k}_{\frac{c}{2}}(v, \bv, u)
		\frac{|f(s, x, v+u ) - f(s, x, \bv+u)|}{|v - \bv|^{2\b}}
		\Big) 
	%}
	\\
	&\quad\quad\quad \quad
	%{\color{blue}
		+ \big(  \frac{1}{\langle v+\zeta \rangle}  + \frac{\langle v+\zeta \rangle}{w(v+\zeta)} \big) \|wf(s)\|^{2}_{\infty}  \Big]
	%}  
	d\zeta  \\
	%&\lesssim \int_{0}^{t} e^{-\frac{\varpi}{2} \langle v \rangle^{2} (t-s)} \langle v \rangle^{4\b} ds \|w_{0}f_{0}\|_{\infty} \big[ \sup_{0\leq s \leq T}\mathfrak{H}_{sp}^{2\b}(s) + \sup_{0\leq s \leq T}\mathfrak{H}_{vel}^{2\b}(s) \big] 
	%+ \|w_{0}f_{0}\|_{\infty}^{2}  \\
	&\lesssim_{\b} \frac{1}{\varpi} 
	\big[ \sup_{0\leq s \leq T}\mathfrak{H}_{sp}^{2\b}(s) + \sup_{0\leq s \leq T}\mathfrak{H}_{vel}^{2\b}(s) \big] 
	\mathcal{P}_{2}( \|w_{0}f_{0}\|_{\infty}).  \\
	\end{split}
	\end{equation}
	%similar as \eqref{est : basic f-f2-dv}. \\
	
	{\it Substep 2-3} In this substep, we consider  \eqref{basic f-f3} and \eqref{basic f-f4}. Similar as \eqref{est : basic f-f34-dv} in {\it Substep 1-3}, the bound is same as \eqref{est : basic f-f2-dx} except that the order of $\|wf(s)\|_{\infty}$ is reduced $1$, i.e., \\
	\begin{equation} \label{est : basic f-f34-dx}
	\begin{split}
	&\int_{0}^{t} e^{-\varpi \langle v \rangle^{2} t} 
	\int_{\R_{\zeta}^{3}} k_{c}(v, \bv, \zeta) \frac{ |\eqref{basic f-f3} | + |\eqref{basic f-f4} | }{|x - \bx|^{2\b}} d\zeta ds   
	\lesssim_{\b} \frac{1}{\varpi} 
	\big[ \sup_{0\leq s \leq T}\mathfrak{H}_{sp}^{2\b}(s) + \sup_{0\leq s \leq T}\mathfrak{H}_{vel}^{2\b}(s) \big] 
	\mathcal{P}_{1}( \|w_{0}f_{0}\|_{\infty}) .  \\
	\end{split}
	\end{equation}
	%similar as \eqref{est : basic f-f34-dv}. \\
	Putting \eqref{est : basic f-f1-dx}, \eqref{est : basic f-f2-dx}, and \eqref{est : basic f-f34-dx} altogether, we conclude 
	\Be \label{est : Hsp pre}
	\begin{split}
		\sup_{0\leq s \leq T}\mathfrak{H}_{sp}^{2\b}(s)  
		&\lesssim_{\b} \sup_{\substack{v\in\R^{3} \\ |x - \bx|\leq 1}} 
		\langle v \rangle  \frac{|f_{0}( x, v ) - f_{0}(\bx, v)|}{|x - \bx|^{2\b}}  
		+ \sup_{ \substack{ x\in \overline{\O} \\ |v - \bv|\leq 1   } }  \langle v \rangle^{2} \frac{|f_{0}( x, v ) - f_{0}( x, \bv)|}{|v - \bv|^{2\b}}  
		+ \|w_{0} f_{0}\|_{\infty} \\
		&\quad + \frac{1}{\varpi}  \big[ \sup_{0\leq s \leq T}\mathfrak{H}_{sp}^{2\b}(s) + \sup_{0\leq s \leq T}\mathfrak{H}_{vel}^{2\b}(s) \big] \mathcal{P}_{2}( \|w_{0}f_{0}\|_{\infty}),
	\end{split}
	\Ee
	for $T \leq T^{*}$, where $T^{*}$ is local existence time in Lemma \ref{lem loc}. \\
	From \eqref{est : Hvel pre} and \eqref{est : Hsp pre}, we finish the proof by choosing sufficiently large $\varpi \gg_{f_{0}, \b} 1$.   
\end{proof}

 \section{Regularity estimate}
 
\subsection{$C^{0,\frac{1}{2}}_{x,v}$ estimates of trajectory}	 
 	The next geometric lemma asserts that the specular characteristics are basically H\"older regular with an exponent of $1/2$.  
 	\begin{lemma}\label{lem:Holder_tb}Suppose the domain is given as in Definition \ref{def:domain} and \eqref{convex_xi}. Let $(x,v), (\bar x, \bar v) \in \O \times \R^3$, and $|(x,v) - (\bar x , \bar v)|\leq 1$, and $\zeta \in \R^3$. 
 		
 		\noindent 1) We have
 		\Be\label{Holder:tb}
 		\begin{split}
 			&\max\{ |v+\zeta|, |\bv+\zeta| \}|\tb(x,v+\zeta) - \tb(\bar x, \bar v+\zeta)|\\
 			&  \lesssim 
 			\sqrt{\| \nabla \xi \|_\infty } 
 			\big\{
 			{|\bar x- x|}^{\frac{1}{2}} +
 			\max\{ \sqrt{\tb(x,v+\zeta)} ,\sqrt{\tb(\bar x, \bar v+\zeta)}\}
 			{|\bar v - v|}^{\frac{1}{2}}
 			\big\}.
 		\end{split}
 		\Ee
 		2) For $s \leq  t$  
 		\Be\label{Holder:X} 
 		|X(s;t,x,v +\zeta)) -X(s;t,\bar x, \bar v+\zeta)|
 		\lesssim
 		\big\{1+  \langle v+\zeta\rangle |t-s|  \big\}
 		\big\{
 		|x- \bar x|^{\frac{1}{2}} +  |t-s|^{\frac{1}{2}}|v- \bar v |^{\frac{1}{2}}
 		\big\}.  \Ee
 		3) For $s \leq  t $, if 
 		\[
 			s \leq \min\{ t^{1}(x,v), t^{1}(\bx, \bv) \}\quad\text{or} \quad s > \max\{ t^{1}(x,v), t^{1}(\bx, \bv) \},
 		\]
 		then
 		\Be\label{Holder:V}
 		\begin{split}
 			|V(s;t,x,v+\zeta) -V(s;t,\bar x,\bar   v+\zeta)|
 			\lesssim
 			|v- \bar v |
 			+   \langle v+\zeta\rangle  
 			\big\{
 			|x- \bar x|^{\frac{1}{2}} +  |t-s|^{\frac{1}{2}}|v- \bar v |^{\frac{1}{2}}
 			\big\}. 
 		\end{split}  \Ee
 		\hide
 		4) For $0 \leq s \leq  t \lesssim 1$ and $s \notin  I_{\textit{int}}(t,x,v, \bar x, \bar v,0)$,
 		\Be\label{Holder:V1}
 		\begin{split}
 			|R_{\xb(x,v)}V(s;t,x,v) -
 			V(s;t,\bar x, \bar v)
 			| 
 			&
 			\lesssim \max\{|v|, |\bar v|\} 
 			\big\{
 			|x- \bar x|^{\frac{1}{2}} +  |t-s|^{\frac{1}{2}}|v- \bar v |^{\frac{1}{2}}
 			\big\}\\
 			&
 			\ \    \ \   \ \   \ \   \  \        \   \ \   \ \   \ \   \ \ \text{for} \ \ 
 			s \in [ t_1(t, \bar x, \bar v ), t_1(t,x,v )],  \\
 			|V(s;t,x,v) -R_{\xb(\bar x,\bar v)}V(s;t,\bar x, \bar v)| 
 			&
 			\lesssim \max\{|v|, |\bar v|\} 
 			\big\{
 			|x- \bar x|^{\frac{1}{2}} +  |t-s|^{\frac{1}{2}}|v- \bar v |^{\frac{1}{2}}
 			\big\}\\
 			&    \ \    \ \   \ \   \ \   \  \        \   \ \   \ \   \ \   \ \  \text{for} \ \ 
 			s \in [t_1(t,x,v ), t_1(t, \bar x, \bar v  )].  
 		\end{split}\Ee\unhide
 	\end{lemma}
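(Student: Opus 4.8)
The plan is to estimate the three quantities by tracking the explicit formula \eqref{form_XV} and using the derivative identities \eqref{nabla_x_bv_b}, together with the ODE-method estimates already established for the specular singularity. The key point is that all three bounds should come from a \emph{single} difference-quotient estimate which is then combined with the Carleman/velocity-lemma--type integral bounds.

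\medskip

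\noindent\emph{Step 1: the bound \eqref{Holder:tb} on $\tb$.} First I would use the shift of Definition \ref{def_tilde}: write the difference $\tb(x,v+\zeta)-\tb(\bar x,\bar v+\zeta)$ through the two intermediate points with the shifted position and velocity, i.e.\ interpolate $x\to\tx\to\bar x$ keeping $v+\zeta$ fixed, then $v+\zeta\to\tv+\zeta\to\bar v+\zeta$ keeping $\bar x$ fixed (exactly the splitting \eqref{split1}--\eqref{split4} but applied to $\tb$ rather than $f$). The pieces where the parametrization is parallel to the velocity (the $\tx\to\bar x$ and $\tv\to\bar v$ legs) are trivial, giving directly $|\tb$-difference$|\lesssim |\tx-\bar x|\lesssim|x-\bar x|$ from $\nabla_x\tb=\nabla\xi(\xb)/(\nabla\xi(\xb)\cdot v)$ bounded appropriately, and the analogous $v$-leg term. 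The genuinely singular legs are handled by Lemma \ref{L sim S}: estimates \eqref{est:Lx} and \eqref{est:Lv} bound $|\tb(x,v+\zeta)-\tb(\tx,v+\zeta)|$ and $|\tb(\bar x,v+\zeta)-\tb(\bar x,\tv+\zeta)|$ by $|x-\tx|\int_0^1\mathfrak S_{sp}^{-1}\,d\tau$ and $|v-\tv|\int_0^1\mathfrak S_{vel}^{-1}\,d\tau$ respectively; then Proposition \ref{prop_avg S} (estimates \eqref{int:Sx}, \eqref{int:Sv}) converts these into bounds of the form $\tfrac{|x-\tx|}{|(v+\zeta)\cdot\nabla\xi(\xb)|}$ and (for the velocity leg) $\tfrac{|v-\tv|}{|\V(\tau_*)\cdot\nabla\xi(\xb)|}\cdot\tfrac{1+\min_\tau(|\V|\tb)}{|\V|}$. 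The square-root structure of \eqref{Holder:tb} then emerges exactly as in the $2$D circle example in the roadmap: the grazing factor $|(v+\zeta)\cdot\nabla\xi(\xb)|^{-1}$ multiplied against the additional smallness $|x-\bar x|$ (or $|v-\bar v|$) coming from the fact that two nearby grazing trajectories differ quadratically; one balances $\min\{|x-\bar x|/|(v+\zeta)\cdot\nabla\xi|,\,|(v+\zeta)\cdot\nabla\xi|/|v+\zeta|^2\}\lesssim |v+\zeta|^{-1}|x-\bar x|^{1/2}$, which after including the $\sqrt{\|\nabla\xi\|_\infty}$ and the $\sqrt{\tb}$ normalization for the velocity perturbation yields \eqref{Holder:tb}. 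The case where only one of the two trajectories hits $\p\O$ is the remaining subcase; there $\tb$ of the grazing one is controlled by the geometry of $\p\O$ and the estimate is again of the stated form.

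\medskip

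\noindent\emph{Step 2: the bounds \eqref{Holder:X} and \eqref{Holder:V} on $X$ and $V$.} These follow from \eqref{Holder:tb} once one uses \eqref{form_XV}. Before the bounce ($s>\max\{t^1(x,v),t^1(\bar x,\bar v)\}$) the characteristics are straight lines and the difference is trivially $\le|x-\bar x|+|t-s||v-\bar v|$. After both have bounced, I would write $X(s)=\xb-(t-\tb-s)R_{\xb}(v)$ and $V(s)=R_{\xb}(v)$, and expand the difference using: (i) $|\xb(x,v+\zeta)-\xb(\bar x,\bar v+\zeta)|\le|x-\bar x|+|v+\zeta|\,|\tb(x,v+\zeta)-\tb(\bar x,\bar v+\zeta)|+|t-s||v-\bar v|$ together with \eqref{Holder:tb}; (ii) $|R_{\xb}(v)-R_{\bar{\xb}}(\bar v)|\lesssim|v-\bar v|+|v|\,|n(\xb)-n(\bar{\xb})|\lesssim |v-\bar v|+|v|\,|\xb-\bar{\xb}|$, since $n$ is $C^1$ on $\p\O$ and the curvature is bounded by \eqref{convex_xi}. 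Substituting \eqref{Holder:tb} (which produces the $|x-\bar x|^{1/2}$ and $|t-s|^{1/2}|v-\bar v|^{1/2}$ terms, with the extra $|v+\zeta|$ absorbed so that $\langle v+\zeta\rangle$ appears as a prefactor) gives \eqref{Holder:X}; the factor $1+\langle v+\zeta\rangle|t-s|$ accounts for the $(t-\tb-s)$-length times the velocity. For \eqref{Holder:V}, since $V(s)$ is constant on each of the two legs and equals $v+\zeta$ before and $R_{\xb}(v+\zeta)$ after, under the hypothesis $s\le\min\{t^1,\bar t^1\}$ or $s>\max\{t^1,\bar t^1\}$ both velocities are on the ``same side'', so $|V(s)-\bar V(s)|$ is either $|v-\bar v|$ (before) or $|R_{\xb}(v+\zeta)-R_{\bar{\xb}}(\bar v+\zeta)|$ (after); the latter is estimated by (ii) above and \eqref{Holder:X}, giving the claimed bound with prefactor $\langle v+\zeta\rangle$. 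The restriction on $s$ in part 3) is exactly what rules out the jump-discontinuity of $V$ at $s=t-\tb$, so no cross-side comparison is needed.

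\medskip

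\noindent\emph{Main obstacle.} The hard part is Step 1, and within it the grazing balance: proving that the product of the singular factor $|(v+\zeta)\cdot\nabla\xi(\xb)|^{-1}$ coming from $\nabla_x\tb,\nabla_v\tb$ against the spatial/velocity separation genuinely degenerates only like a square root, uniformly over all planes $S_{(x,\bar x,v+\zeta)}$ and $S_{(\bar x,v,\bar v,\zeta)}$. This is where the shift method and the orthogonality \eqref{perp short} are essential: without shifting, the angle between $\dot{\X}$ (or $\dot{\V}$) and $\nabla\xi$ is not uniformly controlled and the ODE for $\mathfrak S_{sp,vel}$ (Lemma \ref{lemma:ODE}) would not yield the clean $\sqrt{\cdot}$ growth; with the shift, Proposition \ref{prop_avg S} gives precisely the bound needed. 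One must also be careful that the intermediate shifted points $\tx,\tv$ depend on $\zeta$, but — as noted after \eqref{fraction scheme} — the relevant grazing denominators $|(v+\zeta)\cdot\nabla\xi(\xb(\tx,v+\zeta))|=|(v+\zeta)\cdot\nabla\xi(\xb(\bar x,v+\zeta))|$ are $\zeta$-independent in the end, so this causes no trouble in the final assembly. The mixed case (only one trajectory touching $\p\O$) is a routine but somewhat tedious case analysis that I would relegate to a remark or omit, as the excerpt does for part 4).
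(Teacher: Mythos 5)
Your parts 2) and 3) match the paper's argument: use the explicit billiard formula \eqref{form_XV}, Lipschitz dependence of $n(\xb)$ and $\xb$ on $x$, and feed in part 1). The restriction on $s$ in part 3) is used exactly as you say, to avoid comparing velocities across the reflection. That portion is fine.

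Part 1) is where the problem is. You reach for the shift/specular-singularity machinery (Definition \ref{def_tilde}, Lemma \ref{L sim S}, Proposition \ref{prop_avg S}), whereas the paper's proof of \eqref{Holder:tb} is completely elementary and self-contained: since $\xi(\xb(x,v))=0=\xi(\xb(\bar x, \bar v))$ and $\xi\le 0$ on $\overline\O$, one writes (with $\tb\ge\bar t_{\mathbf b}$ WLOG)
$0\le \xi(x-\tb v)-\xi(x-\bar t_{\mathbf b}v)=\xi(\bar x-\bar t_{\mathbf b}\bar v)-\xi(x-\bar t_{\mathbf b}v)\le\|\nabla\xi\|_\infty\{|\bar x-x|+\bar t_{\mathbf b}|\bar v-v|\}$
for the upper bound, and a second-order Taylor expansion of $\xi$ along the ray, $\xi(x-\tb v)-\xi(x-\bar t_{\mathbf b}v)=(-v\cdot\nabla\xi(\xb))(\tb-\bar t_{\mathbf b})+\int\!\!\int(-v\cdot\nabla^2\xi\, v)$, where the linear term has a sign and the quadratic term is $\gtrsim\theta_\O|v|^2|\tb-\bar t_{\mathbf b}|^2$ by \eqref{convex_xi}. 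Comparing these two bounds gives \eqref{Holder:tb} directly; one then repeats with $\bar v$ in place of $v$ to catch the $\max\{|v+\zeta|,|\bar v+\zeta|\}$ prefactor. No shift, no ODE, no averaging of $\mathfrak S_{sp,vel}$ is involved.

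Your route, as stated, has a genuine gap in the ``balancing'' step. From Lemma \ref{L sim S} and Proposition \ref{prop_avg S} you get a bound of the form $|\tb-\bar t_{\mathbf b}|\lesssim |x-\bar x|/|(v+\zeta)\cdot\nabla\xi(\xb(\X(\tau_*),v+\zeta))|$, and you then invoke a second, competing bound $|\tb-\bar t_{\mathbf b}|\lesssim |(v+\zeta)\cdot\nabla\xi|/|v+\zeta|^2$ to run a $\min\le\sqrt{\cdot}$ argument. That second bound is neither stated nor proved in the paper, and as written it is false in general: it is not the interior-domain chord-length (``velocity lemma'') estimate, which does not apply here, and if $\nabla\xi$ is evaluated at a grazing $\tau_*$ the quantity vanishes while $|\tb-\bar t_{\mathbf b}|$ does not (even the paper's own $2$D example, $x=(1,0)$, $\bar x=(1,\varepsilon)$, $v=(1,0)$ on the unit circle, has $v\cdot\nabla\xi(\xb(x,v))=0$ but $|\tb-\bar t_{\mathbf b}|\simeq\sqrt\varepsilon$). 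To make the min argument work you would need to specify carefully at which interpolation point $\nabla\xi$ is evaluated and prove the missing inequality there — at which point it is far simpler to use the paper's direct argument, which needs none of the $\mathfrak S_{sp,vel}$ apparatus (that apparatus is only required later, to integrate the grazing factor against $d\zeta$ in the $\mathfrak H^{2\beta}$ estimates, not for the pointwise H\"older bound on the trajectory).
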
	
 	\begin{proof}
 		In the course of the proof we set $\zeta=0$ without loss of generality. 
 		
 		\textbf{Step 1. Proof of \eqref{Holder:tb}.} For simplicity's sake we tentatively denote $\tb= \tb(x,v)$ and $\bar{t}_{\mathbf{b}} = \tb(\bar x, \bar v)$. Without loss of generality we may assume $\tb \geq    \bar{t}_{\mathbf{b}}$. For $\tb \leq    \bar{t}_{\mathbf{b}}$ we can follow the same argument with obvious modification. We note that $x-\bar{t}_{\mathbf{b}}  v \in \O$ and therefore $\xi(x-\tb v)- \xi(x-\bar{t}_{\mathbf{b}}  v)  \geq 0$. Using $\xi(\bar x-\bar{t}_{\mathbf{b}}  \bar v)=0=\xi(x-\tb v)$, we derive that  
 		\Be \label{diff:xi}
 		0\leq  \xi(x-\tb v)- \xi(x-\bar{t}_{\mathbf{b}}  v)    =  \xi(\bar x-\bar{t}_{\mathbf{b}}  \bar v)- \xi(x-\bar{t}_{\mathbf{b}}  v) 
 		\leq \| \nabla \xi \|_\infty 
 		\{
 		|\bar x- x| + |\bar{t}_{\mathbf{b}}| |\bar v - v|
 		\} 
 		%\leq  \| \nabla \xi \|_\infty\{|\bar x- x| +T |\bar v - v|\} 
 		. 
 		\Ee
 		On the other hand, by an expansion, 
 		\Be
 		\begin{split}
 			\xi(x-\tb v)- \xi(x-\bar{t}_{\mathbf{b}}  v)    &=
 			\int^{\tb}_{\bar{t}_{\mathbf{b}}}
 			-v\cdot \nabla \xi (x-s v)
 			\dd s \\
 			&=
 			\int^{\tb}_{\bar{t}_{\mathbf{b}}}
 			\Big\{
 			-v\cdot \nabla \xi (x-\tb v)
 			+ \int^s_{\tb} v\cdot \nabla^2 \xi (x- \tau v)  v \dd \tau 
 			\Big\}
 			\dd s \\
 			&=( -v\cdot \nabla \xi ( \xb  )) (\tb - \bar{t}_{\mathbf{b}})
 			+  \int^{\tb}_{\bar{t}_{\mathbf{b}}}\int^{\tb}_s - v\cdot \nabla^2 \xi (x- \tau v)   v \dd \tau  
 			\dd s .\label{exp:xi}
 		\end{split}
 		\Ee
 		Together with $-v\cdot \nabla \xi (x-\tb v)\geq0$ and the convexity \eqref{convex_xi}, we conclude that 
 		\Be\label{lower_diff:xi}
 		\xi(x-\tb v)- \xi(x-\bar{t}_{\mathbf{b}}  v)  \geq \frac{\theta_\O}{2} |v|^2 |\tb- \bar{t}_{\mathbf{b}}|^2.
 		\Ee
 		From \eqref{exp:xi} and \eqref{lower_diff:xi}, we derive that, when $\tb \geq \bar{t}_{\mathbf{b}}$, 
 		\Be\label{diff:tb1}
 		|v||\tb -  \bar{t}_\mathbf{b}|\lesssim  \sqrt{\| \nabla \xi \|_\infty}
 		\Big\{
 		\sqrt{|\bar x- x|} +
 		\max\{ \sqrt{\tb} ,\sqrt{\bar{t}_\mathbf{b}}\}
 		\sqrt{|\bar v - v|}
 		\Big\}. 
 		\Ee
 		
 		Now we repeat the procedure \eqref{diff:xi}-\eqref{diff:tb1} with some change: For $\tb \geq \bar{t}_{\mathbf{b}}$, as \eqref{diff:xi}, we have $0 \leq\xi(\bar x-  t_{\mathbf{b}} \bar v) -  \xi (\bar x -\bar  t_{\mathbf{b}}\bar v)
 		=\xi(\bar x-  t_{\mathbf{b}} \bar v) -  \xi ( x -  t_{\mathbf{b}} v)
 		\leq \| \nabla \xi \|_\infty \{|\bar x- x| + |\tb||\bar v -  v|\}.$ Then as \eqref{lower_diff:xi} we derive that $\xi(\bar x-  t_{\mathbf{b}} \bar v) -  \xi (\bar x -\bar  t_{\mathbf{b}}\bar v) \geq \frac{\theta_\O}{2} |\bar v|^2 | \tb - \bar t_{\mathbf{b}} |^2$. Hence we conclude that $|\bar v||\tb - \bar{t}_{\mathbf{b}}|$ has the same bound of \eqref{diff:tb1}:
 		\begin{align}
 		\frac{\theta_\O}{2} |\bar v|^2 |\tb- \bar{t}_{\mathbf{b}}|^2
 		\leq |\xi(\bar x-\tb \bar v)- \xi(\bar x-\bar{t}_{\mathbf{b}}  \bar v) |    \leq  \| \nabla \xi \|_\infty
 		\{
 		|\bar x- x| +\max\{ {\tb} , {\bar{t}_\mathbf{b}}\} |\bar v - v|
 		\} .\label{diff:tb2}
 		\end{align}
 		Thereby we conclude \eqref{Holder:tb} from \eqref{diff:tb1} and \eqref{diff:tb2}.
 		
 		\smallskip
 		
 		\textbf{Step 2: Proof of \eqref{Holder:X}.} %First we prove the first bound of \eqref{Holder:X}. 
 		We consider the case of $\tb(x,v)<\infty$ and $\tb(\bar x,  \bar  v)<\infty$. Then from \eqref{form_XV}  \Be
 		\begin{split}
 			&|X(s;t,x,v) - X(s;t, \bar x ,\bar v)| 	\\
 			&\lesssim 
 			|x- \bar{x}| + |t-s| |v- \bar v|
 			+  | n(\xb) \cdot v
 			|| \tb - \bar t_{\mathbf{b}}| 
 			+ |t-\bar t_\mathbf{b} -s | 
 			|\bar v|  |n(\xb)- n(\bar x_\mathbf{b})|
 			\\
 			& \lesssim 
 			\big\{1+ |t-s| \max\{|v|, |\bar v|\}\big\}\big\{
 			|x- \bar x|+ |t-s||v- \bar v |
 			+  \max\{|v|, |\bar v|\}|\tb - \bar t _\mathbf{b}|
 			\big\}.
 			\label{diff:X}
 		\end{split}
 		\Ee
 		Now we are using \eqref{diff:X} and \eqref{Holder:tb} to conclude \eqref{Holder:X}.
 		
 		If $\tb(x,v)=\infty$ and $\tb(\bar x,  \bar  v)=\infty$ then simply we have 
 		\Be\label{diff:X1}
 		|X(s;t,x,v) - X(s;t, \bar x ,\bar v)| \leq |x- \bar x| + |t-s||v - \bar v|,
 		\Ee
 		which is bounded as \eqref{Holder:X}. 
 		
 		For the rest of case we bound $|X(s;t,\bar  x,v) - X(s;t, x ,  v)|$ and $|X(s;t,  x,\bar v) - X(s;t, x ,  v)|$ separately. We start with $|X(s;t,\bar  x,v) - X(s;t, x ,  v)|$. First we consider the case of $\tb(\bar x, v)<\infty$ and $\tb( x,  v)=\infty$. 
 		Recall $\X(\tau)$ in \eqref{xv para}. From \eqref{tau_pm} there exists $\tau_+=\tau_+ (x,\bar x, v)$ such that $- \nabla \xi (\xb(\X(\tau_+), v)) \cdot v=0$, $\tb(\X(\tau), v)<\infty$ for $\tau \in [  0, \tau_+ ]$, and $\tb(\X(\tau), v)=\infty$ for $\tau \in ( \tau_+,1 ]$. Equipped with $|X(s;t,\bar  x,v) - X(s;t, x ,  v)|\leq |X(s;t,\bar  x,v) - X(s;t, x(\tau_+) ,  v)| + |X(s;t,  x(\tau_+),v) - X(s;t, x ,  v)|$, we have $|X(s;t,\bar  x,v) - X(s;t, \X(\tau_+) ,  v)|\lesssim \eqref{diff:X}$ and $|X(s;t,  \X(\tau_+),v) - X(s;t, x ,  v)|\lesssim \eqref{diff:X1}$, which give us the bound of $|X(s;t,\bar  x,v) - X(s;t, x ,  v)|$ as in \eqref{Holder:X}. The case of $\tb(x, v)<\infty$ and $\tb(\bar  x,  v)=\infty$ can be treated similarly. 
 		
 		Now we bound $|X(s;t,  x,\bar v) - X(s;t, x ,  v)|$. Consider first the case of $\tb( x, \bar v)<\infty$ and $\tb( x,  v)=\infty$. Recall $\V(\tau)$ in \eqref{xv para}. From \eqref{tau_pm} there exists $\tau_+=\tau_+ (   x, v, \bar v, 0)$ such that $- \nabla \xi (\xb(x, \V(\tau_+))) \cdot \V(\tau_+)=0$, $\tb(x, \V(\tau))<\infty$ for $\tau \in [  0, \tau_+ ]$, and $\tb(x, \V(\tau))=\infty$ for $\tau \in ( \tau_+,1 ]$. Then following the previous argument and using \eqref{diff:X} and \eqref{diff:X1}, we easily get the result. Other case $\tb( x, v)<\infty$ and $\tb( x, \bar v)=\infty$ also can be treated similarly.

 		\smallskip
 		
 		\textbf{Step 3: Proof of \eqref{Holder:V}.} We only need to consider the case of $s < \min \{ t_1 (t,x,v), t_1 (t, \bar x, \bar v)\}$. Otherwise, in the case of $s > \max \{ t_1 (t,x,v), t_1 (t, \bar x, \bar v)\}$, we have $|V(s;t,x,v) -V(s;t,\bar x,\bar   v)|= |v- \bar v|$. From \eqref{form_XV} and \eqref{Holder:tb}
 		\Be
 		\begin{split}\label{diff:V}
 			&|V(s;t,x,v)- V(s;t, \bar x, \bar v)| 
 			\lesssim |v- \bar v| + 
 			\max \{|v|, |\bar v|\} |\xb - \bar x_\mathbf{b}|\\
 			&
 			\lesssim \big\{1+ |t-s| \max\{|v|, |\bar v|\}\big\} |v- \bar v| + 
 			\max \{|v|, |\bar v|\}  |x- \bar x|
 			+ \max \{|v|, |\bar v|\}^2 |\tb- \bar{t}_{\mathbf{b}}|.
 			%2 (n( \bar x_\mathbf{b}) \cdot \bar v)n(\bar x_\mathbf{b}) - 2 (n(\xb) \cdot v)n(\xb) 
 		\end{split}
 		\Ee
 		Now using \eqref{diff:V} and \eqref{Holder:tb} we conclude \eqref{Holder:V}.   \\		
 		
 		\hide
 		\smallskip
 		
 		\textbf{Step 4: Proof of .}

 		\smallskip
 		
 		\textbf{Step 5: Proof of .}
 		
 		\unhide\end{proof}

 \subsection{H\"older regularity : Proof of the Main Theorem}
 
 \hide
 \begin{theorem}
 	Suppose the domain is given as in Definition \ref{def:domain} and \eqref{convex_xi}. Assume $f_0 \in C^1_{x,v}(\bar \O \times \R^3)$ and $\| e^{\vartheta_{0}|v|^{2}} f_0 \|_\infty< \infty$ for $0< \vartheta_{0} < 1/4$. Then there exists $0< T \ll 1$ such that in where we can construct a unique solution $f(t,x,v)$ which solves \eqref{f_eqtn} and \eqref{specular} for $0 \leq t \leq T$, and satisfies $\sup_{0 \leq t \leq T}\| e^{\vartheta|v|^2} f (t) \|_\infty\lesssim \| e^{\theta_{0}|v|^2} f_0 \|_\infty$ for some $0\leq \vartheta <\vartheta_{0}$. Moreover $f(t,x,v)$ is H\"older continuous in the following sense:
 	{\color{blue}
 		\begin{equation} \label{est:Holder}
 		\begin{split}
 		&\sup_{0\leq t \leq T} 
 		\sup_{ \substack{ (x,v)\in \overline{\O}\times \R^{3} \\ |(x,v)-(\bx, \bv)|\leq 1   } }
 		\Big| 
 		\langle v \rangle^{-2\b} e^{-\varpi\langle v \rangle^{2}t}\frac{ | f(t,x,v) - f(t, \bx, \bv) | }{ |(x,v) - (\bx, \bv)|^{\b} }
 		\Big|    \\
 		%&\lesssim  \|w_{0}f_{0}\|_{\infty} \big[ \sup_{0\leq s \leq T}\mathfrak{H}_{sp}^{2\b}(s) + \sup_{0\leq s \leq T}\mathfrak{H}_{vel}^{2\b}(s) \big] +  \mathcal{P}_{2}(\|w_{0}f_{0}\|_{\infty})  \\
 		&\lesssim_{\b} 
 		\|w_{0}f_{0}\|_{\infty}
 		\Big[
 		\sup_{v, |x - \bx|\leq 1} 
 		\langle v \rangle  \frac{|f_{0}( x, v ) - f_{0}(\bx, v)|}{|x - \bx|^{2\b}}  
 		+ \sup_{x, |v - \bv|\leq 1}  \langle v \rangle^{2} \frac{|f_{0}( x, v ) - f_{0}( x, \bv)|}{|v - \bv|^{2\b}}  
 		\Big]
 		+ \mathcal{P}_{2}(\|w_{0}f_{0}\|_{\infty}).
 		\end{split}
 		\end{equation}
 		where $\mathcal{P}(s) = |s| + |s|^2$. 
 	} %blue
 \end{theorem}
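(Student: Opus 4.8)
The existence, uniqueness and the bound $\sup_{0\le t\le T}\|e^{\vartheta|v|^{2}}f(t)\|_{\infty}\lesssim\|e^{\vartheta_{0}|v|^{2}}f_{0}\|_{\infty}$ are already supplied by Lemma~\ref{lem loc}, so the only remaining point is the H\"older estimate \eqref{est:Holder}. The plan is to run the \emph{nonlocal-to-local} principle one final time: the nonlocal content has been absorbed into the a priori control of the moment seminorms $\mathfrak{H}^{2\b}_{sp},\mathfrak{H}^{2\b}_{vel}$ in Proposition~\ref{prop_unif H}, the characteristic map is H\"older--$\tfrac12$ by Lemma~\ref{lem:Holder_tb}, and it remains to read off the pointwise H\"older seminorm of $f$ from these two facts through the Duhamel expansion. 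Starting from the basic inequality \eqref{basic f-f} with $\zeta=0$, i.e.\ $|f(t,x,v)-f(t,\bx,\bv)|\le\eqref{basic f-f1}+\eqref{basic f-f2}+\eqref{basic f-f3}+\eqref{basic f-f4}$, I would multiply by $\langle v\rangle^{-2\b}e^{-\varpi\langle v\rangle^{2}t}|(x,v)-(\bx,\bv)|^{-\b}$ and bound the four terms. Throughout, $|V(s)|=|v|$ for every $s$ because specular reflection preserves speed, so all velocity weights $\langle V(s)\rangle$, $w(V(s))$ are comparable to $\langle v\rangle$, $w(v)$; this is what keeps the weight bookkeeping closed.

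For the initial term $\eqref{basic f-f1}=|f_{0}(X(0),V(0))-f_{0}(\bar X(0),\bar V(0))|$ I would use the shift decomposition \eqref{diff f}--\eqref{split4} evaluated at $s=0$, inserting $\tx=\tx(x,\bx,v)$ and $\tv=\tv(v,\bv,0)$ and applying Lemma~\ref{f-f split} with $f$ replaced by $f_{0}$ (which satisfies the compatibility condition \eqref{specular}). Each of \eqref{split1}--\eqref{split4} is then a pure $x$-- or $v$--difference quotient of $f_{0}$ at a velocity of magnitude $|v|$ (hence controlled by the two hypothesized $\langle v\rangle$-- and $\langle v\rangle^{2}$--weighted $C^{0,2\b}$ seminorms of $f_{0}$) against a $2\b$-power of a trajectory difference. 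Unlike in Proposition~\ref{prop_unif H}, where the would-be singularities $\mathcal{T}_{sp},\mathcal{T}_{vel}$ are absorbed by integrating in $\zeta$ (Corollary~\ref{lem_int T}), here $\zeta$ is fixed and instead one bounds the trajectory differences $|X(0)-\bar X(0)|$, $|V(0)-\bar V(0)|$, $|\tb(x,v)-\tb(\bx,\bv)|$ \emph{directly} by Lemma~\ref{lem:Holder_tb}: they are $\lesssim\langle v\rangle^{O(1)}|(x,v)-(\bx,\bv)|^{1/2}$, so their $2\b$-powers are $\lesssim\langle v\rangle^{O(1)}|(x,v)-(\bx,\bv)|^{\b}$ since $2\b<1$. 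In the configuration where one characteristic has already struck $\p\O$ and the other has not --- the case in which $V$ genuinely jumps --- one routes the decomposition through the grazing point $\X(\tau_{-})$, resp.\ $\V(\tau_{-})$, of the shift path (Definition~\ref{def_para}; the $A_{i},\dots,D_{i}$ pieces appearing in the proof of Lemma~\ref{f-f split}), using that a specular reflection at a grazing point is the identity together with the compatibility of $f_{0}$. This bounds $\eqref{basic f-f1}$ by $\big(\text{the two }C^{0,2\b}\text{ seminorms of }f_{0}\big)\langle v\rangle^{-2\b}|(x,v)-(\bx,\bv)|^{\b}$.

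The collision contribution $\eqref{basic f-f2}=\int_{0}^{t}|\Gamma_{\text{gain}}(f,f)(s,X(s),V(s))-\Gamma_{\text{gain}}(f,f)(s,\bar X(s),\bar V(s))|\,ds$ is the core estimate. Inserting the intermediate point $(\bar X(s),V(s))$ and invoking Lemma~\ref{lem_Gamma}, the $x$-part is $\lesssim\|wf(s)\|_{\infty}\big(\int k_{c}(V(s),V(s)+u)\tfrac{|f(s,X(s),V(s)+u)-f(s,\bar X(s),V(s)+u)|}{|X(s)-\bar X(s)|^{2\b}}\,du\big)|X(s)-\bar X(s)|^{2\b}$ by \eqref{full k x}, and the $v$-part is bounded the same way by \eqref{full k v} plus a remainder $\lesssim\|wf(s)\|_{\infty}^{2}\langle v\rangle^{-1}|V(s)-\bar V(s)|^{2\b}$; by Definition~\ref{def_H} together with Lemma~\ref{lem_nega} (which demotes $k_{c},\mathbf{k}_{c}$ to $k_{c/2},\mathbf{k}_{c/2}$) the $u$-integrals are $\lesssim e^{\varpi\langle v\rangle^{2}s}\mathfrak{H}^{2\b}_{sp}(s)$ and $\lesssim e^{\varpi\langle v\rangle^{2}s}\mathfrak{H}^{2\b}_{vel}(s)$. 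Hence $e^{-\varpi\langle v\rangle^{2}t}\int_{0}^{t}e^{\varpi\langle v\rangle^{2}s}(\cdots)\,ds$ contributes the favorable factor $\tfrac{1}{\varpi\langle v\rangle^{2}}$; combined with $|X(s)-\bar X(s)|^{2\b},|V(s)-\bar V(s)|^{2\b}\lesssim\langle v\rangle^{2\b}|(x,v)-(\bx,\bv)|^{\b}$ from \eqref{Holder:X}--\eqref{Holder:V}, with $\|wf\|_{\infty}\lesssim\|w_{0}f_{0}\|_{\infty}$, and with Proposition~\ref{prop_unif H}, one gets $\eqref{basic f-f2}\lesssim\tfrac{1}{\varpi}\|w_{0}f_{0}\|_{\infty}\big(\text{data seminorms}+\|w_{0}f_{0}\|_{\infty}\big)|(x,v)-(\bx,\bv)|^{\b}+\mathcal{P}_{2}(\|w_{0}f_{0}\|_{\infty})|(x,v)-(\bx,\bv)|^{\b}$. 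The jump of $V$ is handled exactly as for $\eqref{basic f-f1}$: one applies the shift decomposition to $\Gamma_{\text{gain}}(f,f)$ (Lemma~\ref{nu G split}), bounds $|\Gamma_{\text{gain}}(f,f)|\lesssim\langle v\rangle w(v)^{-1}\|wf\|_{\infty}^{2}$ crudely on the short interval $[\,t^{1}(x,v)\wedge t^{1}(\bx,\bv),\ t^{1}(x,v)\vee t^{1}(\bx,\bv)\,]\cap[0,t]$, whose length is $\lesssim\langle v\rangle^{-1}|(x,v)-(\bx,\bv)|^{1/2}$ by \eqref{Holder:tb} when both exit times are finite, and routes through the grazing point when only one of them is finite. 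Finally $\eqref{basic f-f3},\eqref{basic f-f4}$ are treated identically with $\nu(f)$ in place of $\Gamma_{\text{gain}}(f,f)$ (Lemma~\ref{lem_nu}, Lemma~\ref{nu G split}); being linear in $f$ they lose one power of $\|wf\|_{\infty}$ and contribute only lower-order polynomial terms.

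Summing the four bounds, dividing by $|(x,v)-(\bx,\bv)|^{\b}$ and taking the supremum over $|(x,v)-(\bx,\bv)|\le1$ and over $t\le T$ with $\varpi T\ll1$ yields exactly \eqref{est:Holder}. I expect the main obstacle to be precisely the one already isolated in Lemma~\ref{f-f split}: the jump discontinuity of $s\mapsto V(s;t,x,v)$ at $s=t-\tb(x,v)$, combined with the various cases of which of the two characteristics has already met $\p\O$ (in particular the asymmetric case where one does and the other does not). It is the shift $\tx,\tv$ of Definition~\ref{def_tilde}, the grazing point on the shift path, and the H\"older--$\tfrac12$ trajectory bounds of Lemma~\ref{lem:Holder_tb} acting together that render every such transition H\"older--$\tfrac12$, so that no $1/|v\cdot\nabla\xi(\xb)|$-type singularity survives in the final pointwise estimate.
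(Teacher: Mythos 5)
Your proposal is correct and follows essentially the same route as the paper's proof of Theorem~\ref{theo:Holder}: decompose via \eqref{basic f-f1}--\eqref{basic f-f4}, apply the shift decomposition \eqref{split1}--\eqref{split4} (routing through the grazing point $\X(\tau_-)$, $\V(\tau_-)$ in the asymmetric case), but replace the specular-singularity bounds of Lemma~\ref{frac sim S} by the pointwise H\"older-$\tfrac12$ trajectory estimates of Lemma~\ref{lem:Holder_tb}, then close via the uniform $\mathfrak{H}^{2\b}_{sp,vel}$ control of Proposition~\ref{prop_unif H} together with Lemmas~\ref{lem_Gamma}, \ref{lem_nu}, \ref{nu G split}, \ref{lem_nega} and the small factor from $\int_0^t e^{-\varpi\langle v\rangle^2(t-s)}ds$. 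You have also correctly isolated the two distinctions from the seminorm estimate: here $\zeta$ is frozen so the $\mathcal{T}_{sp},\mathcal{T}_{vel}$ singular averages are unavailable and Lemma~\ref{lem:Holder_tb} must stand in for them, and the conservation $|V(s)|=|v|$ under specular reflection is what keeps the weight $\langle v\rangle^{-2\b}e^{-\varpi\langle v\rangle^2 t}$ compatible with Definition~\ref{def_H}.
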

\unhide

We provide the proof of Theorem \ref{theo:Holder}.
 \begin{proof} [Proof of Theorem \ref{theo:Holder}]
 	\hide
 	{\color{blue}
 		{\bf Instruction} \\
 		1. We consider duhamel expansion \eqref{basic f-f1}--\eqref{basic f-f4}. For each \eqref{basic f-f1}--\eqref{basic f-f4}. Note that we just keep $(x,v)$ and $(\bx, \bv)$ and apply term by term splittion \eqref{est:split2 raw}, \eqref{est:split4 raw}, \eqref{est:split1 raw}, and \eqref{est:split3 raw}. \\
 		2. In $H$ estimate we splitted $(x, \bx, v)$ and $(\bx, v, \bv)$ so for each $H_{x}$ and $H_{v}$ we only considered \eqref{est:split2 raw}, \eqref{est:split1 raw} OR \eqref{est:split1 raw}, \eqref{est:split3 raw} only. But now, we consider all \eqref{est:split2 raw}--\eqref{est:split3 raw} in one time. \\
 		3. Check condition ($\mathbf{1}$ etc) and apply Lemma \ref{lem:Holder_tb} to control $\frac{1}{2}$ order fraction estimate$X-X$ and $V-V$ and $|v|(t^{1} - s)$. Carefule (and compare as previous version) order of natural growth!!  \\
 		4. Generate $H^{2\b}_{x,v}$ form and impose proper weight. }   \\
 		\unhide
 	First, note that \eqref{est:Holder} is obvious if $|(x,v) - (\bx, \bv)| \geq 1$. If $|(x,v) - (\bx, \bv)| \leq 1$, we consider the following steps. \\ 	
	\noindent{\bf Step 1} Let us assume \eqref{assume_x} for $v+\zeta$ and $\eqref{assume_v}$ for $\bx$, respectively.   \\
	In this step, we consider \eqref{basic f-f1}. To consider \eqref{basic f-f1}, we put $s=0$ of \eqref{split1}--\eqref{split4} and apply Lemma \ref{lem:Holder_tb} to obtain 
 	\begin{equation} \label{est : f-f1-half}
 	\begin{split}
 	&\frac{ \eqref{basic f-f1} }{|(x,v)-(\bx,\bv)|^{\b}}  \\
 	&\lesssim 
 	\Big[ 1 +  \langle v+\zeta \rangle t \Big]^{2\b}  
 	%{\color{blue}  
 	\Big[ \frac{ 1 }{ \langle v+\zeta \rangle}  
 		\sup_{\substack{v\in\R^{3} \\ 0 < |x - \bx|\leq 1}} 
 		\langle v \rangle \frac{|f_{0}( x, v ) - f_{0}(\bx, v)|}{|x - \bx|^{2\b}}  + \frac{1}{w_{0}(v+\zeta)}\|w_{0} f_{0}\|_{\infty} \Big] 
 	%}     
 	\\
 	&\quad +
 	\Big[1 + \langle v+\zeta \rangle \Big]^{2\b} 
 	%{\color{blue}  
 	\Big[ 
 		\frac{ 1}{ \langle v+\zeta \rangle^{2} }	
 		\sup_{ \substack{ x\in \overline{\O} \\ 0 < |v - \bv|\leq 1   } }  \langle v \rangle^{2} \frac{|f_{0}( x, v ) - f_{0}( x, \bv)|}{|v - \bv|^{2\b}} + \frac{1}{w_{0}(v+\zeta)} \|w_{0}f_{0}\|_{\infty}  \Big]
 	%}  
 	\\
 	&\lesssim  
 	\sup_{\substack{v\in\R^{3} \\ 0 < |x - \bx|\leq 1}} 
 	\langle v \rangle \frac{|f_{0}( x, v ) - f_{0}(\bx, v)|}{|x - \bx|^{2\b}}  
 	+ \sup_{ \substack{ x\in \overline{\O} \\ 0 < |v - \bv|\leq 1   } }  \langle v \rangle^{2} \frac{|f_{0}( x, v ) - f_{0}( x, \bv)|}{|v - \bv|^{2\b}}  
 	+ \|w_{0} f_{0}\|_{\infty}.
 	\end{split}
 	\end{equation}
 	Similarly,
 	\begin{equation} \label{est : f-f2-half}
 	\begin{split}
 	&\frac{ \eqref{basic f-f2} }{|(x,v)-(\bx,\bv)|^{\b}}  \\
 	\hide
 	&\lesssim 
 	\int_{0}^{t}
 	\Big[ 1 +  \langle v+\zeta \rangle (t-s) \Big]^{2\b}  
 	{\color{blue}  \Big[ \frac{ 1 }{ \langle v+\zeta \rangle}  
 		\sup_{v, |x - \bx|\leq 1} 
 		\langle v \rangle \frac{|f_{0}( x, v ) - f_{0}(\bx, v)|}{|x - \bx|^{2\b}}  + \frac{1}{w_{0}(v+\zeta)}\|w_{0} f_{0}\|_{\infty} \Big] 
 	}     
 	\\
 	& + 
 	\Big[1 + \langle v+\zeta \rangle \Big]^{2\b} 
 	{\color{blue}  \Big[ 
 		\frac{ 1}{ \langle v+\zeta \rangle^{2} }	
 		\sup_{x, |v - \bv|\leq 1}  \langle v \rangle^{2} \frac{|f_{0}( x, v ) - f_{0}( x, \bv)|}{|v - \bv|^{2\b}} + \frac{1}{w_{0}(v+\zeta)} \|w_{0}f_{0}\|_{\infty}  \Big]
 	}  
 	\\
 	\unhide
 	&\lesssim \int_{0}^{t} \Big[ 1 +  \langle v+\zeta \rangle (t-s) \Big]^{2\b} e^{\varpi \langle v+\zeta \rangle^{2} s}
 	\\
 	&\quad\quad 
 	\times  
 	%{\color{blue} 
 	\Big[ 
 		\Big( 
 		\|wf(s)\|_{\infty}
 		\sup_{\substack{v\in\R^{3} \\ 0 < |x - \bx|\leq 1}}  
 		e^{-\varpi \langle v \rangle^2 s}  
 		\int_{\R^{3}_{u}} k_{c}(v, v+u) \frac{|f(s, x, v+u ) - f(s, \bx, v+u)|}{|x - \bx|^{2\b}} du
 		\Big) 
 	%}
 	\\
 	&\quad\quad\quad \quad
 	%{\color{blue}
 		+ \frac{\langle v+\zeta \rangle}{w(v+\zeta)}\|w f(s)\|^{2}_{\infty} \Big] 
 	%}    
 	d\zeta   \\
 	& + \int_{0}^{t} 
 	\Big[1 + \langle v+\zeta \rangle \Big]^{2\b} e^{\varpi\langle v+\zeta \rangle^{2}s}  \\
 	&\quad\quad 
 	\times
 	%{\color{blue}  
 		\Big[ 
 		%\frac{ 1 }{ \langle v+\zeta \rangle^{2\b} }	
 		\Big(
 		\|wf(s)\|_{\infty}
 		\sup_{ \substack{ x\in \overline{\O} \\ 0 < |v - \bv|\leq 1   } } e^{-\varpi \langle v \rangle^2 s}  
 		%\langle v \rangle^{2\b} 
 		\int_{\R_{\zeta}^{3}} \mathbf{k}_{c}(v, \bv, u)
 		\frac{|f(s, x, v+u ) - f(s, x, \bv+u)|}{|v - \bv|^{2\b}}
 		\Big) 
 	%}
 	\\
 	&\quad\quad\quad \quad
 	%{\color{blue}
 		+ \big( \frac{1}{\langle v+\zeta \rangle} + \frac{\langle v+\zeta \rangle}{w(v+\zeta)} \big) \|wf(s)\|^{2}_{\infty}  \Big]
 	%}  
 	d\zeta  \\
 	&\lesssim  
 	\langle v+\zeta \rangle^{2\b} e^{\varpi\langle v+\zeta \rangle^{2}t} 
 	\Big\{ \|w_{0}f_{0}\|_{\infty}\big[ \sup_{0\leq s \leq T}\mathfrak{H}_{sp}^{2\b}(s) + \sup_{0\leq s \leq T}\mathfrak{H}_{vel}^{2\b}(s) \big] + \|w_{0}f_{0}\|_{\infty}^{2} \Big\},
 	\end{split}
 	\end{equation}
 	and  
 	\begin{equation} \label{est : f-f34-half}
 	\begin{split}
 	&\frac{ \eqref{basic f-f3} + \eqref{basic f-f4} }{|(x,v)-(\bx,\bv)|^{\b}}  \\
 	&\lesssim  
 	\langle v+\zeta \rangle^{2\b} e^{\varpi\langle v+\zeta \rangle^{2}t} 
 	\Big\{ \|w_{0}f_{0}\|_{\infty}\big[ \sup_{0\leq s \leq T}\mathfrak{H}_{sp}^{2\b}(s) + \sup_{0\leq s \leq T}\mathfrak{H}_{vel}^{2\b}(s) \big]  + \|w_{0}f_{0}\|_{\infty} \Big\}.
 	\end{split}
 	\end{equation}
 	From \eqref{est : f-f1-half}, \eqref{est : f-f2-half}, \eqref{est : f-f34-half}, and Lemma \ref{est : H}, for $|(x,v) - (\bx, \bv)| \leq 1$,
 	\begin{equation*}
 	\begin{split}
 	&\langle v+\zeta \rangle^{-2\b} e^{-\varpi\langle v+\zeta \rangle^{2}t}\frac{ | f(t,x,v+\zeta) - f(t, \bx, \bv+\zeta) | }{ |(x,v) - (\bx, \bv)|^{\b} }  \\
 	&\lesssim  \|w_{0}f_{0}\|_{\infty} \big[ \sup_{0\leq s \leq T}\mathfrak{H}_{sp}^{2\b}(s) + \sup_{0\leq s \leq T}\mathfrak{H}_{vel}^{2\b}(s) \big] +  \mathcal{P}_{2}(\|w_{0}f_{0}\|_{\infty})  \\
 	&\lesssim 
 	\|w_{0}f_{0}\|_{\infty}
 	\Big[
 	\sup_{\substack{v\in\R^{3} \\ 0 < |x - \bx|\leq 1}} 
 	\langle v \rangle  \frac{|f_{0}( x, v ) - f_{0}(\bx, v)|}{|x - \bx|^{2\b}}  
 	+ \sup_{ \substack{ x\in \overline{\O} \\ 0 < |v - \bv|\leq 1   } } \langle v \rangle^{2} \frac{|f_{0}( x, v ) - f_{0}( x, \bv)|}{|v - \bv|^{2\b}}  
 	\Big]
 	+ \mathcal{P}_{2}(\|w_{0}f_{0}\|_{\infty}).  \\
 	\end{split}
 	\end{equation*}
 	\\
 	\noindent{\bf Step 2} (Trivial case) Assume \eqref{assume_x} or \eqref{assume_v} do not hold with $v+\zeta$ and  $\bx$, respectively. In this case, we cannot split \eqref{diff f} into \eqref{split1}--\eqref{split4}. Instead, we split
 	\begin{align}
 	%\begin{split}
 	%& f(s, X, V) - f(s, \bar{X}, \bar{V}) \notag  \\
 	& f(s, X(s;t,x,v+\zeta), V(s;t,x,v+\zeta)) - f(s, X(s;t, \bar{x}, \bar{v}+\zeta), V(s;t, \bar{x}, \bar{v}+\zeta))  \notag  \\
 	%&\leq f(s, X(s;t,x,v), V(s;t,x,v)) - f(s, X(s;t, \bar{x}, v ), V(s;t, \bar{x}, v ))  \notag \\
 	%&\quad + f(s, X(s;t, \bar{x}, v ), V(s;t, \bar{x}, v )) - f(s, X(s;t, \bar{x}, \bar{v} ), V(s;t, \bar{x}, \bar{v} ))  \notag \\
 	&\leq  \big[ f(s, X(s;t,x,v+\zeta), V(s;t,x,v+\zeta)) - f(s, X(s;t, \bar{x}, v+\zeta ), V(s;t, \bar{x}, v+\zeta ))  \big]  \label{triv:split1}  \\
 	&\quad + \big[ f(s, X(s;t, \bar{x}, v+\zeta ), V(s;t, \bar{x}, v+\zeta )) - f(s, X(s;t, \bar{x}, \bv+\zeta ), V(s;t, \bar{x}, \bv+\zeta )) \big]. \label{triv:split2}
 	%\end{split}
 	\end{align}
 	When \eqref{assume_x} does not hold, we can estimate \eqref{triv:split1} similar as \eqref{est:split2} since $S_{(\tx, \bx, v+\zeta)}$ is not well-defined by \eqref{def_tildex}. Similarly, when \eqref{assume_v} does not hold, we can estimate \eqref{triv:split2} similar as \eqref{est:split4} since $S_{(\bx, \tv, \bv, \zeta)}$ is not well-defined by \eqref{def_tildev}. Therefore, we obtain the same bound \eqref{est:Holder} even if \eqref{assume_x} for $v+\zeta$ or $\eqref{assume_v}$ for $\bx$ do not hold. 
 	\hide
 	\begin{equation*}
 	\begin{split}
 	&\frac{ | f(t,x,v+\zeta) - f(t, \bx, \bv+\zeta) | }{ |(x,v) - (\bx, \bv)|^{\b} }  \\
 	&\lesssim \langle v+\zeta \rangle^{2\b} e^{\varpi\langle v+\zeta \rangle^{2}t}  \Big\{ \|w_{0}f_{0}\|_{\infty} \big[ \sup_{0\leq s \leq T}\mathfrak{H}_{sp}^{2\b}(s) + \sup_{0\leq s \leq T}\mathfrak{H}_{vel}^{2\b}(s) \big] +  \mathcal{P}_{2}(\|w_{0}f_{0}\|_{\infty}) \Big\}.
 	\end{split}
 	\end{equation*} 
 	and hence,
 	\unhide
\end{proof}
	
%\section{Appendix} 

	\hide
		\noindent \textbf{Lemma \ref{lem_tau ratio}}
		\textit{ (i) Assume \eqref{assume_x} and \eqref{assume_x2}, and recall $\tau_0(x, \bx, v)$ from \eqref{tau_0}. Then,
		\Be  
		\frac{\tau_{0}(x, \bx, v) - \tau_{-}(x, \bx, v)}{\tau_{+}(x, \bx, v) - \tau_{-}(x, \bx, v)} \gtrsim_{\O} 1.  \\
		\Ee
		(ii) Assume \eqref{assume_v} and \eqref{assume_v2}, and recall $\tau_0(x, v, \bv, \zeta)$ from \eqref{tau_0_v}. Then,
		\Be  
		\frac{\tau_{0}(x, v, \bv, \zeta) - \tau_{-}(x, v, \bv, \zeta)}{\tau_{+}(x, v, \bv, \zeta) - \tau_{-}(x, v, \bv, \zeta)} \gtrsim_{\O} 1.	
		\Ee
	} %textit
	\unhide

	\section{Acknowledgements}%CK thanks the anonymous referees of \cite{Kim11} for the insightful reviews on a possible connection between the analytic singularities (e.g. \cite{AM,Taylor1,Taylor2}) and the singularity of the Boltzmann, which have been greatly inspiring the authors to study the subject of the current paper. He also thanks Laurent Desvillettes for his insightful comments on the subject in the occasion of Summer Workshop on Kinetic Theory and Gas Dynamics (organized by Tai-Ping Liu) at Stanford University June/2010. 
	CK is supported in part by National Science Foundation under Grant No.1900923, Grant No.2047681, the Wisconsin Alumni Research Foundation, and the Brain Pool program (NRF-2021H1D3A2A01039047) of the Ministry of Science and ICT in Korea. DL is supported by Samsung Science and Technology Foundation under Project Number   SSTF-BA1902-02	 and the National Research Foundation of Korea(NRF) grant funded by the Korea government(MSIT)(No. NRF-2019R1C1C1010915). 
	 
		\bibliographystyle{plain}

\end{document}